\documentclass[10pt,a4paper]{amsart}
 \usepackage{amsmath}
\usepackage{amssymb}
\usepackage[english]{babel}
\usepackage[arrow, matrix, curve]{xy}
\usepackage{amsmath,amscd}
\numberwithin{equation}{section}

\usepackage{hyperref,srcltx}
\usepackage[T1]{fontenc}
\usepackage[latin1]{inputenc}

\DeclareMathOperator\GL{GL}

\DeclareMathOperator\Spec{Spec}

\DeclareMathOperator\Lie{Lie}
\DeclareMathOperator\cind{c-ind}

\DeclareMathOperator\pr{pr}

\parindent=10pt
\parskip=6pt
\baselineskip=14pt

\renewcommand{\phi}{\varphi}

\newcommand{\Fcal}{\mathcal{F}}
\newcommand{\Gcal}{\mathcal{G}}
\newcommand{\Hcal}{\mathcal{H}}

\newcommand{\Mcal}{\mathcal{M}}
\newcommand{\Ncal}{\mathcal{N}}
\newcommand{\Ocal}{\mathcal{O}}

\newcommand{\lieg}{\mathfrak{g}}

\newcommand{\Q}{\mathbb{Q}}

\newcommand{\Abb}{\mathbb{A}}

\newcommand{\Gbb}{\mathbb{G}}

\newtheorem{theo}{Theorem}[section]
\newtheorem{lem}[theo]{Lemma}
\newtheorem{prop}[theo]{Proposition}
\newtheorem{cor}[theo]{Corollary}
\newtheorem{conj}[theo]{Conjecture}
\theoremstyle{remark}
\newtheorem{rem}[theo]{Remark}
\theoremstyle{remark}
\newtheorem{expl}[theo]{Example}
\theoremstyle{definition}

\usepackage{color}
\newcounter{commentcounter}

\newcounter{commentcounterS}

\def\?{\ 
{\bf\color{red}???}\ 
\immediate\write16{}
\immediate\write16{Warning: There was still a question mark . . . }
\immediate\write16{}}

\author{Eugen Hellmann}

\address{Eugen Hellmann\\
Mathematisches Institut\\
Universit\"at M\"unster\\
Einsteinstrasse 62\\
D-48149 M\"unster\\
Germany\\
e.hellmann@uni-muenster.de}

\title{On the derived category of the Iwahori-Hecke algebra}


\begin{document}

\begin{abstract}
We state a conjecture that relates the derived category of smooth representations of a $p$-adic split reductive group with the derived category of (quasi-)coherent sheaves on a stack of L-parameters. We investigate the conjecture in the case of the principal block of ${\rm GL}_n$ by showing that the functor should be given by the derived tensor product with the family of representations interpolating the modified Langlands correspondence over the stack of L-parameters that is suggested by the work of Helm and Emerton-Helm. 
\end{abstract}

\maketitle
\setcounter{tocdepth}{1}
\tableofcontents

\section{Introduction}
We study the smooth representation theory of a split reductive group $G$ over a non-archimedean local field $F$. The classification of the irreducible smooth $G$-representations is one of the main objectives of the local Langlands program. 
One aims to parametrize these representations by so called L-parameters, together with some additional datum (a representation of a finite group associated to the L-parameter). 
Such a parametrization has been established in the case of ${\rm GL}_n(F)$. For split reductive groups it has been established by Kazhdan and Lusztig for those irreducible smooth representations of $G$ that have a non-trivial fixed vector under an Iwahori subgroup $I\subset G$, see \cite{KL}. In this case an L-parameter just becomes a conjugacy class of $(\phi,N)$, where $\phi$ is a semi-simple element of the Langlands dual group $\check G$, and $N\in\Lie\check G$, satisfying ${\rm Ad}(\phi)(N)=q^{-1}N$. Here $q$ is the number of elements of the residue field of $F$. This parametrization depends on an additional choice, called a Whittaker datum. 

In this paper we formulate a conjecture that lifts the Langlands classification to a fully faithful embedding of the category ${\rm Rep}(G)$ of smooth $G$-representations (on vector spaces over a field $C$ of characteristic zero) to the category of quasi-coherent sheaves on the stack of L-parameters. It turns out that this conjecture has to be formulated on the level of derived categories. As one of the main tools in the study of smooth representations is parabolic induction, we ask this fully faithful embedding to be compatible with parabolic induction in a precise sense. Moreover, the conjectured functor should depend on the choice of a Whittaker datum.
Similar conjectures, and results, where obtained by Ben-Zvi--Chen-Helm-Nadler \cite{BZCHN} and Zhu \cite{Zhu}. The conjectures stated here can also be regarded as a special case of the conjectural geometrization of the local Langlands correspondence of Fargues-Scholze \cite{FS}. 

The conjecture can be made more precise in the case of the principal Bernstein Block ${\rm Rep}_{[T,1]}(G)$ of ${\rm Rep}(G)$, i.e.~the block containing the trivial representation. 
This block coincides with the full subcategory ${\rm Rep}^IG$ of smooth $G$-representations generated by their $I$-fixed vectors for a choice of an Iwahori-subgroup $I\subset G$. As ${\rm Rep}^IG$ is equivalent to the category of modules over the Iwahori-Hecke algebra the conjecture comes down to a conjecture about the derived category of the Iwahori-Hecke algebra. 

In the main part of the paper we investigate the conjecture in the case of $G={\rm GL}_n(F)$ and the principal block by relating it to the construction of a family of $G$-representations interpolating the (modified) local Langlands correspondence, following the work of Emerton-Helm \cite{EmertonHelm}.

We describe the conjecture and our results in more detail. 
Fix a finite extension $F$ of $\Q_p$, or of $\mathbb{F}_p(\!(t)\!)$.
Let $\mathbb{G}$ be a split reductive group over $F$ and write $G=\mathbb{G}(F)$. 
We fix a field $C$ of characteristic zero and shall always assume that $C$ contains a square root $q^{1/2}$ of $q$. We denote by $\check G$ the dual group of $G$, considered as a reductive group over $C$. 
More generally, for every parabolic (or Levi) $\mathbb{P}$ (or $\mathbb{M}$) of $\mathbb{G}$ we will write $P=\mathbb{P}(F)$ (respectively $M=\mathbb{M}(F)$) for its group of $F$-valued points and $\check P$ (respectively $\check M$) for its dual group over $C$.
For each parabolic subgroup $P\subset G$ with Levi $M$ (normalized) parabolic induction defines a functor $\iota_{\overline{P}}^G$ from $M$-representations to $G$-representations.

On the other hand we denote by $X_{\check G}^{\rm WD}$ the space of Weil-Deligne representations with values in $\check G$, that is, the space whose $C$-valued points are pairs $(\rho,N)$ consisting of a smooth representation $W_F\rightarrow \check G(C)$ of the Weil group $W_F$ of $F$ and $N\in \Lie \check G$ satisfying the usual relation $${\rm Ad}(\rho(\sigma))(N)=q^{-||\sigma||}N,$$ where $||-||:W_F\rightarrow\mathbb{Z}$ is the projection. 
We shall write $[X_{\check G}^{\rm WD}/\check G]$ for the stack quotient by the obvious $\check G$-action. 

Let us write $\mathcal{Z}(\check G)$ for the global sections of the structure sheaf on $[X_{\check G}^{\rm WD}/\check G]$, or equivalently the coordinate ring of the GIT quotient $X_{\check G}^{\rm WD}/\hspace{-.1cm}/\check G$. Moreover, we write $\mathfrak{Z}(G)$ for the Bernstein center of ${\rm Rep}(G)$.
With these notations we state the following conjecture. For the sake of brevity we state the conjecture in a vague form and refer to the body of the paper for a more precise formulation of the conjecture.

\begin{conj}\label{mainconjecturegeneralintro}
There exists the following data:
\begin{enumerate}
\item[(i)] For each $(\mathbb{G},\mathbb{B},\mathbb{T},\psi)$ consisting of a reductive group $\mathbb{G}$, a Borel subgroup $\mathbb{B}$, a split maximal torus $\mathbb{T}\subset\mathbb{B}$, and a (conjugacy class of a) generic character $\psi:N\rightarrow C^\times$ there exists
an exact and fully faithful functor
\[\mathcal{R}_G^\psi:{\bf D}^+({\rm Rep}(G))\longrightarrow {\bf D}^+_{\rm QCoh}([X^{\rm WD}_{\check G}/\check G]),\]
\item[(ii)] for $(\mathbb{G},\mathbb{B},\mathbb{T},\psi)$ as in {\rm (i)} and a parabolic subgroup $\mathbb{P}\subset \mathbb{G}$ containing $\mathbb{B}$ we denote by $\mathbb{M}$ the Levi-quotient of $\mathbb{P}$. Then the functors $\mathcal{R}^\psi_G$ and $\mathcal{R}_M^{\psi_M}$ satisfy a compatibility with parabolic induction $\iota_{\overline{P}}^G$. Here $\psi_M$ is the restriction of $\psi$ to the unipotent radical of the Borel $B_M$ of $M$ via the splitting $\mathbb{M}\rightarrow \mathbb{P}$ defined by $\mathbb{T}$.
\end{enumerate}
These data satisfy the following conditions:
\begin{enumerate}
\item[(a)] If $\mathbb{G}=\mathbb{T}$ is a split torus, then $\mathcal{R}_T=\mathcal{R}_T^{\psi}$ is induced by the equivalence $${\rm Rep}(T)\cong {\rm QCoh}(X_{\check T}^{\rm WD})$$ given by  local class field theory.
\item[(b)] Let $(\mathbb{G},\mathbb{B},\mathbb{T},\psi)$ be as in {\rm (i)}. The morphism $\mathcal{Z}(\check G)\rightarrow \mathfrak{Z}(G)$ defined by fully faithfulness of $\mathcal{R}_G^\psi$ is independent of the choice of $\psi$ and induces a surjection
\[\omega_G:\left\{\begin{array}{*{20}c} \text{Bernstein components}\\  \text{of}\ {\rm Rep} (G)\end{array}\right\}\longrightarrow\left\{\begin{array}{*{20}c}\text{connected components}\\ \text{of}\ X_{\check G}^{\rm WD}\end{array}\right\}.\]
\item[(c)] For $(\mathbb{G},\mathbb{B},\mathbb{T},\psi)$ as in {\rm (i)} there is an isomorphism
\[R_G^\psi((\cind_N^G\psi))\cong \Ocal_{[X^{\rm WD}_{\check G}/\check G]}.\]
\end{enumerate}
\end{conj}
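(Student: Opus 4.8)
We sketch a strategy; for a general group the conjecture is a programme, and the case within reach --- the one investigated below --- is $\mathbb G=\GL_n$ together with the principal block, so I concentrate on that, indicating at each point which ingredients are already available. \emph{Tori and the inductive base.} Part (a), and the base of the induction implicit in (ii), is checked directly. For a split torus $\mathbb T$ of rank $r$ write $T=\mathbb T(F)\cong (F^\times)^r$ and $F^\times\cong\varpi^{\mathbb Z}\times\mathcal O_F^\times$. The relation defining $X^{\rm WD}_{\check T}$ forces $N=0$, so $X^{\rm WD}_{\check T}$ is the scheme of smooth characters $W_F\to\check T$, which by Artin reciprocity is the scheme of smooth characters of $T$ --- explicitly a disjoint union of copies of $\check T$ indexed by the finite-order characters of $(\mathcal O_F^\times)^r$. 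Every smooth $T$-representation is a filtered colimit of representations of the finite quotients of $T$, and reading this off gives the equivalence ${\rm Rep}(T)\simeq {\rm QCoh}(X^{\rm WD}_{\check T})$; exactness and full faithfulness of the induced $\mathcal R_T$ on ${\bf D}^+$ are then formal.

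\emph{The functor for $\GL_n$ on the principal block, and parabolic induction.} For $\mathbb G=\GL_n$ one restricts to the union of components of $X^{\rm WD}_{\check G}$ meeting the unramified locus; on the automorphic side this is the principal block ${\rm Rep}_{[T,1]}(G)={\rm Rep}^IG$, equivalent to the category of modules over the Iwahori--Hecke algebra $\mathcal H$. Following Helm and Emerton--Helm one has the ``co-Whittaker'' family: a $\check G$-equivariant quasi-coherent sheaf $\widetilde{\mathcal W}$ of smooth $G$-representations on $X^{\rm WD}_{\check G}$, built from the universal Gelfand--Graev representation $\cind_N^G\psi$ over the Bernstein centre, whose specialization at $(\rho,N)$ realizes the modified Langlands correspondence. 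One then \emph{defines}
\[\mathcal R^\psi_G(M)\;=\;\widetilde{\mathcal W}\otimes^{\mathbf L}_{\mathcal H}M\;\in\;{\bf D}^+_{\rm QCoh}\bigl([X^{\rm WD}_{\check G}/\check G]\bigr),\]
the Whittaker datum $\psi$ entering through the normalization of $\widetilde{\mathcal W}$. Compatibility with parabolic induction (ii) must be read off from the interaction of $\widetilde{\mathcal W}$ with the Jacquet functor and from the geometry of the map $[X^{\rm WD}_{\check M}/\check M]\to[X^{\rm WD}_{\check G}/\check G]$ --- a family version of Bernstein--Zelevinsky theory, into which the derivative/mirabolic combinatorics for $\GL_n$ should enter.

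\emph{Conditions (b), (c) and full faithfulness.} By the definition of $\mathcal R^\psi_G$, condition (c) amounts to $\widetilde{\mathcal W}\otimes^{\mathbf L}_{\mathcal H}(\cind_N^G\psi)\cong\mathcal O$, which rests on two inputs. First, on the principal block $\cind_N^G\psi$ corresponds to the projective $\mathcal H$-module $\mathcal H e_{\rm sgn}$ ($e_{\rm sgn}$ the sign idempotent of the finite Hecke subalgebra), so $R\Hom_G(\cind_N^G\psi,\cind_N^G\psi)=e_{\rm sgn}\mathcal H e_{\rm sgn}$ is concentrated in degree $0$ and, for $\GL_n$, equals the centre $Z(\mathcal H)=\mathfrak Z_{[T,1]}(G)$ by multiplicity one for Whittaker models. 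Second, Helm's identification of $\mathfrak Z_{[T,1]}(G)$ with $\mathcal Z(\check G)=\Gamma([X^{\rm WD}_{\check G}/\check G],\mathcal O)$; here $\check G=\GL_n$ is linearly reductive over $C$, so $R\Gamma$ of the quotient stack has no higher terms and is simply $\mathcal O(X^{\rm WD}_{\check G})^{\check G}$. Condition (b) then follows from the same circle of results --- compatibility of the Langlands correspondence for $\GL_n$ with the block decomposition --- which makes $\omega_G$ surjective and the centre map independent of $\psi$. For full faithfulness on all of ${\bf D}^+$ one passes from $\cind_N^G\psi$ to the projective generator $\mathcal H$ of $\mathcal H$-mod: the functor is fully faithful as soon as the natural map $\mathcal H\to R\Hom_{[X^{\rm WD}_{\check G}/\check G]}(\widetilde{\mathcal W},\widetilde{\mathcal W})$ is an isomorphism, which is to be proved via (a suitable form of) the Kazhdan--Lusztig and Bezrukavnikov equivalences identifying $\mathcal H$-modules with $\check G$-equivariant coherent sheaves on the (completed) Steinberg variety, this being the incarnation of $[X^{\rm WD}_{\check G}/\check G]$ attached to the Iwahori block; the essential image is then cut out by transporting the $t$-structure.

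\emph{The main obstacle.} The crux is the \emph{derived} nature of the statement: one must establish exactness and full faithfulness on ${\bf D}^+$, and not merely that $\mathcal R^\psi_G$ realizes the Langlands classification on the hearts --- where the underived functor is already not fully faithful, which is exactly why the conjecture must be formulated in the derived setting. Concretely this needs a derived enhancement of Helm's flatness and co-Whittaker theorems, so that $\otimes^{\mathbf L}$ is compatible with specialization of $\widetilde{\mathcal W}$ at all (including singular) L-parameters, together with the computation of $R\Hom$ between the resulting coherent sheaves on the stack of L-parameters and a term-by-term comparison with $\mathrm{Ext}$-groups of smooth representations; the parabolic-induction compatibility (ii) must likewise be upgraded from the level of Grothendieck groups, in the spirit of a Koszul duality. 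Carrying this out is what goes beyond the current literature, and for a general reductive $\mathbb G$ even a candidate functor satisfying (ii) is not available, so the realistic target of a proof is the $\GL_n$ principal-block case described here.
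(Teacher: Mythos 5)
The statement is a conjecture; the paper does not prove it, and your sketch essentially reproduces the paper's own approach to the one case it investigates in depth, namely $\GL_n$ and the principal block. Your tori base case via class field theory, the definition of the candidate functor as a derived tensor over $\Hcal_G$ with the co-Whittaker Emerton--Helm family (the paper's $\mathcal M_G$), the identification of $(\cind_N^G\psi)_{[T,1]}$ with the projective $\Hcal_G e_{\rm st}$ (your $e_{\rm sgn}$ is the paper's Steinberg idempotent), the computation $e_{\rm st}\Hcal_G e_{\rm st}\cong\mathfrak Z$ via multiplicity one, and the diagnosis that the content lies in upgrading the Langlands classification on hearts to a derived fully faithful functor, are all precisely the moves the paper makes.

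The one genuine divergence is the proposed route to full faithfulness. You appeal to a Kazhdan--Lusztig/Bezrukavnikov equivalence with equivariant coherent sheaves on a Steinberg-type space. The paper does not take that path: it establishes full faithfulness only for $\GL_2$, by a hands-on computation of $\mathrm{Ext}^\bullet$ between $\Ocal_{[X_{\check G}/\check G]}$ and $\Ocal_{[X_{\check G,0}/\check G]}$ using an explicit $\check T$-equivariant local model of the stack near the intersection of the two components; for general $n$ it proves compatibility with parabolic induction only over the regular locus $X_{\check G}^{\rm reg}$, and at regular semi-simple infinitesimal characters it proves that the completed functors $\hat R_{M,\chi}$ are characterized by the axioms, rather than proving full faithfulness. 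Your Bezrukavnikov-type strategy is a reasonable alternative and potentially more uniform in $n$, but it is not off-the-shelf: those equivalences naturally live over a graded or formal Steinberg variety (with $q$ an indeterminate, not a fixed root of unity or element of $C^\times$), and one would still have to transport the Emerton--Helm normalization and verify that the fully faithful embedding thus obtained agrees with $-\otimes^{\bf L}_{\Hcal_G}\mathcal M_G$. That is precisely the gap you correctly flag, so on the honesty scale your account tracks the paper's state of the art accurately; it just substitutes a different, as yet unexecuted, mechanism for the one piece neither of you has.
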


In this paper we mainly focus on the conjecture in the case of the principal block of ${\rm Rep}(G)$.
If $T\subset G$ is a split maximal torus, we write ${\rm Rep}_{[T,1]}(G)$ for the Bernstein block of those representations $\pi$ such that all irreducible subquotients of $\pi$ are subquotients of a representations induced from an unramified $T$-representation. 
Then parabolic induction restricts to a functor $$\iota_{\overline{P}}^G:{\rm Rep}_{[T_M,1]}(M)\rightarrow {\rm Rep}_{[T,1]}(G)$$ for any choice of maximal split tori $T\subset G$ and $T_M\subset M$ (as the categories do not depend on these choices).

On the other hand we denote by $X_{\check G}=\{(\phi,N)\in\check G\times\Lie\check G\mid {\rm Ad}(\phi)(N)=q^{-1}N\}$ the space of L-parameters (corresponding to the representations in the principal block) and write $[X_{\check G}/\check G]$ for the stack quotient by the action of $\check G$ induced by conjugation. We obtain similar spaces $[X_{\check P}/\check P]$ etc.~for parabolic subgroups $P\subset G$ (or their Levi quotients). If $T$ is a (maximal split) torus, then $X_{\check T}$ is just the dual torus $\check T$. 

In this case the relation between the the Bernstein center $\mathfrak{Z}_G=\mathfrak{Z}_{[T,1]}(G)$ of the category ${\rm Rep}_{[T,1]}(G)$ and the GIT quotient $X_{\check G}/\hspace{-.1cm}/\check G$ can be made precise: the center $\mathfrak{Z}_G$ can naturally be identified with the functions on the adjoint quotient of $\check G$ and hence $\mathfrak{Z}_G$ acts on categories of modules over $X_{\check G}$ as well as on ${\rm Rep}_{[T,1]}(G)$.
The following conjecture is a slightly more precise version of Conjecture \ref{mainconjecturegeneralintro} in the case of the block ${\rm Rep}_{[T,1]}(G)$. Equivalently, the conjecture can be interpreted as a conjecture about the derived category of the Iwahori-Hecke algebra, and we shall take this point of view in the last part of the paper when we discuss the case of ${\rm GL}_n$. In the case of modules over an affine Hecke algebra (where $q$ is an invertible indeterminate) a similar conjecture\footnote{The author was not aware of their project when coming up with the conjecture and with the results in this paper.} is due to Ben-Zvi--Chen--Helm-Nadler \cite{BZCHN} and Zhu \cite{Zhu}.

\begin{conj}\label{mainconjectureintro}
There exists the following data:
\begin{enumerate}
\item[(i)] For each $(\mathbb{G},\mathbb{B},\mathbb{T},\psi)$ consisting of a reductive group $\mathbb{G}$ a Borel subgroup $\mathbb{B}$, a maximal split torus $\mathbb{T}\subset\mathbb{B}$ and a (conjugacy class of a) generic character $\psi:N\rightarrow C^\times$ there exists
an exact and fully faithful $\mathfrak{Z}_G$-linear functor
\[R_G^\psi:{\bf D}^+({\rm Rep}_{[T,1]}(G))\longrightarrow {\bf D}^+_{\rm QCoh}([X_{\check G}/\check G]),\]
\item[(ii)] for $(\mathbb{G},\mathbb{B},\mathbb{T},\psi)$ as in {\rm (i)} and each parabolic subgroup $\mathbb{P}\subset \mathbb{G}$ containing $\mathbb{B}$ 
there exists a natural $\mathfrak{Z}_{G}$-linear isomorphism
\[\xi_{P}^{G}:R_{G}^\psi\circ \iota_{\overline{P}}^{G}\longrightarrow (R\beta_{\ast}\circ L\alpha^\ast)\circ R_{M}^{\psi_M}\]
of functors ${\bf D}^+({\rm Rep}_{[T_M,1]}M)\rightarrow {\bf D}^+_{\rm QCoh}([X_{\check G}/\check G])$ such that the various $\xi_{P}^{G}$ are compatible (in a precise sense).
Here $\mathbb{M}$ is the Levi quotient of $\mathbb{P}$, the character $\psi_M$ is the restriction of $\psi$ to the unipotent radical of $B_M=B\cap M$ (using a splitting $M\hookrightarrow P$ of $P\rightarrow M$), and 
\begin{align*}
\alpha:&[X_{\check P}/\check P]\longrightarrow [X_{\check M}/\check M] \\
\beta:&[X_{\check P}/\check P]\longrightarrow [X_{\check G}/\check G] 
\end{align*}
are the morphisms on stacks induced by the natural maps $\check P\rightarrow \check M$ and $\check P\rightarrow \check G$.
\end{enumerate}
For a maximal split torus $T$ the functor $R_T=R_T^{\psi_T}$ is induced by the identification
\[{\rm Rep}_{[T,1]}(T)\cong C[T/T^\circ]\text{-}{\rm mod}\cong{\rm QCoh}(\check T),\]
were $T^\circ\subset T$ is the maximal compact subgroup. Moreover, for $(\mathbb{G},\mathbb{B},\mathbb{T},\psi)$ as in {\rm (i)} there is an isomorphism 
\[R_G^\psi((\cind_N^G\psi)_{[T,1]})\cong \Ocal_{[X_{\check G}/\check G]}.\]
\end{conj}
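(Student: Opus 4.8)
The plan is to establish Conjecture~\ref{mainconjectureintro} in the case $\mathbb{G}=\mathrm{GL}_n$, where $\mathrm{Rep}_{[T,1]}(G)$ is equivalent to the category of modules over the Iwahori--Hecke algebra $\mathcal H=\mathcal H(G,I)$ and the target side is explicit: $X_{\check G}\subset \check G\times\Lie\check G$ is cut out by $\mathrm{Ad}(\phi)N=q^{-1}N$, and by the local Langlands correspondence in families (Emerton--Helm \cite{EmertonHelm}, together with work of Helm and Helm--Moss) the Bernstein center $\mathfrak Z_G$ is identified with $\mathcal Z(\check G)=\Ocal(X_{\check G}/\!\!/\check G)$. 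I would define the functor as a derived tensor product
\[
R_G^\psi(M)\;=\;\widetilde\Pi\otimes^{L}_{\mathcal H}M ,
\]
where $\widetilde\Pi$ is the $I$-invariant part of the co-Whittaker family of smooth $G$-representations interpolating the modified Langlands correspondence, regarded as an $(\Ocal_{[X_{\check G}/\check G]},\mathcal H)$-bimodule. Beyond the existence of this family over the GIT quotient (which is Emerton--Helm plus Helm--Moss), the essential construction step is to promote $\widetilde\Pi$ to a genuinely $\check G$-equivariant object on the stack $[X_{\check G}/\check G]$, using the extra structure carried by $\mathcal H$ --- its Bernstein presentation and the role of the nilpotent $N$ --- and not merely its module structure over the center.

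Granting the construction, the two normalizations are essentially formal. For $\mathbb{G}=\mathbb T$ a maximal split torus one has $X_{\check T}=\check T$, $\mathrm{Rep}_{[T,1]}(T)\cong C[T/T^\circ]\text{-}\mathrm{mod}$, local class field theory identifies $C[T/T^\circ]\cong\Ocal(\check T)$, and $\widetilde\Pi$ is the regular representation, so $R_T$ is tautologically the asserted equivalence. The isomorphism $R_G^\psi((\cind_N^G\psi)_{[T,1]})\cong\Ocal_{[X_{\check G}/\check G]}$ is exactly the defining property of a co-Whittaker family: $(\cind_N^G\psi)^I$ is a cyclic $\mathcal H$-module (the universal, or Gelfand--Graev, Whittaker module), and $\widetilde\Pi\otimes^L_{\mathcal H}(\cind_N^G\psi)^I$ collapses to $\Ocal_{[X_{\check G}/\check G]}$ in degree zero by multiplicity one of Whittaker models, while flatness of the Whittaker module over $\mathfrak Z_G$ kills the higher Tor terms.

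The heart of the matter is full faithfulness, i.e.\ that the natural map $R\!\Hom_{\mathcal H}(M,M')\to R\!\Hom_{\Ocal_{[X_{\check G}/\check G]}}(R_G^\psi M, R_G^\psi M')$ is an isomorphism. I would argue by d\'evissage: $\mathbf D^+(\mathcal H\text{-}\mathrm{mod})$ is generated by standard (or projective) modules, and on such generators one should reduce, using the Bernstein presentation, to a statement about $\check G$-equivariant coherent sheaves on $X_{\check G}$ supported near a fixed semisimple $\phi$, where a comparison with the Kazhdan--Lusztig \cite{KL} / Ginzburg realization of $\mathcal H$ at our fixed value of $q$ in terms of the Steinberg variety identifies the two sides. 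Concretely one wants a geometric, Koszul-type resolution of $\mathcal H$ as an $(\mathcal H,\mathcal H)$-bimodule whose terms are built from $\widetilde\Pi$, so that applying $\widetilde\Pi\otimes^L_{\mathcal H}(-)$ and computing $R\!\Hom$ on the coherent side return the same complex. The degree-zero center is handled by the Helm--Moss isomorphism $\mathcal Z(\check G)\xrightarrow{\ \sim\ }\mathfrak Z_G$; what remains is to match the Tor-amplitude of the kernel with the (singular, and possibly non-reduced) geometry of $[X_{\check G}/\check G]$. This higher-$\mathrm{Ext}$ comparison, together with the passage from the GIT quotient to the stack, is the step I expect to be the main obstacle; the rest should be formal or quotable.

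For the compatibility with parabolic induction, note that the morphisms $\alpha\colon[X_{\check P}/\check P]\to[X_{\check M}/\check M]$ and $\beta\colon[X_{\check P}/\check P]\to[X_{\check G}/\check G]$ realize on L-parameters the operation ``inflate a parameter of $\check M$ to $\check P$, then push it forward to $\check G$'', and the isomorphism $\xi_P^G\colon R_G^\psi\circ\iota_{\overline P}^G\xrightarrow{\ \sim\ }(R\beta_\ast\circ L\alpha^\ast)\circ R_M^{\psi_M}$ should be deduced from the compatibility of the interpolating families with parabolic induction established in \cite{EmertonHelm}: the pullback $\beta^\ast\widetilde\Pi_G$, restricted to $I_M$-invariants, agrees with $\alpha^\ast\widetilde\Pi_M$ after the second-adjointness identification of $(\iota_{\overline P}^G M)^I$ with the appropriate induced $\mathcal H$-module. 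The mutual compatibility of the $\xi_P^G$ for a chain $\mathbb{B}\subset\mathbb{P}\subset\mathbb{Q}\subset\mathbb{G}$ then follows from transitivity of parabolic induction together with a cocycle compatibility of the families and of $\alpha,\beta$ along the tower of stacks $[X_{\check P}/\check P]$; exactness of $R_G^\psi$ is automatic since $\widetilde\Pi$ is flat over $\mathfrak Z_G$ and $\mathcal H$ has finite global dimension on each block.
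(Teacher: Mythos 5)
The statement you address is a conjecture; the paper does not prove it, but it does construct the candidate functor and prove partial results in its direction. Your candidate $R_G^\psi(M)=\widetilde\Pi\otimes^L_{\mathcal H}M$ is the same one the paper puts forward (written there as ${}^t\pi\otimes^L_{\Hcal_G}\Mcal_G$, handling the left/right module bookkeeping via the anti-involution $\Hcal_G\cong\Hcal_G^{\rm op}$ and Lemma~\ref{dualofidempotents}). Your treatment of the torus case, the Whittaker normalization, and your identification of full faithfulness as the crux all match the paper's viewpoint; the paper in fact proves the normalization and full faithfulness, respectively, in Proposition~\ref{imagesofprojectives} and (for $\GL_2$ only) Theorem~\ref{theoGL2}.

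There is, however, a genuine gap in your treatment of compatibility with parabolic induction: you assert that $\xi_P^G$ and its cocycle compatibility ``should be deduced from the compatibility of the interpolating families with parabolic induction established in \cite{EmertonHelm}'' together with transitivity, treating this step as essentially formal. It is not. As the paper emphasizes immediately after stating Conjecture~\ref{mainconjectureintro}, the classical stacks $[X_{\check P}/\check P]$ must be replaced by derived variants $[\mathbf X_{\check P}/\check P]$: the fiber squares $X_{P'}\cong X_{P'_M}\times_{X_M}X_P$ of $(\ref{cartesiansquares})$ are cartesian but \emph{not} Tor-independent away from the regular locus (the paper exhibits a concrete failure for a Borel inside a standard parabolic of $\GL_9$ in the example following Lemma~\ref{fiberproducttorindep}), so the base-change morphism~$(\ref{compoinductionGalois})$ underlying the transitivity of the $\xi_P^G$ fails to be an isomorphism in the classical world. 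Without the derived modification, your plan of deducing coherence of the $\xi_P^G$ along a tower of parabolics breaks down, which is exactly why the paper proves compatibility only over the regular locus (Theorem~\ref{theocompwparabinductionregular}) and, by separate explicit work, for $\GL_2$. A smaller inaccuracy: exactness of $R_G^\psi$ has nothing to do with flatness of $\widetilde\Pi$ over $\mathfrak Z_G$ --- which in fact fails, since $\bar\chi_G\colon[X_{\check G}/\check G]\to\check T/W$ is not flat and the paper remarks that $\tilde{\Mcal}_G$ is not $\Ocal_{X_{\check G}}$-flat. ``Exact'' in the conjecture means triangulated, which a derived tensor product is automatically.
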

In fact it turns out that in the formulation of the conjecture the stack $[X_{\check P}/\check P]$ has to be replaced by a derived variant. Again, we refer to the body of the paper for details and a more precise formulation of the conjecture. 

In the case $G={\rm GL}_n(F)$ we consider a candidate for the conjectured functor. Emerton and Helm \cite{EmertonHelm} have suggested (in the context of $\ell$-adic deformation rings rather than the stack $[X_{\check G}/\check G]$) the existence of a family $\mathcal{V}_G$ of smooth $G$-representations on $[X_{\check G}/\check G]$ that interpolates the \emph{modified} local Langlands correspondence.
A candidate for the family $\mathcal{V}_G$ was constructed by Helm in \cite{Helm1}.
The modified local Langlands correspondence assigns to $(\phi,N)\in X_{\check G}(C)$ a certain representation ${\rm LL^{mod}}(\phi,N)$ that is indecomposable, induced from a parabolic subgroup, has a unique irreducible subrepresentation, which is a generic representation, and its unique irreducible quotient is the representation ${\rm LL}(\phi,N)$ associated to $(\phi,N)$ by the local Langlands correspondence.
In the context of modules over the Iwahori-Hecke algebra $\Hcal_G$ the $\Hcal_G$-modules corresponding to the representations ${\rm LL^{mod}}(\phi,N)$ are often referred to as the \emph{standard modules}.

We conjecture that, in the ${\rm GL}_n$-case, the functor $R_G=R_G^\psi$ should be given by the derived tensor product with $\mathcal{V}_G$ (we omit the superscript $\psi$ from the notation as in the case of ${\rm GL}_n$ there is a unique Whittaker datum). For the precise formulation it is more convenient to pass from $G$-representations to modules over the Iwahori-Hecke algebra $\Hcal_G$. The family of $\Hcal_G$-modules associated to $\mathcal{V}_G$ by taking $I$-invariants is in fact a $\Hcal_G\otimes_{\mathfrak{Z}_G}\Ocal_{[X_{\check G}/\check G]}$-module $\mathcal{M}_G$ that is coherent as an $\Ocal_{[X_{\check G}/\check G]}$-module.

We consider the functor 
\begin{equation}\label{EHfunctorintro}
R_G:{\bf D}^+(\Hcal_G\text{-}{\rm mod}) \longrightarrow {\bf D}^+_{\rm QCoh}([X_{\check G}/\check G])
\end{equation}
mapping $\pi$ to ${}^t\pi\otimes^L_{\Hcal_G}\mathcal{M}_G$. Here ${}^t\pi$ is $\pi$ considered as a right module over $\Hcal_G$ by means of the standard involution $\Hcal_G\cong \Hcal_G^{\rm op}$, and we point out that the derived tensor product can easily be made explicit, as $\Hcal_G$ has finite global dimension.
Every (standard) Levi subgroup of ${\rm GL}_n(F)$ is a product of some ${\rm GL}_m(F)$, and we hence can construct similar functors 
\begin{equation}\label{EHfunctorintro2}
R_M:{\bf D}^+(\Hcal_M\text{-}{\rm mod}) \longrightarrow {\bf D}^+_{\rm QCoh}([X_{\check M}/\check M])
\end{equation}
for every Levi $M$.
Over a certain (open and dense) regular locus $X_{\check G}^{\rm reg}$ of $X_{\check G}$, see section \ref{sectionbasicprop} for the definition, we can relate the functor $R_G$ to Conjecture \ref{mainconjectureintro} as follows.

\begin{theo} Let $\mathbb{G}={\rm GL}_n$. 
For each parabolic $\mathbb{P}\subset\mathbb{G}$ with Levi $\mathbb{M}$ the restriction of $(\ref{EHfunctorintro2})$ to the regular locus is a $\mathfrak{Z}_M$-linear functor
\[R_M^{\rm reg}:{\bf D}^+(\Hcal_M\text{-}{\rm mod}) \longrightarrow {\bf D}^+_{\rm QCoh}([X^{\rm reg}_{\check M}/\check M]).
\]
satisfying compatibility with parabolic induction as in Conjecture $\ref{mainconjectureintro}$. Moreover, 
\[R_G((\cind_N^G\psi)_{[T,1]}^I)\cong \Ocal_{[X_{\check G}/\check G]}\]
for any choice of a generic character $\psi:N\rightarrow C^\times$ of the unipotent radical $N$ of a Borel subgroup $B\subset {\rm GL}_n(F)$.
\end{theo}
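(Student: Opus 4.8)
The plan is to decompose the theorem into three independent assertions — ($\mathfrak{Z}_M$-linearity of $R_M^{\rm reg}$ and its well-definedness as a functor to $\mathbf{D}^+_{\rm QCoh}([X^{\rm reg}_{\check M}/\check M])$; compatibility with parabolic induction; and the normalization on $(\cind_N^G\psi)^I_{[T,1]}$) — and to treat each in turn. For the first point, recall that for $\mathbb{M}={\rm GL}_{n_1}\times\cdots\times {\rm GL}_{n_r}$ the Hecke algebra $\Hcal_M$ is a tensor product of the $\Hcal_{{\rm GL}_{n_i}}$, and that $\Hcal_M$ has finite global dimension; thus the derived tensor product $-\otimes^L_{\Hcal_M}\Mcal_M$ is computed by a finite-length projective resolution, so it carries $\mathbf{D}^+$ into $\mathbf{D}^+$ and is exact in the triangulated sense. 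The $\Hcal_M$-module structure on $\Mcal_M$ is by construction $\Ocal_{[X_{\check M}/\check M]}$-linear and the center $\mathfrak{Z}_M$ acts through its identification with functions on the adjoint quotient of $\check M$, i.e.\ through $\Ocal_{[X_{\check M}/\check M]}$; hence the output is naturally a complex of $\Ocal_{[X_{\check M}/\check M]}$-modules and the functor is $\mathfrak{Z}_M$-linear. Restricting to the regular locus is the pullback along the open immersion $[X^{\rm reg}_{\check M}/\check M]\hookrightarrow[X_{\check M}/\check M]$, which is exact on quasi-coherent sheaves, so $R_M^{\rm reg}$ is well defined and still exact and $\mathfrak{Z}_M$-linear. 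I expect this first point to be essentially formal once the properties of $\Mcal_M$ from Helm's construction are quoted.

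For the normalization statement, I would compute $R_G((\cind_N^G\psi)^I_{[T,1]})={}^t\!\bigl((\cind_N^G\psi)^I_{[T,1]}\bigr)\otimes^L_{\Hcal_G}\Mcal_G$ directly. The key input is that, for ${\rm GL}_n$, the $I$-invariants of the Gelfand--Graev representation in the principal block is (isomorphic to) $\Hcal_G$ itself as a module over $\Hcal_G$ — this is the Hecke-algebra incarnation of the fact that $(\cind_N^G\psi)$ is the progenerator attached to the Whittaker datum (Bushnell--Kutzko / Chan--Savin type statements, and it underlies Helm's normalization of $\Mcal_G$ in \cite{Helm1}). Once $(\cind_N^G\psi)^I_{[T,1]}\cong \Hcal_G$ as a (left) $\Hcal_G$-module, applying ${}^t(-)$ gives the free right module $\Hcal_G$, and the derived tensor product degenerates: $\Hcal_G\otimes^L_{\Hcal_G}\Mcal_G\cong\Mcal_G$, concentrated in degree $0$. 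It then remains to identify $\Mcal_G$ with $\Ocal_{[X_{\check G}/\check G]}$; this is exactly the generic Whittaker normalization built into the Emerton--Helm family $\Vcal_G$ — the family interpolating ${\rm LL^{mod}}$ is normalized so that the (co)Whittaker vector generates a free rank-one $\Ocal$-module, equivalently $\Mcal_G$ is invertible over $\Ocal_{[X_{\check G}/\check G]}$ and the canonical co-Whittaker functional trivializes it. I would cite the relevant property of $\Vcal_G$ / $\Mcal_G$ from \cite{EmertonHelm, Helm1} rather than reprove it. (Note this part is asserted over all of $[X_{\check G}/\check G]$, not just the regular locus — that is fine, since the computation never used regularity.)

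The compatibility with parabolic induction is the substantive point, and the main obstacle. I would establish the isomorphism $\xi^G_P\colon R_G\circ\iota^G_{\overline P}\xrightarrow{\ \sim\ }(R\beta_*\circ L\alpha^*)\circ R_M$ on the regular locus by an explicit module-theoretic argument: on the level of Hecke algebras, $\iota^G_{\overline P}$ corresponds to $-\otimes^L_{\Hcal_M}\Hcal_G$ (with $\Hcal_M$ embedded via the usual Bernstein/parahoric map), so the left-hand side sends $\pi$ to ${}^t\pi\otimes^L_{\Hcal_M}\Hcal_G\otimes^L_{\Hcal_G}\Mcal_G\cong {}^t\pi\otimes^L_{\Hcal_M}\Mcal_G$. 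On the right-hand side, $L\alpha^*R_M(\pi)={}^t\pi\otimes^L_{\Hcal_M}(L\alpha^*\Mcal_M)$ and then $R\beta_*$ pushes forward along $[X_{\check P}/\check P]\to[X_{\check G}/\check G]$. So everything reduces to a single comparison isomorphism of $\Hcal_M$-$\Ocal$-bimodules, over the regular locus,
\[
\Mcal_G\big|_{\rm reg}\ \cong\ R\beta_*\,L\alpha^*\,\Mcal_M\big|_{\rm reg},
\]
compatible with the $\Hcal_M$-actions — i.e.\ that the Emerton--Helm family for $\mathbb{G}$ restricted to the regular locus is obtained from that of $\mathbb{M}$ by the geometric induction functor $R\beta_*L\alpha^*$. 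I would prove this by identifying both sides pointwise with the standard module ${\rm LL^{mod}}$ of $(\phi,N)$ and its compatibility with parabolic induction — precisely, over $X^{\rm reg}_{\check G}$ every L-parameter is, up to conjugacy, induced from the torus (or from a Levi in a controlled way), ${\rm LL^{mod}}(\phi,N)$ is by definition a parabolically induced representation, and $R\beta_*L\alpha^*$ of the structure sheaf (the family for the torus/Levi) computes exactly the family of such inductions — and then bootstrapping from the pointwise identification to an isomorphism of families over the (reduced, and over the regular locus even smooth) stack by a flatness/base-change argument together with the rank-one normalization from the previous paragraph to pin down the isomorphism uniquely. The hard part is controlling $R\beta_*$: a priori it has higher cohomology, and one must show on the regular locus that $L\alpha^*\Mcal_M$ is $\beta_*$-acyclic (equivalently, that the relevant flag-variety cohomology vanishes in positive degrees), which is where the definition of $X^{\rm reg}_{\check G}$ from section \ref{sectionbasicprop} — engineered so that the fibers of $\beta$ behave like points or have no higher cohomology against the sheaves in play — is used in an essential way. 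Finally, I would check the cocycle/associativity compatibility of the various $\xi^G_P$ by the usual base-change-transitivity of $L\alpha^*$ and $R\beta_*$ along a chain of parabolics $\mathbb{B}\subset\mathbb{P}'\subset\mathbb{P}\subset\mathbb{G}$, which is formal once the single-step isomorphism and its naturality in $\pi$ are in place.
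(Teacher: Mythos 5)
There is a genuine error in your normalization step. The $I$-invariants of $(\cind_N^G\psi)_{[T,1]}$ are not $\Hcal_G$; they are the proper direct summand $\Hcal_G e_{G,\rm st}$ cut out by the idempotent corresponding to the Steinberg constituent of ${\rm Ind}_I^K\mathbf{1}_I$. This follows from the paper's Corollary~\ref{indetifycindpsi}, $(\cind_N^G\psi)_{[T,1]}\cong\cind_K^G{\rm st}_G$, whose $I$-fixed vectors are $\Hcal_Ge_{G,\rm st}=\Hcal_Te_{G,\rm st}$, a free rank-one $\Hcal_T$-module and a proper summand of $\Hcal_G$ whenever $n\geq 2$. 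Your statement confuses ``progenerator of the block'' (true) with ``isomorphic to the regular module'' (false). Consequently the conclusion $\Mcal_G\cong\Ocal_{[X_{\check G}/\check G]}$ is also false: by Proposition~\ref{identifyMandOB}, $\tilde\Mcal_G\cong\beta_\ast\Ocal_{Y_{\check G}}$ is the pushforward from a generically finite cover of degree $n!$, so it has large rank; only its rank-one summand $e_{G,\rm st}\tilde\Mcal_G$ is the structure sheaf, and proving that (Proposition~\ref{imagesofprojectives}) actually requires the computation $e_{\rm st}\Hcal_Ge_{\rm st}\cong\mathfrak{Z}$ together with Pyvovarov's theorem that ${\rm Hom}_K(\sigma_{\mathcal P_{\rm min}},{\rm LL}(\phi_\eta,N_\eta))\neq 0$ at every generic point $\eta$. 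It is not the formal ``co-Whittaker trivialization'' you suggest.

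On parabolic induction, your reduction of $\xi_P^G$ to a comparison $\Mcal_G|_{\rm reg}\cong R\beta_\ast L\alpha^\ast\Mcal_M|_{\rm reg}$ of $\Hcal_M\otimes\Ocal$-bimodules is in the right direction, but two details are off. First, $X_{\check M}^{\rm reg}$ is (as the paper shows) reduced and a local complete intersection, \emph{not} smooth in general; the input that makes the bootstrap work is Tor-independence of the cartesian squares $(\ref{cartesiansquares})$, proved there via Koszul resolutions and Cohen--Macaulayness, not flatness. Second, your worry about higher direct images is misplaced: by the very definition of the regular locus the map $\tilde X_{\check B}^{\rm reg}\to X_{\check G}^{\rm reg}$ is \emph{finite}, so $R\beta_\ast=\beta_\ast$ is exact --- there is no flag-variety cohomology to control. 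The paper's actual mechanism is the identification $\Mcal_M^{\rm reg}=R\beta_{B_M,\ast}\Ocal_{[X_{\check B_M}^{\rm reg}/\check B_M]}$ coming from Proposition~\ref{identifyMandOB}, after which $\xi_P^G$ is an adjunction unit composed with a base-change isomorphism, and $\Hcal_M$-linearity is checked over the dense regular-semi-simple open via Lemma~\ref{HGestfree}~(ii), i.e.\ the compatibility $e_{M,\rm st}\mapsto e_{G,\rm st}$ between Steinberg idempotents. Your pointwise-then-glue plan omits both the Hecke-equivariance of the identification and the non-smoothness issue, so as written it does not close.
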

In the case ${\rm GL}_2(F)$ we can also control the situation for non-regular $(\phi,N)$ and prove fully faithfulness:
\begin{theo}
Let $G={\rm GL}_2(F)$ and $T\subset B\subset G$ denote the standard maximal torus respectively the standard Borel. The functors $R_G$ and $R_T$ defined by $(\ref{EHfunctorintro})$ are fully faithful and there is a natural $\mathfrak{Z}_G$-linear isomorphism 
\[\xi_{B}^{G}:R_{G}\circ \iota_{\overline{B}}^{G}\longrightarrow (R\beta_{\ast}\circ L\alpha^\ast)\circ R_{T},\]
where $\alpha$ and $\beta$ are defined as in Conjecture \ref{mainconjectureintro} {\rm (ii)}.
\end{theo}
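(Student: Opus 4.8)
The plan is to reduce everything to explicit computations with the Iwahori–Hecke algebra $\Hcal_G$ of $\mathrm{GL}_2(F)$, which is a well-understood (rank $1$) affine Hecke algebra of finite global dimension. First I would recall the structure of the standard module family: the coherent $\Hcal_G\otimes_{\mathfrak{Z}_G}\Ocal_{[X_{\check G}/\check G]}$-module $\Mcal_G$ is, fibrewise over $(\phi,N)\in X_{\check G}$, the $I$-invariants of $\mathrm{LL^{mod}}(\phi,N)$, and over the regular locus $X_{\check G}^{\rm reg}$ this is already covered by the previous theorem. So the real content is at the finitely many non-regular points — those where $N=0$ and $\phi$ has eigenvalue ratio $q^{\pm 1}$, i.e. the points where the two components of $X_{\check G}$ meet the "Steinberg-type" locus — and I would analyze $\Mcal_G$ near these points by writing down an explicit presentation of $\Hcal_{\mathrm{GL}_2}$ (generators $T_s$, $\theta_{\pm 1}$ with the Bernstein relation) and the explicit two- or three-dimensional modules $\mathrm{LL^{mod}}(\phi,N)^I$.

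Next I would establish full faithfulness of $R_G$. Since $\Hcal_G$ has finite global dimension, $\Db^+(\Hcal_G\text{-}\mathrm{mod})$ is generated (as a triangulated category) by the finitely many simple modules, or more conveniently by the projective generators / standard modules; so it suffices to check that $R_G$ induces isomorphisms on $\Hom$ and all higher $\mathrm{Ext}$ groups between a set of compact generators. I would take the standard modules (equivalently the family $\Mcal_G$ specialized along $X_{\check G}\to X_{\check G}/\hspace{-.1cm}/\check G$) as generators: the identity $R_G(M)={}^tM\otimes^L_{\Hcal_G}\Mcal_G$ together with the fact that $\Mcal_G$ realizes, on each component, a "tilting-type" bimodule means that $\mathrm{RHom}_{[X_{\check G}/\check G]}(R_G M,R_G M')$ should compute $\mathrm{RHom}_{\Hcal_G}(M,M')$ provided $\Mcal_G$ is, locally on $X_{\check G}/\hspace{-.1cm}/\check G$, faithfully flat-ish over $\Hcal_G$ and its endomorphisms over $\Ocal_{[X_{\check G}/\check G]}$ are exactly $\Hcal_G^{\rm op}$. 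Concretely this is the statement that $\mathrm{REnd}_{\Ocal_{[X_{\check G}/\check G]}}(\Mcal_G)\cong \Hcal_G$ (no higher terms), which for $\mathrm{GL}_2$ can be verified by a direct computation component-by-component, handling the regular part by the earlier theorem and the non-regular points by the explicit module-theory above. The case of the torus, $R_T$, is immediate: it is the tautological equivalence $\mathrm{Rep}_{[T,1]}(T)\cong\mathrm{QCoh}(\check T)$ of Conjecture \ref{mainconjectureintro}, so it is fully faithful on the nose and induces an equivalence on derived categories.

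For the compatibility isomorphism $\xi_B^G\colon R_G\circ\iota_{\overline B}^G\Rightarrow (R\beta_\ast\circ L\alpha^\ast)\circ R_T$, I would use the explicit description of parabolic induction on the Hecke side: $\iota_{\overline B}^G$ corresponds, on $I$-invariants, to the induction functor $\Hcal_T\text{-}\mathrm{mod}\to\Hcal_G\text{-}\mathrm{mod}$ along the Bernstein embedding $\Hcal_T\hookrightarrow\Hcal_G$ (tensoring $\Hcal_G$ over $\Hcal_T$), and on the geometric side $\beta$ is a proper (in fact the relevant $[X_{\check B}/\check B]\to[X_{\check G}/\check G]$ is the Grothendieck–Springer-type resolution for $\mathrm{GL}_2$) and $\alpha$ is smooth, so $R\beta_\ast L\alpha^\ast$ is computed by pushing forward along the resolution. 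The isomorphism then comes from an explicit identification of the two bimodules: $\Mcal_G$ restricted along $\beta$ and $\alpha$ versus $\Mcal_T$, using that $\mathrm{LL^{mod}}$ is by construction a full parabolic induction from the torus at the generic point and degenerates correctly at the special points. I would construct $\xi_B^G$ first over $X_{\check G}^{\rm reg}$ (where it is the previous theorem's compatibility, or rather its $\mathrm{GL}_2$ instance) and then check it extends over the non-regular points, which amounts to checking that no higher $R^i\beta_\ast$ contributes there — for $\mathrm{GL}_2$ the fibers of $\beta$ are points or $\Pbb^1$'s, so this is a finite check.

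The main obstacle I expect is precisely the behavior at the non-regular $(\phi,N)$: that is exactly where the derived/homological subtleties concentrate (the module $\Mcal_G$ fails to be locally free there, higher $\mathrm{Ext}$'s among standard modules appear, and $R\beta_\ast$ can have higher direct images along the $\Pbb^1$-fibers). Controlling $\mathrm{REnd}_{\Ocal}(\Mcal_G)$ and the compatibility isomorphism simultaneously at those points — i.e. showing the higher terms vanish and the bimodule endomorphisms are exactly $\Hcal_G$ — is the crux; everything on the regular locus is already handled by the preceding theorem, and the passage from the regular locus to all of $X_{\check G}$ is where the real work lies.
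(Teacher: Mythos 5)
Your overall strategy matches the paper's in outline --- reduce full faithfulness of $R_G$ to an $\mathrm{Ext}$-group computation between a finite set of compact generators, and obtain $\xi_B^G$ by extending the regular-locus construction across the two bad points using the classical vanishing of higher direct images for the $\mathrm{GL}_2$ Grothendieck--Springer resolution. But there is a genuine gap precisely at the step you identify as the crux.

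You expect that controlling $\mathrm{REnd}_{\Ocal_{[X_{\check G}/\check G]}}(\Mcal_G)$ means \emph{showing the higher terms vanish} near the non-regular points $(\phi_0,0)$ with $\phi_0$ of eigenvalue ratio $q^{\pm1}$. At the level of coherent sheaves on $X_{\check G}$ this is \emph{false}. Taking the projective generators $\Hcal_G e_K$ and $\Hcal_G e_{\rm st}$ (whose images under $R_G$ are $\Ocal_{X_0}$ and $\Ocal_{X_{\check G}}$, where $X_0$ is the $N=0$ component), one finds that $\mathcal{E}xt^{2i}_{\Ocal_{X_{\check G}}}(\Ocal_{X_0},\Ocal_{X_0})$ is a \emph{non-zero} line bundle on $X_0\cap X_1$ for every $i>0$: the two branches of $X_{\check G}$ meet along a codimension-two locus and a $2$-periodic Koszul tail persists. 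What makes the equivariant $\mathrm{Ext}$-groups vanish is the $\check G$-equivariant structure: since $C$ has characteristic $0$, $\mathrm{Ext}^i_{[X_{\check G}/\check G]}(\mathcal{F},\mathcal{G})$ is the $\check G$-invariants of $\mathrm{Ext}^i_{X_{\check G}}(\tilde{\mathcal{F}},\tilde{\mathcal{G}})$, and the higher $\mathcal{E}xt$-sheaves carry a non-trivial character of the stabilizer torus $\check T=\operatorname{Stab}_{\check G}(\phi_0)$ (the paper makes this visible by passing through a $\check T$-torsor over the regular semi-simple locus and a local model $Z$, where the higher $\mathrm{Ext}$'s are $(S/(t_1-t_2,a))(-i/2)$ --- non-zero, but with a non-trivial twist). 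A naive ``direct computation'' of the $\mathrm{Ext}$-sheaves will not show vanishing, because there is none; the argument must be structurally equivariant from the start. This is the missing idea.

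Two secondary points. The morphism $\alpha:[X_{\check B}/\check B]\to[X_{\check T}/\check T]$ is not smooth: the scheme map $X_{\check B}\to\check T$ has fibers of jumping dimension over the ratio-$q$ locus, so you only have finite Tor-dimension (Koszul), not flatness; this is why the paper is careful about derived pullback / Tor-independence rather than invoking smoothness. Also your proposed generators (closed-point specializations of $\Mcal_G$, i.e.\ the standard modules) are not projective, so running a d\'evissage on them requires auxiliary resolutions; the projectives $\Hcal_G e_K,\Hcal_G e_{\rm st}$ generate $\mathbf{D}^b(\Hcal_G\text{-}\mathrm{mod}_{\mathrm{fg}})$ and make the $\mathrm{Ext}$ computation entirely concrete.
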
 

We finally  return to ${\rm GL}_n$ for arbitrary $n$, but restrict to the case of $(\phi,N)$ with $\phi$ regular semi-simple. 
Over the regular semi-simple locus the situation in fact can be controlled very explicitly and we are able to compute examples. 
Given $(\phi,N)\in X_{\check G}(C)$ with regular semi-simple $\phi$ we write $X_{\check G,[\phi,N]}$ for the Zariski-closure of its $\check G$-orbit. 
\begin{theo}
Let $(\phi,N)\in X_{\check G}(C)$ and assume that $\phi$ is regular semi-simple. Then
\[R_G({\rm LL^{mod}}(\phi,N))=\Ocal_{[X_{\check G,[\phi,N]}/\check G]}.\]
\end{theo}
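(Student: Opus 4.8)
The plan is to reduce the computation to an explicit description of the standard module $\mathcal{M}_G$ over the regular semi-simple locus and then to evaluate the derived tensor product ${}^t({\rm LL^{mod}}(\phi,N))^I\otimes^L_{\Hcal_G}\mathcal{M}_G$ by hand. First I would recall that over the locus where $\phi$ is regular semi-simple the centralizer $\check G_\phi$ is a maximal torus, so the stack $[X_{\check G}/\check G]$ restricted to this locus is particularly simple: the fibre of $X_{\check G}$ over a fixed regular semi-simple $\phi$ is the affine space of nilpotents $N$ with $\Ad(\phi)(N)=q^{-1}N$, i.e. the sum of the root spaces on which $\phi$ acts by $q^{-1}$, and the $\check G$-action is transitive on semi-simple parts. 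This means the closed substack $[X_{\check G,[\phi,N]}/\check G]$ is, locally around $\phi$, the orbit closure of $N$ inside that affine space, modulo the torus. So the target $\Ocal_{[X_{\check G,[\phi,N]}/\check G]}$ is a concrete coherent sheaf supported on an explicit affine cone.

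Next I would make the source side explicit. The representation ${\rm LL^{mod}}(\phi,N)$ is, by definition, parabolically induced from the Levi $M$ attached to the Langlands data of $(\phi,N)$ (the ``standard module''), and passing to $I$-invariants turns it into the standard $\Hcal_G$-module attached to $(\phi,N)$. The key structural input is the compatibility of $R_M$ with parabolic induction on the regular locus (the first displayed theorem of the excerpt, applied to the regular semi-simple case, which is certainly inside $X^{\rm reg}_{\check G}$): this lets me write $R_G({\rm LL^{mod}}(\phi,N))\cong (R\beta_\ast\circ L\alpha^\ast)R_M(\sigma)$, where $\sigma$ is the corresponding essentially discrete-series (in fact, over the regular semi-simple locus, essentially one-dimensional up to twist) module on the Levi $M$, and $\alpha,\beta$ are the maps $[X_{\check P}/\check P]\to[X_{\check M}/\check M]$ and $[X_{\check P}/\check P]\to[X_{\check G}/\check G]$. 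I would then compute $R_M(\sigma)$ directly: on each $\GL_{m_i}$-factor of $M$ the parameter is a single unramified character twisted by the appropriate power of $|\det|$, $N$ is a principal nilpotent, and $R_{\GL_m}$ of the corresponding Steinberg-type module is the structure sheaf of the (single) orbit closure $\{q^{-1}\text{-eigenline filled}\}$ — this is the base case one checks against Helm's construction of $\mathcal{V}$, or equivalently against the $\GL_2$ computation in the preceding theorem, iterated. Hence $L\alpha^\ast R_M(\sigma)$ is the structure sheaf of $[X_{\check P,[\phi,N]}/\check P]$ (no higher Tor terms because, over the regular semi-simple locus, $X_{\check P}\to X_{\check M}$ is flat — the fibre is an affine space bundle).

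The final and most delicate step is to show that $R\beta_\ast$ of $\Ocal_{[X_{\check P,[\phi,N]}/\check P]}$ is exactly $\Ocal_{[X_{\check G,[\phi,N]}/\check G]}$, with no higher direct images and with the map on structure sheaves an isomorphism rather than merely a surjection or inclusion of a subscheme. Here I would argue that, since $\phi$ is regular semi-simple, the map $\beta$ restricted to the relevant loci is the Grothendieck–Springer-type resolution of the orbit closure: $X_{\check P,[\phi,N]}\to X_{\check G,[\phi,N]}$ is proper, birational (the generic point of $[\phi,N]$ has a unique compatible parabolic because $\phi$ regular semi-simple pins down the Borel containing the relevant torus, and $N$ in Langlands position pins down $P$), and the target has rational singularities — indeed the orbit closures appearing are products of nilpotent-orbit closures in $\mathfrak{gl}_{m}$ intersected with a Cartan eigenspace, which are normal with rational singularities by Kraft–Procesi / Springer theory. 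Then $R\beta_\ast\Ocal=\Ocal$ follows from properness, birationality and rationality of singularities, and the higher $R^i\beta_\ast$ vanish for the same reason. The main obstacle is precisely this vanishing-plus-normality claim: one must verify that the orbit closures $X_{\check G,[\phi,N]}$ are normal (so that the birational proper map induces an isomorphism on $\Ocal$, not just a finite birational morphism), and that no $R^i\beta_\ast$ survives; I expect this to come down to citing the known geometry of nilpotent orbit closures in type $A$ together with a dévissage over the $\check T_\phi$-fixed stratification, but it is the step that genuinely uses regular semi-simplicity of $\phi$ and that I would spend the most care on.
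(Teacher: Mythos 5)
Your high-level strategy agrees with the paper's: express ${\rm LL^{mod}}(\phi,N)$ as a parabolic induction of a tensor of generalized Steinbergs on a Levi, use the compatibility theorem to reduce to $(R\beta_\ast\circ L\alpha^\ast)R_M(\sigma)$, compute $L\alpha^\ast R_M(\sigma)$ as the structure sheaf of a $\check P$-orbit closure using smoothness of $\alpha$ along that locus, and then push forward. Where you diverge is precisely the step you identify as ``most delicate,'' and I think you make it harder and leave a real gap. You argue that $\beta$ restricted to the relevant closed subscheme is a proper birational map onto $X_{\check G,[\phi,N]}$ and then invoke rational singularities of that orbit closure to get $R\beta_\ast\Ocal=\Ocal$. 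But neither input is justified: the varieties $X_{\check G,[\phi,N]}$ are $\GL_n$-orbit closures of pairs $(\phi,N)$ in $\GL_n\times\mathfrak{gl}_n$ with the $q$-twist condition, not nilpotent orbit closures in $\mathfrak{gl}_n$, so Kraft--Procesi does not apply off the shelf, and the proposed ``d\'evissage over the $\check T_\phi$-fixed stratification'' is not filled in. More importantly, the argument is unnecessary: because the $\lambda_i$'s are taken in Kudla's ordering (the Langlands position), the closed $\check G$-subscheme $Z_{\check G}(x_s,\dots,x_1)\subset\tilde X_{\check P'}$ that you land on after $L\alpha^\ast$ maps \emph{isomorphically}, not merely birationally, onto $X_{\check G,[\phi,N]}$, so $R\beta_\ast$ is a tautology with no cohomology-vanishing needed. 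That isomorphism is what the ordering buys you, and you should verify it directly rather than resorting to a resolution-of-singularities argument you cannot complete.

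There is a second, earlier gap that you acknowledge but underestimate. The base case $R_{\GL_m}({\rm St}(\lambda,m))=\Ocal_{X_{\GL_m,[\phi,N]}}$ is not a quick iteration of the $\GL_2$ computation. The paper proves it (Corollary \ref{corimageofSteinberg}) by constructing an explicit inductive projective resolution $\hat C^\bullet_{m,\lambda}$ of the deformed Steinberg out of intertwining operators between parabolically induced completed principal series, building a matching inductive Koszul-type resolution $\hat E^\bullet_{m,\lambda}$ on the formal neighborhood of the stack, and showing $\hat R_{G,\chi}$ carries one to the other via the uniqueness theorem for the completed functors (Theorem \ref{theouniquenesshatR}); one then specializes along a nonzerodivisor using $\hat{\mathfrak{Z}}_\chi$-linearity. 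This is where the bulk of the work lives, and a proof that elides it is not a proof.
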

Moreover (still in the ${\rm GL}_n$ case), the conjectured functor $R_G$ should be uniquely determined by the conditions in Conjecture \ref{mainconjectureintro}, see section \ref{remarksaboutthevariousconjectures} for more details. After formal completion we can prove a result in that direction.
For a character $\chi:\mathfrak{Z}_G\rightarrow C$ we write $\hat\Hcal_{G,\chi}$ for the completion of the Iwahori-Hecke algebra $\Hcal_G$ with respect to the kernel of $\chi$. Similarly we can consider the formal completion $\hat X_{\check G,\chi}$ of $X_{\check G}$ with respect to the pre-image of (the closed point of the adjoint quotient defined by) $\chi$ in $X_{\check G}$. Then $(\ref{EHfunctorintro})$ extends to a functor
\[\hat R_{G,\chi}:{\bf D}^+(\hat\Hcal_{G,\chi}\text{-mod})\longrightarrow{\bf D}^+_{\rm QCoh}([\hat X_{\check G,\chi}/\check G]),\]
and similarly for (standard) Levi subgroups $M\subset G$.
\begin{theo}
Let $\phi\in\check G(C)$ be regular semi-simple and let $\chi:\mathfrak{Z}_G\rightarrow C$ denote the character defined by the image of $\phi$ in the adjoint quotient. The set of functors 
\[\hat R_{M,\chi}:\mathbf{D}^{+}(\hat \Hcal_{M,\chi}\text{-}{\rm mod})\longrightarrow \mathbf{D}^{+}_{\rm QCoh}([\hat X_{\check M,\chi}/\check M])\]
for standard Levi subgroups $M\subset G$, is uniquely determined (up to isomorphism) by requiring that they are $\hat{\mathfrak{Z}}_{M,\chi}$-linear, compatible with parabolic induction, and that $\hat R_{T,\chi}$ is induced by the identification 
\[\hat\Hcal_{T,\chi}\text{-}{\rm mod}={\rm QCoh}(\hat X_{\check T,\chi}).\]
\end{theo}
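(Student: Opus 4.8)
The plan is to prove uniqueness by a dévissage on the semisimple rank of $M$, starting from the torus case (where the functor is prescribed) and showing at each step that compatibility with parabolic induction, together with $\hat{\mathfrak{Z}}_{M,\chi}$-linearity, forces $\hat R_{M,\chi}$ on a set of generators of $\mathbf{D}^+(\hat\Hcal_{M,\chi}\text{-}\mathrm{mod})$. The key structural input is that, because $\phi$ is regular semisimple, the completed Iwahori--Hecke algebra $\hat\Hcal_{M,\chi}$ has especially simple homological behaviour: the characters $\chi$ that occur cut out finitely many points of the adjoint quotient, and the completed algebra is a finite product of blocks each of which is Morita-equivalent (or at least derived-equivalent) to a small explicit algebra. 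Concretely I would first record that in the regular semisimple situation every object of $\mathbf{D}^+(\hat\Hcal_{M,\chi}\text{-}\mathrm{mod})$ is built (by cones and shifts) from the standard modules $\mathrm{LL}^{\mathrm{mod}}(\phi',N')^I$ for $(\phi',N')$ lying over $\chi$, and in fact from the objects obtained by parabolic induction of characters of $\hat\Hcal_{T,\chi}$, i.e.\ of $\iota_{\overline P}^M$ applied to the one-dimensional modules. This is where Theorem (the regular semisimple computation, $R_G(\mathrm{LL}^{\mathrm{mod}}(\phi,N))=\Ocal_{[X_{\check G,[\phi,N]}/\check G]}$) and its analogue for Levi subgroups get used.

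Second, I would run the induction. For $M=T$ there is nothing to prove. Suppose uniqueness is known for all proper Levi subgroups of $M$. Given two candidate functors $\hat R_{M,\chi}$ and $\hat R'_{M,\chi}$ satisfying the three requirements, the isomorphisms $\xi_P^M$ (compatibility with parabolic induction) identify both functors, after composing with $\iota_{\overline P}^M$ for a proper parabolic $P\subsetneq M$, with $(R\beta_\ast\circ L\alpha^\ast)\circ \hat R_{M',\chi'}$; by the inductive hypothesis the latter is independent of the choice of candidate. Hence $\hat R_{M,\chi}$ and $\hat R'_{M,\chi}$ agree (compatibly with the transition isomorphisms) on the essential image of all proper parabolic inductions. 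Because $\phi$ is regular semisimple, this essential image generates the target category $\mathbf{D}^+(\hat\Hcal_{M,\chi}\text{-}\mathrm{mod})$ as a triangulated category: the only module not manifestly reached is the one corresponding to $N$ of maximal size in its orbit, i.e.\ the most singular standard module, and for regular semisimple $\phi$ even this sits in a triangle whose other two terms are parabolically induced (this is precisely the kind of resolution one reads off from the geometry of $X_{\check M,[\phi,N]}$ and its stratification by orbit closures, cf.\ section \ref{sectionbasicprop}). Exactness of both functors then propagates the agreement along the generating triangles.

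Third, to upgrade "agreement on a generating set of objects, compatibly with the $\xi$'s" to a genuine isomorphism of functors, I would invoke a standard generation-plus-rigidity argument: the two functors agree on objects and on morphisms between objects in the generating subcategory (the morphism spaces there are computed by the torus case and the inductive hypothesis), both are exact, and the ambient categories are derived categories of algebras of finite global dimension, so a natural transformation defined compatibly on generators extends uniquely; one checks it is an isomorphism on generators and concludes it is an isomorphism everywhere. The $\hat{\mathfrak{Z}}_{M,\chi}$-linearity is used to pin down the scalar ambiguity at each stage (an isomorphism of functors on the torus block that is $\hat{\mathfrak{Z}}_{T,\chi}$-linear and fixes $\Ocal$ is forced to be the identity on the nose after the normalization $\hat R_{T,\chi}$ is prescribed).

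The main obstacle I anticipate is the generation statement in the second step: showing that, for regular semisimple $\phi$, the subcategory of $\mathbf{D}^+(\hat\Hcal_{M,\chi}\text{-}\mathrm{mod})$ triangulated-generated by proper parabolic inductions of characters is in fact everything. This is believable because the regular semisimple locus is the situation where standard modules admit explicit Koszul-type resolutions by parabolically induced objects (reflecting the fact that $X_{\check M,[\phi,N]}$ is, up to the $\check M$-action, a product of partial flag varieties glued along the nilpotent $N$), but making it precise requires a careful bookkeeping of which orbit closures appear and a check that the resolving complexes are themselves in the image of $\iota_{\overline P}^M$ rather than merely filtered by such. Everything else — the dévissage, the rigidity of natural transformations, the role of $\hat{\mathfrak{Z}}$-linearity — is formal once this generation input is in place.
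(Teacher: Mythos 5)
Your overall dévissage has the right shape, but two points need correcting, and one of them is where the real work of the proof lies.

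First, the generation step you flag as the main obstacle is in fact immediate for regular semisimple $\phi$ and is not where the difficulty lies. Because $\phi$ is regular semisimple, $\hat\Hcal_{T,\chi}$ is a product of complete local rings indexed by the Weyl orbit $\{w\phi\}$, so
\[\hat\Hcal_{M,\chi}=\hat\Hcal_{M,\chi}\otimes_{\hat\Hcal_{T,\chi}}\hat\Hcal_{T,\chi}=\bigoplus_{w\in W_M}\hat\Hcal_{M,\chi}\otimes_{\hat\Hcal_{T,\chi}}\hat\delta_w=\bigoplus_{w\in W_M}\iota_{\overline{B}_M}^{M}\hat\delta_w.\]
Thus the free rank-one module already decomposes as a direct sum of Borel inductions of one-dimensional $\hat\Hcal_{T,\chi}$-modules; there is no need for resolutions of standard modules, Koszul complexes, or orbit-closure stratifications to achieve generation, and in particular the regular-semisimple computation of $R_G(\mathrm{LL^{mod}}(\phi,N))$ plays no role in this part of the argument. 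Projective generation then follows for $\mathbf{D}^+$ by free resolutions of truncations plus a limit argument.

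Second, and more seriously, your "generation-plus-rigidity" step glosses over the actual crux. Once you know both candidates send each $\iota_{\overline{B}}^M\hat\delta_w$ to (something isomorphic to) $\Ocal_{\hat X_{\check M,\chi}(w)}$, each identifying isomorphism $\alpha_w$ is only determined up to a unit of $\hat{\mathfrak{Z}}_\chi$. The morphism space $\Hom(\iota_{\overline{B}}^M\hat\delta_w,\iota_{\overline{B}}^M\hat\delta_{w'})$ is free of rank one over $\hat{\mathfrak{Z}}_\chi$ with basis an intertwining operator $\hat f(w,w')$, and $\hat{\mathfrak{Z}}_\chi$-linearity tells you the image of $\hat f(w,w')$ is \emph{some} generator of the corresponding rank-one Hom space on the sheaf side; it does not tell you the images are compatible across different pairs $(w,w')$ once you fix all the $\alpha_w$. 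Making the scalars coherent across all of $W_M$ simultaneously is a genuine cocycle problem, and this is exactly what the body of the proof must handle: one has to choose the $\alpha_w$ one at a time, reduce general intertwiners to compositions of adjacent-transposition intertwiners via explicit factorizations, normalize those against the rank-two ($\GL_2$) computation through parabolic induction, and then use that any nonzero $\hat{\mathfrak{Z}}_\chi$-multiple of a generator of a rank-one free module detects commutativity of the relevant square, allowing one to check coherence only along a convenient factorization through an intermediate $w''$. Saying "morphism spaces there are computed by the torus case and the inductive hypothesis" and invoking a standard rigidity argument does not address this; the morphism spaces in question are $\hat\Hcal_{M,\chi}$-equivariant Hom's between non-isomorphic induced modules, not torus-side Hom's, and the coherence of the normalizing units is precisely the content that needs proving. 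Your statement that $\hat{\mathfrak{Z}}$-linearity "is forced to be the identity on the nose after the normalization $\hat R_{T,\chi}$ is prescribed" is true at the level of the torus functor, but it does not propagate automatically to fixing all the $\alpha_w$ at once at the level of $M$.

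Relatedly, be careful in stating what parabolic-induction compatibility gives you. Knowing $\hat R_{M,\chi}\circ\iota_{\overline P}^{M}\cong\hat R'_{M,\chi}\circ\iota_{\overline P}^{M}$ as functors on $\mathbf{D}^+(\hat\Hcal_{M',\chi}\text{-}\mathrm{mod})$ does not yet give $\hat R_{M,\chi}\cong\hat R'_{M,\chi}$; to descend, you need the isomorphism on $\hat\Hcal_{M,\chi}$ (viewed through the decomposition above) to be equivariant for the right $\hat\Hcal_{M,\chi}$-action, which is again the coherence-of-scalars issue above. The general regular-semisimple case is then handled by writing the eigenvalues of $\phi$ as a product of "chains" and reducing along a block-diagonal Levi and transitivity of intertwining operators; that reduction is correct and is worth making explicit, but the single-chain case is where the inductive/explicit work happens.
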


Finally, I would like to mention that I was led to Conjecture \ref{mainconjectureintro} by considerations about $p$-adic automorphic forms and moduli spaces of $p$-adic Galois representations. In fact we hope that the conjecture extends (in a yet rather vague sense) to a $p$-adic picture, which should have implications on the computation of locally algebraic vectors in the $p$-adic Langlands program, as in work of Pyvovarov \cite{Pyvovarov3}, which in fact inspired the computation in section \ref{sectioncind}. 
We do not pursue this direction here, but will come back to this in the future.

{\bf Acknowledgments:} I very much like to thank Christophe Breuil, Michael Rapoport, Timo Richarz, Peter Schneider, Jakob Scholbach, Benjamin Schraen and Matthias Strauch for many helpful discussions and for their interest.
Moreover, I would like to thank Johannes Ansch\"utz and Arthur-C\'esar Le-Bras for asking many helpful questions and pointing out some mistakes in an earlier version of the paper.
 Special thanks go to Peter Scholze for his constant interest and encouragement after I explained him my first computations.
 The author was supported by Germany's Excellence Strategy EXC 2044-390685587 "Mathematics M\"unster: Dynamics--Geometry--Structure" and by the CRC 1442 Geometry: Deformations and Rigidity of the DFG.

\section{Spaces of L-parameters}\label{spacesofLparam}
We fix a field $C$ of characteristic $0$ and a prime $p$ with power $q=p^r$.
Let $G$ be a linear algebraic group over $C$ and let $\lieg$ denote its Lie-Algebra, considered as a $C$-scheme. 
We define the $C$-scheme $X_G$ as the scheme representing the functor
\[R\longmapsto \{(\phi,N)\in (G\times\lieg)(R)\mid {\rm Ad}(\phi)(N)=q^{-1}N \}\]
on the category of $C$-algebras. 

The scheme $X_G$ comes with a canonical $G$-action, by conjugation on $G$ and by the adjoint action on $\lieg$. We write $[X_G/G]$ for the stack quotient of $X_G$ by this action. For obvious reasons this is an algebraic stack (or Artin stack).
Given a homomorphism $\alpha:G\rightarrow H$ of linear algebraic groups, we obtain canonical morphisms $X_G\rightarrow X_H$ of schemes and $[X_G/G]\rightarrow [X_H/H]$ of stacks.

\subsection{Basic properties}\label{sectionbasicprop}
 We study the basic properties of the spaces $X_G$ and $[X_G/G]$. Some of the results in this section were also obtained, in the more general situation of stacks of L-parameters, by Dat-Helm-Kurinczuk-Moss \cite{DHKM} and by Zhu \cite{Zhu}.

\begin{prop}\label{equidimreductive}
(i) Assume that $G$ is reductive. Then $X_{G}$ is a complete intersection inside $G\times \lieg$ and has dimension $\dim G$.\\
(ii) If $G={\rm GL}_n$, then $X_G$ is reduced and the irreducible components are in bijection with the set of $G$-orbits in the nilpotent cone $\mathcal{N}_G\subset \lieg$ of $G$. Moreover let $\eta=(\phi_\eta,N_\eta)\in X_G$ be a generic point of an irreducible component. Then $\phi_\eta$ is regular semi-simple. 
\end{prop}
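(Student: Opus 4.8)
The plan is to study $X_G$ for $G=\GL_n$ via the projection $\pi: X_G \to \lieg$, $(\phi,N)\mapsto N$, and to analyze its fibers. For a fixed $N\in\lieg(C)$, the fiber $\pi^{-1}(N)$ is the closed subscheme of $G$ cut out by the linear (in $\phi$, after writing the condition as $\phi N \phi^{-1} = q^{-1}N$, i.e.\ $\phi N = q^{-1} N \phi$) equation inside $\GL_n$; equivalently it is the intersection of $\GL_n$ with the linear subspace $\{A \in \Endo(C^n) \mid AN = q^{-1}NA\}$ of $n\times n$ matrices. The key elementary observation is that this linear subspace, the ``$q^{-1}$-twisted centralizer'' of $N$, has dimension that can be computed from the Jordan type of $N$: decomposing $C^n$ under the nilpotent $N$ into Jordan blocks, the space of $A$ with $AN=q^{-1}NA$ decomposes into $\Hom$-spaces between (generalized) eigenspaces, and since $q^{-1}\neq 1$ the contribution is governed by how the block sizes interlace after the shift, giving $\dim\{A \mid AN=q^{-1}NA\} = \dim\mathfrak{z}_{\lieg}(N')$ for a suitable nilpotent $N'$ in the same $\GL$-orbit closure relation — in particular this dimension is $\geq n$ and equals $n$ exactly when $N'$ (equivalently $N$) is regular nilpotent, with the extreme cases controlling everything. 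I would carry this out carefully for $\GL_n$ where nilpotent orbits are parametrized by partitions of $n$.

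Next I would combine this fiber computation with Proposition \ref{equidimreductive}(i), which gives $\dim X_G = \dim G = n^2$ and tells us $X_G$ is a complete intersection, hence Cohen--Macaulay, hence has no embedded components and is equidimensional of dimension $n^2$ provided we also show it is generically reduced. The irreducible components: over the open dense locus where $\phi$ is regular semi-simple, $N$ is forced into a specific shape relative to the eigenvalues of $\phi$ (the eigenvalues must come in $q$-power chains and $N$ is the corresponding "shift" nilpotent), and this locus is smooth; stratify $X_G$ by the $\GL_n$-orbit $\Ocal$ of $N = \pi(\phi,N)$ in the nilpotent cone $\mathcal{N}_G$, writing $X_G^{\Ocal} = \pi^{-1}(\Ocal)$. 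Each $X_G^{\Ocal}$ is irreducible: it fibers over the irreducible variety $\Ocal$ with fibers the $q^{-1}$-twisted centralizer intersected with $\GL_n$, and one checks this intersection is irreducible (it is open in a linear space, and one must check it is nonempty and that the linear space is "generically invertible" — this uses that $q$ is not a root of unity of small order, or just that over an infinite field a generic element of the twisted centralizer of a nilpotent is invertible, which follows because the identity... no: rather, one produces an explicit invertible element, e.g.\ built blockwise). Then $\dim X_G^{\Ocal} = \dim\Ocal + \dim(\text{twisted centralizer})$, and using $\dim\Ocal = n^2 - \dim\mathfrak{z}_\lieg(N)$ together with the fiber dimension count, each stratum closure $\overline{X_G^{\Ocal}}$ has dimension exactly $n^2$. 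Hence the irreducible components of $X_G$ are precisely the $\overline{X_G^{\Ocal}}$ as $\Ocal$ runs over nilpotent orbits, which gives the claimed bijection.

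For reducedness, since $X_G$ is a complete intersection it suffices (by $R_0 + S_1 \Rightarrow$ reduced, and $S_1$ is automatic for CM) to show $X_G$ is generically reduced, i.e.\ smooth at the generic point $\eta$ of each component $\overline{X_G^{\Ocal}}$. I would do this by exhibiting, for each nilpotent orbit $\Ocal$, a point $(\phi,N)\in X_G^{\Ocal}$ with $N\in\Ocal$ and $\phi$ \emph{regular semi-simple}, and checking smoothness of $X_G$ there via the Jacobian criterion: the tangent space at $(\phi,N)$ is $\{(\dot\phi,\dot N) : \ad(\phi)(\dot N) + [\text{correction term involving }\dot\phi] = q^{-1}\dot N\}$, and a dimension count on this linear system at a point with $\phi$ regular semi-simple shows the tangent space has dimension exactly $n^2$. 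Constructing such a $(\phi,N)$: given the partition $\lambda = (\lambda_1,\ldots,\lambda_k)\vdash n$ defining $\Ocal$, take $N$ the nilpotent with Jordan blocks of sizes $\lambda_i$, and choose $\phi$ acting on the $i$-th Jordan block as $\phi_i \cdot \mathrm{diag}(1,q^{-1},\ldots,q^{-(\lambda_i-1)})$ with the scalars $\phi_i$ chosen generically (in particular so that all $n$ eigenvalues $\phi_i q^{-j}$ are distinct) — this is possible precisely because $C$ has characteristic $0$ so $q$ is not a root of unity, ensuring $\phi$ is regular semi-simple; then $\Ad(\phi)(N) = q^{-1}N$ by direct check on each block. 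This simultaneously proves the last sentence of the proposition (the generic point of each component has $\phi$ regular semi-simple), since the regular-semisimple locus is open and meets each $X_G^{\Ocal}$ in a nonempty set, hence is dense in each component.

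\textbf{Main obstacle.} The main technical point I expect to be delicate is the generic reducedness / smoothness computation: verifying that at the constructed point $(\phi,N)$ with $\phi$ regular semi-simple and $N$ in an arbitrary nilpotent orbit, the tangent space to $X_G$ inside $T_\phi\GL_n \oplus \lieg$ has the expected dimension $n^2$. This requires understanding the linearized twisted-conjugacy equation and doing the bookkeeping over Jordan blocks with the $q$-shifts; the regular semi-simplicity of $\phi$ should make the relevant linear operator on $\lieg$ as close to invertible as possible, reducing the tangent space computation to a sum of rank computations over pairs of eigenlines, but organizing this cleanly (and making sure one genuinely lands in each component, not just some component) is where the real work lies. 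A secondary subtlety is checking irreducibility of the twisted-centralizer-intersect-$\GL_n$ fibers uniformly in $N$, for which the cleanest route is probably to show directly that $X_G^{\Ocal}\to\Ocal$ is a Zariski-locally-trivial fibration (or at least a fibration with irreducible fibers and irreducible base) with fibers open dense in an affine space.
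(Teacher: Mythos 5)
Your plan for part (i) is the same basic strategy as the paper's: push forward along $f\colon X_G\to\lieg$, note that the image lies in the nilpotent cone, stratify by $\GL_n$-orbit $Z$, and show $\dim f^{-1}(Z)=\dim G$. The paper gets that last dimension for free from a torsor observation that you appear to have missed and that would save you all the Jordan-block bookkeeping: if $\phi_0\in f^{-1}(N)$ exists, then $\phi\in f^{-1}(N)\Leftrightarrow\phi_0^{-1}\phi\in G_N$, so $f^{-1}(Z)$ is a right torsor under the $Z$-group scheme of centralizers and $\dim f^{-1}(Z)=\dim Z+\dim G_N=\dim G$; this also works for any reductive $G$ (the statement (i) is not only about $\GL_n$), where your explicit matrix analysis does not directly apply. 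Relatedly, your intermediate claim that $\dim\{A:AN=q^{-1}NA\}=\dim\mathfrak z_\lieg(N')$ ``for a suitable $N'$ in the same orbit closure relation'' is off: the twisted centralizer is $\Hom_{C[t]}\bigl((C^n,qN),(C^n,N)\bigr)$, and since $qN$ and $N$ have the same Jordan type this has dimension $\dim\mathfrak z_\lieg(N)$ for the very same $N$; no auxiliary $N'$ is involved. You should also not leave implicit the fact that $N$ is forced to be nilpotent ($\Ad(\phi)N=q^{-1}N$ makes the eigenvalue set of $N$ invariant under multiplication by $q$, hence $\{0\}$ in characteristic $0$); the paper outsources this to \cite[Lemma 2.3]{HartlHellmann}. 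And be careful that the sentence ``combine this fiber computation with Proposition~\ref{equidimreductive}(i)'' reads as circular — the fiber computation \emph{is} the proof of (i) for $\GL_n$ and should be presented as such.

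For part (ii) the paper simply cites \cite[Theorem 3.2]{HartlHellmann} and \cite[Proposition 4.2]{Helm2}, so your self-contained argument — complete intersection $\Rightarrow$ Cohen--Macaulay $\Rightarrow S_1$, then generic smoothness checked at an explicit regular-semisimple witness in each stratum via the Jacobian criterion — is a genuinely different route and a useful addition. It does go through, but you have the exponents backwards in your explicit point: with the standard upper-triangular Jordan block $N=\sum_a E_{a,a+1}$, the choice $\phi\,|_{\text{block}}=\phi_i\cdot\mathrm{diag}(1,q^{-1},\ldots,q^{-(\lambda_i-1)})$ gives $\Ad(\phi)(N)=qN$, not $q^{-1}N$, so the point you write down is not in $X_G$; you want $\mathrm{diag}(1,q,\ldots,q^{\lambda_i-1})$ (or the paper's convention on $N$ from $(\ref{standardphiNmodule})$, which raises the index). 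With that fixed, the tangent-space computation does close up: $\ker\bigl(\Ad(\phi_0)-q^{-1}\bigr)$ on $\lieg$ is spanned exactly by the within-block consecutive $E_{a,a+1}$ (generic block scalars rule out cross-block coincidences), of dimension $n-k$ for $k$ blocks, and the brackets $[E_{pp},N_0]$ for $p$ ranging over block indices span precisely those $E_{a,a+1}$, so $d\gamma$ is surjective, $X_G$ is smooth of dimension $n^2$ at $(\phi_0,N_0)$, and openness of the regular-semisimple locus gives the last assertion.
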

\begin{proof}
\noindent (ii) This is \cite[Theorem 3.2.]{HartlHellmann} resp.~\cite[Proposition 4.2]{Helm2}. 

\noindent (i) Helm's argument from \cite{Helm2} directly generalizes to the case of a reductive group:
as $X_G\subset G\times \lieg$ is cut out by $\dim G$ equations, it is enough to show that $X_G$ is equi-dimensional of dimension $\dim G$. 

Let us write $f:X_G\rightarrow \lieg$ for the projection to the Lie algebra. We first claim that $f$ set-theoretically factors over the nilpotent cone $\Ncal_G\subset \Lie G$. In order to do so we choose an embedding $G\hookrightarrow \GL_m$ for some $m$. Then $X_G$ embeds into $X_{\GL_m}$ and given $(\phi,N)\in X_G$ \cite[Lemma 2.3]{HartlHellmann} implies that $N$ is mapped to a nilpotent element of $\mathfrak{gl}_m$. This implies $N\in \Ncal_G$.

The scheme $\Ncal_G$ is irreducible and a finite union of (locally closed) $G$-orbits for the adjoint action, as $G$ is reductive. 
Let $Z\subset \Ncal_G$ be such a $G$-orbit and let $\Gcal_Z\subset G\times Z$ be the $Z$-group scheme of centralizers of the points in $Z$, i.e.~the fiber $G_z$ of $\Gcal_Z$ over $z\in Z$ is the centralizer of $z$ in $G$. Then the translation action makes $f^{-1}(Z)$ (if non-empty) into a right $\Gcal_Z$-torsor. In particular in this case we have $\dim f^{-1}(Z)=\dim Z+\dim_Z \Gcal_Z=\dim Z+\dim G_z=\dim G$, where $z\in Z$ is any (closed) point. 
The scheme $X_G$ now is the union of the locally closed subsets $f^{-1}(Z)$, where $Z$ runs over all the $G$-orbits in $\Ncal_G$. As all these locally closed subsets (if $\neq\emptyset$) have dimension $\dim G$, their closures are precisely the irreducible components of $X_G$.  It follows that $X_G$ is equi-dimensional of dimension $\dim G$ as claimed. 
\end{proof}

\begin{rem}\label{remarkequidim}
\noindent (a) The proof implies that the irreducible components of $X_G$ are indexed by a subset of the $G$-orbits in $\Ncal_G$. 
We expect that the conclusion of (ii) holds true for a general reductive group, i.e.~the scheme $X_G$ should be reduced and a complete intersection.  Its irreducible components should be in bijection with the $G$-orbits in the nilpotent cone and at the generic points of the irreducible components the element $\phi$ should be regular semi-simple.
The first part ($X_G$ is reduced and a complete intersection) can be deduced from \cite[Corollary 2.4, Proposition 2.7]{DHKM} (compare also \cite[Proposition 3.1.6]{Zhu}) after noting that $X_G$ is a connected component of the fiber over $C$ of the scheme $\underline{Z}^1(W_F^\circ/P_F,G)$ of loc.~cit..
\\
\noindent (b) The only ingredient in the proof of (i) that uses the assumption that $G$ is reductive is the fact that $G$ acts with only finitely many orbits on its nilpotent cone. More precisely, let $G$ be an arbitrary linear algebraic group and let $G\hookrightarrow {\rm GL}_m$ be a faithful representation. Then the proof of (i) works if $G$ acts with finitely many orbits on $\Lie G\cap \Ncal_{\GL_m}$. This is not true in general, even if $G$ is a parabolic subgroup in $\GL_m$, see \cite{BoosM}: it follows from loc.~cit.~that this fails in the case of a Borel subgroup in $\GL_m$ for $m\geq 6$.
The following example shows that also the statement of the proposition fails for Borel subgroups of ${\rm GL}_n$ for $n\geq 9$. We did not check that this is the optimal bound. It is very likely possible that $X_B$ is not equi-dimensional if $B$ is a Borel subgroup in ${\rm GL}_6$. 
\end{rem}
\begin{expl}\label{explnonequi}
Let $r,d>0$ and $n=rd$. Let $B\subset \GL_n$ be the Borel subgroup of upper triangular matrices and let \[\varphi_0={\rm diag}(1,\dots,1, q,\dots, q,\dots, q^{d-1},\dots, q^{d-1})\in B(C),\]
where each entry $q^i$ appears $r$ times. Then a given element $N=(n_{ij})_{ij}\in {\Lie B}$ satisfies $N\phi_0=q\phi_0 N$ if and only if 
$$n_{ij}=0 \ \text{for}\ j\notin \{ir+1,\dots, (i+1)r\}.$$ 
Scaling $\phi_0$ by multiplication with elements of the center $Z\cong \Gbb_m$, we obtain a closed embedding $\Gbb_m\times \prod_{i=1}^{d-1}\Abb^{r^2}\hookrightarrow X_B$. The $B$-orbit of this closed subscheme is irreducible and of dimension 
\begin{align*}
\dim \big(\Gbb_m\times \prod_{i=1}^{d-1}\Abb^{r^2}\big)+\dim B-{\rm Stab}_B(\phi_0)&=1+\dim B+\big((d-1)r^2-d\tfrac{r(r+1)}{2}\big)\\&=1+\dim B+\tfrac{r}{2}\big(dr-(2r+d)\big).
\end{align*}
In particular we find that $X_B$ has an irreducible component of dimension strictly larger than $\dim B$ if $dr\geq 2r+d$.
On the other hand $X_B$ has always an irreducible component of dimension $\dim B$, namely $B\times\{0\}\subset B\times \Lie B$. 
\end{expl}

Let $G$ be a reductive group and let $P$ be a parabolic subgroup. We will write $\tilde X_P$ for the scheme representing the sheafification of the functor:
\begin{equation}\label{Gequivmodel}
R\mapsto \left\{(\phi,N,g)\in (G\times\lieg)(R)\times G(R)/P(R)\left | 
{\begin{array}{*{20}c}
(\phi,N)\in X_G(R) \ \text{and}\\ \phi\in g^{-1}Pg,\\ N\in {\rm Ad}(g^{-1})(\Lie P) 
\end{array}}
\right\}\right.\end{equation}
This is a closed $G$-invariant subscheme of $X_G\times G/P$ (where $G$ acts on $G/P$ by left translation). Then $(\phi,N)\mapsto (\phi,N,1)$ induces a closed embedding $X_P\hookrightarrow \tilde X_P$ which descends to an isomorphism
\begin{equation}\label{GmodelXP}
[X_P/P]\xrightarrow{\cong} [\tilde X_P/G].
\end{equation}
Moreover, the canonical projection $\tilde X_P \rightarrow X_G$ is $G$-equivariant and the induced morphism $[\tilde X_P/G]\rightarrow [X_G/G]$ agrees under the isomorphism $(\ref{GmodelXP})$ with the morphism $[X_P/P]\rightarrow [X_G/G]$ induced by $P\hookrightarrow G$. The following lemma is a direct consequence of this discussion. 
\begin{lem}\label{XPproper}
Let $G$ be a reductive group and $P\subset G$ be a parabolic subgroup. Then the canonical map $[X_P/P]\rightarrow [X_G/G]$ induced by the inclusion $P\hookrightarrow G$ is proper. 
\end{lem}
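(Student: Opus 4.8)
The plan is to deduce properness from the factorization $[X_P/P] \xrightarrow{\cong} [\tilde X_P/G] \to [X_G/G]$ already established, since properness of a morphism of algebraic stacks can be checked on an atlas and is stable under the passage from $X_P \hookrightarrow \tilde X_P$ via the descent isomorphism $(\ref{GmodelXP})$. Concretely, one is reduced to showing that the $G$-equivariant projection $\tilde X_P \to X_G$ is proper as a morphism of schemes, and for this it suffices to show that it is a closed immersion followed by a proper morphism, or — more simply — that $\tilde X_P$ is a closed subscheme of $X_G \times G/P$ with the projection to $X_G$ being proper because $G/P$ is a projective $C$-scheme. The key point is that $\tilde X_P \subset X_G \times G/P$ is \emph{closed}: it is cut out by the conditions $\phi \in g^{-1}Pg$ and $N \in \operatorname{Ad}(g^{-1})(\operatorname{Lie} P)$, and these are closed conditions on the flag variety $G/P$ because $P$ (hence its conjugates, hence the incidence locus) is defined by closed conditions, and $\operatorname{Lie} P$ likewise sits closed inside $\lieg$.

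First I would recall the standard fact that the second projection $\operatorname{pr}_1 : X_G \times G/P \to X_G$ is proper, since $G/P$ is proper (projective) over $C$ and properness is stable under base change. Next I would verify that $\tilde X_P$, as described by the sheafified functor $(\ref{Gequivmodel})$, is represented by a closed subscheme of $X_G \times G/P$: this is exactly the assertion already made in the text preceding the lemma, namely that $\tilde X_P$ is a ``closed $G$-invariant subscheme of $X_G \times G/P$.'' Granting this, the composite $\tilde X_P \hookrightarrow X_G \times G/P \xrightarrow{\operatorname{pr}_1} X_G$ is proper, being a closed immersion followed by a proper morphism. Then I would invoke the fact that properness of a morphism of quotient stacks $[\tilde X_P / G] \to [X_G/G]$ can be tested after the smooth surjective base change $X_G \to [X_G/G]$: the fiber product $[\tilde X_P/G] \times_{[X_G/G]} X_G$ is canonically $\tilde X_P$, and the induced map $\tilde X_P \to X_G$ is the proper morphism just described, so $[\tilde X_P/G] \to [X_G/G]$ is proper. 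Finally, transporting along the isomorphism $(\ref{GmodelXP})$ and the identification of $[\tilde X_P/G] \to [X_G/G]$ with the map $[X_P/P] \to [X_G/G]$ induced by $P \hookrightarrow G$ (also stated in the text), one concludes that the latter is proper.

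I expect the only genuine subtlety to be the bookkeeping of stacky properness — that is, justifying that properness descends through the smooth atlas $X_G \to [X_G/G]$ and that the fiber product of $[\tilde X_P/G] \to [X_G/G]$ with this atlas is indeed $\tilde X_P$ with its evident projection. This is standard (it is part of the definition of properness for Artin stacks, or follows from \cite[Tag 0CL4]{stacks-project}-type statements), but it is the one place where some care is needed; everything else is formal manipulation of closed immersions and the projectivity of $G/P$. No part of the argument uses the finer structural results of Proposition \ref{equidimreductive}, so the lemma holds for an arbitrary reductive $G$ and arbitrary parabolic $P$.
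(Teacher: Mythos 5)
Your proposal is correct and follows exactly the route the paper intends: use the isomorphism $[X_P/P]\cong[\tilde X_P/G]$, observe that $\tilde X_P$ is closed in $X_G\times G/P$ with $G/P$ projective so that $\tilde X_P\to X_G$ is proper, and conclude by descent of properness along the smooth atlas $X_G\to[X_G/G]$. The paper simply calls the lemma a ``direct consequence'' of the preceding discussion without spelling out these steps, which you have done accurately.
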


We continue to assume that $G$ is reductive. 
We say that a point $(\varphi,N)\in G\times \lieg$ is \emph{regular}, if there are only finitely many Borel subgroups $B'\subset G$ such that $\varphi\in B'$ and $N\in \Lie B'$, i.e.~if (for one fixed choice of a Borel $B$) the point $(\varphi,N)$ has only finitely many pre-images under 
\[\left\{(\phi,N,gB)\in G\times \lieg \times G/B\left | \begin{array}{*{20}c} \phi \in g^{-1}Bg,\\ N\in {\rm Ad}(g^{-1})(\Lie B)\end{array}\right.\right\}\longrightarrow G\times \lieg.\] 
As this morphism is proper and the fiber dimension is upper semi-continuous on the source, the regular elements form a Zariski-open subset $(G\times \lieg)^{\rm reg}\subset G\times \lieg$. 
Similarly we can define a Zariski-open subset $$X_G^{\rm reg}=X_G\cap (G\times\lieg)^{\rm reg}\subset X_G.$$ 
If $P\subset G$ is a parabolic subgroup, we write $$(P\times \Lie P)^{\rm reg}=(G\times \Lie G)^{\rm reg}\cap(P\times \Lie P)$$ and  $X_P^{\rm reg}=X_P\cap X_G^{\rm reg}$. Moreover, we write $\tilde X_P^{\rm reg}$ for the pre-image of $X_G^{\rm reg}$ under $\tilde X_P\rightarrow X_G$. Then $[X_P^{\rm reg}/P]=[\tilde X_P^{\rm reg}/G]$ as stacks and the morphism $\tilde X_P^{\rm reg}\rightarrow X_G^{\rm reg}$ is by construction a finite morphism. 
Moreover, if we write $M$ for the Levi quotient of $P$, it is a direct consequence of the definition that the canonical projection $[X_P/P]\rightarrow [X_M/M]$ restricts to $[X_P^{\rm reg}/P]\rightarrow [X_M^{\rm reg}/M]$.

\begin{lem}\label{equidimparabolicregular}
The scheme $X_P^{\rm reg}$ is equi-dimensional of dimension $\dim P$ and a complete intersection inside $(P\times \Lie P)^{\rm reg}$. Moreover, the map $\tilde X_P^{\rm reg}\rightarrow X_G^{\rm reg}$ is surjective and each irreducible component of $\tilde X_P^{\rm reg}$ dominates an irreducible component of $X_G^{\rm reg}$.
\end{lem}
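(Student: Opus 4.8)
The strategy is to bootstrap from Proposition \ref{equidimreductive}(i) applied to the reductive group $M$, transported to $P$ via the projection $P\to M$. The key geometric input is that the fibers of the projection $\pi\colon X_P^{\rm reg}\to X_M^{\rm reg}$ (induced by $P\twoheadrightarrow M$) are understood: a point $(\phi,N)\in X_P$ lifting $(\bar\phi,\bar N)\in X_M$ is determined by a choice of Borel-compatible lift, and the fiber is an affine space bundle (a torsor under the unipotent radical $U_P$, up to the constraints imposed by the $\Ad(\phi)(N)=q^{-1}N$ relation inside $\Lie U_P$). Since $\dim M + \dim U_P = \dim P$, this should give the dimension count once one knows $X_M^{\rm reg}$ is equidimensional of dimension $\dim M$, which is the content of Proposition \ref{equidimreductive}(i) restricted to the open locus $X_M^{\rm reg}\subset X_M$.

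First I would establish the complete intersection claim: $X_P\subset P\times\Lie P$ is cut out by $\dim\lieu_P$ equations (the vanishing of $\Ad(\phi)(N)-q^{-1}N$ in $\Lie G/\Lie P$ is automatic for $(\phi,N)\in P\times\Lie P$? — no, rather one writes the relation as a section of the bundle $\Lie P$, giving $\dim P$ equations inside $P\times\Lie P$, but $\dim P$ of them, mimicking the reductive case). Precisely: as in the proof of \ref{equidimreductive}(i), $X_P\hookrightarrow P\times\Lie P$ is the zero locus of the map $(\phi,N)\mapsto \Ad(\phi)(N)-q^{-1}N$ valued in $\Lie P$ (well-defined since $P$ normalizes $\Lie P$), hence is cut out by $\dim P$ equations; restricting to the open set $(P\times\Lie P)^{\rm reg}$ preserves this. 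So it suffices to show $X_P^{\rm reg}$ is equidimensional of dimension $\dim P$ — then automatically a complete intersection. For the dimension bound, I would run Helm's orbit argument directly for $P$: the projection $X_P^{\rm reg}\to \Lie P$ factors set-theoretically through the nilpotent cone $\Ncal_G\cap\Lie P$ (by the same embedding-into-$\GL_m$ argument), and on the regular locus $P$ acts with finitely many orbits on the relevant nilpotent locus — this is where regularity is essential, circumventing Remark \ref{remarkequidim}(b), since imposing the regularity condition cuts down to finitely many orbit types even though $P$ acts with infinitely many orbits on $\Ncal_G\cap\Lie P$ in general. Over each such orbit $Z$, the fiber is a torsor under the centralizer group scheme, giving $\dim Z + \dim_Z\Gcal_Z = \dim P$, so each stratum has dimension $\dim P$ and $X_P^{\rm reg}$ is equidimensional of that dimension.

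For the surjectivity of $\tilde X_P^{\rm reg}\to X_G^{\rm reg}$: given $(\phi,N)\in X_G^{\rm reg}$, regularity means there are only finitely many Borels $B'$ with $\phi\in B'$, $N\in\Lie B'$, and at least one such $B'$ exists (by properness of the Grassmannian fibration: the fiber over $(\phi,N)$ in $\{(\phi,N,gB)\}$ is nonempty because its image is closed and every such $(\phi,N)$ arising in $X_G$ has nilpotent $N$ lying in some Borel together with $\phi$ — for $\GL_n$ this follows from the classification, and for general reductive $G$ one uses that the pair generates a solvable subgroup up to the torus, hence is contained in a Borel). Any Borel $B'\subset P$ with these properties gives a point of $\tilde X_P$ over $(\phi,N)$; since $P\supset B$ contains Borels, and one can conjugate, such a $B'$ inside (a conjugate of) $P$ exists, yielding surjectivity. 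The dominance statement then follows because $\tilde X_P^{\rm reg}\to X_G^{\rm reg}$ is finite (stated in the excerpt just before the lemma) and surjective, so by going-up every irreducible component of the source dominates a component of the target — finite surjective morphisms send generic points to generic points, and the image of a component, being closed and of the same dimension, must be a full component of $X_G^{\rm reg}$ (using that $X_G^{\rm reg}$ is equidimensional of dimension $\dim G$, which follows from \ref{equidimreductive}(i) since $X_G^{\rm reg}$ is open and dense).

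**Main obstacle.** The delicate point is verifying that \emph{on the regular locus}, $P$ acts with only finitely many orbits on $\Ncal_G\cap\Lie P$ (equivalently, that the stratification of $X_P^{\rm reg}$ by nilpotent $P$-orbits is finite), so that the orbit-by-orbit dimension count applies. Unlike the reductive case this is not a standard finiteness result — it genuinely uses the regularity constraint to cut an a priori infinite family down to finitely many strata — and I expect the bulk of the argument to consist in making this precise, probably by relating the fibers of $X_P^{\rm reg}\to X_M^{\rm reg}$ to those of the Springer-type resolution and invoking that $X_M^{\rm reg}$ already has the correct (finite, equidimensional) structure from \ref{equidimreductive}(i).
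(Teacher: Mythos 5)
Your argument for surjectivity and dominance is essentially correct and close in spirit to the paper's (the paper reduces surjectivity to the Borel case and then to $\GL_n$ by an embedding; your "pair generates a solvable subgroup" sketch for general $G$ would need to be fleshed out, but the idea is sound). However, the core of the lemma — the dimension bound that makes $X_P^{\rm reg}$ equidimensional of dimension $\dim P$ — is where your proposal diverges from the paper and contains a genuine gap. You propose to run Helm's orbit argument for $P$ directly, claiming that restricting to the regular locus cuts $P$'s action on $\Ncal_G\cap\Lie P$ down to finitely many orbits. This is not established (and not obviously true as stated): regularity is a condition on pairs $(\phi,N)$, not on $N$ alone, so it does not directly produce a finite $P$-orbit stratification of a locus in $\Lie P$; and your fallback of bootstrapping from $X_M^{\rm reg}$ via the projection $X_P^{\rm reg}\to X_M^{\rm reg}$ runs into the problem that the fibers of that projection have varying dimension, which is precisely what Example \ref{explnonequi} shows going wrong in the non-regular setting. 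You identify this as "the main obstacle" but do not resolve it.

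The paper sidesteps all of this with a much shorter argument that you did not consider. Since $X_P\subset P\times\Lie P$ is cut out by $\dim P$ equations, every component of $X_P^{\rm reg}$ has dimension $\ge\dim P$; it therefore suffices to show every component has dimension $\le\dim P$, equivalently every component of $\tilde X_P^{\rm reg}$ has dimension $\le\dim G$. But this is immediate from the fact — already noted in the text just before the lemma, and essentially built into the definition of the regular locus — that $\tilde X_P^{\rm reg}\to X_G^{\rm reg}$ is \emph{finite}, together with Proposition \ref{equidimreductive}(i) applied to $G$ (not to $M$): finite morphisms do not increase dimension, and $X_G^{\rm reg}$ is open in the equidimensional scheme $X_G$ of dimension $\dim G$. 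No orbit analysis on $\Lie P$ is needed at all. This also gives the dominance claim directly. The lesson is that the right reference object is $X_G^{\rm reg}$, not $X_M^{\rm reg}$, and the finiteness of the Springer-type map is the whole content of "regular."
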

\begin{proof}
Following the first lines of the proof of Proposition \ref{equidimreductive}, the first claim follows if we show that every irreducible component of $X_P^{\rm reg}$ has dimension at most $\dim P$. Equivalently, we can show that every irreducible component of $\tilde X_P^{\rm reg}$ has dimension at most $\dim G$. This is a direct consequence of the fact that $\tilde X_P^{\rm reg}\rightarrow X_G^{\rm reg}$ is finite. 
It follows that every irreducible component of $\tilde X_P^{\rm reg}$ has dimension equal to $\dim G$. As $\tilde X_P^{\rm reg}\rightarrow X_G^{\rm reg}$ is finite and $X_G$ is equi-dimensional of dimension $\dim G$ it follows that every irreducible component of $\tilde X_P^{\rm reg}$ dominantes an irreducible component of $X_G^{\rm reg}$. 

It remains to show that $\tilde X_P^{\rm reg}\rightarrow X_G^{\rm reg}$ is surjective. In fact we even show that $\tilde X_P\rightarrow X_G$ is surjective:
we easily reduce to the case $P=B$ a Borel subgroup, and, choosing an embedding $G\hookrightarrow \GL_n$, we can reduce to the case of $\GL_n$. There we can check the claim on $k$-valued points for algebraically closed fields $k$, where it easily follows by looking at the Jordan canonical forms of $\phi$ and $N$. 
\end{proof}
\begin{rem}
We remark that $X_P^{\rm reg}\subset X_P$ is open, but not dense in general, as can be deduced from Lemma \ref{equidimparabolicregular} and Example \ref{explnonequi}.
If $G$ is reductive then we expect that $X_G^{\rm reg}$ is dense in $X_G$. In the case of $\GL_n$ this is a consequence of Proposition \ref{equidimreductive}.
\end{rem}

If $G=\GL_n$ and $P\subset G$ is a parabolic subgroup, then $G/P$ can be identified with the variety of flags of type $P$. In particular we can identify $\tilde X_P$ with the variety of triples $(\phi,N,\mathcal{F})$ consisting of $(\phi,N)\in X_G$ and a $(\phi,N)$-stable flag of type $P$. From now on we will often use this identification.

\begin{lem}\label{reducedparaboliccase}
Let $G={\rm GL}_n$ and let $P\subset G$ be a parabolic. Then $\tilde X_P^{\rm reg}$ is reduced.
\end{lem}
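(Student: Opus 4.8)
The plan is to show that $\tilde X_P^{\rm reg}$ is reduced by exhibiting it as generically reduced and then invoking the fact that it is a complete intersection (hence Cohen--Macaulay, hence satisfies Serre's condition $(S_1)$, so has no embedded components). More precisely: by Lemma \ref{equidimparabolicregular} we already know $\tilde X_P^{\rm reg}$ is equi-dimensional of dimension $\dim G$ and, via the identification $[X_P^{\rm reg}/P]=[\tilde X_P^{\rm reg}/G]$ together with $X_P^{\rm reg}$ being a complete intersection in $(P\times\Lie P)^{\rm reg}$, it is Cohen--Macaulay. A Cohen--Macaulay scheme is reduced as soon as it is regular in codimension $0$, i.e.~generically reduced along each irreducible component (the $(R_0)+(S_1)$ criterion). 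So it suffices to check that each irreducible component of $\tilde X_P^{\rm reg}$ has a smooth (or at least reduced) generic point.

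First I would reduce to the Borel case: for $G=\GL_n$, the projection $\tilde X_P\to\tilde X_B$ (forgetting part of the flag) — or rather a suitable relative-flag bundle argument comparing $\tilde X_P$ to $\tilde X_B$ — shows it is enough to treat $P=B$, since the relevant morphism is smooth with geometrically integral fibers (partial flag varieties fibered over full ones), and smoothness descends reducedness. Concretely, over $\tilde X_P$ one has the relative flag variety parametrizing refinements of $\mathcal F$ to a full $(\phi,N)$-stable flag, and this recovers $\tilde X_B^{\rm reg}$; a smooth surjection from a reduced scheme forces the target to be reduced, so reducedness of $\tilde X_B^{\rm reg}$ implies that of $\tilde X_P^{\rm reg}$.

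For $P=B$ I would identify $\tilde X_B$ with the variety of triples $(\phi,N,\mathcal F)$ with $\mathcal F$ a complete $(\phi,N)$-stable flag, and use the proper map $\tilde X_B\to X_G$. By Proposition \ref{equidimreductive}(ii) the generic point $\eta=(\phi_\eta,N_\eta)$ of each component of $X_G$ has $\phi_\eta$ regular semi-simple; over the regular semi-simple locus of $X_G$ the fiber of $\tilde X_B\to X_G$ is finite and étale (a regular semi-simple $\phi$ lies in finitely many Borels, namely those containing its centralizing maximal torus, and the condition $N\in\Lie B'$ then cuts out a reduced finite set — one checks this over an algebraically closed field using the eigenspace decomposition of $\phi$, exactly as in the surjectivity argument in Lemma \ref{equidimparabolicregular}). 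Hence $\tilde X_B$ is generically étale over $X_G$ along each component, and since $X_G$ is reduced (Proposition \ref{equidimreductive}(ii)), $\tilde X_B^{\rm reg}$ is generically reduced; combined with Cohen--Macaulayness it is reduced.

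The main obstacle I anticipate is the reduction step and the bookkeeping of components: one must make sure that \emph{every} irreducible component of $\tilde X_P^{\rm reg}$ is hit by this generic analysis, i.e.~that no component of $\tilde X_B^{\rm reg}$ maps into the non-regular-semisimple locus of $X_G$. This is precisely guaranteed by Lemma \ref{equidimparabolicregular}, which says each component of $\tilde X_P^{\rm reg}$ dominates a component of $X_G^{\rm reg}$, whose generic point has regular semi-simple $\phi$ by Proposition \ref{equidimreductive}(ii); so the étale-fiber computation does apply at the generic point of every component. The remaining technical point — verifying that the fiber of $\tilde X_B\to X_G$ over a regular semi-simple $(\phi,N)$ is reduced, not merely finite — is a direct linear-algebra check with the eigenspace filtration of $\phi$ and the nilpotent $N$, and I would carry it out over an algebraically closed field.
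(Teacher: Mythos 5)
Your core argument coincides with the paper's: use that $\tilde X_P^{\rm reg}$ is an equi-dimensional complete intersection (hence Cohen--Macaulay, hence $(S_1)$) so that reducedness reduces to generic reducedness; then note that every generic point $\xi$ of $\tilde X_P^{\rm reg}$ maps under $\tilde\beta_P$ to a generic point $\eta$ of $X_G$ (by Lemma~2.4), where $\phi_\eta$ is regular semi-simple (by Proposition~2.1(ii)), so the fiber $\tilde\beta_P^{-1}(\eta)$ is reduced, being a closed subscheme of the finite reduced set of $\phi_\eta$-stable partial flags of type $P$. That is precisely the paper's proof.

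However, the preliminary reduction to $P=B$ is both unnecessary and incorrect as stated. The forgetful map $\tilde X_B^{\rm reg}\rightarrow\tilde X_P^{\rm reg}$ is \emph{not} a smooth fibration in flag varieties: its fiber over $(\phi,N,\mathcal G)$ is not $P/B$ but the set of $(\phi,N)$-stable refinements of $\mathcal G$, a finite scheme whose length jumps even within the regular semi-simple locus. Already for $\GL_2$ with $\phi={\rm diag}(\lambda,q\lambda)$ the fiber of $\tilde X_B\to X_G$ has two reduced points at $(\phi,0)$ but only one at $(\phi,E_{12})$; via the fiber-product description $X_B\cong X_{B_M}\times_{X_M}X_P$ of Corollary~2.8, the same jump appears for $\tilde X_B^{\rm reg}\to\tilde X_P^{\rm reg}$ for any proper $P$. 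A finite morphism with jumping fiber length is not flat, so one cannot descend reducedness along it. Fortunately this does not matter: the final fiber check you propose for $P=B$ (finitely many $\phi_\eta$-stable flags, each reduced, then intersect with the $N_\eta$-stability locus) works verbatim for partial flags of type $P$, so you should simply apply it to $\tilde\beta_P^{-1}(\eta)$ directly and drop the reduction step. One minor caution: the map $\tilde X_B\to X_G$ is not \'etale over the full regular semi-simple locus (again because of the fiber jumps), only at the generic points $\eta$ themselves, which is all the argument needs.
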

\begin{proof}
To prove that $\tilde X_P^{\rm reg}$ is reduced, it remains to show that is generically reduced. 
Let $\xi=(\phi_\xi,N_\xi,\mathcal{F}_\xi)\in \tilde X_P^{\rm reg}$ be a generic point. Under $\tilde\beta_P:\tilde X_P^{\rm reg}\rightarrow X_G$ the point $\xi$ maps to a generic point $\eta=(\phi_\xi,N_\xi)$ of $X_G$ and hence $\phi_\xi$ is regular semi-simple. 
It is enough to show that $\tilde\beta_P^{-1}(\eta)$ is reduced. But as $\phi_\xi$ is regular semi-simple the space of $\phi_\xi$-stable flags is a finite disjoint union of reduced points. Hence its closed subspace of flags that are in addition stable under $N_\xi$ has to be reduced as well.
\end{proof}
\begin{rem}\label{XBfor GLsmalln}
Let $n\leq 5$ and $P\subset \GL_n$ a parabolic subgroup. We point out that the argument in Remark \ref{remarkequidim} (b) implies that $X_P$ is a complete intersection in $P\times \Lie P$. 
But if $n\geq 4$, it is not true that every irreducible component of $\tilde X_P$ dominates an irreducible component of $X_{\GL_n}$. Indeed, one can compute that if $n=4$ and $P=B$ is a Borel, then there is an irreducible component of $\tilde X_B$ on which the Frobenius $\phi$ is semi-simple with eigenvalues $\lambda,q\lambda,q\lambda,q^2\lambda$ for some indeterminate $\lambda$. This component clearly can not dominate an irreducible component of $X_{\GL_4}$.
However, for $n\leq 3$ one can compute that every irreducible component of $X_P$ is the closure of an irreducible component of $X_P^{\rm reg}$. In particular we deduce that $X_P$ is reduced if $n\leq 3$. 
In the general case ($P\subset G$ a parabolic subgroup of a reductive group) we do not know whether $X_P$ is reduced.  
\end{rem}


\begin{lem}\label{fiberproducttorindep}
Let $G$ be reductive and $P\subset G$ be a parabolic with Levi quotient $M$.\\
\noindent (i) The morphism $X_P^{\rm reg}\rightarrow X_M^{\rm reg}$ has finite Tor-dimension.\\
\noindent (ii) Let $P'\subset P$ be a second parabolic subgroup. Let $M'$ denote the Levi quotient of $P'$ and $P'_{M}\subset M$ denote the image of $P'$ in $M$. Then the diagrams
\begin{equation}\label{cartesiansquares}
\begin{aligned}
\begin{xy}
\xymatrix{
X_{P'} \ar[r]\ar[d] & X_P \ar[d]&\text{and}& X_{P'}^{\rm reg} \ar[r] \ar[d]& X_P^{\rm reg}\ar[d]\\
X_{P'_M}\ar[r] & X_M && X_{P'_M}^{\rm reg}\ar[r] & X_M^{\rm reg}
}
\end{xy}\end{aligned}\end{equation}
are cartesian and the fiber product on the right hand side is Tor-independent.
\end{lem}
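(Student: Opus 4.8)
### Proof plan for Lemma~\ref{fiberproducttorindep}

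The plan is to reduce everything to an explicit Koszul-type description of the defining equations of $X_P$ relative to $X_M$. Recall that $P$ has Levi decomposition $P = M \ltimes U$ with unipotent radical $U$, so that as a scheme $P \cong M \times U$ and $\Lie P = \Lie M \oplus \lieu$. The map $P \to M$ and the induced $\Lie P \to \Lie M$ are just the projections. For a point $(\phi, N) \in X_P$ with image $(\bar\phi, \bar N) \in X_M$, writing $\phi = \bar\phi \cdot u$ with $u \in U$ and $N = \bar N + n$ with $n \in \lieu$, the condition $\mathrm{Ad}(\phi)(N) = q^{-1} N$ decomposes along $\Lie P = \Lie M \oplus \lieu$: the $\Lie M$-component is automatically the condition defining $X_M$, and the $\lieu$-component gives a collection of equations cutting out $X_P$ inside $P \times_{M} X_M \times^{?} \lieu$. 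More precisely, I would set up the fiber square
\begin{equation*}
\begin{xy}
\xymatrix{
X_P \ar[r]\ar[d] & U \times \lieu \times X_M \ar[d] \\
X_M \ar[r] & X_M
}
\end{xy}
\end{equation*}
and identify $X_P$ inside $U \times \lieu \times X_M$ (over $X_M$) as the vanishing locus of the $\lieu$-valued section $s(u, n, \bar\phi, \bar N) = \mathrm{Ad}(\bar\phi u)(\bar N + n) - q^{-1}(\bar N + n)$ projected to $\lieu$. The key numerical input is that $\dim \lieu = \dim P - \dim M = \dim U$, so $s$ is a section of a rank-$(\dim U)$ bundle over the smooth $X_M$-scheme $U \times \lieu \times X_M$ of relative dimension $2 \dim U$; by Proposition~\ref{equidimreductive}(i) applied to $G$, $M$, and by Lemma~\ref{equidimparabolicregular} (for the regular versions), $X_P$ (resp. $X_P^{\rm reg}$) has the expected dimension $\dim P = \dim M + \dim U$, so $s$ is a regular section and the relative Koszul complex on $s$ is a resolution. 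This proves (i): the relative Koszul complex has length $\dim U$, so $X_P^{\rm reg} \to X_M^{\rm reg}$ has Tor-dimension at most $\dim U$.

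For (ii), first note the outer square of schemes is Cartesian on the level of functors of points essentially by definition: a $(\phi', N', g')$-datum for $P'$ inside $G$ is the same as a compatible pair of a datum for $P$ and a datum for $P'_M$ inside $M$, because the flags of type $P'$ refine those of type $P$ and $P' \subset P$ maps onto $P'_M \subset M$. Concretely one checks the two maps $X_{P'} \to X_P \times_{X_M} X_{P'_M}$ and back are mutually inverse on $R$-points; the same argument restricts verbatim to the regular loci since the regular condition on $(\phi, N)$ is inherited from $G$ and is compatible with all four schemes. The Tor-independence of the right-hand square is then the point that requires the Koszul description: I would show that, pulling back the regular section $s_M$ cutting out $X_{P'_M} \subset U_M \times \lieu_{M} \times X_M$ along $X_P^{\rm reg} \to X_M^{\rm reg}$, the pulled-back section $s_M|_{X_P^{\rm reg}}$ remains regular — equivalently that $X_{P'}^{\rm reg}$ has the expected codimension $\dim U_M$ inside the smooth-over-$X_P^{\rm reg}$ scheme $U_M \times \lieu_M \times X_P^{\rm reg}$. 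This follows from Lemma~\ref{equidimparabolicregular} applied to $P' \subset G$: $X_{P'}^{\rm reg}$ is equi-dimensional of dimension $\dim P'$, while $\dim(U_M \times \lieu_M \times X_P^{\rm reg}) = 2\dim U_M + \dim P$ and $\dim P' = \dim P + \dim U_M - \dim U_M$... so one gets the codimension count $\dim P + 2\dim U_M - \dim P' = \dim U_M$, exactly matching the rank of the bundle, hence the section is regular and the Koszul complex computes the derived fiber product, giving Tor-independence.

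The main obstacle I anticipate is bookkeeping the dimension count in (ii) correctly and making sure the equi-dimensionality inputs line up: one needs $\dim X_{P'}^{\rm reg} = \dim P'$ and $\dim X_{P'_M}^{\rm reg} = \dim P'_M$ simultaneously with the expected-dimension statements for $X_P^{\rm reg}$ and $X_M^{\rm reg}$, and these all come from Lemma~\ref{equidimparabolicregular} applied to the four parabolics $P, P', M, P'_M$ (the last two being parabolics of the reductive group $M$). Once all four are known to be complete intersections of the expected dimension, Tor-independence is a formal consequence of "a regular section pulled back along a map that does not drop the expected codimension stays regular", which is the standard criterion that a base change of a Koszul-resolved closed immersion along a morphism is Tor-independent iff the pulled-back equations remain a regular sequence — and this is precisely detected by the dimension of the fiber product being the expected one. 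A secondary subtlety is that for part (i) one should phrase the Tor-dimension bound intrinsically (it is bounded by $\dim U = \dim P - \dim M$), rather than merely "finite", but the Koszul complex of length $\dim U$ makes this transparent. I do not expect the Cartesian-square verification in (ii) to be hard; it is a direct unwinding of the moduli descriptions, using that for $G = \GL_n$ one may phrase things via stable flags as recalled before Lemma~\ref{reducedparaboliccase}, though the argument in fact works for general reductive $G$ via the groups $g^{-1}Pg$ directly.
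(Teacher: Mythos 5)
Your part~(i) is essentially the paper's argument: decompose $P\cong M\times U$, $\Lie P\cong\Lie M\oplus\lieu$, observe that the $\Lie M$-component of $\mathrm{Ad}(\phi)(N)-q^{-1}N$ reproduces the $X_M$-equations, and resolve $\Ocal_{X_P^{\rm reg}}$ over the $X_M^{\rm reg}$-smooth scheme $U\times\lieu\times X_M^{\rm reg}$ by the Koszul complex of the remaining $\dim U$ equations. One point you slide over, which is where the paper spends a sentence: having the expected codimension is not by itself enough to make the defining sequence regular; you need the ambient to be Cohen--Macaulay. Here that holds because $U\times\lieu\times X_M^{\rm reg}$ (equivalently $\pi^{-1}(X_M)^{\rm reg}\cong\mathbb A^{2\dim U}_{X_M^{\rm reg}}$) is an open subscheme of an affine space over $X_M$, and $X_M$ is CM as a complete intersection by Proposition~\ref{equidimreductive}(i). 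The paper invokes \cite[Cor.~16.5.6]{EGA4} to pass from "unmixed of the right dimension in a CM scheme" to "regular sequence". Worth making this explicit.

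The cartesian claim in (ii) is fine, and can be said more simply than via flags: $P'$ is exactly the preimage of $P'_M$ under $P\to M$, and likewise for Lie algebras, so $P'\times\Lie P'=(P\times\Lie P)\times_{M\times\Lie M}(P'_M\times\Lie P'_M)$, and intersecting with the equation defining $X$ preserves the fiber product.

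The Tor-independence step in (ii) is where there is a real problem. Your formula "$\dim P' = \dim P + \dim U_M - \dim U_M$" is garbled; the correct relation, writing $V$ for the unipotent radical of $P'_M\subset M$ (your $U_M$), is $\dim P' = \dim U_P + \dim P'_M = \dim P - \dim V$. With that, the codimension of $X_{P'}^{\rm reg}$ inside $V\times\mathfrak v\times X_P^{\rm reg}$ is
\[
(2\dim V + \dim P) - \dim P' = 2\dim V + \dim P - (\dim P - \dim V) = 3\dim V,
\]
not $\dim V$, so your section of the rank-$\dim V$ bundle is {\em not} regular there, and the Koszul complex you write down is not a resolution. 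Underneath the arithmetic is a structural issue: you are trying to resolve $\Ocal_{X_{P'_M}^{\rm reg}}$ over $X_M^{\rm reg}$, but $X_{P'_M}^{\rm reg}\hookrightarrow X_M^{\rm reg}$ is not a regular immersion — it has codimension $\dim V$ while being cut out by the $2\dim V$ linear conditions $\phi\in P'_M$, $N\in\Lie P'_M$. The paper's argument goes the other way around: it keeps the {\em same} Koszul complex as in (i), namely the one resolving $\Ocal_{X_P^{\rm reg}}$ over $\pi^{-1}(X_M)^{\rm reg}$ by the $s=\dim U_P$ equations $g_1,\dots,g_s$ (which is a complex of $\Ocal_{X_M^{\rm reg}}$-flat modules), and pulls it back along the closed immersion $X_{P'_M}^{\rm reg}\hookrightarrow X_M^{\rm reg}$. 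One then has to check that $g_1,\dots,g_s$ remain a regular sequence on $\pi^{-1}(X_{P'_M}^{\rm reg})^{\rm reg}$, which again uses CM of the ambient (from Lemma~\ref{equidimparabolicregular} applied to $P'_M\subset M$) together with the codimension count
\[
\dim\pi^{-1}(X_{P'_M}^{\rm reg}) - \dim X_{P'}^{\rm reg} = (2s + \dim P'_M) - \dim P' = (2s + \dim P'_M) - (s + \dim P'_M) = s.
\]
So the equations you must control are $s=\dim U_P$ in number, not $\dim V$, and the ambient you work over is built from $X_{P'_M}^{\rm reg}$, not $X_P^{\rm reg}$. Reorganizing along those lines recovers the correct argument.
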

\begin{proof}
\noindent (i) Let $U\subset P$ denote the unipotent radical of $P$ and fix a section $M\hookrightarrow P$ of the canonical projection. We write $\mathfrak{u}\subset \mathfrak{p}$ for the Lie algebras of $U$ resp.~$P$ and $\mathfrak{m}$ for the Lie algebra of $M$. Then we obtain a commutative diagram 
\[\begin{xy}
\xymatrix{
(M\times U)\times (\mathfrak{m}\times\mathfrak{u}) \ar[rr]^\cong_{\psi} \ar[rrd]&& P\times\mathfrak{p} \ar[d]^{\pi} \\
 && M\times \mathfrak{m},
}
\end{xy}\]
where the horizontal arrow $\psi$ is induced by multiplication and the other two morphisms are the canonical projections. 
Let $r=\dim M$ and $s=\dim U$. 
Let $I\subset\Gamma(P\times\Lie P,\Ocal_{P\times \Lie P})$ be the ideal defining $X_P\hookrightarrow P\times \Lie P$, i.e.~the ideal generated by the entires of ${\rm Ad}(\phi)(N)-q^{-1}N$, where $\phi$ and $N$ are the universal elements over $P$ resp.~$\Lie P$.
Then we deduce from the diagram that $I$ can be generated by elements $f_1,\dots, f_r,g_1\dots,g_s$ such that 
\[f_1,\dots,f_r\in \Gamma(M\times \mathfrak{m},\Ocal_{M\times \mathfrak{m}})\subset \Gamma(P\times \Lie P, \Ocal_{P\times \Lie P}),\]
where $(f_1,\dots,f_r)$ is the ideal defining $X_M\subset M\times \mathfrak{m}$.
It follows that the ideal $(g_1,\dots, g_s)$ is the ideal defining $X_P$ as a closed subscheme of $\pi^{-1}(X_M)\cong \mathbb{A}^{2s}_{X_M}$. 

Let us now write $K(g_1,\dots, g_s)$ for the Koszul complex defined by $g_1,\dots, g_s$ on the open subscheme $\pi^{-1}(X_M)^{\rm reg}=\pi^{-1}(X_M)\cap (P\times\Lie P)^{\rm reg}$ of $\pi^{-1}(X_M)$. This is a finite complex of flat $\Ocal_{X_M}$-modules and we claim that it is a resolution of $\Ocal_{X_P^{\rm reg}}$.  
Indeed, $g_1,\dots,g_s$ cut out the closed subscheme $X_P^{\rm reg}\subset \pi^{-1}(X_M)^{\rm reg}$ which is of codimension $s$ by Lemma \ref{equidimparabolicregular}. As $\pi^{-1}(X_M)^{\rm reg}$ is Cohen-Macaulay (it is an open subscheme of an affine space over $X_M$ and $X_M$ is Cohen-Macaulay as a consequence of Proposition \ref{equidimreductive}) it follows from \cite[Corollaire 16.5.6]{EGA4} that $g_1\dots,g_s$ is a regular sequence and hence the Koszul complex is a resolution of its $0$-th cohomology which is $\Ocal_{X_P^{\rm reg}}$.

\noindent (ii) The fact that the squares are fiber products follows from the fact that $P'$ is the pre-image of $P'_M$ under $P\rightarrow M$. We show that the square on the right is Tor-independent.
As in (i) we have a Koszul complex $K(g_1,\dots, g_s)$ on $\pi^{-1}(X_M)^{\rm reg}$ which is a $\Ocal_{X^{\rm reg}_M}$-flat resolution of $\Ocal_{X_P^{\rm reg}}$. Consider the closed embedding \begin{equation}\label{embeddingXP'}X_{P'}^{\rm reg}\hookrightarrow \pi^{-1}(X_{P'_M})\cap (P\times \mathfrak{p})^{\rm reg}.\end{equation}
As $(\ref{cartesiansquares})$ is cartesian, the restrictions of $g_1,\dots, g_s$ to $\pi^{-1}(X_{P'_M})\cap (P\times \Lie P)^{\rm reg}$ generate the ideal defining the closed embedding $(\ref{embeddingXP'})$ and it remains to show that the pullback of the Koszul complex $K(g_1,\dots, g_s)$ along $(\ref{embeddingXP'})$ is a resolution of its $0$-th cohomology group; that is we need to show that $g_1,\dots, g_s$ is a regular sequence in 
$$\Ocal_{\pi^{-1}(X^{\rm reg}_{P'_M})\cap (P\times \Lie P)^{\rm reg},x}\ \ \text{for all}\ \ x\in X_{P'}^{\rm reg}\subset \pi^{-1}(X^{\rm reg}_{P'_M})\cap (P\times \Lie P)^{\rm reg}.$$
Now $$\pi^{-1}(X_{P'_M}^{\rm reg})\cap (P\times \Lie P)^{\rm reg}\subset \pi^{-1}(X_{P'_M}^{\rm reg})\cong \Abb^{2s}_{X_{P'_M}^{\rm reg}}$$ is an open subscheme and hence it is Cohen-Macaulay as $X_{P'_M}^{\rm reg}$ is Cohen-Macaulay by Lemma \ref{equidimparabolicregular}. 
The claim now follows again from \cite[Corollaire 16.5.6]{EGA4} and the fact that $X_{P'}^{\rm reg}$ is equi-dimensional of dimension $\dim  \pi^{-1}(X_{P'_M}^{\rm reg})-s$. 
\end{proof}
\begin{expl}
Let us point out that the left cartesian diagram of $(\ref{cartesiansquares})$ is not necessarily Tor-independent without restricting to the regular locus. Let us consider $r=d=3$ (so that $dr=2r+d$) in Example \ref{explnonequi}. Let $B\subset \GL_n$ be the Borel subgroup of upper triangular matrices, where $n=rd=9$. Then the above example shows that $X_B$ is not equi-dimensional, and hence the defining ideal is not generated by a regular sequence. Let $P\subset \GL_9$ be a standard parabolic containing $B$ with Levi $M=\GL_5\times \GL_4$. Then the classification of \cite{BoosM} shows that $P$ as well as the Borel $B_M$ of $M$ have the property that they act only via finitely many orbits on $\Lie P\cap \Ncal_{\GL_9}$ resp.~$\Lie B_M\cap \Ncal_{M}$. In particular $X_P$ and $X_{B_M}$ are complete intersections in $P\times \Lie P$ resp.~$B_M\times \Lie B_M$ by Remark \ref{remarkequidim}.
As in the proof above we can construct generators $f_1,\dots, f_{\dim B}$ of the ideal defining $X_B\subset B\times \Lie B$ such that Tor-independence of $(\ref{cartesiansquares})$ is equivalent to exactness of the Koszul complex $K(f_1,\dots, f_{\dim B})$ in negative degrees.  
Let $x\in X_B\subset B\times\Lie B$ be a point that lies on an irreducible component of $X_B$ of dimension strictly larger than $\dim B$. Then (the germs of) $f_1,\dots, f_{\dim B}$ lie in the maximal ideal $\mathfrak{m}_{B\times\Lie B,x}\subset \Ocal_{B\times\Lie B,x}$ and the Koszul complex defined by these elements is not exact, as they do not form a regular sequence (because $\Ocal_{X_B,x}$ is not equi-dimensional of dimension $\dim B$).
\end{expl}
We reformulate the first claim of the Lemma in terms of stacks.
\begin{cor}\label{caresiansquareclassicalstacks}
Let $P'\subset P\subset G$ be parabolic subgroups of $G$ and let $M$ be the Levi quotient of $P$ and $P'_M$ the image of $P'$ in M. Then the diagram
\[
\begin{xy}
\xymatrix{
[X_{P'}/P'] \ar[r]\ar[d] & [X_P/P]\ar[d] \\
[X_{P'_M}/P'_M] \ar[r] & [X_M/M]
}
\end{xy}
\]
of stacks is cartesian.
\end{cor}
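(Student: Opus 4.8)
The plan is to deduce the statement for stacks from the corresponding statement for schemes. By Lemma~\ref{fiberproducttorindep}(ii) the square of schemes underlying the diagram is cartesian, so the natural morphism
\[
X_{P'}\longrightarrow X_P\times_{X_M}X_{P'_M}
\]
is an isomorphism; and since $P'$ is, by definition, the preimage of $P'_M$ under the faithfully flat projection $\pi\colon P\twoheadrightarrow M$ (the quotient by the unipotent radical of $P$), one has an identification of group schemes $P'\cong P\times_M P'_M$ under which the two projections become the inclusion $P'\hookrightarrow P$ and the surjection $P'\twoheadrightarrow P'_M$. As every morphism in the diagram of schemes is equivariant for the corresponding group homomorphism, the universal property of the $2$-fiber product produces a canonical $1$-morphism of stacks
\[
\Phi\colon[X_{P'}/P']\longrightarrow[X_P/P]\times_{[X_M/M]}[X_{P'_M}/P'_M],
\]
and the task is to show that $\Phi$ is an equivalence.

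I would check this on groupoids of $S$-valued points for $C$-schemes $S$. On the one hand, $\Phi$ sends a $P'$-torsor $\mathcal{U}\to S$ with equivariant morphism $\mathcal{U}\to X_{P'}$ to the tuple $(\mathcal{U}\times^{P'}P,\ \mathcal{U}\times^{P'}P'_M,\dots)$, with structure maps induced via $X_{P'}\to X_P$ and $X_{P'}\to X_{P'_M}$ and with the gluing isomorphism being the tautological identification of both induced $M$-torsors with $\mathcal{U}\times^{P'}M$. On the other hand, an object of the target is a tuple $(\mathcal{T},s,\mathcal{T}',s',\theta)$ consisting of a $P$-torsor $\mathcal{T}\to S$ with $P$-equivariant $s\colon\mathcal{T}\to X_P$, a $P'_M$-torsor $\mathcal{T}'\to S$ with $P'_M$-equivariant $s'\colon\mathcal{T}'\to X_{P'_M}$, and an isomorphism $\theta\colon\mathcal{T}\times^P M\xrightarrow{\sim}\mathcal{T}'\times^{P'_M}M$ of $M$-torsors over $S$ compatible with the induced morphisms to $X_M$; to it I would attach the candidate preimage
\[
\mathcal{U}:=\mathcal{T}\times_{\mathcal{T}\times^P M}\mathcal{T}',
\]
the fiber product formed along $\mathcal{T}\to\mathcal{T}\times^P M$ and the composite $\mathcal{T}'\to\mathcal{T}'\times^{P'_M}M\xrightarrow{\theta^{-1}}\mathcal{T}\times^P M$, which carries a natural action of $P\times_M P'_M=P'$ and a $P'$-equivariant morphism to $X_P\times_{X_M}X_{P'_M}=X_{P'}$. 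It then remains to verify that these assignments are $2$-functorial in $S$ and mutually quasi-inverse, which I expect to be routine but somewhat lengthy.

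The one point that is not purely formal, and which I expect to be the main obstacle, is to show that $\mathcal{U}=\mathcal{T}\times_{\mathcal{T}\times^P M}\mathcal{T}'$ is genuinely a $P'$-torsor over $S$, rather than merely a scheme with a $P'$-action. This is where I would use the faithful flatness of $\pi\colon P\to M$: working fppf-locally on $S$ one may take $\mathcal{T}$ and $\mathcal{T}'$, hence all induced $M$-torsors, to be trivial, and then $\mathcal{U}$ is visibly the trivial torsor under $P\times_M P'_M=P'$; flatness and local triviality of $\mathcal{U}$ over $S$ follow by fppf descent, and the same local computation simultaneously yields full faithfulness and essential surjectivity of $\Phi$ on $S$-points. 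Once this torsor-descent step is in place, the Corollary follows by combining it with the scheme-level cartesian square and the group-scheme identity recorded at the outset.
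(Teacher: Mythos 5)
Your proof is correct, but it takes a genuinely different route from the paper's. The paper re-expresses each parabolic quotient stack as a quotient by a \emph{single} larger group via the equivariant models: using $P/P'\cong M/P'_M$, it builds schemes $Y_{P'}$ and $Y_{P'_M}$ (twisted versions of $X_{P'}$ and $X_{P'_M}$ over the relevant flag varieties as in $(\ref{Gequivmodel})$) with $[X_{P'}/P']\cong[Y_{P'}/P]$ and $[X_{P'_M}/P'_M]\cong[Y_{P'_M}/M]$, verifies that the scheme-level square $Y_{P'}\to X_P$, $Y_{P'_M}\to X_M$ is cartesian using Lemma~\ref{fiberproducttorindep}, and then deduces the stack-level cartesianness by quotienting successively by the unipotent radical $U$ of $P$ and then by $M=P/U$, which preserves fiber products because both amount to base change along $\Spec C\to{\rm B}U$, $\Spec C\to{\rm B}M$. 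You instead argue directly at the level of groupoids of torsors, using the same two ingredients --- Lemma~\ref{fiberproducttorindep}(ii) at the scheme level and the group-scheme identity $P'\cong P\times_M P'_M$ --- to construct the comparison $1$-morphism $\Phi$ and an explicit candidate quasi-inverse via $\mathcal{U}=\mathcal{T}\times_{\mathcal{T}\times^P M}\mathcal{T}'$, checking that $\mathcal{U}$ is a $P'$-torsor by fppf-local trivialization. Your treatment is essentially a proof of the general principle that a cartesian square of group schemes acting compatibly on a cartesian square of schemes induces a $2$-cartesian square of quotient stacks; this is more self-contained and does not require the $Y$-models, at the cost of some torsor bookkeeping. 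One point worth making explicit in your fppf-local argument: since $P\to M$ is surjective with kernel $U$ rather than injective, the map $\mathcal{T}\to\mathcal{T}\times^P M$ is a $U$-bundle (not an embedding), so after trivializing $\mathcal{T}$ and $\mathcal{T}'$ the gluing isomorphism $\theta$ is right-translation by some $m:S\to M$, and you need to lift $m$ to $P$ fppf-locally (possible because $P\twoheadrightarrow M$ is faithfully flat) to re-trivialize $\mathcal{T}$ and make $\theta$ the identity before identifying $\mathcal{U}$ with $S\times P'$. With that spelled out, the argument is complete.
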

\begin{proof}
Note that $P\rightarrow M$ induces an isomorphism $P/P'\cong M/P'_M$. As in $(\ref{Gequivmodel})$ we define a closed $M$-invariant subscheme $Y_{P'_M}\subset X_M\times M/P'_M$ 
as the scheme representing the sheafification of the functor
\[
R\mapsto \left\{(\phi,N,g)\in X_M(R)\times M(R)/P'_M(R)\left | 
{\begin{array}{*{20}c}
 \phi\in g^{-1}P'_Mg\ \text{and}\\ N\in {\rm Ad}(g^{-1})(\Lie P'_M) 
\end{array}}
\right\}\right..\]
Then we have a canonical isomorphism $[X_{P'_M}/P'_M]\cong[Y_{P'_M}/M]$. Similarly we define a $P$-invariant closed subscheme $Y_{P'}\subset X_{P'}\times P/P'$ such that $[X_{P'}/P']\cong[Y_{P'}/P]$. Then the diagram
\[
\begin{xy}
\xymatrix{
Y_{P'} \ar[r]\ar[d] & X_P\ar[d] \\
Y_{P'_M}\ar[r] & X_M
}
\end{xy}
\]
is cartesian by the above lemma. Let $U\subset P$ denote the unipotent radical of $P$, then it follows that
\[
\begin{xy}
\xymatrix{
[Y_{P'}/U] \ar[r]\ar[d] & [X_P/U]\ar[d] \\
Y_{P'_M}\ar[r] & X_M
}
\end{xy}
\]
is cartesian diagram of stacks with $M$-action and with $M$-equivariant morphisms. The claim follows by taking the quotient by the $M$-action everywhere. 
\end{proof}

The objects introduced above have variants in the world of derived (or dg-) schemes (see e.g.~\cite[0.6.8]{DG} and the references cited there). The category of dg-schemes over $C$ canonically contains the category of $C$-schemes as a subcategory.
For any linear algebraic group $G$ we write $\gamma_G:G\times \Lie G\rightarrow \Lie G$ for the morphism $(\phi,N)\mapsto {\rm Ad}(\phi)(N)-q^{-1}N$. We denote by $\mathbf{X}_G$ the fiber product
\[\begin{xy}
\xymatrix{
\mathbf{X}_G\ar[r]\ar[d] &G\times \Lie G \ar[d]^{\gamma_G} \\
\{0\}\ar[r] &\Lie G
}
\end{xy}\]
in the category of dg-schemes. This yields a dg-scheme $\mathbf{X}_G$ with underlying classical scheme ${}^{\rm cl}\mathbf{X}_G=X_G$. 
If $G$ is reductive then Proposition \ref{equidimreductive} implies that $\mathbf{X}_G=X_G$.
Similarly, if $P\subset G$ is a parabolic subgroup of a reductive group, we denote by $\mathbf{X}_P^{\rm reg}\subset \mathbf{X}_P$ the open sub-dg-scheme with underlying topological space $X_P^{\rm reg}$. Then Lemma \ref{equidimparabolicregular} implies that $\mathbf{X}_P^{\rm reg}=X_P^{\rm reg}$ is a classical scheme. 

For any linear algebraic group $G$ we write $[\mathbf{X}_G/G]$ for the stack quotient of $\mathbf{X}_G$ by the canonical action of $G$. 
This is an algebraic dg-stack\footnote{In \cite{DG} dg-stacks are simply called stacks. In order to distinguish between derived and non-derived variants we will always write dg-stack.} in the sense of \cite[1.1]{DG}.
Similarly to the case of schemes we can view any algebraic stack as an algebraic dg-stack. Moreover, recall that every dg-stack $S$ has an underlying classical stack ${}^{\rm cl}S$.

If $G$ is reductive and $P\subset G$ is a parabolic we also consider the stacks $[\mathbf{X}_G^{\rm reg}/G]$ and $[\mathbf{X}_P^{\rm reg}/P]$. Then 
\begin{align*}
[\mathbf{X}_G/G]&=[X_G/G],\\ 
[\mathbf{X}_G^{\rm reg}/G]&=[X_G^{\rm reg}/G]\ \text{and} \\
[\mathbf{X}_P^{\rm reg}/P]&=[X_P^{\rm reg}/P]. 
\end{align*}

We recall that a morphism $\mathbf{Y}_1\rightarrow \mathbf{Y}_2$ of dg-stacks is \emph{schematic} if for all affine dg-schemes $\mathbf{Z}$ and all morphisms $\mathbf{Z}\rightarrow \mathbf{Y}_2$ the fiber product $\mathbf{Z}\times_{\mathbf{Y}_2}\mathbf{Y}_1$ is a dg-scheme, see \cite[1.1.2]{DG}. A morphism of dg-schemes is called proper is the induced morphism of the underlying classical schemes is proper, and a morphism of algebraic dg-stacks is proper if the morphism of underlying classical stacks is proper in the sense of \cite[Definition 7.11]{LM}.
Similarly to the non-derived results above we obtain the following lemma. 
\begin{lem}
Let $G$ be a reductive group and let $P\subset G$ be a parabolic subgroup with Levi quotient $M$. \\
(i) The morphism $[\mathbf{X}_P/P]\rightarrow [\mathbf{X}_G/G]$ is schematic and proper. \\
(ii) Let $P'\subset P$ be a second parabolic subgroup and write $P'_M\subset M$ for the image of $P'$ in $M$. Then 
$$\mathbf{X}_{P'}\cong \mathbf{X}_{P'_M}\times_{\mathbf{X}_M}\mathbf{X}_P\ \ \text{and}\ \  [\mathbf{X}_{P'}/P']\cong [\mathbf{X}_{P'_M}/P'_M]\times_{[\mathbf{X}_M/M]}[\mathbf{X}_P/P].$$
\end{lem}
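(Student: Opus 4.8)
The plan is to promote the non-derived statements --- Lemma~\ref{XPproper} and Lemma~\ref{fiberproducttorindep}~(ii), equivalently Corollary~\ref{caresiansquareclassicalstacks} --- to the derived setting. Two general observations carry most of the weight. First, properness of a morphism of algebraic dg-stacks is by definition detected on the underlying classical stacks, so the properness assertion in (i) is literally Lemma~\ref{XPproper}. Second, a derived fibre product automatically records the Tor-independence that in the classical statement had to be enforced by restricting to the regular locus; this is why (ii) holds with no such restriction. What remains to be proved by hand is the identification of the derived fibre product in (ii) and the \emph{schematic} half of (i).

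For (i) I would use the derived $G$-equivariant model. There is a natural $P$-equivariant morphism $\mathbf{X}_P\to\mathbf{X}_G$ of dg-schemes lifting $X_P\hookrightarrow X_G$ (both sides are derived fibres over $0$ of $\gamma_P$, resp.\ $\gamma_G$, and $P\times\Lie P\hookrightarrow G\times\Lie G$ intertwines these maps over $\Lie P\hookrightarrow\Lie G$). Inducing up, put $\tilde{\mathbf{X}}_P:=G\times^P\mathbf{X}_P$: a dg-scheme fibred over $G/P$ with fibre $\mathbf{X}_P$ and underlying classical scheme $\tilde X_P$ of $(\ref{Gequivmodel})$. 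There is a canonical isomorphism $[\mathbf{X}_P/P]\cong[\tilde{\mathbf{X}}_P/G]$ exactly as in $(\ref{GmodelXP})$, under which $[\mathbf{X}_P/P]\to[\mathbf{X}_G/G]$ becomes the morphism induced by the projection $\tilde{\mathbf{X}}_P\to\mathbf{X}_G$. To see this last morphism is schematic one tests against an affine dg-scheme $\mathbf{Z}\to[\mathbf{X}_G/G]$, classified by a $G$-torsor $\mathbf{T}\to\mathbf{Z}$ with a $G$-equivariant map $\mathbf{T}\to\mathbf{X}_G$: the fibre product is $(\mathbf{T}\times_{\mathbf{X}_G}\tilde{\mathbf{X}}_P)/G$, and this is a dg-scheme because $\tilde{\mathbf{X}}_P$ carries a $G$-equivariant relatively ample line bundle over $\mathbf{X}_G$ (pulled back from $G/P$, $G$-linearisable since $G$ is reductive), so the quotient by the free $G$-action exists as a dg-scheme, in fact projective over $\mathbf{Z}$. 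Properness of $[\mathbf{X}_P/P]\to[\mathbf{X}_G/G]$ is then Lemma~\ref{XPproper}.

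For (ii) I would reuse the decomposition of $\gamma_P$ from the proof of Lemma~\ref{fiberproducttorindep}~(i). Fix a Levi section $M\hookrightarrow P$, let $U\subset P$ be the unipotent radical and write $\mathfrak{u}=\Lie U$, $\mathfrak{m}=\Lie M$, so that $P=U\rtimes M$, $\Lie P=\mathfrak{u}\oplus\mathfrak{m}$, and likewise $P'=U\rtimes P'_M$, $\Lie P'=\mathfrak{u}\oplus\Lie P'_M$ since $P'$ is the preimage of $P'_M$ under $P\to M$. The $\mathfrak{m}$-component of $\gamma_P\colon P\times\Lie P\to\Lie P$ factors through the smooth projection $\mathrm{pr}\colon P\times\Lie P\to M\times\mathfrak{m}$ followed by $\gamma_M$, while its $\mathfrak{u}$-component is a morphism $g\colon P\times\Lie P\to\mathfrak{u}$; the corresponding data for $P'$ are the restrictions of these ($\gamma_{P'_M}$ and $g|_{P'\times\Lie P'}$). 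Hence, setting $W:=(P\times\Lie P)\times_{M\times\mathfrak{m}}\mathbf{X}_M$ --- an honest, non-derived affine-space bundle over $\mathbf{X}_M$ as $\mathrm{pr}$ is flat --- one gets $\mathbf{X}_P\cong W\times^L_{\mathfrak{u}}\{0\}$ (the map $W\to\mathfrak{u}$ induced by $g$), and similarly $\mathbf{X}_{P'}\cong W'\times^L_{\mathfrak{u}}\{0\}$ with $W':=(P'\times\Lie P')\times_{P'_M\times\Lie P'_M}\mathbf{X}_{P'_M}$. Since $P'\times\Lie P'\cong(P\times\Lie P)\times_{M\times\mathfrak{m}}(P'_M\times\Lie P'_M)$ (again as $P'$, resp.\ $\Lie P'$, is the preimage of $P'_M$, resp.\ $\Lie P'_M$), one identifies $W'\cong W\times_{\mathbf{X}_M}\mathbf{X}_{P'_M}$ compatibly with the maps to $\mathfrak{u}$, and a short chain of derived base-change isomorphisms gives
\[\mathbf{X}_{P'}\cong W'\times^L_{\mathfrak{u}}\{0\}\cong(W\times_{\mathbf{X}_M}\mathbf{X}_{P'_M})\times^L_W\mathbf{X}_P\cong\mathbf{X}_{P'_M}\times^L_{\mathbf{X}_M}\mathbf{X}_P,\]
the map $\mathbf{X}_P\to\mathbf{X}_M$ here being $\mathbf{X}_P\to W\to\mathbf{X}_M$, which one checks is the one induced by $P\to M$. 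The stack-level statement $[\mathbf{X}_{P'}/P']\cong[\mathbf{X}_{P'_M}/P'_M]\times_{[\mathbf{X}_M/M]}[\mathbf{X}_P/P]$ then follows by the descent argument of Corollary~\ref{caresiansquareclassicalstacks}: introduce the derived analogues $\mathbf{Y}_{P'_M}$ and $\mathbf{Y}_{P'}$ of the schemes $Y_{P'_M}$, $Y_{P'}$ used there, note $\mathbf{Y}_{P'}\cong\mathbf{Y}_{P'_M}\times^L_{\mathbf{X}_M}\mathbf{X}_P$ (base change of the identity above along the smooth $M/P'_M\cong P/P'$), and quotient first by $U$ and then by $M$.

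The main obstacle is the interaction of these derived fibre products with the formation of quotient stacks: in (ii) one must verify that the $U$- and $M$-quotients preserve the fibre-product identity (the derived analogue of the last two paragraphs of the proof of Corollary~\ref{caresiansquareclassicalstacks}), and in (i) one needs that twisted $G/P$-bundles over affine dg-schemes are again dg-schemes --- which is where projectivity of $G/P$ and the descent of a $G$-linearised ample line bundle enter. By contrast the scheme-theoretic core --- the splitting of $\gamma_P$ into an $\mathfrak{m}$-part and a $\mathfrak{u}$-part and the resulting presentation of $\mathbf{X}_P$ as a derived zero locus over the affine-space bundle $W$ --- is already available from the proof of Lemma~\ref{fiberproducttorindep}~(i) and requires nothing new.
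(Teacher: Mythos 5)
Your argument is correct and follows essentially the same route as the paper: for (i) you build the $G$-equivariant model $\tilde{\mathbf{X}}_P=G\times^P\mathbf{X}_P$ lifting $(\ref{Gequivmodel})$ (the paper presents this via descent along the $G\times P$-space $Y$, but this is the same construction), and for (ii) you invoke that $P'$, $\Lie P'$ are the pullbacks of $P'_M$, $\Lie P'_M$ along the flat map $P\to M$. The paper is terser in both places --- it does not spell out the $G$-linearised ample line bundle behind schematicity in (i), and dispatches (ii) as ``a direct consequence of the definition'' --- so your explicit decomposition of $\gamma_P$ into the $\mathfrak{m}$- and $\mathfrak{u}$-components and the intermediate flat $\mathbf{X}_M$-scheme $W$ is a correct and useful unpacking of what that terse statement silently relies on (namely the same splitting already used in the proof of Lemma~\ref{fiberproducttorindep}~(i)).
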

\begin{proof}
(i) As in the non derived case we can rewrite $[\mathbf{X}_P/P]$ as $[\tilde{\mathbf{X}}_P/G]$ where $\tilde{\mathbf{X}}_P\subset \mathbf{X}_G\times G/P$ is the closed $G$-invariant sub-dg-scheme obtained by making $$\mathbf{X}_P\subset \mathbf{X}_G\times \{P\}\subset \mathbf{X}_G\times G/P$$ invariant under the $G$-action. 
More precisely we can describe this dg-scheme as follows. Let us write  $Z\subset G\times\Lie G\times G/P$ for scheme representing (the sheafification of) the functor
\[R\longmapsto\{(\phi,N,gP)\in (G\times\Lie G\times G/P)(R)\mid \phi\in g^{-1}Pg, \ N\in{\rm Ad}(g^{-1})\Lie P\},\]
on $C$-algebras. By definition there is an isomorphism of stacks $$[Z/G]\cong [P\times\Lie P/P].$$ 
Similarly we write $Z'\subset \Lie G\times G/P$ for the scheme parametrizing pairs $(N,gP)$ such that $N\in {\rm Ad}(g^{-1})\Lie P$. Then $Z'\rightarrow G/P$ is a vector bundle (with fibers isomorphic to $\Lie P$) and we can define $\tilde{\mathbf{X}}_P$ as the derived fiber product
\[\begin{xy}
\xymatrix{
\tilde{\mathbf{X}}_P \ar[r]\ar[d] & Z\ar[d]\\
G/P \ar[r]& Z',}
\end{xy}
\]
where $G/P\rightarrow Z'$ is the zero section and the right vertical arrow is given by $(\phi,N,gP)\mapsto ({\rm Ad}(\phi)N-q^{-1}N,gP)$
Now $[\mathbf{X}_P/P]=[\tilde{\mathbf{X}}_P/G]\rightarrow [\mathbf{X}_G/G]$, and the projection
\begin{equation}\label{GmodelforbetaP}
\tilde{\mathbf{X}}_P\longrightarrow X_G
\end{equation} 
is a $G$-equivariant model for the canonical projection $[\mathbf{X}_P/P]\rightarrow [X_G/G]$ induced by $P\subset G$. 
The claim follows from this together with the observation that ${}^{\rm cl}\tilde{\mathbf{X}}_P=\tilde X_P$ and ${}^{\rm cl}[\mathbf{X}_P/P]=[X_P/P]$. \\
(ii) This is a direct consequence of the definition of the fiber product in the category of dg-schemes and the fact that $P'$ is the pre-image of $P'_M$ under the (flat) morphism $P\rightarrow M$.
\end{proof}
\begin{rem}\label{GmodelforbetaB2}
Let $B\subset G$ be a Borel subgroup and let $U\subset B$ denote the unipotent radical. Then $\mathbf{X}_B\subset B\times \Lie B$ is in fact a (derived) subscheme of $B\times\Lie U$. Indeed, computing ${\rm Ad}(\phi)N-q^{-1}N$ for the universal pair $(\phi,N)$ over $B\times\Lie B$ and restricting to the Lie algebra of (a choice of) a maximal torus in $B$, we find that $\mathbf{X}_B\subset B\times\Lie U$. 
Using this observation we mention the following variant of the construction in the proof of (i) above. Let $\mathcal{N}_G$ denote the nilpotent cone of $G$, we can define $Y=Z\cap G\times\mathcal{N}_G\times G/B$. 
In fact $[Y/G]\cong [B\times\Lie U/B]$ and if we write $\tilde{G}\rightarrow G$ (respectively $\tilde{\mathcal{N}}_G\rightarrow \mathcal{N}_G$) for the Grothendieck (respectively Springer) resolution, then
$$Y=\tilde G\times_{G/B}\tilde{\mathcal{N}}_G.$$
In particular $Y$ is smooth and affine of relative dimension $\dim G$ over $G/B$. Then we can define  $\tilde{\mathbf{X}}_B$ as a closed sub dg-scheme of $Y$ in a similar fashion as it was defined as a closed sub dg-scheme of $Z$ in the proposition. 
\end{rem}
\subsection{Derived categories of (quasi-)coherent sheaves}

Given a scheme or a stack $X$ (or a derived scheme or a derived stack) we write ${\bf D}_{\rm QCoh}(X)$ for the derived category of quasi coherent sheaves on $X$, see \cite[1.2]{DG} and denoted by ${\rm QCoh}(X)$ in loc.~cit.
We write ${\bf D}^b_{\rm Coh}(X)$ for the full subcategory of objects that only have cohomology in finitely many degrees which moreover is coherent. 

If $X$ is a noetherian scheme then ${\bf D}^b_{\rm Coh}(X)$ coincides with the full subcategory of the derived category ${\bf D}(\mathcal{O}_X\text{-mod})$ of $\mathcal{O}_X$-modules, consisting of those complexes that have coherent cohomology and whose cohomology is concentrated in finite degrees. 
Similarly, if $X$ is a (classical) algebraic stack, then ${\bf D}^b_{\rm Coh}(X)$, and more generally ${\bf D}^+_{\rm QCoh}(X)$, agrees\footnote{This is true after passing to the underlying homotopy category. The derived categories in \cite{DG} are by definition $\infty$-categories while the derived categories in \cite{LM} are classical triangulated categories.} with the definition of the bounded derived category of coherent sheaves, respectively with the bounded below derived category of quasi-coherent sheaves as defined in \cite{LM}.

\begin{lem}\label{preservesboundedbelow}
Let $G$ be a reductive group and let $P$ be a parabolic subgroup with Levi quotient $M$, and let 
\[\alpha:[\mathbf{X}_P/P]\rightarrow [\mathbf{X}_M/M]\ \text{and}\ \beta:[\mathbf{X}_P/P]\rightarrow [\mathbf{X}_G/G]\]
denote the canonical morphisms. Then the maps
\begin{align*}
L\alpha^\ast:&{\bf D}_{\rm QCoh}([\mathbf{X}_M/M])\longrightarrow {\bf D}_{\rm QCoh}([\mathbf{X}_P/P])\\
R\beta_\ast:&{\bf D}_{\rm QCoh}([\mathbf{X}_P/P])\longrightarrow{\bf D}_{\rm QCoh}([\mathbf{X}_G/G])
\end{align*}
preserve the subcategories ${\bf D}^+_{\rm QCoh}(-)$ and ${\bf D}^b_{\rm Coh}(-)$.
\end{lem}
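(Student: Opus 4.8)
The plan is to reduce the statement for the morphisms $\alpha$ and $\beta$ to two separate inputs: a general preservation statement for pushforwards along schematic proper morphisms of algebraic (dg-)stacks, and a boundedness statement for the derived pullback $L\alpha^\ast$ coming from the finite Tor-dimension of the underlying morphism of dg-schemes. For $R\beta_\ast$, recall from the previous lemma that $\beta$ is schematic and proper, and that it can be rewritten as $[\tilde{\mathbf{X}}_P/G]\to[\mathbf{X}_G/G]$ where the underlying $G$-equivariant morphism $\tilde{\mathbf{X}}_P\to\mathbf{X}_G$ is (induced from) a proper morphism of Noetherian dg-schemes. One then checks that for such a morphism $R\beta_\ast$ has bounded cohomological amplitude: on $[\mathbf{X}_G/G]$, which has affine diagonal and is Noetherian, one may compute $R\beta_\ast$ smooth-locally on the base via a presentation, reducing to the case of a proper morphism of Noetherian dg-schemes, where finite cohomological dimension of $R\Gamma$ on the fibers (bounded by the relative dimension, using that ${}^{\rm cl}\tilde{\mathbf X}_P={\tilde X}_P\to X_G$ is proper of bounded fiber dimension) gives the claim. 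This proves $R\beta_\ast$ preserves ${\bf D}^+_{\rm QCoh}$; preservation of ${\bf D}^b_{\rm Coh}$ then follows from properness (coherence of higher direct images) together with the fact that $\mathbf{X}_G$ and $[\mathbf{X}_G/G]$ are Noetherian and that $\beta$ is of finite type, so that a bounded coherent complex has bounded coherent pushforward.

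For $L\alpha^\ast$, the point is that $\alpha:[\mathbf{X}_P/P]\to[\mathbf{X}_M/M]$ is, after passing to a smooth presentation, modelled by the morphism of dg-schemes $\mathbf{X}_P\to\mathbf{X}_M$ (together with the flat morphism $P\to M$ on the group side, which contributes nothing derived). So it suffices to show $\mathbf{X}_P\to\mathbf{X}_M$ has finite Tor-dimension. Here I would use exactly the computation in the proof of Lemma \ref{fiberproducttorindep}(i): the defining ideal of $\mathbf{X}_P$ inside $\mathbf{X}_M\times_{(M\times\mathfrak m)}(M\times U\times\mathfrak m\times\mathfrak u)\cong\mathbb A^{2s}_{\mathbf X_M}$ is, by the multiplication isomorphism $\psi$, cut out by the $s$ elements $g_1,\dots,g_s$; hence the Koszul complex $K(g_1,\dots,g_s)$ on $\mathbb A^{2s}_{\mathbf X_M}$ is a finite complex of flat $\mathcal O_{\mathbf X_M}$-modules computing $\mathbf X_P$ (as a derived fiber, i.e.\ as a possibly non-classical dg-scheme — we do \emph{not} need it to be a resolution of $\mathcal O_{X_P}$ here, only that it is the derived pullback of $\{0\}$). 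Thus $\mathcal O_{\mathbf X_P}$ is a perfect, in particular finite Tor-amplitude, $\mathcal O_{\mathbf X_M}$-module, so $L\alpha^\ast$ has bounded amplitude and preserves ${\bf D}^+_{\rm QCoh}$. Since $\alpha$ is of finite type and $L\alpha^\ast$ of bounded Tor-amplitude sends perfect complexes to perfect complexes and pseudo-coherent to pseudo-coherent, it preserves ${\bf D}^b_{\rm Coh}$ as well.

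The main obstacle, and the part that deserves genuine care, is the $R\beta_\ast$ half: one must make sure the reduction to a smooth presentation of $[\mathbf{X}_G/G]$ is legitimate (flat base change for $R\beta_\ast$ along the presentation, which holds because $\beta$ is schematic and quasi-compact, so one is in the situation of \cite[1.2]{DG}) and that the cohomological amplitude bound for a proper morphism of Noetherian dg-schemes is available in the form needed — it is, because the amplitude of $R\Gamma$ along the fibers is controlled by Grothendieck vanishing applied to the underlying classical proper scheme, whose dimension is bounded uniformly (the fibers of $\tilde X_P\to X_G$ have dimension at most $\dim G/P$). The rest is bookkeeping: combining the two halves and invoking that $[\mathbf X_P/P]$, $[\mathbf X_M/M]$, $[\mathbf X_G/G]$ are all Noetherian algebraic dg-stacks with affine diagonal so that the formalism of ${\bf D}^+_{\rm QCoh}$ and ${\bf D}^b_{\rm Coh}$ from \cite{DG}, \cite{LM} applies without friction.
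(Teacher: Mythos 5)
Your argument is essentially the paper's proof, only more verbose: the $R\beta_\ast$ half is dispatched from $\beta$ being schematic and proper (the paper devotes one line to this), and the $L\alpha^\ast$ half rests on exactly the Koszul-complex computation from Lemma \ref{fiberproducttorindep}(i), with coherence of the pullback checked over the ambient affine $P\times\Lie P$. One small slip: the Koszul complex has entries that are finite free over $\mathcal{O}_{\mathbb{A}^{2s}_{\mathbf{X}_M}}$, hence \emph{flat} but not finitely generated over $\mathcal{O}_{\mathbf{X}_M}$, so $\mathcal{O}_{\mathbf{X}_P}$ is of finite Tor-amplitude but not \emph{perfect} over $\mathcal{O}_{\mathbf{X}_M}$ — luckily you only invoke finite Tor-amplitude in what follows, and pseudo-coherence for the coherent case comes instead from $\alpha'$ being of finite type between Noetherian dg-schemes, so the conclusion stands.
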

\begin{proof}
In the case of $R\beta_\ast$ the claim directly follows from the fact that $\beta$ is proper and schematic.
We prove the claim for $L\alpha^\ast$. As the properties of belonging to ${\bf D}^+_{\rm QCoh}(-)$ or ${\bf D}^b_{\rm Coh}(-)$ can be checked over the smooth cover $\mathbf{X}_P$ of $[\mathbf{X}_P/P]$ it is enough to show that pullback along the morphism
\[\alpha':\mathbf{X}_P\longrightarrow \mathbf{X}_M\]
preserves ${\bf D}^+_{\rm QCoh}(-)$ and ${\bf D}^b_{\rm Coh}(-)$. 
Moreover, both properties may be checked after forgetting the $\mathcal{O}_{\mathbf{X}_P}$-module structure, and only remembering the $\mathcal{O}_{P\times\Lie P}$-modules structure. As in the proof of Lemma \ref{fiberproducttorindep} we find that $\mathcal{O}_{\mathbf{X}_P}$ can be represented by a finite complex $\mathcal{F}_P^\bullet$ of flat $\mathcal{O}_{\mathbf{X}_M}$-modules that are coherent as $\mathcal{O}_{P\times \Lie P}$-modules, and $L\alpha'^\ast$ is identified with the functor $-\otimes^{L}_{\mathcal{O}_{\mathbf{X}_M}}\mathcal{F}_P^\bullet$.  The claim follows from this. 
\end{proof}
\begin{rem}
\noindent (a) We point out that using the definition of the derived categories as in \cite{DG} has the advantage that there is a canonical pullback functor between the derived categories of quasi-coherent sheaves on stacks. 
At least as long as we only consider non-derived stacks (as e.g.~$[X_G/G]$ or $[X_G^{\rm reg}/G]$) there is a definition of the derived category of quasi-coherent sheaves in \cite{LM}. However, the definition of the pullback functor in loc.~cit.~meets some problems. Lemma \ref{preservesboundedbelow} essentially tells us that we could as well use the definition of \cite{LM} and only consider complexes that are bounded below.

\noindent (b) The explicit description of the pullback in the proof of Lemma \ref{preservesboundedbelow} could also be used to completely bypass the use of derived schemes, or derived stacks. 
In the end we will be interested in the composition $R\beta_\ast\circ L\alpha^\ast$ rather than in the individual functors. Hence instead of $L\alpha^\ast$ we might use the construction $-\otimes^L_{\mathcal{O}_{X_M}}\mathcal{F}^\bullet_P$ and carefully define the $\mathcal{O}_{[X_G/G]}$-action after push-forward (and after descent to the stack quotient).
However, it seems to be more natural to use derived stacks than such an explicit workaround. 
\end{rem}

Let $P_1\subset P_2$ be parabolic subgroups of a reductive group $G$ with Levi-quotients $M_i$, $i=1,2$. We write $P_{12}\subset M_2$ for the image of $P_1$ in $M_2$. Then $P_{12}\subset M_2$ is a parabolic subgroup with Levi quotient $M_1$. We obtain a diagram
\begin{equation}\label{compisitiondiagram}
\begin{aligned}
\begin{xy}
\xymatrix{
[\mathbf{X}_{P_{1}}/P_{1}] \ar[r]_{\beta}\ar[d]^{\alpha} \ar@/_ 1cm/[dd]_{\alpha_{1}} \ar@/^ .5cm/[rr]^{\beta_{1}}& [\mathbf{X}_{P_{2}}/P_{2}] \ar[r]_{\beta_{2}}\ar[d]^{\alpha_{2}} & [\mathbf{X}_{G}/G]\\
[\mathbf{X}_{P_{12}}/P_{12}] \ar[r]^{\beta_{12}}\ar[d]^{\alpha_{12}} & [\mathbf{X}_{M_2}/M_2] \\
[\mathbf{X}_{M_1}/M_1]
}
\end{xy}
\end{aligned}
\end{equation}
where the upper left diagram is cartesian (in the category of dg-stacks). 
\begin{lem}
In the above situation we have a natural isomorphism
\begin{equation}\label{compoinductionGalois}
(R\beta_{2\,\ast}\circ L\alpha_{2}^\ast) \circ (R\beta_{12\,\ast}\circ L\alpha_{12}^\ast)\xrightarrow\cong R\beta_{1\,\ast}\circ L\alpha_{1}^\ast
\end{equation}
of functors $${\bf D}_{\rm QCoh}([\mathbf{X}_{M_1}/M_1])\longrightarrow {\bf D}_{\rm QCoh}([\mathbf{X}_{G}/G]).$$  
\end{lem}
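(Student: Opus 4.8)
The plan is to deduce the isomorphism from base change together with transitivity of pullback and of pushforward, applied to the diagram $(\ref{compisitiondiagram})$. First I would unwind the two composite functors using the evident factorizations $\beta_1 = \beta_2\circ\beta$ and $\alpha_1 = \alpha_{12}\circ\alpha$, which are immediate from the diagram. Transitivity of $L(-)^\ast$ gives a natural isomorphism $L\alpha_1^\ast \cong L\alpha^\ast\circ L\alpha_{12}^\ast$, and transitivity of $R(-)_\ast$ gives $R\beta_{1\,\ast}\cong R\beta_{2\,\ast}\circ R\beta_\ast$. Substituting these into the right-hand side of $(\ref{compoinductionGalois})$, the target becomes $R\beta_{2\,\ast}\circ R\beta_\ast\circ L\alpha^\ast\circ L\alpha_{12}^\ast$, while the source is $R\beta_{2\,\ast}\circ L\alpha_2^\ast\circ R\beta_{12\,\ast}\circ L\alpha_{12}^\ast$.

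Comparing the two, it suffices to produce a natural isomorphism of functors
\[
L\alpha_2^\ast\circ R\beta_{12\,\ast}\xrightarrow{\ \cong\ } R\beta_\ast\circ L\alpha^\ast
\]
on ${\bf D}_{\rm QCoh}([\mathbf{X}_{P_{12}}/P_{12}])$, i.e.\ flat base change along the upper-left cartesian square of $(\ref{compisitiondiagram})$. The square is cartesian in the category of dg-stacks by the preceding lemma (part (ii), applied to $P_1\subset P_2$ inside $G$, so that $P_{12}$ is the image of $P_1$ in $M_2$), and the morphism $\beta\colon[\mathbf{X}_{P_1}/P_1]\to[\mathbf{X}_{P_2}/P_2]$ is schematic and proper, again by the preceding lemma (part (i)). For schematic proper morphisms of algebraic dg-stacks and an arbitrary base change, the derived base-change natural transformation $L\alpha_2^\ast\circ R\beta_{12\,\ast}\Rightarrow R\beta_\ast\circ L\alpha^\ast$ is an isomorphism; in the setting of \cite{DG} this is part of the standard six-functor formalism for quasi-coherent sheaves on (derived) algebraic stacks, and it can also be checked after the smooth cover $\mathbf{X}_{M_2}\to[\mathbf{X}_{M_2}/M_2]$, reducing to base change for the schematic proper morphism of dg-schemes obtained by pulling back $\beta$. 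Assembling these isomorphisms, and checking that the resulting composite is independent of the order in which we perform the substitutions (which follows from the compatibility of the transitivity isomorphisms for $L(-)^\ast$ and $R(-)_\ast$ with base change, i.e.\ the usual cocycle/pentagon coherences), yields $(\ref{compoinductionGalois})$.

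The main obstacle is the base-change statement in the derived-stack setting: one must know that $L\alpha_2^\ast\circ R\beta_{12\,\ast}\to R\beta_\ast\circ L\alpha^\ast$ is an isomorphism for a schematic proper morphism along an \emph{arbitrary} (non-flat, and non-classical) base change of dg-stacks. Here one genuinely uses that everything is phrased derived-categorically: the square is cartesian as dg-stacks (not merely on underlying classical stacks), so no higher Tor corrections are lost, and properness plus schematicness of $\beta$ guarantee that $R\beta_\ast$ commutes with the relevant colimits and with pullback. I would cite the base-change results for quasi-coherent sheaves from \cite{DG} (or reduce to the dg-scheme case via the smooth cover and cite base change there), and then the remaining work — bookkeeping of the transitivity isomorphisms and their coherences — is routine.
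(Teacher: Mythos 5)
Your proof is correct and takes essentially the same approach as the paper: factor $\beta_1=\beta_2\circ\beta$ and $\alpha_1=\alpha_{12}\circ\alpha$, use transitivity of $L(-)^\ast$ and $R(-)_\ast$, and reduce to the base change isomorphism for the (derived) cartesian square in $(\ref{compisitiondiagram})$, citing \cite{DG}. One small imprecision: part (i) of the preceding lemma applies directly to $\beta_{12}\colon[\mathbf{X}_{P_{12}}/P_{12}]\to[\mathbf{X}_{M_2}/M_2]$ (a parabolic inside the reductive $M_2$), not to $\beta$ whose target $[\mathbf{X}_{P_2}/P_2]$ is a quotient by the non-reductive $P_2$; schematicness and properness of $\beta$ then follow because $\beta$ is the base change of $\beta_{12}$ — which is the route the paper takes.
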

\begin{proof}
The upper right square is (derived) cartesian and $\beta_{12}$ is schematic and proper, in particular it is quasi-compact and quasi-separated. Hence \cite[Proposition 1.3.6, 1.3.10]{DG} implies that the natural base change morphism $$L\alpha_{2}^\ast \circ R\beta_{12\,\ast}\longrightarrow R\beta_\ast \circ L\alpha^\ast$$ is an isomorphism. We obtain the natural isomorphism
\begin{align*}
(R\beta_{2\,\ast}\circ L\alpha_{2}^\ast) \circ (R\beta_{12\,\ast}\circ L\alpha_{12}^\ast)=R\beta_{2\,\ast}\circ (L\alpha_{2}^\ast \circ R\beta_{12\,\ast})\circ L\alpha_{12}^\ast\\
\xrightarrow\cong R\beta_{2\,\ast}\circ (R\beta_\ast \circ L\alpha^\ast)\circ L\alpha_{12}^\ast =R\beta_{1\,\ast}\circ L\alpha_{1}^\ast.
\end{align*}
\end{proof}
We point out that working only with classical schemes, we still obtain a natural transformation between the corresponding functors: if we consider the underlying classical stacks in the diagram $(\ref{compisitiondiagram})$ and keep the same notations for the morphisms by abuse of notation, then there still is a natural base change morphism, but it is not necessarily an isomorphism as the fiber product might not be Tor-independent. However, it becomes an isomorphism when we restrict the functors to the regular locus, i.e.~we consider them as functors $${\bf D}_{\rm QCoh}([X_{M_1}^{\rm reg}/M_1])\rightarrow {\bf D}_{\rm QCoh}([X_{G}^{\rm reg}/G]),$$ as in this case the classical and the derived picture coincide. Indeed, after the restriction to the regular locus the derived fiber product equals the classical fiber product by the Tor-independence in Lemma \ref{fiberproducttorindep}.

\subsection{Duals}

We continue to assume that $G$ is a reductive group over $C$ and fix a Borel subgroup $B\subset G$. Recall that we have defined a morphism
\[\beta_B:[\mathbf{X}_B/B]\longrightarrow [X_G/G]\]
of (derived) stacks. In this subsection we will analyze the (derived) direct image of the structure sheaf under this morphism that will play an important role\footnote{This complex of sheaves is called the \emph{coherent Springer sheaf} in the work of Ben-Zvi--Chen--Helm--Nadler \cite{BZCHN} and of Zhu \cite{Zhu}.} later on. 
For a (derived) scheme or a (derived) stack $X$ we will write $\omega_X\in {\bf D}_{\rm QCoh}(X)$ for the dualizing complex and $\mathbb{D}_X(-)=R\mathcal{H}om_{\mathcal{O}_X}(-,\omega_X)$ for the usual duality functor.
The main point of this section is to give evidence to the following conjecture (compare also \cite[Conjecture 3.35]{BZCHN} and \cite[Conjecture 4.5.1, Remark 4.5.4]{Zhu}).
\begin{conj}\label{conjdegzero}
The derived direct image $R\beta_\ast(\mathcal{O}_{[\mathbf{X}_B/B]})\in {\bf D}_{\rm Coh}^b([X_G/G])$ is concentrated in degree zero and there is an isomorphism
$$R\beta_\ast(\mathcal{O}_{[\mathbf{X}_B/B]})\cong \mathbb{D}_{[X_G/G]}(R\beta_\ast(\mathcal{O}_{[\mathbf{X}_B/B]})).$$
In particular the pullback of $R\beta_\ast(\mathcal{O}_{[\mathbf{X}_B/B]})$ to $X_G$ is concentrated in degree zero and is a maximal Cohen-Macaulay module. 
\end{conj}

Let us write $\tilde\beta_B:\tilde{\mathbf{X}}_B\rightarrow X_G$ for the $G$-equivariant model of $\beta_B$, as in $(\ref{GmodelforbetaP})$. The conjecture is obviously equivalent to the claim that $R\tilde\beta_{B,\ast}\mathcal{O}_{\tilde{\mathbf{X}}_B}$ is concentrated in degree zero and that there is a $G$-equivariant isomorphism $$R\tilde\beta_{B,\ast}\mathcal{O}_{\tilde{\mathbf{X}}_B}\cong \mathbb{D}_{X_G}(R\tilde\beta_{B,\ast}\mathcal{O}_{\tilde{\mathbf{X}}_B})[-\dim G].$$
This isomorphism then implies that the dual of the sheaf $R\tilde\beta_{B,\ast}\mathcal{O}_{\tilde{\mathbf{X}}_B}$ is concentrated in a single degree and hence, by \cite[Tag 0B5A]{stacksproject}, it is a Cohen-Macaulay module. 
As its support obviously is all of $X_G$ it is then a maximal Cohen-Macaulay module.

As in the Remark \ref{GmodelforbetaB2} we write $Y\rightarrow G\times \Lie G$ for the scheme parametrizing triples $(\phi,N,gB)\in G\times\mathcal{N}_G\times G/B$ such that $\phi\in g^{-1}Bg$ and $N\in {\rm Ad}(g^{-1})\Lie U$, where $U\subset B$ is the unipotent radical.
Recall that $Y$ is smooth (of dimension $\dim G+\dim G/B$) as the projection to the flag variety $\pr:Y\rightarrow G/B$ is the composition of a $B$-bundle and a geometric vector bundle (or rank $\dim U$). 
We consider the diagram.
\[
\begin{xy}
\xymatrix{
\tilde{\mathbf{X}}_B \ar[d]_{\tilde\beta} \ar[r]^{\iota_B} & Y \ar[d]^f  \\
X_G\ar[r]^{\iota_G} & G\times\Lie G.
}
\end{xy}
\]
and write $\mathcal{F}^\bullet=R\iota_{B,\ast}\mathcal{O}_{\tilde{\mathbf{X}}_B}$. The main observation is that $\mathcal{F}^\bullet$ is represented by a Koszul complex and hence is selfdual (up to a shift). 
In order to make this precise, recall that an algebraic representation of $B$ defines a $G$-equivariant vector bundle on $G/B$. We write $\mathcal{U}^\vee$ for the $G$-equivariant vector bundle on $G/B$ defined by the canonical $B$ representation on $\mathfrak{u}^\vee$. Here $\mathfrak{u}$ denotes the Lie algebra of $U$ (considered as a $C$-vector space), and $\mathfrak{u}^\vee$ denotes its dual. 
In particular $\mathcal{U}^\vee$ admits a filtration whose graded pieces are the lines bundles $\mathcal{L}_\alpha$ on $G/B$ associated to the \emph{negative} (with respect to $B$) roots $\alpha$ of $G$. 
\begin{lem}
\noindent (i) The complex $\mathcal{F}^\bullet$ is represented by a Koszul complex
\[ \dots\longrightarrow \bigwedge\nolimits^i \pr^\ast(\mathcal{U}^\vee) \longrightarrow \bigwedge\nolimits^{i-1}\pr^\ast(\mathcal{U}^\vee)\longrightarrow\dots \]
where the term $\bigwedge\nolimits^i \pr^\ast(\mathcal{U}^\vee)$ sits in (cohomological) degree $-i$. \\
\noindent (ii) There is a $G$-equivariant isomorphism 
\[\mathcal{F}^\bullet\cong \mathbb{D}_{Y}(\mathcal{F}^\bullet)[-\dim G].\]
\end{lem}
\begin{proof}
(i) By definition the structure sheaf of the derived scheme $\mathbf{X}_B\subset B\times \Lie U$ is (as a complex of $\mathcal{O}_{B\times \Lie U}$-modules) quasi-isomorphic to the $B$-equivariant Koszul complex
\[ \dots\longrightarrow \bigwedge\nolimits^i \mathfrak{u}^\vee\otimes_C \mathcal{O}_{B\times \Lie U} \longrightarrow \bigwedge\nolimits^{i-1}\mathfrak{u}^\vee\otimes \mathcal{O}_{B\times \Lie U}\longrightarrow\dots \]
defined by the entries of ${\rm Ad}(\phi)N-q^{-1}N$ for the universal pair $(\phi,N)$ over $B\times \Lie U$ (where, by abuse of notation, we also write $\mathfrak{u}$ for the $C$-vector space underlying the scheme $\Lie U$). 
The claim follows from the identification $[Y/G]=[B\times \Lie U]$, compare also Remark \ref{GmodelforbetaB2}. \\
\noindent (ii) The claim on duality follows from the usual self-duality of Koszul complexes which is induced by the perfect pairing
\[ \bigwedge\nolimits^i \pr^\ast(\mathcal{U}^\vee)\times  \bigwedge\nolimits^{\dim G/B-i} \pr^\ast(\mathcal{U}^\vee)\longrightarrow \bigwedge\nolimits^{\dim G/B} \pr^\ast(\mathcal{U}^\vee)\]
and the identification
\[\bigwedge\nolimits^{\dim G/B} \pr^\ast{\mathcal{U}^\vee}=\pr^\ast\big(\bigwedge\nolimits^{\dim G/B}\mathcal{U}^\vee\big)=\pr^\ast \omega_{G/B}[-\dim G/B]=\omega_Y[-\dim Y].\]
\end{proof}
\begin{conj}\label{conjdeg<=0}
The complex $Rf_\ast \mathcal{F}^\bullet$ is concentrated in non-negative degrees.
\end{conj}

\begin{prop}
Conjecture \ref{conjdeg<=0} implies Conjecture \ref{conjdegzero}.
\end{prop}
\begin{proof}
The morphisms $\iota_G$ and $f$ satisfy Grothendieck duality, as $\iota_G$ is a complete intersection and $f$ factors into a complete intersection and a smooth morphism. 
Hence we have the following isomorphisms in ${\bf D}_{\rm QCoh}(G\times\lieg)$:
\begin{align*}
R\iota_{G,\ast}(R\tilde{\beta}_{B,\ast}(\mathcal{O}_{\tilde{\mathbf{X}}_B}))=Rf_\ast\mathcal{F}^\bullet &\cong Rf_\ast(\mathbb{D}_{Y}(\mathcal{F}^\bullet)[-\dim G])=Rf_\ast(\mathbb{D}_{Y}(\mathcal{F}^\bullet))[-\dim G]\\
&\cong \mathbb{D}_{G\times\lieg}(Rf_\ast\mathcal{F}^\bullet)[-\dim G]\\&=\mathbb{D}_{G\times\lieg}(R\iota_{G,\ast}R\tilde{\beta}_{B,\ast}(\mathcal{O}_{\tilde{\mathbf{X}}_B}))[-\dim G]\\
&\cong R\iota_{G,\ast}\big(\mathbb{D}_{X_G}(R\tilde{\beta}_{B,\ast}(\mathcal{O}_{\tilde{\mathbf{X}}_B}))[-\dim G]\big).
\end{align*}
Assuming Conjecture \ref{conjdeg<=0} the complex $Rf_\ast\mathcal{F}^\bullet$ is concentrated in degrees $(-\infty,0]$, and hence so is $R\tilde{\beta}_{B,\ast}(\mathcal{O}_{\tilde{\mathbf{X}}_B})$, as $\iota_{G}$ is affine. 
It follows that $\mathbb{D}_{X_G}(R\tilde{\beta}_{B,\ast}(\mathcal{O}_{\tilde{\mathbf{X}}_B}))$ is concentrated in degrees $[-\dim G,+\infty)$ and hence $$R\iota_{G,\ast}\big(\mathbb{D}_{X_G}(R\tilde{\beta}_\ast(\mathcal{O}_{\tilde{\mathbf{X}}_B}))[-\dim G]\big)$$ is concentrated in degrees $[0,+\infty)$. 
Hence $Rf_\ast\mathcal{F}^\bullet$ is concentrated in degree zero and, again using that $\iota_G$ is affine, the same holds true for $R\tilde{\beta}_{B,\ast}(\mathcal{O}_{\tilde{\mathbf{X}}_B})$.

But as $R\tilde{\beta}_{B,\ast}(\mathcal{O}_{\tilde{\mathbf{X}}_B})$ is concentrated in a single degree the isomorphism 
\[R\iota_{G,\ast}(R\tilde{\beta}_{B,\ast}(\mathcal{O}_{\tilde{\mathbf{X}}_B}))\cong R\iota_{G,\ast}\big(\mathbb{D}_{X_G}(R\tilde{\beta}_{B,\ast}(\mathcal{O}_{\tilde{\mathbf{X}}_B}))[-\dim G]\big)\]
is in fact a $G$-equivariant isomorphism of sheaves on $G\times\lieg$ (not just an isomorphism in the derived category) and hence restricts to a $G$-invariant isomorphism of sheaves
\[R\tilde{\beta}_{B,\ast}(\mathcal{O}_{\tilde{\mathbf{X}}_B})\cong \mathbb{D}_{X_G}(R\tilde{\beta}_{B,\ast}(\mathcal{O}_{\tilde{\mathbf{X}}_B}))[-\dim G].\]
on $X_G$. As we have seen above these claims are equivalent to Conjecture \ref{conjdegzero}. 
\end{proof}

Unfortunately we are not able to prove Conjecture \ref{conjdeg<=0} in general. Instead we will show that this conjecture would follow from a combinatorial statement about roots. As $G\times \lieg$ is affine it is enough to prove that $H^i(G\times \lieg,Rf_\ast\mathcal{F}^\bullet)$ vanishes for $i>0$. This cohomology is computed by a spectral sequence
\[E_1^{i,j}=H^j(Y,\mathcal{F}^i)\Longrightarrow H^{i+j}(G\times\lieg,Rf_\ast\mathcal{F}^\bullet)\]
and hence it is enough to show that 
$H^j(Y,\bigwedge\nolimits^i\pr^\ast(\mathcal{U}^\vee))=0\ \text{for}\ j>i$ (note that $\mathcal{F}^i=\bigwedge^{-i}\pr^\ast(\mathcal{U}^\vee)$).
We can compute this cohomology group after push-forward along $Y\hookrightarrow G\times \mathcal{N}_G\times G/B$ as follows.
Let us write $\bar{\mathcal{U}}$ respectively $\bar{\mathcal{B}}$ for $G$-equivariant vector bundles on $G/B$ associated to the $B$-representations on $\mathfrak{g}/\mathfrak{b}$ respectively on $\mathfrak{g}/\mathfrak{u}$, where $\mathfrak{b}=\Lie B$. 
Note that $\mathcal{O}_Y$ is, as sheaf on $G\times \mathcal{N}_G\times G/B$ quasi-isomorphic to the Koszul complex
\[\dots\longrightarrow \bigwedge\nolimits^i g^\ast(\bar{\mathcal{U}}^\vee\otimes\bar{\mathcal{B}}^\vee)\longrightarrow \bigwedge\nolimits^{i-1} g^\ast(\bar{\mathcal{U}}^\vee\otimes\bar{\mathcal{B}}^\vee)\longrightarrow \dots \]
where again the $\bigwedge\nolimits^i$ term appears in degree $-i$, and where $g:G\times\mathcal{N}_G\times G/B\rightarrow G/B$ denotes the canonical projection. 
By the projection formula (using that $g$ is affine and flat base change) we find that
\begin{align*}
H^j(Y,\mathcal{F}^{-i})&=H^j(G\times\mathcal{N}_G\times G/B,\bigwedge\nolimits^ig^\ast(\mathcal{U})^\vee\otimes \mathcal{O}_Y)\\
&=H^j(G\times \mathcal{N}_G\times G/B, \bigwedge\nolimits^ig^\ast(\mathcal{U})^\vee\otimes \bigwedge\nolimits^\bullet  g^\ast(\bar{\mathcal{U}}^\vee\otimes\bar{\mathcal{B}}^\vee))\\&=H^j(G/B,\bigwedge\nolimits^i\mathcal{U}^\vee\otimes \bigwedge\nolimits^\bullet  (\bar{\mathcal{U}}^\vee\otimes\bar{\mathcal{B}}^\vee))\otimes_C \Gamma(G\times \mathcal{N}_G,\mathcal{O}_{G\times\mathcal{N}})
\end{align*}
and again we can use a spectral sequence to compute this cohomology group. Hence we have to show that
\[H^\bullet(G/B,\bigwedge\nolimits^a\mathcal{U}^\vee\otimes\bigwedge\nolimits^b \bar{\mathcal{U}}^\vee\otimes \bigwedge\nolimits^c\bar{\mathcal{B}}^\vee)=0\ \text{for}\ \bullet>a+b+c.\]
As $\bar{\mathcal{B}}^\vee$ is an extension of $\bar{\mathcal{U}}^\vee$ and a power of the trivial line bundle it is enough to prove that 
\[H^\bullet(G/B,\bigwedge\nolimits^a\mathcal{U}^\vee\otimes\bigwedge\nolimits^b \bar{\mathcal{U}}^\vee\otimes \bigwedge\nolimits^c\bar{\mathcal{U}}^\vee)=0\ \text{for}\ \bullet>a+b+c.\]
We compute this cohomology group in terms of the cohomology of equivariant line bundles: note that the $G$-equivariant vector bundle $\bigwedge\nolimits^a\mathcal{U}^\vee\otimes\bigwedge\nolimits^b \bar{\mathcal{U}}^\vee\otimes \bigwedge\nolimits^c\bar{\mathcal{U}}^\vee$ has a filtration whose subquotients are $G$-equivariant lines bundles $\mathcal{L}_{\lambda}\otimes\mathcal{L}_\mu\otimes\mathcal{L}_\nu=\mathcal{L}_{\lambda+\mu+\nu}$, where $\lambda$ is a sum of $a$ pairwise distinct negative roots, $\mu$ is a sum of $b$ pairwise distinct positive roots and $\nu$ is a sum of $c$ pairwise distinct positive roots. It hence suffices to show that $H^\bullet(G/B,\mathcal{L}_{\lambda+\mu+\nu})$ vanishes in degrees larger than $a+b+c$ in this case. 
By the Borel-Bott-Weil theorem the cohomology of these line bundles is computed as follows: we write $w\cdot\kappa=w(\kappa+\rho)-\rho$ for the dot action of the Weyl group $W$ on $X^\ast(T)$, where $\rho$ is the half sum of the positive roots. 
Then there is either an element $1\neq w\in W$ such that $w\cdot\kappa=\kappa$, or there is a unique $w\in W$ such that $w\cdot \kappa$ is dominant. In the first case $H^\bullet (G/B,\mathcal{L}_{\kappa})=0$ and in the second case $H^\bullet(G/B,\mathcal{L}_{\kappa})$ is concentrated in degree $\ell(w)$, where, as usual, $\ell(w)$ denotes the length of the Weyl group element $w$.
We hence obtain a combinatorial claim about roots which allows us to check Conjecture \ref{conjdeg<=0}. At least for ${\rm GL}_2$ and ${\rm GL}_3$ we can explicitly check this claim about roots. 
\begin{prop}
Conjecture \ref{conjdegzero} holds true for ${\rm GL}_2$ and ${\rm GL}_3$.
\end{prop}
\begin{proof}
The above discussion implies that it is enough to prove the following claim: 
Let $a,b,c\geq 0$ and let $\lambda$ be a sum of $a$ pairwise distinct negative roots, $\mu$ be a sum of $b$ pairwise distinct positive roots and $\nu$ be a sum of $c$ pairwise distinct positive roots. If there is some (necessarily unique) $w\in W$ such that $w\cdot(\lambda+\mu+\nu)$ is dominant, then $\ell(w)\leq a+b+c$.

In the ${\rm GL}_2$ case this computation is rather trivial. In the ${\rm GL}_3$ case $G/B$ has dimension $3$ and the claim is trivially satisfied for $a+b+c\geq 3$. Moreover, if $a+b+c=0$, then $\mathcal{L}_{\lambda+\mu+\nu}=\mathcal{O}_{G/B}$ and again the claim is trivial. We are left to check the cases $a+b+c=1$ and $a+b+c=2$. We write $\alpha$ and $\beta$ for the two simple positive roots, and $s_\alpha,s_\beta\in W$ for the corresponding reflections.

If $a+b+c=1$, then $\mathcal{L}_{\lambda+\mu+\nu}\in\{\mathcal{L}_\alpha,\mathcal{L}_\beta, \mathcal{L}_{\alpha+\beta}, \mathcal{L}_{-\alpha},\mathcal{L}_{-\beta},\mathcal{L}_{-\alpha-\beta}\}$. 
The root $\alpha$ is fixed, under the dot action, by $s_\beta$,  the root $\beta$ is fixed by $s_\alpha$ and $\alpha+\beta$ is dominant. Hence $\mathcal{L}_\alpha,\mathcal{L}_\beta$ and $\mathcal{L}_{\alpha+\beta}$ have no higher cohomology. 
On the other hand $s_\alpha\cdot(-\alpha)=0$ and $s_\beta\cdot(-\beta)=0$ are dominant and hence $\mathcal{L}_{-\alpha}$ and $\mathcal{L}_{-\beta}$ have cohomology in degree $1=\ell(s_\alpha)=\ell(s_\beta)$. Moreover, $-\alpha-\beta=-\rho$ is the fix point for the dot action and hence $\mathcal{L}_{-\alpha-\beta}$ has no cohomology. 

Now assume $a+b+c=2$. We list the possible weights $\lambda+\mu+\nu$ in this case:  
\begin{enumerate}
\item[-] if $a=b=0, c=2$ (or if $a=c=0, b=2$), we have to check the weights
$\alpha+\beta,2\alpha+\beta, \alpha+2\beta$.
\item[-] if $a=2, b=c=0$, we have to check the weights $-\alpha-\beta,-2\alpha-\beta, -\alpha-2\beta$.
\item[-] if $a=0, b=c=1$, we have to check the weights $2\alpha, 2\beta, \alpha+\beta,2\alpha+\beta, \alpha+2\beta,2\alpha+2\beta$.
\item[-] if $a=b=1, c=0$ (or if $a=c=1,b=0$) we have to check the weights $0,\alpha-\beta,-\beta,\beta-\alpha,-\alpha,\beta,\alpha$.
\end{enumerate}
More precisely, we have to check that for any of these weights $\kappa$, either $\kappa$ is fixed by some $1\neq w$ under the dot action, or $w\cdot \kappa$ is dominant for some $w$ of length less or equal to $2$. This is done in the following table.

\[
\begin{array}{c|c|c}
\kappa&w\ \text{such that}\ w\cdot\kappa\ \text{is dominant}&1\neq w\ \text{fixing}\ \kappa\\
\hline
2\alpha&s_\beta&-\\
2\beta &s_\alpha&-\\
\alpha+\beta&1&-\\
2\alpha+\beta&1&-\\
\alpha+2\beta&1&-\\
2\alpha+2\beta &1&-\\
\alpha&-&s_\beta\\
\beta&-&s_\alpha\\
0&1&-\\
\alpha-\beta&s_\beta&-\\
\beta-\alpha&s_\alpha&-\\
-\alpha&s_\alpha&-\\
-\beta&s_\beta&-\\
-\alpha-\beta&-&\text{any}\ w\\
-2\alpha-\beta&s_\beta s_\alpha&-\\
-\alpha-2\beta&s_\alpha s_\beta&-
\end{array}
\]
\end{proof}
\begin{rem}\label{remarkgenericLparam}
Even though we can not prove Conjecture \ref{conjdegzero} in general, it follows that the restriction of $R\tilde\beta_{B,\ast}(\mathcal{O}_{\tilde{\mathbf{X}}_B})$ to the regular locus $X^{\rm reg}_G\subset X_G$ is a maximal Cohen-Macaulay module. 
In particular is it flat in the neighborhood of points $x=(\phi,N)\in X^{\rm reg}_G$ such that $X_G$ is smooth at $x$, by the Auslander-Buchsbaum formula. 
For $G=\GL_n$ this is the case if $(\phi^{\rm ss},N)$ is the L-parameter of a generic representation, see \cite[Lemma 1.3.2.(1)]{BLGGT}.
For a parabolic subgroup $P\subset G$ the morphism $\beta_P:\tilde X_P\rightarrow X_G$ is clearly not flat in general, as its fiber dimension can jump. And even the finite morphism $\tilde X_P^{\rm reg}\rightarrow X_G^{\rm reg}$ is not flat: at the intersection points of two irreducible components of $X_G^{\rm reg}$ the number of points in the fiber (counted with multiplicity) can jump. 
\end{rem}

\section{Smooth representations and modules over the Iwahori-Hecke algebra}

Let $F$ be a finite extension of $\Q_p$ (or of $\mathbb{F}_p(\!(t)\!)$) with residue field $k_F$ and let $q=p^r=|k_F|$. In the following let $\mathbb{G}$ be a split reductive group over $F$ and write $G=\mathbb{G}(F)$. From now on we will assume that $C$ contains a square root $q^{1/2}$ of $q$. We fix a choice of this root. 

We will always fix $\mathbb{T}\subset \mathbb{B}\subset \mathbb{G}$ a split maximal torus and a Borel subgroup. By this choice we can define the dual group $\check G$ of $G$ considered as an algebraic group over $C$. Moreover, we denote by $\check T\subset \check B\subset \check G$ the dual torus, resp.~the dual Borel.
More generally, given a parabolic subgroup $\mathbb{P}\subset \mathbb{G}$ containing $\mathbb{B}$, we denote by $\check P\subset \check G$ the corresponding parabolic subgroup of the dual group.
We write $W=W_G=W(\mathbb{G},\mathbb{T})$ for the Weyl group of $(\mathbb{G},\mathbb{T})$.
If $\mathbb{P}\subset \mathbb{G}$ is a parabolic subgroup containing $\mathbb{B}$, then the choice of $\mathbb{T}$ defines a lifting of the Levi quotient $\mathbb{M}$ of $\mathbb{P}$ to a subgroup of $\mathbb{G}$. Similarly, we regard the dual group $\check M$ of $\mathbb{M}$ as a subgroup of $\check G$ containing the maximal torus $\check T$. We write $W_M\subset W$ for the Weyl group of $(\mathbb{M},\mathbb{T})$.

Let $\check G/\hspace{-.1cm}/\check G$ denote the GIT quotient of $\check G$ with respect to its adjoint action on itself. The inclusion $\check T\hookrightarrow \check G$ induces an isomorphism $\check T/W\cong \check G/\hspace{-.1cm}/\check G$. 
The projection $X_{\check G}\rightarrow {\check G}$ induces a map
\begin{equation}\label{maptoTmodW}
\chi=\chi_G: X_{\check G}\longrightarrow \check G\longrightarrow \check G/\hspace{-.1cm}/\check G=\check T/W
\end{equation}
which is $\check G$-equivariant and hence induces a map $\bar\chi=\bar\chi_{G}:[X_{\check G}/\check G]\rightarrow \check T/W$. 
Similarly, we obtain morphisms $$\chi_M:X_{\check M}\rightarrow \check T/W_M\ \ \text{and}\ \ \bar\chi_M:[X_{\check M}/\check M]\rightarrow \check T/W_M.$$

\subsection{Categories of smooth representations} 
Let us write ${\rm Rep}(G)$ for the category of smooth representations of $G$ on $C$ vector spaces. It is well known that ${\rm Rep}(G)$ has a decomposition into Bernstein blocks
\[{\rm Rep}(G)=\prod_ {[M,\sigma]\in\Omega(G)}{\rm Rep}_{[M,\sigma]}(G),\]
where $\Omega(G)$ is a set of equivalence classes of a Levi $M$ of $G$ and a cuspidal representation $\sigma$ of $M$, see \cite[III, 2.2]{Bernstein} for example.
We will restrict our attention to the Bernstein component ${\rm Rep}_{[T,1]}(G)$, where $1$ is the trivial representation of the torus $T$. Given $\pi\in{\rm Rep}(G)$ we write $\pi_{[T,1]}$ for its image under the projection to ${\rm Rep}_{[T,1]}(G)$.
Moreover, we will write $\mathfrak{Z}_G$ for the center of the category ${\rm Rep}_{[T,1]}(G)$, then 
\[\mathfrak{Z}_G\xrightarrow{\cong}\Gamma(\check G/\hspace{-.1cm}/\check G,\Ocal_{\check G/\hspace{-.1cm}/\check G}),\]
see below for an explicit description. 
This isomorphism allows us to identify the category $\mathfrak{Z}_G\text{-mod}$ of $\mathfrak{Z}_G$-modules with the category ${\rm QCoh}(\check T/W)$ of quasi-coherent sheaves on the adjoint quotient $\check G/\hspace{-0.1cm}/\check G=\check T/W$ of $\check G$, and the category $\mathfrak{Z}_G\text{-mod}_{\rm fg}$ of finitely generated $\mathfrak{Z}_G$-modules with the category ${\rm Coh}(\check T/W)$ of coherent sheaves on $\check T/W$.
We obtain an identification of derived categories
\begin{equation}
\begin{aligned}
{\bf D}(\mathfrak{Z}_G\text{-mod})&\cong {\bf D}_{\rm QCoh}(\check T/W),\\
{\bf D}^b(\mathfrak{Z}_G\text{-mod}_{\text{fg}})&\cong {\bf D}^b_{\rm Coh}(\check T/W).
\end{aligned}
\end{equation}
We use these identifications and the morphism $\bar\chi:[X_{\check G}/\check G]\rightarrow \check T/W$ to make ${\bf D}^+_{\rm QCoh}([X_{\check G}/\check G])$ and ${\bf D}^b_{\rm Coh}([X_{\check G}/\check G])$ into $\mathfrak{Z}_G$-linear categories.

If $\mathbb{P}\subset \mathbb{G}$ is a parabolic subgroup with Levi quotient $\mathbb{M}$, we write $$\iota_P^G={\rm Ind}_P^G(\delta_P^{1/2}\otimes -):{\rm Rep}(M)\longrightarrow{\rm Rep}(G)$$ for the normalized parabolic induction, and $\iota_{\overline{P}}^G$ for normalized parabolic induction of the opposite parabolic $\overline{P}$ of $P$ (note that the normalization uses the choice of $q^{1/2}$). These functors are exact and restrict to functors $${\rm Rep}_{[T_M,1]}(M)\longrightarrow {\rm Rep}_{[T,1]}(G)$$ (for any choice of a maximal split torus $T_M$ of $M$).
Using a splitting $\mathbb{M}\hookrightarrow \mathbb{P}\subset\mathbb{G}$ to the projection we obtain a morphism
\[\check M/\hspace{-.1cm}/\check M\longrightarrow \check G/\hspace{-.1cm}/\check G\]
which is obviously independent of the choice of $\mathbb{M}\hookrightarrow\mathbb{G}$. Then the functors $\iota_P^G$ and $\iota_{\overline{P}}^G$ are linear with respect to the morphism 
\[\mathfrak{Z}_G\cong\Gamma(\check G/\hspace{-.1cm}/\check G, \Ocal_{\check G/\hspace{-.1cm}/\check G})\longrightarrow\Gamma(\check M/\hspace{-.1cm}/\check M,\Ocal_{\check M/\hspace{-.1cm}/\check M})\cong\mathfrak{Z}_M,\]
see below for details.

Let us write ${\bf D}({\rm Rep}_{[T,1]}(G))$ respectively ${\bf D}^+({\rm Rep}_{[T,1]}(G))$ for the derived category, respectively for the bounded below derived category, of ${\rm Rep}_{[T,1]}(G)$. Moreover, we write ${\bf D}^b({\rm Rep}_{[T,1],{\rm fg}}(G))$ for the full subcategory of complexes whose cohomology is concentrated in bounded degrees and is finitely generated as a $C[G]$-module. Then $\iota_P^G$ and $\iota_{\overline{P}}^G$ induce functors
\begin{align*}
{\bf D}({\rm Rep}_{[T_M,1]}(M))&\longrightarrow {\bf D}({\rm Rep}_{[T,1]}(G))\\
{\bf D}^+({\rm Rep}_{[T_M,1]}(M))&\longrightarrow {\bf D}^+({\rm Rep}_{[T,1]}(G))\\
{\bf D}^b({\rm Rep}_{[T_M,1],{\rm fg}}(M))&\longrightarrow {\bf D}^b({\rm Rep}_{[T,1],{\rm fg}}(G))
\end{align*}
which we will also denote by $\iota_P^G$ respectively $\iota_{\overline{P}}^G$.

Given two parabolic subgroup $\mathbb{P}_1\subset\mathbb{P}_2$ of $\mathbb{G}$ with Levi quotient $\mathbb{M}_1$ respectively $\mathbb{M}_2$. We write $\mathbb{P}_{12}$ for the image of $\mathbb{P}_1$ in $\mathbb{M}_2$. 
Then we have natural isomorphisms
\begin{equation}\label{compoofinduction}
\begin{aligned}
\iota_{P_{2}}^{G}\circ \iota_{P_{12}}^{M_2}&\longrightarrow \iota_{P_{1}}^{G},\\
\iota_{\overline{P}_{2}}^{G}\circ \iota_{\overline{P}_{12}}^{M_2}&\longrightarrow \iota_{\overline{P}_{1}}^{G}
\end{aligned}
\end{equation}
of functors ${\bf D}({\rm Rep}_{[T_{M_1},1]}(M_1))\rightarrow {\bf D}({\rm Rep}_{[T,1]}(G))$.

Finally, recall that a Whittaker datum is a $G$-conjugacy class of tuples $(\mathbb{B},\psi)$, where $\mathbb{B}\subset \mathbb{G}$ is a Borel subgroup and $\psi:N\rightarrow C^\times$ is a generic character of $N=\mathbb{N}(F)$, where $\mathbb{N}\subset \mathbb{B}$ is the unipotent radical. 
As above we fix the choice of a Borel subgroup $\mathbb{B}$ and a maximal split torus $\mathbb{T}\subset\mathbb{G}$. 
For a parabolic $\mathbb{P}\subset\mathbb{G}$ containing $\mathbb{B}$ with Levi quotient $\mathbb{M}$ we write $\psi_M:N_M\rightarrow C^\times$ for the restriction of $\psi$ to the unipotent radical $N_M\subset N$ of the Borel $B_M=B\cap M$ of $M$. Note that the $M$-conjugacy class of $(B_M,\psi_M)$ does not depend on the choice of $\mathbb{M}\hookrightarrow\mathbb{G}$ (i.e.~on the choice of $\mathbb{T}$).

We can describe the above categories of representations in terms of modules over Iwahori-Hecke algebras. In order to do so, let us fix a hyperspecial vertex in the apartment of the Bruhat-Tits building of $G$ defined by the maximal torus $\mathbb{T}$, i.e.~we fix $\Ocal_F$-models of $(\mathbb{G},\mathbb{T})$. The choice of a Borel $\mathbb{B}$ then defines an  Iwahori subgroup $I\subset G$. We write ${\rm Rep}^I G$ for the category of  smooth representations $\pi$ of $G$ on $C$-vector spaces that are generated by their Iwahori fixed vectors $\pi^I$ and ${\rm Rep}_{\rm fg}^I G\subset {\rm Rep}^I G$ for the full subcategory of representations that are finitely generated (as $C[G]$-modules). 
It is well known that the category ${\rm Rep}^I G$ does not depend on the choice of $I$ and agrees with the Bernstein block ${\rm Rep}_{[T,1]}(G)$.

Let $\Hcal_G=\Hcal(G,I)={\rm End}_G(\text{c-ind}_I^G\mathbf{1}_I)$ denote the Iwahori-Hecke algebra. Then 
\[\pi\longmapsto \pi^I={\rm Hom}_G(\text{c-ind}_I^G\mathbf{1}_I,\pi)\]
induces an equivalence of categories between ${\rm Rep}_{[T,1]}(G)={\rm Rep}^I G$ and the category $\Hcal_G\text{-mod}$ of $\Hcal_G$-modules. This equivalence identifies ${\rm Rep}^I_{\rm fg} G$ and the full subcategory $\Hcal_G\text{-mod}_{\text{fg}}\subset \Hcal_G\text{-mod}$ of finitely generated $\Hcal_G$-modules. 
Moreover, it identifies the center $\mathfrak{Z}_G$ of ${\rm Rep}_{[T,1]}(G)$ with the center of the Iwahori-Hecke algebra $\Hcal_G$. 
Then we have an isomorphism 
\[\mathfrak{Z}_G\cong C[X_\ast(\mathbb{T})]^W=C[X^\ast(\check T)]^W=\Gamma(\check T/W,\Ocal_{\check T/W})=\Gamma(\check G/\hspace{-0.1cm}/\check G,\Ocal_{\check G/\hspace{-0.1cm}/\check G})\]
(see for example \cite[Lemma 2.3.1]{HainesKottwitzPrasad}), which is in fact independent of the choice of the Iwahori $I$.

Given a representation $\pi\in {\rm Rep}^IG$ and a $\mathfrak{Z}_G$-module $\rho$ we will sometimes (by abuse of notation) write $\pi\otimes_{\mathfrak{Z}_G}\rho$ for the pre-image of the $\mathcal{H}_G$-module $\pi^I\otimes_{\mathfrak{Z}_G}\rho$ under the equivalence ${\rm Rep}^I G\cong \Hcal_G\text{-mod}$ (and similarly for corresponding derived functors). 

\begin{rem}
Note that if $\mathbb{G}=\mathbb{T}$ is a split torus, then $I=I_T=T^\circ$ is the unique maximal compact subgroup of $T$ and we have canonical identifications 
\begin{equation}\label{HeckealgTorus}
C[X_\ast(\mathbb{T})]\cong C[T/T^\circ]=\Hcal_T.
\end{equation}
where the first isomorphism is given by $\mu\mapsto \mu(\varpi)$ for the choice of a uniformizer $\varpi$ of $F$ (note that this isomorphism is independent of this choice).
We often use this isomorphism to identify unramified characters and $\Hcal_T$-modules.
\end{rem}

 

Let $\mathbb{P}\subset \mathbb{G}$ be a parabolic subgroup containing $\mathbb{B}$ with Levi quotient $\mathbb{M}$ and write $P=\mathbb{P}(F)$ and $M=\mathbb{M}(F)$. 
Set $I_M=I_G\cap M$, which is an Iwahori-subgroup of $M$, in particular ${\rm Rep}_{[T_M,1]}(M)={\rm Rep}^{I_M}M$.
There is a canonical embedding $\mathcal{H}_M\hookrightarrow \mathcal{H}_G$ such that the diagrams
\begin{equation}\label{inductiontensor}
\begin{aligned}
\begin{xy}
\xymatrix{
{\rm Rep}^{I_M}M \ar[r]^{(-)^{I_M}}\ar[d]_{\iota_P^G} & \mathcal{H}_M\text{-mod}\ar[d]^{{\rm Hom}_{\mathcal{H}_M}(\mathcal{H}_G,-)} & \text{and} & {\rm Rep}^{I_M}M \ar[r]^{(-)^{I_M}}\ar[d]_{\iota_{\overline P}^G} & \mathcal{H}_M\text{-mod}\ar[d]^{\mathcal{H}_G\otimes_{\mathcal{H}_M}-}\\
{\rm Rep}^{I_G}G \ar[r]^{(-)^{I}} & \mathcal{H}_G\text{-mod} && {\rm Rep}^{I_G}G \ar[r]^{(-)^{I}} & \mathcal{H}_G\text{-mod}.
}
\end{xy}
\end{aligned}
\end{equation}
commute. 
Note that this is equivalent to the commutativity of the diagram
\begin{equation}\label{Jacquetforget}
\begin{aligned}
\begin{xy}
\xymatrix{
{\rm Rep}^{I_M}M \ar[r]^{(-)^{I_M}} & \mathcal{H}_M\text{-mod}\\
{\rm Rep}^{I_G}G \ar[r]^{(-)^{I}} \ar[u]^{r_P^G(-)}& \mathcal{H}_G\text{-mod}\ar[u]_{\text{forget}}.
}
\end{xy}
\end{aligned}\end{equation}
Here $r^G_P(-)$ is the normalized Jacquet-module which is the left adjoint functor to $\iota_P^G(-)$. It is also the right adjoint functor to $\iota_{\overline{P}}^G(-)$ by Bernstein's second adjointness theorem.
By abuse of notation we will often write $\iota_{P}^G$ respectively $\iota_{\overline{P}}^G$ for the functors ${\rm Hom}_{\mathcal{H}_M}(\mathcal{H}_G,-)$  respectively $\mathcal{H}_G\otimes_{\mathcal{H}_M}-$ on Hecke modules.

The embedding $\Hcal_M\subset \Hcal_G$ induces an embedding $\mathfrak{Z}_G\subset \mathfrak{Z}_M$, where $\mathfrak{Z}_M$ is the center of ${\rm Rep}_{[T_M,1]}(M)$ which is identified with the center of $\mathcal{H}_M$, such that the canonical diagram
\[\begin{xy}
\xymatrix{
\mathfrak{Z}_G\ar[r] \ar[d] & \Gamma(\check T/W,\mathcal{O}_{\check T/W})\ar[d]\\
\mathfrak{Z}_M\ar[r]  & \Gamma(\check T/W_M,\mathcal{O}_{\check T/W_M})
}
\end{xy}\]
commutes. We deduce that $\iota_P^G$ and $\iota_{\overline{P}}^G$ are $\mathfrak{Z}_G$-linear. 
In particular, for a $\mathfrak{Z}_G$-module $\rho$ we obtain natural isomorphisms
\begin{equation}\label{inductionandcenter}
\begin{aligned}
\iota_P^G(-\otimes_{\mathfrak{Z}_G}\rho)&\longrightarrow \iota_P^G(-)\otimes_{\mathfrak{Z}_G}\rho,\\
\iota_{\overline{P}}^G(-\otimes_{\mathfrak{Z}_G}\rho)&\longrightarrow \iota_{\overline{P}}^G(-)\otimes_{\mathfrak{Z}_G}\rho,
\end{aligned}
\end{equation}
and similarly for the corresponding functors on the derived category. 

\subsection{The main conjecture}
Using the notations introduced above we state the following conjecture.
Variants of the conjecture have been around in representation theory in the past years. A proof of the conjecture is announced in the work of Ben-Zvi--Nadler--Helm \cite{BZCHN}, and by Zhu \cite{Zhu}.
\begin{conj}\label{mainconjecture}
There exists the following data:
\begin{enumerate}
\item[(i)] For each $(\mathbb{G},\mathbb{B},\mathbb{T},\psi)$ consisting of a reductive group $\mathbb{G}$, a Borel subgroup $\mathbb{B}$, a split maximal torus $\mathbb{T}\subset\mathbb{B}$, and a (conjugacy class of a) generic character $\psi:N\rightarrow C^\times$ there exists
an exact and fully faithful $\mathfrak{Z}_G$-linear functor
\[R_G^\psi:{\bf D}^+({\rm Rep}_{[T,1]}(G))\longrightarrow {\bf D}^+_{\rm QCoh}([X_{\check G}/\check G]),\]
\item[(ii)] for $(\mathbb{G},\mathbb{B},\mathbb{T},\psi)$ as in {\rm (i)} and each parabolic subgroup $\mathbb{P}\subset \mathbb{G}$ containing $\mathbb{B}$ 
there exists a natural $\mathfrak{Z}_{G}$-linear isomorphism
\[\xi_{P}^{G}:R_{G}^\psi\circ \iota_{\overline{P}}^{G}\longrightarrow (R\beta_{\ast}\circ L\alpha^\ast)\circ R_{M}^{\psi_M}\]
of functors ${\bf D}^+({\rm Rep}_{[T_M,1]}M)\rightarrow {\bf D}^+_{\rm QCoh}([X_{\check G}/\check G])$.
Here $\mathbb{M}$ is the Levi quotient of $\mathbb{P}$ and 
\begin{align*}
\alpha:[\mathbf{X}_{\check P}/\check P]&\longrightarrow[X_{\check M}/\check M],\\
\beta:[\mathbf{X}_{\check P}/\check P]&\longrightarrow[X_{\check G}/\check G]
\end{align*} 
are the morphisms on stacks induced by the natural maps $\check P\rightarrow \check M$ and $\check P\rightarrow \check G$.
\end{enumerate}
These data satisfy the following conditions:
\begin{enumerate}
\item[(a)] If $\mathbb{G}=\mathbb{T}$ is a split torus, then $R_T=R_T^{\psi}$ is induced by the identification $(\ref{HeckealgTorus})$ and viewing a sheaf on $\check T$ as an $\check T$-equivariant sheaf with the trivial $\check T$-action (note that $\check T$ acts trivially on $\check T=X_{\check T}$).
\item[(b)] Let $(\mathbb{G},\mathbb{B},\mathbb{T},\psi)$ as in {\rm (i)} and let $\mathbb{P}_1\subset\mathbb{P}_2\subset \mathbb{G}$ be parabolic subgroups containing $\mathbb{B}$ with Levi quotients $\mathbb{M}_1$ and $\mathbb{M}_2$. Let $\mathbb{P}_{12}$ denote the image of $\mathbb{P}_1$ in $\mathbb{M}_2$. 
Then, with the notations from $(\ref{compisitiondiagram})$, the diagram 
\[\begin{xy}
\xymatrix{
&R_{G}^\psi\circ \iota_{\overline{P}_{1}}^{G} \ar[dl]_{\xi_{P_1}^{G}}\ar[dr]^{(\ref{compoofinduction})} & \\
R\beta_{1,\ast}L\alpha_{1}^\ast \circ R^{\psi_{M_1}}_{M_1}\ar[d]_{(\ref{compoinductionGalois})}&& R_{G}^\psi\circ \iota_{\overline{P}_{2}}^{G}\circ  \iota_{\overline{P}_{12}}^{M_2}\ar[d]^{\xi_{P_2}^{G}}\\
(R\beta_{2,\ast}L\alpha_{2}^\ast)\circ (R\beta_{12,\ast}L\alpha_{12}^\ast) \ar[rr]^{\xi_{P_{12}}^{M_2}}\circ R^{\psi_{M_1}}_{M_1} && (R\beta_{2,\ast}L\alpha_{2}^\ast)\circ R^{\psi_{M_2}}_{M_2}\circ  \iota_{\overline{P}_{12}}^{M_2}
}
\end{xy}\]
is a commutative diagram of functors $${\bf D}^+({\rm Rep}_{[T_{M_1},1]}(M_1))\longrightarrow {\bf D}^+_{\rm QCoh}([X_{\check G}/\check G]).$$
\item[(c)] For any $(\mathbb{G},\mathbb{B},\mathbb{T},\psi)$ as in {\rm (i)} let $(\cind_N^G\psi)_{[T,1]}$ denote the projection of the compactly induced representation $\cind_N^G\psi$ to ${\rm Rep}_{[T,1]}(G)$. Then
\[R_G^\psi((\cind_N^G\psi)_{[T,1]})\cong \Ocal_{[X_{\check G}/\check G]}.\]
\end{enumerate}
\end{conj}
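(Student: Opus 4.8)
The plan is to produce the functors $R_G^\psi$ explicitly on the Iwahori--Hecke side and then verify, one ingredient at a time, the three assertions of the conjecture: exactness and fully faithfulness of $R_G^\psi$, existence and transitivity of the isomorphisms $\xi_P^G$, and the normalizations (a)--(c). Throughout one works through the equivalence ${\rm Rep}_{[T,1]}(G)\cong\Hcal_G\text{-mod}$ and the identification $\mathfrak{Z}_G\cong\Gamma(\check T/W,\Ocal)$, so that ${\bf D}^+_{\rm QCoh}([X_{\check G}/\check G])$ is $\mathfrak{Z}_G$-linear via $\bar\chi_G$.

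\emph{Definition of the functor.} I would take $R_G^\psi$ to send $\pi$ to ${}^t(\pi^I)\otimes^L_{\Hcal_G}\Mcal_G$, where $\Mcal_G$ is the $\Hcal_G\otimes_{\mathfrak{Z}_G}\Ocal_{[X_{\check G}/\check G]}$-module extracted by taking $I$-invariants of Helm's family $\Vcal_G$ interpolating the modified local Langlands correspondence (\cite{EmertonHelm}, \cite{Helm1}). Since $\Hcal_G$ has finite global dimension this derived tensor product is computed by a bounded projective resolution, hence is exact and lands in ${\bf D}^+_{\rm QCoh}$ (and in ${\bf D}^b_{\rm Coh}$ on finitely generated modules); $\mathfrak{Z}_G$-linearity is part of the construction of $\Mcal_G$. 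For $\mathbb{G}=\mathbb{T}$ condition (a) forces $R_T$ to be the equivalence $(\ref{HeckealgTorus})$ followed by regarding a sheaf on $\check T$ as a trivially $\check T$-equivariant sheaf, and one checks directly that the general recipe specializes to this since $\Vcal_T$ is the universal unramified character of $T$.

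\emph{Parabolic induction.} On Hecke modules $\iota_{\overline P}^G=\Hcal_G\otimes_{\Hcal_M}-$ by $(\ref{inductiontensor})$, so $R_G^\psi\circ\iota_{\overline P}^G$ is computed by $-\otimes^L_{\Hcal_M}\Mcal_G$, and one must identify this with $(R\beta_\ast\circ L\alpha^\ast)\circ R_M^{\psi_M}$ for the morphisms $\alpha,\beta$ out of $[\mathbf{X}_{\check P}/\check P]$. The geometric inputs are Lemma \ref{preservesboundedbelow} (so the target category is correct) together with Lemma \ref{fiberproducttorindep} and Corollary \ref{caresiansquareclassicalstacks}, which over the regular locus present $R\beta_\ast L\alpha^\ast$ by an explicit Koszul-type complex and guarantee the relevant fibre products are Tor-independent; the representation-theoretic input is the compatibility of $\Vcal_G$ with parabolic induction in the sense of Emerton--Helm, which one matches to the stack picture using the commuting square of Bernstein centres refining $\mathfrak{Z}_G\hookrightarrow\mathfrak{Z}_M$. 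Condition (b), transitivity of the $\xi_P^G$ along $\mathbb{P}_1\subset\mathbb{P}_2$, then follows by combining $(\ref{compoofinduction})$, $(\ref{compoinductionGalois})$, $(\ref{inductiontensor})$ and the corresponding transitivity on the side of $\Vcal_G$.

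\emph{Fully faithfulness and condition (c).} I expect fully faithfulness to be the main obstacle, and I would approach it by induction on the semisimple rank: the base case is the torus, where $R_T$ is an equivalence, and in the inductive step one computes $R_G^\psi$ on standard modules, i.e. on $\iota_{\overline P}^G$ of unramified characters of Levis, which by $\xi_P^G$ and the torus case are sent to structure sheaves $\Ocal_{[X_{\check G,[\phi,N]}/\check G]}$ of closures of $\check G$-orbits; one is then reduced to matching $\Hom$- and $\Ext$-groups between standard modules on the Hecke side (governed by Kazhdan--Lusztig theory \cite{KL}) with coherent cohomology of orbit closures on $[X_{\check G}/\check G]$ (governed by Proposition \ref{equidimreductive}). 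Over $X_{\check G}^{\rm reg}$ this works cleanly --- $\tilde X_P^{\rm reg}\to X_{\check G}^{\rm reg}$ is finite, $\tilde X_P^{\rm reg}$ is reduced by Lemma \ref{reducedparaboliccase}, and the classical and derived pictures coincide --- yielding the conjecture over the regular locus. The genuine difficulty is that off the regular locus $\Mcal_G$ need not be $\Ocal_{[X_{\check G}/\check G]}$-flat and the fibres of $\beta$ jump, so one cannot simply reduce to $X_{\check G}^{\rm reg}$; this is why, beyond ${\rm GL}_n$ over the regular locus, fully faithfulness is only within reach for ${\rm GL}_2$ by a direct computation, and, after formal completion at a regular semisimple parameter, for arbitrary ${\rm GL}_n$. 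Finally, for (c) one observes that $(\cind_N^G\psi)_{[T,1]}^I$ is the Whittaker (Gelfand--Graev) $\Hcal_G$-module, whose derived tensor with $\Mcal_G$ is $\Ocal_{[X_{\check G}/\check G]}$ by the very normalization of $\Vcal_G$; this is a hands-on computation, carried out in section \ref{sectioncind}.
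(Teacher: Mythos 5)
This statement is a conjecture, not a theorem, and the paper offers no proof of it; what you have written is not a proof proposal so much as an accurate summary of the candidate functor $R_G^\psi$ and the scattered partial verifications the paper actually carries out. As such your outline tracks the paper closely: the definition $R_G(\pi)={}^t\pi^I\otimes^L_{\Hcal_G}\Mcal_G$ via the Emerton--Helm family is exactly $(\ref{EHfunctor})$; the compatibility with parabolic induction over the regular locus is Theorem $\ref{theocompwparabinductionregular}$, resting on Lemma $\ref{fiberproducttorindep}$ and Corollary $\ref{caresiansquareclassicalstacks}$ as you say; condition (c) is Proposition $\ref{imagesofprojectives}$; and fully faithfulness is established only for ${\rm GL}_2$ (Theorem $\ref{theoGL2}$) and, after completion at a regular semisimple parameter, as uniqueness rather than fully faithfulness for ${\rm GL}_n$ (Theorem $\ref{theouniquenesshatR}$).

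Two small points where your sketch drifts from what the paper does. First, the proof of fully faithfulness for ${\rm GL}_2$ does not proceed by computing $\Ext$ between standard modules as you suggest; it computes $\Ext$ between the two projective generators $\Hcal_G e_K$ and $\Hcal_G e_{\rm st}$ and matches them to $\Ext^\bullet_{[X_{\check G}/\check G]}$ between $\Ocal_{[X_{\check G,0}/\check G]}$ and $\Ocal_{[X_{\check G}/\check G]}$ (Proposition $\ref{extgroupcomputation1}$, Corollary $\ref{extgroupcomputation2}$), which is more elementary than a Kazhdan--Lusztig computation and makes the d\'evissage to arbitrary bounded complexes immediate. Second, condition (c) is not ``by the very normalization of $\Vcal_G$'' but requires the identification $(\cind_N^G\psi)_{[T,1]}\cong\cind_K^G{\rm st}$ (Corollary $\ref{indetifycindpsi}$) and the explicit computation that $e_{\rm st}\Hcal_G\otimes_{\Hcal_G}\tilde{\Mcal}_G\cong\Ocal_{X_{\check G}}$, which in turn hinges on Proposition $\ref{identifyMandOB}$ realizing $\tilde{\Mcal}_G$ as $\beta_\ast\Ocal_{Y_{\check G}}$. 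Neither of these affects the substance of your outline, but if you intend this as a genuine proof attempt you should be clear that the general case remains open: away from the regular locus $\Mcal_G$ is not $\Ocal_{X_{\check G}}$-flat, the expected identification $(\ref{desiredidentification})$ is only a hope (Remark $\ref{remgeneralizetononregular}$), and no argument is given that bridges that gap for $n\geq 3$.
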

Let us point out that the $\mathfrak{Z}_G$-linearity of the conjectured functor $R_G^\psi$ implies that for each $\rho\in {\bf D}^+(\mathfrak{Z}_G\text{-}{\rm mod})$ there is a natural isomorphism
\[\psi_{G,\rho}: R_G^\psi(-\otimes_{\mathfrak{Z}_G}^L\rho)\xrightarrow{\cong} R_G^\psi(-)\otimes^L_{\Ocal_{[X_{\check G}/\check G]}}L\bar\chi_{G}^\ast\rho\]
of functors ${\bf D}^+({\rm Rep}_{[T,1]}(G))\rightarrow {\bf D}^+_{\rm QCoh}([X_{\check G}/\check G])$ which is functorial in $\rho$ (in the obvious sense). 
Moreover, given $\mathbb{P}\subset \mathbb{G}$ as in (ii), the $\mathfrak{Z}_{G}$-linearity of the natural isomorphism $\xi_{P}^{G}$ implies that the natural transformations $\psi_{M,\rho}$ and $\psi_{G,\rho}$ are compatible with parabolic induction. We do not spell this out explicitly in terms of commutative diagrams. 

\begin{rem}\label{remaboutconj}
\noindent (a) We expect that the conjectured functor $R_G^\psi$ induces a functor $${\bf D}^b({\rm Rep}_{[T,1],{\rm fg}}(G))\longrightarrow {\bf D}^b_{\rm Coh}([X_{\check G}/\check G]).$$
This would allow to extend the functor to the full derived category ${\bf D}({\rm Rep}_{[T,1]}(G))$: as ${\rm Rep}_{[T,1],{\rm fg}}(G)\cong \mathcal{H}_G\text{-mod}_{\text{fg}}$ and as $\mathcal{H}_G$ has finite global dimension, see \cite[4. Theorem 29]{Bernstein}, the full derived category ${\bf D}({\rm Rep}_{[T,1]}(G))$ is the ind-completion of ${\bf D}^b({\rm Rep}_{[T,1],{\rm fg}}(G))$.
Hence the conjectured functor would extend to a fully faithful and exact functor
\[{\bf D}({\rm Rep}_{[T,1]}(G))\longrightarrow {\rm IndCoh}([X_{\check G}/\check G]),\]
where ${\rm IndCoh}([X_{\check G}/\check G])$ is the ind-completion of ${\bf D}^b_{\rm Coh}([X_{\check G}/\check G])$. Note that this category differs from ${\bf D}_{\rm QCoh}([X_{\check G}/\check G])$, as $X_{\check G}$ is singular. However, there is a canonical equivalence $${\rm IndCoh}^+([X_{\check G}/\check G])\xrightarrow{\cong}{\bf D}^+_{\rm QCoh}([X_{\check G}/\check G]),$$ see e.g.~\cite[3.2.4]{DG}. In particular, restricting to bounded below objects, the conjecture that the (yet hypothetical) functor $R_G^\psi$ is fully faithful does not depend on whether we consider it as a functor with values in ${\rm IndCoh}^+([X_{\check G}/\check G])$ or with values in ${\bf D}^+_{\rm QCoh}([X_{\check G}/\check G])$.
We hence arrive with a conjecture that parallels the formulation of the geometric Langlands program, see \cite{Gaitsgory}. Also the conjectured compatibility with parabolic induction agrees with the compatibility with parabolic induction in loc.~cit..
See also the formulation given in \cite[Conjecture 4.4.5]{Zhu}.

\noindent (b) Recall that an L-parameter for $G$ that is trivial on inertia is a $\check G$-conjugacy class $[\phi,N]$ of $(\phi,N)\in X_{\check G}(C)$ with $\phi$ semi-simple. 
We write $S_{[\phi,N]}=C_{[\phi,N]}/C^\circ_{[\phi,N]}$ for the quotient of the centralizer of $(\phi,N)$ by its connected component of the identity.
By the classification of Kazhdan-Lusztig \cite[Theorem 7.12]{KL} the irreducible representations in ${\rm Rep}^IG$ (respectively the simple objects in $\Hcal_G\text{-mod}$) are in bijection with pairs $([\phi,N],\rho)$, where $[\phi,N]$ is an L-parameter and $\rho$ runs through a certain set of irreducible representation of $S_{[\phi,N]}$. This parametrization depends on an additional choice that corresponds to the choice of a Whittaker datum $(\mathbb{B},\psi)$.
More precisely, the classification in \cite{KL} (which in the case of $\GL_n$ coincides with the Bernstein-Zelevinsky classification \cite{BernsteinZele}) associates to $([\phi,N],\rho)$ an indecomposable representation (respectively Hecke module) $\pi^\psi_{[\phi,N],\rho}$ which has a unique irreducible quotient. 
Conjecture \ref{mainconjecture} should have the following relation with this classification. For simplicity we only treat the case of regular semi-simple $\phi$, the general case seems to be much more involved. 
 
Given $[\phi,N]$ let us write $$X_{\check G,[\phi,N]}^\circ\subset X_{\check G}$$ for the $\check G$-orbit of $(\phi,N)$\footnote{If $\phi$ is not regular semi-simple we have to replace this space by the locally closed subscheme whose $C$-valued points are given by those $(\phi',N')$ such that $[\phi,N]$ is the $\check G$-conjugacy class of $(\phi'^{\rm ss},N)$, where $\phi'^{\rm ss}$ is the semi-simplification of $\phi'$. 
Also the definition of $X_{\check G,[\phi,N]}$, the expected support of the sheaf $\mathcal{F}_{[\phi,N],\rho}$, should be more complicated: it should be given by the union of those $X_{\check G,[\phi',N']}^\circ$ such that $(\phi'^{\rm ss},N')$ lies in the Zariski closure of the $\check G$-orbit of $(\phi^{\rm ss},N)$ . This closed subscheme is in fact larger than the Zariski closure of $X_{\check G,[\phi,N]}^\circ$.
Finally the description of the equivariant sheaf $\mathcal{F}_{[\phi,N],\rho}$ should be more involved as well.}. 
Moreover, we denote by $$X_{\check G,[\phi,N]}=\overline{X_{\check G,[\phi,N]}^\circ}$$ its Zariski closure. As we assume that $\phi$ is regular semi-simple we can, given an irreducible representation $\rho$ of $S_{[\phi,N]}$ on a finite dimensional $C$-vector space, use $\rho$ to define a $\check G$-equivariant coherent sheaf $$\tilde{\mathcal{F}}_{[\phi,N],\rho}\in{\rm Coh}(X_{\check G,[\phi,N]})$$ which hence defines a coherent sheaf $\mathcal{F}_{[\phi,N],\rho}$ on the closed substack $$[X_{\check G,[\phi,N]}/\check G]\subset [X_{\check G}/\check G].$$ 
We then expect that the conjectured functor $R_G$ has the property
\[R_G^\psi(\pi_{[\phi,N],\rho}^\psi)=\mathcal{F}_{[\phi,N],\rho}.\]
If the L-parameter $[\phi,N]$ is generic, there is a unique $\psi$-generic representation $\pi$ in the L-packet defined by $[\phi,N]$. With the above notations this representation is the representation $$\pi=\pi^\psi_{[\phi,N],{\rm trivial}}.$$
Then, the expected formula above specializes to
\[R_G^\psi(\pi)=\mathcal{O}_{[X_{\check G,[\phi,N]}/\check G]}\]

\noindent (c) We point out that the conjectured functor $R_G^\psi$ will not be essentially surjective. 
In fact this is already obvious in the case $G=T$ a split torus. Here $R_T=R_T^\psi$ is the derived version of the functor
\[\Hcal_T\text{-}{\rm mod}\cong{\rm QCoh}(\check T)\longrightarrow {\rm QCoh}([\check T/\check T]).\]
The morphism on the right hand side is the embedding given by equipping a quasi-coherent sheaf with the trivial $\check T$-action. Obviously $\check T$-equivariant sheaves with non-trivial $\check T$-action are not in the essential image.

There is also a second obstruction for essential surjectivity for general $G$ (i.e.~$G$ is not assumed to be a torus).  
Let $[\phi,N]$ be an L-parameter such that $\phi$ is semi-simple but not regular semi-simple. Then (using the notation of (b) and its footnote) the structure sheaf of the closed substack $$[X_{\check G,[\phi,N]}^{\rm ss}/\check G]\subset [X_{\check G,[\phi,N]}/\check G]$$ of pairs $(\phi',N')$ where $\phi'$ is (pointwise) semi-simple should not be in the essential image of the functor $R_G$.

Following Fargues-Scholze \cite{FS} and Zhu \cite[4.6]{Zhu}, the failure of essential surjectivity should be fixed by replacing the category of smooth representations by a larger category. 

\noindent (d) Finally we point out that in the conjecture it is necessary to pass to derived categories. Heuristically this can be explained by the fact that flat morphisms on the representation theory side correspond to non-flat morphisms on the side of stacks: for example $\Hcal_G$ is flat over its center, whereas the canonical morphism $$\bar\chi_G:[X_{\check G}/\check G]\longrightarrow \check T/W$$ is not flat (as it maps some irreducible components to proper closed subschemes of $\check T/W$).
Moreover, we will see below that in the case of ${\rm GL}_n(F)$ the trivial representation will be mapped to a complex concentrated in cohomological degree $1-n$, see Remark \ref{remimageofLL} below. Hence, without passing to derived categories, the functor can not be fully faithful. 
The canonical t-structures on the source (respectively target) should correspond to an exotic t-structure on the other side. However, we have no idea how this t-structure could be described intrinsically.
Moreover, the formulation of the conjecture needs the passage to derived schemes respectively derived stacks: as parabolic induction is transitive (in the sense that $(\ref{compoofinduction})$ is an isomorphism), the base change morphism $(\ref{compoinductionGalois})$ has to be an isomorphism as well. However, in the world of classical schemes the corresponding cartesian diagram is not Tor-independent in general. 
\end{rem}

\subsection{A generalization of the conjecture} Conjecture \ref{mainconjecture} in fact is a special case of a more general conjecture about the category ${\rm Rep}(G)$, instead of the Bernstein block ${\rm Rep}_{[T,1]}(G)$. 
Let us describe this generalization. A similar generalization is conjectured by Zhu \cite[Conjecture 4.5.1]{Zhu}. The generalization stated here can also be viewed as a special case of the main conjecture \cite[Conjecture I.10.2]{FS} of Fargues-Scholze. 

We continue to assume that $\mathbb{G}$ is a split reductive group with dual group $\check G$. 
Let us write $W_F$ for the Weil group of $F$ and $I_F\subset W_F$ for the inertia group. 
We define the space of $\check G$-valued Weil-Deligne representations to be the scheme $X_{\check G}^{\rm WD}$ representing the functor
\[R\longmapsto\left\{\rho:W_F\rightarrow \check G(C),\ N\in \Lie\check G\left| \begin{array}{*{20}c} \rho|_{J}\ \text{is trivial for some}\ J\subset I_F\ \text{open}\\ {\rm Ad}(\rho(\sigma))(N)=q^{-||\sigma||}N \end{array}
 \right.\right\}\]
on $C$-algebras $R$.  Here $||-||:W_F\rightarrow\mathbb{Z}$ is the usual projection. It is easy to see that $X_{\check G}^{\rm WD}$ is an infinite disjoint union of affine schemes and is equipped with a $\check G$-action via conjugation on $\rho$ and via the adjoint action on $N$.
The space of Weil-Deligne representations $X_{\check G}^{\rm WD}$ in fact agrees with the fiber over $C$ of the moduli space of L-parameters $\underline{Z}^1(W_F^\circ,\check G)$ studied in work of Dat-Helm-Kurinczuk-Moss \cite{DHKM} and is defined and studied as well in \cite[3.1]{Zhu}.

Similarly, for every parabolic subgroup $\check P\subset \check G$ we can define the scheme $X_{\check P}^{\rm WD}$ and the derived scheme ${\bf X}_{\check P}^{\rm WD}$ that come equipped with $\check P$-actions. 

The inclusion $\check P\hookrightarrow \check G$ and the projection $\check P\rightarrow \check M$ onto the Levi-quotient $\check M$ of $\check P$ induce morphisms
\begin{equation}\label{alphabetaWD}
\begin{aligned}
\beta_{\check P}^{\rm WD}:[{\bf X}_{\check P}^{\rm WD}/\check P]&\longrightarrow [X_{\check G}^{\rm WD}/\check G],\\
\alpha_{\check P}^{\rm WD}:[{\bf X}_{\check P}^{\rm WD}/\check P]&\longrightarrow [X_{\check M}^{\rm WD}/\check M]
\end{aligned}
\end{equation}
of the respective stack quotients. Moreover, we will write $X_{\check G}^{\rm WD}/\hspace{-0.1cm}/\check G$ for the GIT quotient of $X_{\check G}^{\rm WD}$ by the $\check G$-action. 
As in the case of the space of $(\phi,N)$-modules $X_{\check G}$ it is easy to show that $\beta_{\check P}^{\rm WD}$ is proper. The following summarizes properties of the spaces just introduced (which are proved using similar methods as in section \ref{spacesofLparam}).

Let $\check P\subset \check G$ be a parabolic subgroup with Levi-quotient $\check M$.
\begin{enumerate}
\item[(i)] The space $X_{\check G}^{\rm WD}$ is reduced and a local complete intersection. \\
(This follows from \cite[Theorem 4.1]{DHKM}. See also \cite[Proposition 3.1.6]{Zhu}.)
\item[(ii)] The morphism $\alpha_{\check P}^{\rm WD}:[{\bf X}_{\check P}^{\rm WD}/\check P]\rightarrow [X_{\check M}^{\rm WD}/\check M]$ has finite Tor-dimension.\\
(This follows from \cite[Lemma 3.3.1]{Zhu}.)
\item[(iii)] There is a morphism $X_{\check M}^{\rm WD}/\hspace{-.1cm}/\check M\rightarrow X_{\check G}^{\rm WD}/\hspace{-.1cm}/\check G$ making the diagram
\begin{tiny}
\[\begin{xy}
\xymatrix{
& [{\bf X}_{\check P}^{\rm WD}/\check P]\ar[dl]_{\beta_{\check P}^{\rm WD}}\ar[dr]^{\alpha_{\check P}^{\rm WD}} & \\
[X_{\check G}^{\rm WD}/\check G] \ar[d]&& [X_{\check M}^{\rm WD}\check M]\ar[d]\\
X_{\check G}^{\rm WD}/\hspace{-.1cm}/\check G && X_{\check M}^{\rm WD}/\hspace{-.1cm}/\check M \ar@{-->}[ll]
}
\end{xy}\]
\end{tiny}
commutative.\\
(This is the commutative diagram  \cite[(3.10)]{Zhu}. The morphism can easily be constructed using the morphism $X_{\check M}^{\rm WD}\rightarrow \mathbf{X}^{\rm WD}_{\check P}$ induced by the choice of a splitting of $\check P\rightarrow \check M$.)
\end{enumerate}

\begin{rem}
In the case of the space of $(\phi,N)$-modules $X_{\check G}$ all these properties have been verified in section \ref{spacesofLparam}.
In relation with (iii) we remark that the morphism
\[[X_{\check G}/\check G]\longrightarrow X_{\check G}/\hspace{-.1cm}/\check G\]
is just the morphism $\bar\chi$ from $(\ref{maptoTmodW})$, i.e.~the GIT quotient $X_{\check G}/\hspace{-.1cm}/\check G$ agrees with the adjoint quotient $\check G/\hspace{-.1cm}/\check G$. This can be seen as follows: the morphism $\phi\mapsto (\phi,0)$ defines a closed embedding $\check G\hookrightarrow X_{\check G}$ which is the inclusion of an irreducible component. As $\check G$ is reductive and $C$ has characteristic $0$ the category of $\check G$-representations is semi-simple and we obtain a closed embedding $$\check G/\hspace{-.1cm}/\check G\longrightarrow X_{\check G}/\hspace{-.1cm}/\check G.$$
As source and target are reduced (as $\check G$ and $X_{\check G}$ are) it is enough to show that the morphism is bijective. This comes down to proving that for $(\phi,N)\in X_{\check G}(k)$, for an algebraically closed field $k$, there exists $\phi'\in\check G(k)$ such that 
$$\overline{\check G\cdot (\phi',0)}\cap \overline{\check G\cdot (\phi,N)}\neq \emptyset.$$
By (the proof of) Lemma \ref{equidimparabolicregular} we may assume that $\phi\in \check B$ and $N\in \Lie\check B$ for some Borel $\check B\subset \check G$. Let $\mathbb{G}_m$ act on $X_{\check G}$ by the sum of the positive roots, then the closure of $\mathbb{G}_m\cdot (\phi,N)$ contains in addition the point $(\phi',0)$ for some $\phi'\in \check G$ such that $\phi$ and $\phi'$ have the same image in the adjoint quotient $\check G/\hspace{-.1cm}/\check G$. 
\end{rem}

Let us write $\mathfrak{Z}(G)$ for the Bernstein center of the category ${\rm Rep}(G)$. Given a Bernstein component $\Omega$ of ${\rm Rep}(G)$ we denote its center by $\mathfrak{Z}_\Omega(G)$. Moreover, we denote by $$\mathcal{Z}(\check G)=\Gamma(X_{\check G}^{\rm WD}/\hspace{-.1cm}/\check G,\Ocal_{X_{\check G}^{\rm WD}/\hspace{-.1cm}/\check G})$$ the ring of functions on the GIT quotient $X_{\check G}^{\rm WD}/\hspace{-.1cm}/\check G$. If $X\subset X_{\check G}^{\rm WD}$ is a connected component, we write $\mathcal{Z}_X(\check G)$ for the ring of functions on the GIT quotient $X/\hspace{-.1cm}/\check G$.

\begin{rem}
If $\mathbb{G}=\mathbb{T}$ is a split torus, then the isomorphism $F^\times \rightarrow W_F^{\rm ab}$ of local class field theory identifies $X_{\check T}^{\rm WD}$ with the scheme representing the functor
\[R\longmapsto \{\rho:F^\times\longrightarrow \check T(R)\ \text{smooth character}\}\]
on the category of $C$-algebras. 
In particular, the scheme $X_{\check T}^{\rm WD}$ decomposes into a disjoint union of copies of $\check T$ indexed by the smooth characters $\Ocal_F^{\times}\rightarrow \check T(C)$. This decomposition induces an equivalence of categories
\begin{equation}\label{equivlcft}
{\rm Rep}(T)\cong{\rm QCoh}(X_{\check T}^{\rm WD}).
\end{equation}
\end{rem}

We state a generalization of Conjecture \ref{mainconjecture}.
\begin{conj}\label{mainconjecturegeneral}
There exists the following data:
\begin{enumerate}
\item[(i)] For each $(\mathbb{G},\mathbb{B},\mathbb{T},\psi)$ consisting of a reductive group $\mathbb{G}$, a Borel subgroup $\mathbb{B}$, a split maximal torus $\mathbb{T}\subset\mathbb{B}$, and a (conjugacy class of a) generic character $\psi:N\rightarrow C^\times$ there exists
an exact and fully faithful functor
\[\mathcal{R}_G^\psi:{\bf D}^+({\rm Rep}(G))\longrightarrow {\bf D}^+_{\rm QCoh}([X^{\rm WD}_{\check G}/\check G]),\]
\item[(ii)] for $(\mathbb{G},\mathbb{B},\mathbb{T},\psi)$ as in {\rm (i)} and each parabolic subgroup $\mathbb{P}\subset \mathbb{G}$ containing $\mathbb{B}$ 
there exists a natural isomorphism
\[\xi_{P}^{G}:\mathcal{R}_{G}^\psi\circ \iota_{\overline{P}}^{G}\longrightarrow (R\beta^{\rm WD}_{\check P,\ast}\circ L\alpha_{\check P}^{{\rm WD},\ast})\circ \mathcal{R}_{M}^{\psi_M}\]
of functors ${\bf D}^+({\rm Rep}(M))\rightarrow {\bf D}^+_{\rm QCoh}([X^{\rm WD}_{\check G}/\check G])$.
Here $\mathbb{M}$ is the Levi quotient of $\mathbb{P}$ and $\alpha_{\check P}^{\rm WD}$ and $\beta_{\check P}^{\rm WD}$ are the morphisms defined in $(\ref{alphabetaWD})$.
\end{enumerate}
These data satisfy the following conditions:
\begin{enumerate}
\item[(a)] If $\mathbb{G}=\mathbb{T}$ is a split torus, then $\mathcal{R}_T=\mathcal{R}_T^{\psi}$ is induced by the equivalence $(\ref{equivlcft})$ given by  local class field theory.
\item[(b)] Let $(\mathbb{G},\mathbb{B},\mathbb{T},\psi)$ be as in {\rm (i)}. The morphism $\mathcal{Z}(\check G)\rightarrow \mathfrak{Z}(G)$ defined by fully faithfulness of $\mathcal{R}_G^\psi$ is independent of the choice of $\psi$ and induces a surjection
\[\omega_G:\left\{\begin{array}{*{20}c} \text{Bernstein components}\\  \text{of}\ {\rm Rep} (G)\end{array}\right\}\longrightarrow\left\{\begin{array}{*{20}c}\text{connected components}\\ \text{of}\ X_{\check G}^{\rm WD}\end{array}\right\}.\]
\item[(c)] Let $(\mathbb{G},\mathbb{B},\mathbb{T},\psi)$  and $\mathbb{P}$ be as in {\rm (ii)}. Then the natural isomorphism $\xi_P^G$ is $\mathcal{Z}(\check G)$-linear for the $\mathcal{Z}(\check G)$-linear structure on ${\rm Rep}(M)$ defined by the morphism $$\mathcal{Z}(\check G)\longrightarrow\mathcal{Z}(\check M)\longrightarrow \mathfrak{Z}(M)$$ 
that is given by the composition of the morphism $\mathcal{Z}(\check G)\rightarrow\mathcal{Z}(\check M)$ induced by $X_{\check M}^{\rm WD}/\hspace{-.1cm}/\check M\rightarrow X_{\check G}^{\rm WD}/\hspace{-.1cm}/\check G$ with the morphism $\mathcal{Z}(\check M)\rightarrow \mathfrak{Z}(M)$ of {\rm (b)}.
\item[(d)] Let $(\mathbb{G},\mathbb{B},\mathbb{T},\psi)$ as in {\rm (i)} and let $\mathbb{P}_1\subset\mathbb{P}_2\subset \mathbb{G}$ be parabolic subgroups containing $\mathbb{B}$ with Levi quotients $\mathbb{M}_1$ and $\mathbb{M}_2$. Let $\mathbb{P}_{12}$ denote the image of $\mathbb{P}_1$ in $\mathbb{M}_2$. 
Then the diagram \begin{tiny}
\[\begin{xy}
\xymatrix{
&\mathcal{R}_{G}^\psi\circ \iota_{\overline{P}_{1}}^{G} \ar[dl]_{\xi_{P_1}^{G}}\ar[dr]^{(\ast)} & \\
R\beta_{{\check P_1},\ast}^{\rm WD}L\alpha_{\check P_1}^{{\rm WD},\ast} \circ \mathcal{R}^{\psi_{M_1}}_{M_1}\ar[d]_{(\ast\ast)}&& \mathcal{R}_{G}^\psi\circ \iota_{\overline{P}_{2}}^{G}\circ  \iota_{\overline{P}_{12}}^{M_2}\ar[d]^{\xi_{P_2}^{G}}\\
(R\beta^{\rm WD}_{\check P_2,\ast}L\alpha_{\check P_2}^{{\rm WD},\ast})\circ (R\beta_{\check P_{12},\ast}^{\rm WD}L\alpha_{\check P_{12}}^{{\rm WD},\ast}) \ar[rr]^{\xi_{P_{12}}^{M_2}}\circ \mathcal{R}^{\psi_{M_1}}_{M_1} && (R\beta_{\check P_2,\ast}^{\rm WD}L\alpha_{\check P_2}^{{\rm WD},\ast})\circ \mathcal{R}^{\psi_{M_2}}_{M_2}\circ  \iota_{\overline{P}_{12}}^{M_2}
}
\end{xy}\]
\end{tiny}
is a commutative diagram of functors $${\bf D}^+({\rm Rep}(M_1))\longrightarrow {\bf D}^+_{\rm QCoh}([X^{\rm WD}_{\check G}/\check G]).$$ Here $(\ast)$ is the natural isomorphism given by transitivity of parabolic induction and $(\ast\ast)$ is a base change isomorphism defined by the analogous diagram as in $(\ref{compisitiondiagram})$.
\item[(e)] For $(\mathbb{G},\mathbb{B},\mathbb{T},\psi)$ as in {\rm (i)} there is an isomorphism
\[\mathcal{R}_G^\psi(\cind_N^G\psi)\cong \Ocal_{[X^{\rm WD}_{\check G}/\check G]}.\]
\end{enumerate}
\end{conj}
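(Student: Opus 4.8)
The plan is to construct $\mathcal{R}_G^\psi$ one Bernstein block at a time and to push the substantive content down to the principal block. Since ${\rm Rep}(G)=\prod_{[M,\sigma]}{\rm Rep}_{[M,\sigma]}(G)$ and $X^{\rm WD}_{\check G}$ breaks into connected components, part (b) already dictates which block must map into which component (the map $\omega_G$), so it suffices to produce, for each block ${\rm Rep}_{[M,\sigma]}(G)$, an exact fully faithful functor into $\mathbf{D}^+_{\rm QCoh}([X/\check G])$ for the corresponding component $X$, and then to check that conditions (a)--(e) survive the product decomposition. The base case $\mathbb{G}=\mathbb{T}$ is pure local class field theory: $(\ref{equivlcft})$ is an equivalence, hence exact and fully faithful, $\omega_T$ is a bijection, and (e) follows by unwinding $(\ref{equivlcft})$ on $\cind_N^T\psi=\cind_{\{1\}}^T\mathbf{1}$ (the left regular representation, $N$ being trivial for a torus), which lands on the structure sheaf. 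For the principal block of general $\mathbb{G}$ one is reduced to Conjecture $\ref{mainconjecture}$; here, following Emerton--Helm, one expects a ``co-Whittaker'' family $\Vcal_G$ of smooth $G$-representations over $[X_{\check G}/\check G]$ interpolating the modified local Langlands correspondence, sets $\Mcal_G=\Vcal_G^{I}$ (a module over $\Hcal_G\otimes_{\mathfrak{Z}_G}\Ocal_{[X_{\check G}/\check G]}$, coherent over $\Ocal$), and defines $R_G^\psi(\pi)={}^t\pi\otimes^L_{\Hcal_G}\Mcal_G$. With this construction condition (e)/(c) becomes exactly the defining property of a co-Whittaker family: the $(\cind_N^G\psi)$-coinvariants of $\Vcal_G$ are free of rank one over $\Ocal_{[X_{\check G}/\check G]}$.

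Compatibility with parabolic induction (ii), together with its transitivity (d)/(b), I would then deduce formally once the construction is in place. On Hecke modules $\iota_{\overline{P}}^G=\Hcal_G\otimes_{\Hcal_M}(-)$ by $(\ref{inductiontensor})$, so what is really needed is a geometric description of parabolic induction: an isomorphism relating $\Mcal_G$ restricted along $\beta^{\rm WD}_{\check P}$ to $\Mcal_M$ pulled back along $\alpha^{\rm WD}_{\check P}$, which on $I$-invariants recovers the identification $\iota_{\overline{P}}^G\cong\Hcal_G\otimes_{\Hcal_M}(-)$. Granting this, the isomorphism $\xi_P^G$ and the commutativity of the pentagon in (d) reduce to derived base change along the (WD-analogue of the) cartesian square $(\ref{compisitiondiagram})$, which is an isomorphism precisely because $\alpha^{\rm WD}_{\check P}$ has finite Tor-dimension --- Conjecture $\ref{geometryofWDstacks}$(ii), whose $(\phi,N)$-model is Lemma $\ref{fiberproducttorindep}$ and the isomorphism $(\ref{compoinductionGalois})$. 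The diagram chase for transitivity is then the same one already carried out for $(\ref{compoinductionGalois})$, and the $\mathcal{Z}(\check G)$-linearity asked for in (c) follows from Conjecture $\ref{geometryofWDstacks}$(iii) together with compatibility of the centers.

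The genuine obstacle is full faithfulness in (i). Even with the family $\Vcal_G$ in hand, one must compute $R\Hom$ in $\mathbf{D}^+_{\rm QCoh}([X_{\check G}/\check G])$ of the images of two Hecke modules and match it with $R\Hom$ over $\Hcal_G$; for ${\rm GL}_2$ the paper does this by an explicit computation, but in general I would try to produce a resolution of the diagonal --- a bounded complex of projective $\Hcal_G\otimes_C\Hcal_G^{\rm op}$-modules built functorially out of $\Mcal_G$ and resolving $\Hcal_G$, arising geometrically from a Koszul or \v{C}ech resolution on $[X_{\check G}/\check G]$. This is delicate exactly because $X^{\rm WD}_{\check G}$ is singular, so $\mathbf{D}^+_{\rm QCoh}$ and ${\rm IndCoh}$ diverge (Remark $\ref{remaboutconj}$(a)) and $\bar\chi_G$ is non-flat; I expect this step to be the crux, and indeed the reason the statement is still only conjectural. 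To summarize, the gaps one would have to close are: (1) the geometric Conjecture $\ref{geometryofWDstacks}$, which is taken as input; (2) the existence of the co-Whittaker family beyond ${\rm GL}_n$ --- and, for non-principal blocks, at all --- presumably via Bushnell--Kutzko types and an explicit description of the components of $X^{\rm WD}_{\check G}$; and (3) full faithfulness, which even for ${\rm GL}_n$ is so far known only in low rank or after restriction to a regular locus.
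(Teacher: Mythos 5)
This statement is a \emph{conjecture} in the paper, not a theorem, and the paper gives no proof of it; it is stated precisely because the difficulties you identify (chiefly full faithfulness, and the existence of interpolating families outside ${\rm GL}_n$) are open. What the paper does offer is a reduction strategy in its surrounding remarks, and your proposal matches it closely: the Bernstein decomposition paired against $\pi_0(X^{\rm WD}_{\check G})$ via $\omega_G$, with the torus case an equivalence from local class field theory; the reduction of the ${\rm GL}_n$ case to Conjecture~\ref{mainconjecture} via Bushnell--Kutzko types and the decomposition of $X^{\rm WD}_{{\rm GL}_n}$ into products of $(\phi,N)$-module spaces for extensions $F'/F$; the candidate functor $\pi\mapsto{}^t\pi\otimes^L\Mcal_G$ built from the Emerton--Helm/Helm co-Whittaker family, with (e) the normalization; and the deduction of (ii)/(d) by derived base change, conditional on the geometric inputs of Conjecture~\ref{geometryofWDstacks} (or Lemma~\ref{fiberproducttorindep} in the principal-block model). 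Your diagnosis that full faithfulness is the crux, and that at present it is only established in low rank (${\rm GL}_2$) or after passage to regular/completed loci, is exactly the state of affairs in the paper. One small caveat on your step (e) for the torus: you should be a bit careful that $\cind_{\{1\}}^T\mathbf{1}=C_c^\infty(T)$ does map to $\Ocal_{X^{\rm WD}_{\check T}}$ under $(\ref{equivlcft})$, which is true but uses the Fourier-type description of the equivalence rather than being completely tautological. Beyond that, your proposal is a faithful account of the paper's intended roadmap, correctly marked as conditional at each of the points where the paper itself leaves things open.
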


\begin{rem}
(a) It should be possible to construct the expected morphism $$\mathcal{Z}(\check G)\longrightarrow \mathfrak{Z}(G)$$ of (b) in the conjecture, without referring to the conjectured functor $\mathcal{R}_G^\psi$. In the case of ${\rm GL}_n$ a result like this has been established by Helm and Moss \cite{HelmMoss} (even with $\mathbb{Z}_{\ell}$-coefficients). More generally Fargues and Scholze \cite[Proposition I.9.3]{FS} give a construction of such a morphism. The construction in \cite{FS} uses the spectral action constructed in loc.cit.. While the morphism is an isomorphism in the ${\rm GL}_n$-case of \cite{HelmMoss}, this is not true in the general case.\\
(b) In fact $\mathcal{Z}(\check G)$ coincides with the \emph{stable Bernstein center} as defined by Haines in \cite[5.3.]{Haines}. This is a consequence of \cite[Theorem 6.10]{DHKM}. 
With this identification the morphism $\mathcal{Z}(\check G)\rightarrow\mathfrak{Z}(G)$ of (b) in the Conjecture should coincide with the morphism constructed in \cite[Proposition 5.5.1]{Haines} assuming the local Langlands correspondence. 
\end{rem}
Let us point out that the morphism $\omega_G$ from (b) can not be expected to be a bijection in general, as, for a given Whittaker datum $\psi$, not every Bernstein component $\Omega$ is $\psi$-generic\footnote{In fact there are groups with Bernstein components that are not $\psi$-generic for any choice of a Whittaker datum $\psi$.} in the sense of \cite[4.3]{BushnellHenniart} (note that the notions of being \emph{$\psi$-generic} and being \emph{simply $\psi$-generic} of \cite{BushnellHenniart} agree by Example 4.5 (1) of loc.~cit., as $\mathbb{G}$ is assumed to be (quasi-)split).
More precisely, Conjecture \ref{mainconjecturegeneral} predicts that the restriction of $\omega_G$ induces a bijection
\[\left\{\begin{array}{*{20}c} \psi\text{-generic Bernstein} \\ \text{components of}\ {\rm Rep} (G)\end{array}\right\}\longrightarrow\left\{\begin{array}{*{20}c}\text{connected} \\ \text{components of}\ X_{\check G}^{\rm WD}\end{array}\right\},\]
and that for a $\psi$-generic Bernstein component $\Omega$ the induced morphism
\begin{equation}\label{centervsstablecenter}
\mathcal{Z}_{\omega_G(\Omega)}(\check G)\longrightarrow \mathfrak{Z}_\Omega(G)
\end{equation}
is an isomorphism.
Indeed, combining 4.2.~Corollary and 4.3.~Theorem of \cite{BushnellHenniart} we deduce that the $\psi$-generic components are precisely those components $\Omega$ such that $(\cind_N^G\psi)_\Omega\neq 0$. 
Moreover, the morphism $(\ref{centervsstablecenter})$ fits in the commutative diagram
\[\begin{xy}
\xymatrix{
{\rm End}_G((\cind_N^G\psi)_\Omega) \ar[r]^{\cong} & {\rm End}_{[X_{\check G,\Omega}^{\rm WD}/\check G]}(\Ocal_{[X_{\check G,\Omega}^{\rm WD}/\check G]})\\
\mathfrak{Z}_\Omega(G)\ar[u]_{\cong} & \mathcal{Z}_{\omega_G(\Omega)}(\check G), \ar[u]^{\cong}\ar[l]
}
\end{xy}\]
where $X_{\check G,\Omega}^{\rm WD}\subset X_{\check G}^{\rm WD}$ denotes the connected component defined by $\omega_G(\Omega)$.
Here the upper horizontal arrow is an isomorphism by (e) and fully faithfulness in the conjecture, the right vertical arrow is an isomorphism by definition and the left vertical arrow is an isomorphism by \cite[4.3. Theorem]{BushnellHenniart}.

\begin{rem}
Conjecture \ref{mainconjecture} is concerned with the principal component ${\rm Rep}_{[T,1]}(G)$. This Bernstein component is $\psi$-generic for any choice of $\psi$. 
\end{rem}

%

\begin{rem}
In the case $\mathbb{G}={\rm GL}_n$ there is (up to conjugation) a unique choice of $(\mathbb{B},\psi)$ and every Bernstein component of ${\rm Rep}({\rm GL}_n(F))$ is $\psi$-generic, see e.g.~\cite[4.5, Examples (2)]{BushnellHenniart}.
Moreover, in this case one can show that $X_{\check G}^{\rm WD}$ decomposes into a disjoint union 
\[X_{{\rm GL}_n}^{\rm WD}=\coprod\nolimits_{\underline{n}} X_{\underline{n}},\]
where $\underline{n}=(n_{[\tau]})$ is a tuple of non-negative integers $n_{[\tau]}$ indexed by the $W_F$-conjugacy classes $[\tau]$ of irreducible $I_F$-representations $\tau:I_F\rightarrow{\rm GL}_{d_\tau}(C)$
such that  $$n=\sum_{[\tau]} [W_F:W_\tau]\cdot n_\tau d_\tau.$$
Here $W_\tau\subset W_F$ is the $W_F$-stabilizer of a representative $\tau$ of $[\tau]$. Moreover, each $X_{\underline{n}}$ is connected and decomposes into a product where each factor is a space of $(\phi,N)$-modules for a finite extension $F'$ of $F$.
On the other hand, the local Langlands correspondence for ${\rm GL}_n(F)$ induces a bijection 
\[\left\{\begin{array}{*{20}c} W_F\text{-conjugacy classes of}\\ \text{irreducible smooth representations}\\ \tau:I_F\rightarrow {\rm GL}_m(C),\ m\geq 1\end{array}\right\}\longleftrightarrow \left\{\begin{array}{*{20}c}\text{equivalence classes of}\\ \text{cuspidal representations}\\ {\rm GL}_r(F),\ r\geq 1\end{array}\right\}\]
where two cuspidal representations are said to be equivalent if they differ by the twist by an unramified character.
Hence we obtain a bijection between the Bernstein components of ${\rm Rep}({\rm GL}_n(F))$ and the connected components of $X_{{\rm GL}_n}^{\rm WD}$.
By results of Bushnell-Kutzko \cite{BushnellKutzko} every Bernstein component of ${\rm Rep}({\rm GL}_n(F))$ can be described by a semi-simple type and the corresponding Hecke-algebra is in fact isomorphic to a tensor product of Iwahori-Hecke algebras. This corresponds to the decomposition of the connected components $X_{\underline{n}}$ of $X_{{\rm GL}_n}^{\rm WD}$ into a product of spaces of $(\phi,N)$-modules. 
In fact, in the case of ${\rm GL}_n$ type theory and a closer inspection of these decompositions should reduce Conjecture \ref{mainconjecturegeneral} to Conjecture \ref{mainconjecture} (in the case of ${\rm GL}_r$ for various $r$).
In particular it should be possible to generalize all results proven in the following section for the block ${\rm Rep}_{[T,1]}({\rm GL}_n(F))$ to the whole category ${\rm Rep}({\rm GL}_n(F))$, compare also \cite[5]{BZCHN}.
\end{rem}

\section{The case of ${\rm GL}_n$}

In this section we consider the group $G={\rm GL}_n(F)$ and make Conjecture \ref{mainconjecture} more explicit in this case. We will provide a candidate for the conjectured functor and prove that it satisfies compatibility with parabolic induction on the dense open subset of regular elements. In the case of $\GL_2$ we give a full proof of the conjecture. 

We fix $\mathbb{G}=\GL_n$ and choose the canonical integral model of $\mathbb{G}$ over $\Ocal_F$ corresponding to the maximal compact subgroup $K=\GL_n(\Ocal_F)$ of $G$. 
In particular we assume that the hyperspecial vertex defined by $K$ is contained in the apartment defined by the maximal split torus $\mathbb{T}\subset \GL_n$, and $I\subset K$.
We use this to obtain canonical integral models for the choice of a Borel $\mathbb{B}\supset \mathbb{T}$ and for parabolic subgroups $\mathbb{P}\supset \mathbb{B}$ as well as for their Levi quotients. We will use the same symbols for these integral models.
We will often simply write $\mathfrak{Z}=\mathfrak{Z}_G$ for the Bernstein center of the category ${\rm Rep}_{[T,1]}(G)$.

\subsection{The modified Langlands correspondence}\label{modifiedLL}
We recall the construction of the \emph{modified local Langlands correspondence} defined by Breuil and Schneider in \cite[4]{BreuilSchneider}, see also \cite[4.2]{EmertonHelm}. We restrict ourselves to the Bernstein block ${\rm Rep}_{[T,1]}(G)$.

Let $\varpi$ be a uniformizer of $F$. For any field extension $L$ of $C$ and $\lambda\in L^\times$ we write ${\rm unr}_\lambda:F^\times\rightarrow L^\times$ for the unramified character mapping $\varpi$ to $\lambda$. 
More generally, for $\underline{\lambda}=(\lambda_1,\dots,\lambda_n)\in (L^\times)^n$ we write ${\rm unr}_{\underline{\lambda}}={\rm unr}_{\lambda_1}\otimes\dots\otimes{\rm unr}_{\lambda_n}:T\rightarrow L^\times$ for the unramified character of the torus $T=(F^\times)^n$ whose restriction to the $i$-th coordinate is ${\rm unr}_{\lambda_i}$.

Write $|-|={\rm unr}_{q^{-1}}:F^\times\rightarrow C^\times$ for the unramified character such that $|\varpi|=q^{-1}$.
Let $L$ be a field extension of $C$ and let $(\phi,N)\in X_{\check G}(L)$ be a $(\phi,N)$-module such that $\phi$ is semi-simple. 
Then Breuil and Schneider associate to $(\phi,N)$ a smooth, absolutely indecomposable representation ${\rm LL^{mod}}(\phi,N)$ of ${\rm GL}_n(F)$ with coefficients in $L$ as follows:

Fix an algebraic closure $\bar L$ of $L$. Given a scalar $\lambda\in \bar L^\times$ and $r\geq 0$ let ${\rm Sp}(\lambda,r)$ denote as usual the $(\phi,N)$-module structure on $\bar L^r=\bar Le_0\oplus \dots \bar Le_{r-1}$ defined by
\begin{equation}\label{standardphiNmodule}
\begin{aligned}
\phi(e_i)&=q^{-i}\lambda\\
N(e_i)&=\begin{cases}e_{i+1}, &i<r-1\\ 0, &i=r-1.\end{cases}
\end{aligned}
\end{equation}
Let ${\rm St}(\lambda,r)$ denote the generalized Steinberg representation of ${\rm GL}_r(F)$ with coefficients in $\bar L$, i.e.~the unique simple quotient of $\iota_B^G({\rm unr}_\lambda\otimes{\rm unr}_\lambda |-|\otimes\dots\otimes{\rm unr}_\lambda |-|^{n-1})$. 

Given some $(\phi,N)\in X_{\check G}(L)$ with $\phi$ semi-simple we decompose (after enlarging $L$ if necessary)
\[(L^n,\phi,N)\cong\bigoplus_{i=1}^s {\rm Sp}(\lambda_i,r_i)\]
and define ${\rm LL}^{\rm mod}(\phi,N)$ as the unique $L$-model of the $\bar L$ representation 
\begin{equation}\label{defnmodifiedLL}
\iota_P^G \big({\rm St}(\lambda_1,r_1)\otimes\dots\otimes{\rm St}(\lambda_{s},r_s)\big).
\end{equation}
Here $P$ is the block upper triangular parabolic whose Levi is the block diagonal subgroup ${\rm GL}_{r_1}\times\dots\times{\rm GL}_{r_s}$ and the $\lambda_i$ are ordered so that they satisfy the condition of \cite[Definition 1.2.4]{Kudla}.

\begin{rem}
Note that the normalization we use differs from the one in \cite{BreuilSchneider} and \cite{EmertonHelm}. 
In loc.~cit.~the representation ${\rm LL^{mod}}(\phi,N)$ is modified by the twist by $|\det|^{-(n-1)/2} $.
This has the advantage that the resulting $\GL_n(F)$ representation has a unique model over $L$, without assuming the existence (or fixing a choice) of $q^{1/2}$, as proven in \cite[Lemma 4.2]{BreuilSchneider}.
As we have fixed a choice $q^{1/2}$ in the base field $C$, and hence a choice of $|\det|^{-(n-1)/2} $, their argument also implies that our representation ${\rm LL^{mod}}(\phi,N)$ has a (unique) model over $L$. 
The reason for these two different normalizations is the following: 
In \cite{BreuilSchneider} the representation should be canonically defined over $L$, without choosing $q^{1/2}$, and moreover, in \cite{EmertonHelm} the representations should (conjecturally) satisfy some local-global compatibility. 
In our case we work purely locally and we are aiming for a compatibility with normalized parabolic induction. More precisely, we need Lemma \ref{JBofLLmod} below to be true as stated (i.e.~not a twisted version of it). Anyway, the definition of normalized parabolic induction forces us to choose a square root $q^{1/2}$.
\end{rem}

If $(\phi,N)\in X_{\check G}$ with non semi-simple $\varphi$, we write ${\rm LL^{\rm mod}}(\phi,N)={\rm LL^{mod}}(\phi^{\rm ss},N)$. Moreover, if $(\phi,N)$ is such that ${\rm LL}^{\rm mod}(\phi,N)$ is absolutely irreducible (that is if the $\check G$-conjugacy class $[\phi^{\rm ss},N]$ is a generic L-parameter), we usually just write ${\rm LL}(\phi,N)$ instead of ${\rm LL^{mod}}(\phi,N)$. 
Note that in this case ${\rm LL}(\phi,N)^\vee\cong {\rm LL}((\phi,N)^\vee)$, as normalized parabolic induction commutes with contragredients and as in this case the parabolic induction of the contragredient representation still satisfies the condition of \cite[Definition 1.2.4]{Kudla}. 

\begin{lem}\label{compatibilityofcenteraction}
Let $x=(\phi_x,N_x)\in X_{\check G}$. Then, using the notation of $(\ref{maptoTmodW})$ the center $\mathfrak{Z}$ acts on the representation ${\rm LL^{mod}}((\phi,N)^\vee)^\vee$ via the character $\chi_x:\mathfrak{Z}\rightarrow k(x)$ defined by $\chi(x)\in\check T/W=\Spec \mathfrak{Z}$
\end{lem}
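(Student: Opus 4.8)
## Plan for the proof of Lemma \ref{compatibilityofcenteraction}

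The plan is to reduce the statement to the compatibility of the Satake/Bernstein isomorphism with normalized parabolic induction, together with the explicit computation of central characters on generalized Steinberg representations. First I would recall that for any split reductive $\mathbb{G}$ and parabolic $\mathbb{P}$ with Levi $\mathbb{M}$, the functor $\iota_P^G$ (and likewise $\iota_{\overline P}^G$) is $\mathfrak{Z}_G$-linear via the morphism $\mathfrak{Z}_G\cong\Gamma(\check G/\!\!/\check G,\Ocal)\to\Gamma(\check M/\!\!/\check M,\Ocal)\cong\mathfrak{Z}_M$ induced by the inclusion $\check T\hookrightarrow\check M\hookrightarrow\check G$; this was recorded in the discussion around $(\ref{maptoTmodW})$ and the diagram following $(\ref{inductiontensor})$. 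Consequently, if $\pi$ is a representation of $M$ on which $\mathfrak{Z}_M$ acts through a character classified by a point $\bar y\in\check T/W_M$, then $\mathfrak{Z}_G$ acts on $\iota_P^G(\pi)$ (and on $\iota_{\overline P}^G(\pi)$) through the character classified by the image of $\bar y$ under $\check T/W_M\to\check T/W$.

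Next I would treat the building block ${\rm St}(\lambda,r)$, the generalized Steinberg quotient of $\iota_B^{{\rm GL}_r}({\rm unr}_\lambda\otimes{\rm unr}_\lambda|-|\otimes\cdots\otimes{\rm unr}_\lambda|-|^{r-1})$. Since passing to a subquotient does not change the action of the Bernstein center, $\mathfrak{Z}_{{\rm GL}_r}$ acts on ${\rm St}(\lambda,r)$ through the same character as on the full induced representation, namely the one classified by the $W$-orbit in $\check T_r$ of the diagonal element $(\lambda,q^{-1}\lambda,\dots,q^{-(r-1)}\lambda)$. Here is the one point that needs care: by the definition of ${\rm Sp}(\lambda,r)$ in $(\ref{standardphiNmodule})$, the semisimplification of the Frobenius on ${\rm Sp}(\lambda,r)$ has exactly the eigenvalues $\lambda,q^{-1}\lambda,\dots,q^{-(r-1)}\lambda$, so this $W$-orbit is precisely $\chi_{{\rm GL}_r}({\rm Sp}(\lambda,r))$ — i.e. the image in $\check T_r/W$ of the semisimple part of the Frobenius of the $(\phi,N)$-module. (One should note that dualizing $(\phi,N)\mapsto(\phi,N)^\vee$ replaces the eigenvalues by their inverses and then dualizing the representation back undoes this, which is why the statement is phrased with the double dual ${\rm LL^{mod}}((\phi,N)^\vee)^\vee$; alternatively, since the center is preserved under contragredient composed with the inversion on $\check T/W$, the double dual has no effect on the central character beyond matching conventions, and I would check this bookkeeping explicitly.)

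Finally I would assemble the general case. Writing $(L^n,\phi,N)\cong\bigoplus_{i=1}^s{\rm Sp}(\lambda_i,r_i)$ and ${\rm LL^{mod}}(\phi,N)=\iota_P^G({\rm St}(\lambda_1,r_1)\otimes\cdots\otimes{\rm St}(\lambda_s,r_s))$ with $\mathbb{M}={\rm GL}_{r_1}\times\cdots\times{\rm GL}_{r_s}$, the center $\mathfrak{Z}_M=\bigotimes_i\mathfrak{Z}_{{\rm GL}_{r_i}}$ acts on the tensor product through the product of the characters just computed, i.e. through the point of $\prod_i\check T_{r_i}/W_{r_i}$ whose components are the $\chi_{{\rm GL}_{r_i}}({\rm Sp}(\lambda_i,r_i))$. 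Pushing forward along $\prod_i\check T_{r_i}/W_{r_i}\to\check T/W$ and using the $\mathfrak{Z}_G$-linearity of $\iota_P^G$ from the first paragraph, $\mathfrak{Z}$ acts on ${\rm LL^{mod}}(\phi,N)$ through the $W$-orbit of the concatenated diagonal element, which by construction of $\chi=\chi_G$ in $(\ref{maptoTmodW})$ is exactly $\chi(x)$, where $x=(\phi,N)$; the reordering of the $\lambda_i$ imposed by \cite[Definition 1.2.4]{Kudla} is a $W$-conjugation and hence invisible in $\check T/W$. The main obstacle, such as it is, is not conceptual but the careful tracking of the three normalizations in play — the choice of $q^{1/2}$, the normalized (as opposed to unnormalized) parabolic induction, and the double-dual convention — to be sure that the eigenvalues of $\phi^{\rm ss}$ line up on the nose with the Satake parameter of the induced representation rather than with a twist of it; the preceding Remark's insistence on the untwisted normalization and the statement of Lemma \ref{JBofLLmod} are precisely what make this bookkeeping come out cleanly.
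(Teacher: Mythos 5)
Your proof is correct and follows essentially the same strategy as the paper: both come down to the fact that $\mathfrak{Z}\subset\Hcal_T$ is central in $\Hcal_G$, so any subquotient of an unramified principal series has central character read off from the Satake parameter, which coincides with the $W$-orbit of the semisimple part of the Frobenius. The paper compresses the argument by embedding ${\rm LL^{mod}}((\phi,N)^\vee)^\vee$ directly into $\iota_{\overline{B}}^G({\rm unr}_{\underline\lambda})$ and passing to $I$-invariants, whereas you factor through the Levi $M$ and the central characters of the ${\rm St}(\lambda_i,r_i)$ — a more roundabout route to the same conclusion, with the double-dual bookkeeping made explicit.
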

\begin{proof}
By definition of ${\rm LL^{mod}}$ we may assume that $\phi$ is semi-simple. The representation ${\rm LL^{mod}}((\phi,N)^\vee)^\vee$ embeds into $$\iota_{\overline B}^G({\rm unr}_{\lambda_1}\otimes\dots\otimes{\rm unr}_{\lambda_n})$$ for some ordering $\underline{\lambda}=(\lambda_1,\dots,\lambda_n)$ of the eigenvalues of $\phi$.
Hence it follows that $\big({\rm LL^{mod}}((\phi,N)^\vee)^\vee\big)^I$ embeds into $\Hcal_G\otimes_{\Hcal_T}{\rm unr}_{\underline{\lambda}}$ and it is enough to prove that $\mathfrak{Z}\subset\Hcal_G$ acts on $\Hcal_G\otimes_{\Hcal_T}{\rm unr}_{\underline{\lambda}}$ as asserted. 
But as $\mathfrak{Z}\subset \Hcal_T$ is the center of $\Hcal_G$, it acts on $\Hcal_G\otimes_{\Hcal_T}{\rm unr}_{\lambda}$ via the same character as on ${\rm unr}_{\underline{\lambda}}$. The claim follows from this. 
\end{proof}

Recall that for a regular semi-simple endomorphism $\phi$ of an $L$-vector space $L^n$ with eigenvalues in $L$ there is a canonical bijection
\begin{equation}\label{orderingsversusflags}
\{\phi\text{-stable complete flags}\ \mathcal{F}\ \text{of}\ L^n\}\longleftrightarrow \{\text{orderings of the eigenvalues of}\ \phi\}.
\end{equation}
If $\mathcal{F}$ is a flag corresponding to an ordering $\underline{\lambda}=(\lambda_1,\dots,\lambda_n)$ of the eigenvalues of $\phi$, we denote by ${\rm unr}_{\mathcal{F}}={\rm unr}_{\underline{\lambda}}$ the $L$-valued unramified character defined by this ordering.

\begin{lem}\label{JBofLLmod}
Let $x=(\phi_x,N_x)\in X_{\check G}$ with $\phi_x$ regular semi-simple and let $L$ be an (algebraic) extension of $k(x)$ containing the eigenvalues of $\phi_x$. 
Then $$r^G_B({\rm LL^{mod}}((\phi_x,N_x)^{\vee})^\vee\otimes_{k(x)}L)=\bigoplus_{\mathcal{F}}{\rm unr}_{\mathcal{F}},$$
where the direct sum runs over all flags of $L^n$ stable under $\phi_x$ and $N_x$. 
\end{lem}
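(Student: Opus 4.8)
The plan is to compute the Jacquet module by reducing to the Bernstein--Zelevinsky description of $r^G_B$ of the representation $(\ref{defnmodifiedLL})$ and then matching the summands with $\phi_x$-stable flags via the bijection $(\ref{orderingsversusflags})$. First I would reduce to the case where $L$ contains all the eigenvalues of $\phi_x$ and where, after possibly enlarging, we may write $(L^n,\phi_x,N_x)\cong\bigoplus_{i=1}^s {\rm Sp}(\lambda_i,r_i)$ as in the definition of ${\rm LL^{mod}}$; dualizing and using that $r^G_B$ commutes with contragredients (Bernstein's second adjointness together with the fact that $r^G_B(\pi)^\vee\cong r^G_{\overline B}(\pi^\vee)$, and that over the torus $r^G_B$ and $r^G_{\overline B}$ differ only by the Weyl action which is harmless after taking a direct sum over all orderings), it suffices to compute $r^G_B$ of $\iota_P^G\big({\rm St}(\lambda_1,r_1)\otimes\dots\otimes{\rm St}(\lambda_s,r_s)\big)$.

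The key computational input is the geometric lemma (Bernstein--Zelevinsky): $r^G_B\circ\iota_P^G$ is a direct sum over Weyl-coset representatives $w$ of functors of the form $\iota^T_{\ast}\circ w\circ r^{M}_{\ast}$, and each generalized Steinberg ${\rm St}(\lambda_i,r_i)$ is a quotient of $\iota_{B_{M_i}}^{{\rm GL}_{r_i}}\big({\rm unr}_{\lambda_i}\otimes{\rm unr}_{\lambda_i}|{-}|\otimes\cdots\otimes{\rm unr}_{\lambda_i}|{-}|^{r_i-1}\big)$, whose own Jacquet module (by the Zelevinsky theory of segments, or by a direct computation à la Casselman) is the sum of $w\cdot\big({\rm unr}_{\lambda_i}\otimes\cdots\otimes{\rm unr}_{\lambda_i}|{-}|^{r_i-1}\big)$ over exactly those $w$ that preserve the cyclic order within the segment. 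Splicing these together, one finds that $r^G_B({\rm LL^{mod}}(\phi_x,N_x))$ is the sum of ${\rm unr}_{\underline\mu}$ over exactly those orderings $\underline\mu$ of the multiset $\{q^{-j}\lambda_i : 1\le i\le s,\ 0\le j\le r_i-1\}$ of eigenvalues of $\phi_x$ in which, for each fixed $i$, the entries $q^{-j}\lambda_i$ appear in the order $j=0,1,\dots,r_i-1$ (the "shuffles" of the $s$ Steinberg segments). Since $\phi_x$ is regular semi-simple, these eigenvalues are pairwise distinct, and under the bijection $(\ref{orderingsversusflags})$ an ordering corresponds to a $\phi_x$-stable flag; one checks directly from $(\ref{standardphiNmodule})$ that such a flag is in addition $N_x$-stable if and only if within each block ${\rm Sp}(\lambda_i,r_i)$ the basis vectors $e_0,e_1,\dots,e_{r_i-1}$ are picked up in that order, i.e. exactly the shuffle condition. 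Hence the summands are indexed precisely by flags stable under both $\phi_x$ and $N_x$, which is the claim.

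The main obstacle is bookkeeping the precise combinatorics of the geometric lemma and making sure the ordering conventions (the condition of \cite[Definition 1.2.4]{Kudla}, the double-dual in the statement, and the normalization of $\iota$ versus $r$) line up so that the relevant Weyl elements contribute exactly once each with no cancellation or multiplicity; in particular one must verify that for the generalized Steinberg no "wrong-order" segment survives in the Jacquet module and that the outer induction over $P$ contributes all shuffles without repetition. A cleaner alternative, avoiding the full geometric lemma, is to argue via Emerton--Helm or Zelevinsky directly: ${\rm LL^{mod}}(\phi_x,N_x)$ has $I$-invariants embedding into $\Hcal_G\otimes_{\Hcal_T}{\rm unr}_{\underline\lambda}$ (as in the proof of Lemma \ref{compatibilityofcenteraction}), so its Jacquet module is a sub of $\bigoplus_{w\in W}w\cdot{\rm unr}_{\underline\lambda}$; one then pins down which characters occur by a length/exactness count, using that ${\rm LL^{mod}}$ is a standard module and that over the regular semi-simple locus the relevant Hecke-module computation is governed by the $N_x$-stable flags, which is exactly what Lemma \ref{reducedparaboliccase} and the geometry of $\tilde X_B^{\rm reg}$ provide. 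Either way, once the index set of surviving characters is identified with the finite set of $(\phi_x,N_x)$-stable flags, the proof is complete.
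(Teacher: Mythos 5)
Your approach is essentially the same as the paper's: both compute $r^G_B$ of $\iota_P^G\big({\rm St}(\lambda_1,r_1)\otimes\cdots\otimes{\rm St}(\lambda_s,r_s)\big)$ via the Bernstein--Zelevinsky geometric lemma, identify the surviving characters as the ``shuffles'' of the $s$ segments, and match those shuffles with complete $(\phi_x,N_x)$-stable flags via the bijection between $\phi_x$-stable flags and orderings of eigenvalues; the paper likewise disposes of the double dual at the end by ``compatibility with contragredients.'' One small bookkeeping slip you should correct: the unique $(\phi,N)$-stable complete flag of ${\rm Sp}(\lambda,r)$ has $\Fcal_1=Le_{r-1}$, not $Le_0$ (since $N$ is nilpotent, $N\Fcal_1\subseteq\Fcal_1$ forces $\Fcal_1\subseteq\ker N=Le_{r-1}$, and inductively $\Fcal_i=Le_{r-1}\oplus\cdots\oplus Le_{r-i}$), so within each block the basis vectors are picked up in the order $e_{r_i-1},\dots,e_0$ rather than $e_0,\dots,e_{r_i-1}$, and the resulting eigenvalue ordering is $q^{-(r_i-1)}\lambda_i,\dots,q^{-1}\lambda_i,\lambda_i$. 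This is precisely the character that the geometric lemma produces for $r^G_B({\rm St}(\lambda_i,r_i))$, so the direction in your write-up needs to be reversed for the indexing set to actually be the set of $N_x$-stable flags; with that fix, the argument is correct and coincides with the paper's.
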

\begin{proof}
The lemma is an application of the geometrical Lemma \cite[2.11, p.~448]{BernsteinZele} describing the composition of parabolic induction with the Jacquet functor.

Assume first that $(\phi,N)\otimes_{k(x)}L={\rm Sp}(\lambda,r)$, see $(\ref{standardphiNmodule})$. Then we need to compute the Jacquet-module of the generalized Steinberg representation 
$${\rm St}(\lambda,r)=\iota_B^G\big(\delta_B^{-1/2}\otimes {\rm unr}_\lambda|-|^{(n-1)/2}\big)\big/\sum_{B\subsetneq P\subseteq G}\iota_P^G\big(\delta_P^{-1/2}\otimes{\rm unr}_\lambda|-|^{(n-1)/2}\big).$$
Here we view ${\rm unr}_\lambda|-|^{(n-1)/2}$ as a character of $M$ for any (standard) Levi $M$. 
Computing $r_B^G(\iota_P^G(\delta^{-1/2}\otimes {\rm unr}_\lambda|-|^{(n-1)/2}))$ using the geometrical lemma of \cite{BernsteinZele} it follows that $$ r_B^G({\rm St}(\lambda,r))={\rm unr}_{q^{-(r-1)}\lambda}\otimes\dots\otimes {\rm unr}_{q^{-1}\lambda}\otimes {\rm unr}_{\lambda}.$$
This is the character corresponding to the ordering  $q^{-(r-1)} \lambda ,\dots,\lambda q^{-1},\lambda$, i.e.~ to the ordering defined by the unique $(\varphi,N)$-stable flag of ${\rm Sp}(\lambda,r)$.

In the general case we decompose $(\phi,N)\otimes_{k(x)}L=\bigoplus_{i=1}^s {\rm St}(\lambda_i,r_i)$ and 
write $${\rm LL^{mod}}(\phi_x,N_x)\otimes_{k(x)}L=\iota_P^G\big({\rm St}(\lambda_1,r_1)\otimes\dots\otimes{\rm St}(\lambda_{s},r_s)\big)$$ as in $(\ref{defnmodifiedLL})$. Then again the geometrical lemma of \cite{BernsteinZele} computes that its Jacquet-module is the desired one, and the claim follows from compatibility with contragredients. 
\end{proof}
\begin{rem}
For $C=\mathbb{C}$ the lemma can be interpreted as a consequence of the classification of Kazhdan-Lusztig \cite{KL} using equivariant $K$-theory, or its formulation using Borel-Moore homology in \cite[8.1]{ChrissGinz}.
For $(\phi,N)\in X_{\check G}(C)$ as in the lemma the fiber $\tilde\beta_{\check B}^{-1}(\phi,N)$ of $\tilde\beta_{\check B}:\tilde X_{\check B}\rightarrow X_{\check G}$ is identified with the $\phi$-fixed points $\mathfrak{B}_N^\phi$ of the variety $\mathfrak{B}_N$ of $N$-stable complete flags, compare \cite[8.1]{ChrissGinz}. Using the induction theorem \cite[6]{KL} one can deduce that the $\Hcal_G$-module ${\rm LL^{mod}}(\phi,N)$ is precisely the standard module constructed in \cite[Definition 8.1.9]{ChrissGinz}(note that the group $C(\phi,N)$ of loc.~cit.~is trivial in the ${\rm GL}_n$-case). However, as $\phi$ is regular semi-simple the variety $\mathfrak{B}_N^\phi$ is a finite union of points, namely the complete $(\phi,N)$-stable flags. Hence its Borel-Moore homology is the direct sum of copies of $C$ indexed by these points. By construction the Hecke algebra $\Hcal_T$ acts on this direct sum as asserted in the lemma.
\end{rem}

\subsection{The work of Helm and Emerton-Helm}\label{EHfamily}
Emerton and Helm \cite{EmertonHelm} proposed the existence of a family of $G$-representations over a deformation space of $\ell$-adic Galois (or Weil-Deligne) representations that interpolates the modified local Langlands correspondence in a certain sense. A candidate for such a family was constructed in subsequent work of Helm\footnote{in Helm's integral $\ell$-adic set up, the construction of the candidate in \cite{Helm1} is not complete, but depends on a conjecture about the action of the Bernstein center (Conjecture 7.5. of \cite{Helm1}). This conjecture was proven by Helm and Moss \cite{HelmMoss}. In out set up of representations on characteristic $0$, and only considering the Bernstein block defined by $[T,1]$ this conjecture becomes much easier and boils down to Lemma \ref{compatibilityofcenteraction} above.} \cite{Helm1}. 
Rather than working over $\ell$-adic deformation rings we want to work with the stacks of L-parameters defined above. We review the work of Emerton-Helm and Helm in this set up in order to construct a family of $G$-representation on the stack $[X_{\check G}/\check G]$.

In this section we need to work with families of admissible smooth representations, compare \cite[2.1.]{EmertonHelm}. We make precise what we mean by this. Let $A$ be a noetherian $C$-algebra and let $V$ be a finitely generated $A[G]$-module. We say that $V$ is an admissible smooth family of $G$ representations over $A$, if the $G$-representation on $V$ is smooth and if $V^{K'}$ is a finitely generated $A$-module for every compact open subgroup $K'\subset G$. 

Denote by $N\subset B$ the unipotent radical and let $\psi:N\rightarrow C^\times$ be a generic character. 
Recall that an irreducible $G$-representation $\pi$ is called \emph{generic} if there exists an embedding $\pi\hookrightarrow {\rm Ind}_N^G\psi$. Equivalently, $\pi$ is generic if there is a surjection $\cind_N^G\psi \twoheadrightarrow \pi$.

We write $(\cind_N^G\psi)_{[T,1]}$ for the image of the compactly induced representation $\cind_N^G\psi$ in the Bernstein component ${\rm Rep}_{[T,1]}(G)={\rm Rep}^IG$. As in the case of ${\rm GL}_n$ a Whittaker datum is unique up to isomorphism, this representation (up to isomorphism) does not depend on the Whittaker datum $(B,\psi)$. 

Recall that we have fixed $K=\GL_n(\Ocal_F)\supset I$ and consider the induced representation ${\rm Ind}_I^K \mathbf{1}_I$. By \cite{SchneiderZink} this induced representation decomposes into a direct sum 
\begin{equation}\label{SchneiderZinkdecompo}
{\rm Ind}_I^K \mathbf{1}_I=\bigoplus_{\mathcal{P}}\sigma_{\mathcal{P}}^{\oplus m_{\mathcal{P}}}
\end{equation}
indexed by the set of partitions $\mathcal{P}$ of the positive integer $n$ which is partially ordered, see \cite[p.~169]{SchneiderZink}, and has a unique minimal\footnote{Note that the partial ordering used here is the opposite to the standard ordering of partitions, compare \cite[3.]{SchneiderZink}. Here the maximal element is given by $1+1+\dots+1$ and the minimal element is $n$.} element $\mathcal{P}_{\rm min}$ and a unique maximal element $\mathcal{P}_{\rm max}$. Let ${\rm st}={\rm st}_G=\sigma_{\mathcal{P}_{\rm min}}$ denote the finite dimensional Steinberg representation. This representation occurs with multiplicity $m_{\mathcal{P}_{\rm min}}=1$. 
As $\cind_K^G{\rm st}$ lies in the Bernstein component $[T,1]$ it carries a natural action of $\mathfrak{Z}$.

We further recall from \cite[3.2, 3.5]{BernsteinZele} the definition of the $r$-th derivative $V^{(r)}$ of a ${\rm GL}_n(F)$-representation $V$ which is a smooth representation of ${\rm GL}_{n-r}(F)$. In particular $V^{(n)}$ is just a $C$-vector space. 
By \cite[p.5, (2)]{Helm1} there is natural isomorphism 
\begin{equation}\label{(n)adjointness}
{\rm Hom}_G(\cind_N^G\psi, V)\cong  V^{(n)}.
\end{equation}
If $0\neq v\in V^{(n)}$ and $V$ lies in the Bernstein component $[T,1]$, then the morphism defined by $v$ obviously factors through $(\cind_I^G\psi)_{[T,1]}$. 

The following theorem is a summary of the results in \cite[\S\S 3,4]{Helm1} (translated to the easier situation considered here).
\begin{theo}\label{summaryHelm}
Let $\pi$ be one of the representations $(\cind_N^G\psi)_{[T,1]}$ and $\cind_K^G{\rm st}$. Then $\pi$ is a smooth $\mathfrak{Z}$-representation and the $n$-th derivative $\pi^{(n)}$ is a free $\mathfrak{Z}$-modules of rank $1$. 
Moreover, let $\mathfrak{p}\in {\rm Spec}\,\mathfrak{Z}$ then
\begin{itemize}
\item[(a)] the representation $\pi\otimes k(\mathfrak{p})$ is a direct sum of finite length representations.
\item[(b)] the cosocle ${\rm cosoc}(\mathfrak{p})$ of $\pi\otimes k(\mathfrak{p})$ is absolutely irreducible and generic.
\item[(c)] the representation $\ker(\pi\otimes k(\mathfrak{p})\rightarrow {\rm cosoc}(\mathfrak{p}))$ does not contain any generic subquotient.
\end{itemize}
Finally, the representation $\cind_K^G{\rm st}$ is admissible as a $\mathfrak{Z}$-representation.
\end{theo}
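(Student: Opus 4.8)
The plan is to transcribe the arguments of Helm \cite[\S\S 3--4]{Helm1} into the present, considerably simpler, situation: the coefficients lie in a field $C$ of characteristic zero, so that there are no integrality issues and no reduction modulo $\ell$, and we work inside the single Bernstein block ${\rm Rep}_{[T,1]}(G)\cong \mathcal{H}_G\text{-mod}$. The one input on which Helm's argument rests and which was conjectural at the time, namely Conjecture 7.5 of \cite{Helm1} on the compatibility of the action of the Bernstein centre with the modified Langlands correspondence, is replaced here by Lemma \ref{compatibilityofcenteraction}, whose proof in this generality is elementary. So the task is mostly one of bookkeeping, and I would organise it as follows.

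First, the smooth $\mathfrak{Z}$-module structure is immediate: both $\cind_N^G\psi$ and $\cind_K^G{\rm st}$ are smooth $G$-representations, and after projecting to (resp.~observing that they already lie in) the block $[T,1]$ they acquire a $\mathfrak{Z}$-action through the action of the Bernstein centre on that block; for $\cind_K^G{\rm st}$ one uses that ${\rm st}=\sigma_{\mathcal{P}_{\rm min}}$ occurs in ${\rm Ind}_I^K\mathbf{1}_I$, compare $(\ref{SchneiderZinkdecompo})$, so that $\cind_K^G{\rm st}$ lies in $[T,1]$. For the rank-one freeness of $\pi^{(n)}$ over $\mathfrak{Z}$ I would use the adjunction $(\ref{(n)adjointness})$, ${\rm Hom}_G(\cind_N^G\psi,V)\cong V^{(n)}$. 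Applied to $\pi=(\cind_N^G\psi)_{[T,1]}$ this identifies $\pi^{(n)}$ with ${\rm End}_G(\pi)$, which one identifies with $\mathfrak{Z}$ (the block version of the statement that the Whittaker Hecke algebra is the Bernstein centre), using the multiplicity-one property of Whittaker models together with exactness and the Leibniz-type behaviour of the Bernstein--Zelevinsky derivatives \cite[\S 3]{BernsteinZele}. For $\pi=\cind_K^G{\rm st}$ one compares with the previous case exactly as in \cite[\S 3]{Helm1}: a generator of $(\cind_K^G{\rm st})^{(n)}$ defines a map $\cind_K^G{\rm st}\to(\cind_N^G\psi)_{[T,1]}$ which is an isomorphism on $n$-th derivatives, so $(\cind_K^G{\rm st})^{(n)}$ is again free of rank one over $\mathfrak{Z}$.

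For parts (a)--(c) I would first treat the case where $\mathfrak{p}$ is a closed point of ${\rm Spec}\,\mathfrak{Z}=\check T/W$, corresponding to a semisimple conjugacy class $[\phi]$ in $\check G$; there $\pi\otimes k(\mathfrak{p})$ is of finite length, its constituents being controlled via the Bernstein--Zelevinsky classification \cite{BernsteinZele} by the Jordan--H\"older factors of an appropriate parabolically induced representation. The branching behaviour of derivatives forces $\pi\otimes k(\mathfrak{p})$ to have a unique irreducible generic quotient, which one identifies with ${\rm LL^{mod}}(\phi,N)$ for a suitable $N$; parts (b) and (c) then reduce to the standard facts that this standard module has a unique irreducible generic quotient and no other generic subquotient, as in \cite[\S 4]{Helm1}. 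For a general (not necessarily closed) point $\mathfrak{p}$ one base-changes to the residue field $k(\mathfrak{p})$ and repeats the analysis, using that derivatives and the whole discussion are compatible with extension of scalars; the possible failure of finite generation over $\mathfrak{Z}$ in the Whittaker case is exactly why (a) is phrased as a direct sum of finite length representations rather than a single one.

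Finally, for admissibility of $\cind_K^G{\rm st}$ as a $\mathfrak{Z}$-representation: since ${\rm st}$ is finite dimensional, $\cind_K^G{\rm st}$ is finitely generated as a $C[G]$-module, hence corresponds under ${\rm Rep}^IG\cong\mathcal{H}_G\text{-mod}$ to a finitely generated $\mathcal{H}_G$-module; as $\mathcal{H}_G$ (the Iwahori-Hecke, i.e.~affine Hecke, algebra) is a finitely generated module over its Noetherian centre $\mathfrak{Z}$ by Bernstein's theorem, a standard argument expressing $(\cind_K^G{\rm st})^{K'}$ as ${\rm Hom}_{\mathcal{H}_G}$ from the finitely generated projective $\mathcal{H}_G$-module attached to $\cind_{K'}^G\mathbf{1}$ gives that $(\cind_K^G{\rm st})^{K'}$ is a finitely generated $\mathfrak{Z}$-module for every compact open $K'\subset G$. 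There is no single deep step here; the real content is the verification that Helm's arguments go through unchanged, and the place that requires the most care is the rank-one freeness of $\pi^{(n)}$ for $\pi=\cind_K^G{\rm st}$ together with the identification of generic cosocles in (b) --- precisely the points where \cite{Helm1} invoked Conjecture 7.5 and where we substitute Lemma \ref{compatibilityofcenteraction}.
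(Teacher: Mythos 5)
Your overall strategy---transcribe Helm's \S\S 3--4 into the characteristic-zero, single-block setting, with Conjecture~7.5 of \cite{Helm1} replaced by Lemma~\ref{compatibilityofcenteraction}---is exactly the route the paper takes. There the proof simply lists the invoked results in \cite{Helm1} (Lemmas~3.2 and 3.4 for the Whittaker case; Theorem~4.1, Lemma~4.2, Proposition~4.9 and Corollary~4.10 for $\cind_K^G{\rm st}$) and observes that the one substantive modification is replacing Helm's mod-$\ell$ classification of irreducibles via parabolic induction in Proposition~4.9 by the Bernstein--Zelevinsky classification \cite{Zele}.

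One place where your sketch departs from Helm's argument and, as written, does not stand up is the rank-one freeness of $(\cind_K^G{\rm st})^{(n)}$. First, $(\ref{(n)adjointness})$ runs ${\rm Hom}_G(\cind_N^G\psi,V)\cong V^{(n)}$, so a vector in $(\cind_K^G{\rm st})^{(n)}$ yields a map $\cind_N^G\psi\to\cind_K^G{\rm st}$, not the arrow in the direction you wrote. More seriously, the claim that this map ``is an isomorphism on $n$-th derivatives'' presupposes precisely the freeness you are after: the induced map $\mathfrak{Z}\cong(\cind_N^G\psi)_{[T,1]}^{(n)}\to(\cind_K^G{\rm st})^{(n)}$ sends $1$ to $v$, and to see that it is surjective you must already know that any quotient of $\cind_K^G{\rm st}\otimes k(\mathfrak{p})$ by the image of such a map has vanishing $n$-th derivative, which is exactly the information in parts (b),(c) for $\cind_K^G{\rm st}$. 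In Helm's architecture---and so in the paper's---freeness of $(\cind_K^G{\rm st})^{(n)}$ is therefore a \emph{Corollary} (4.10), deduced only after Theorem~4.1, Lemma~4.2 and Proposition~4.9 establish admissibility, the finite-length decomposition, and the cosocle statements. The isomorphism $(\cind_N^G\psi)_{[T,1]}\cong\cind_K^G{\rm st}$ is then Corollary~\ref{indetifycindpsi}, whose proof \emph{uses} all of Theorem~\ref{summaryHelm}; it cannot be fed back into the proof of that theorem. If you keep your ordering, you must replace the comparison step for $(\cind_K^G{\rm st})^{(n)}$ with Helm's own derivation of Corollary~4.10 from the structural results of his \S 4.
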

\begin{proof}
We cite the proof from \cite{Helm1}. All references in this proof refer to loc.~cit. In Helm's situation the coefficients are $W(k)$ for a finite field $k$, instead of the characteristic zero field $C$ in our case. The arguments literally do not change in our set-up; except for one argument, where the classification of irreducible, smooth mod $\ell$ representations in terms of parabolic induction has to be replaced by the corresponding classification of irreducible, smooth representations in characteristic zero.

The case of $(\cind_I^G\psi)_{[T,1]}$ follows from Lemma 3.2 and Lemma 3.4.
In the case $\pi=\cind_K^G{\rm st}$ admissibility follows from Theorem 4.1, and part (a) is Lemma 4.2.
Properties (b) and (c) are proven in Proposition 4.9. Finally the claim on $\pi^{(n)}$ is Corollary 4.10.

The proof of Helm's Proposition 4.9 uses the classification of irreducible smooth mod $\ell$ representations of ${\rm GL}_n(F)$ in terms of parabolic induction, and has to be replaced by the usual Bernstein-Zelevinsky classification of irreducible, smooth representations in characteristic zero \cite{Zele}. With this change of reference the proof in \cite{Helm1} literally does not change. 
\end{proof}

\begin{cor}\label{indetifycindpsi}
There is an isomorphism of $\mathfrak{Z}[G]$-modules. \[(\cind_N^G\psi)_{[T,1]}\cong \cind_K^G{\rm st},\]
unique up to a scalar in $\mathfrak{Z}^\times$.   
\end{cor}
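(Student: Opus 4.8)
The plan is to compare the two $\mathfrak{Z}[G]$-modules $\pi_1 = (\cind_N^G\psi)_{[T,1]}$ and $\pi_2 = \cind_K^G{\rm st}$ via a map in one direction and then show that map is an isomorphism. First I would produce a nonzero $G$-equivariant, $\mathfrak{Z}$-linear morphism $f\colon \pi_1 \to \pi_2$. For this, use the adjunction $(\ref{(n)adjointness})$, which identifies $\Hom_G(\cind_N^G\psi, \pi_2)$ with the $n$-th derivative $\pi_2^{(n)}$; by Theorem \ref{summaryHelm} this is a free $\mathfrak{Z}$-module of rank $1$. A generator $v$ of $\pi_2^{(n)}$ gives a $G$-map $\cind_N^G\psi \to \pi_2$; since $\pi_2$ lies in the block $[T,1]$, this factors through the projection $\cind_N^G\psi \twoheadrightarrow (\cind_N^G\psi)_{[T,1]} = \pi_1$, yielding $f\colon \pi_1 \to \pi_2$. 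One should check $f$ is $\mathfrak{Z}$-linear: the $\mathfrak{Z}$-action on both sides is the block-$[T,1]$ Bernstein-center action, and $f$ is a morphism in ${\rm Rep}_{[T,1]}(G)$, so this is automatic. The choice of $v$ is unique up to $\mathfrak{Z}^\times$, which will give the asserted uniqueness at the end provided $f$ turns out to be an isomorphism (any other generator of $\pi_2^{(n)}$ differs by a unit of $\mathfrak{Z}$, and a priori $f$ could be replaced by $f$ composed with multiplication by such a unit).

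Next I would show $f$ is an isomorphism by a fiberwise (i.e. point-by-point over $\Spec\mathfrak{Z}$) argument. By Theorem \ref{summaryHelm}, both $\pi_1$ and $\pi_2$ have the property that $\pi_i \otimes_{\mathfrak{Z}} k(\mathfrak{p})$ is a direct sum of finite-length representations whose cosocle ${\rm cosoc}(\mathfrak{p})$ is absolutely irreducible and generic, and the quotient by the cosocle contains no generic subquotient; moreover $\pi_i^{(n)}$ is free of rank $1$ over $\mathfrak{Z}$, so $(\pi_i\otimes k(\mathfrak{p}))^{(n)}$ is one-dimensional over $k(\mathfrak{p})$ for every $\mathfrak{p}$. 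Reducing $f$ mod $\mathfrak{p}$ gives $f_{\mathfrak{p}}\colon \pi_1\otimes k(\mathfrak{p}) \to \pi_2 \otimes k(\mathfrak{p})$, and by construction $f_{\mathfrak{p}}$ is nonzero on the $n$-th derivative (the generator $v$ was chosen so that its image in $\pi_2^{(n)}\otimes k(\mathfrak{p})$ is nonzero — here one uses freeness of rank $1$). Since the $n$-th derivative detects the generic constituent, the cosocle ${\rm cosoc}_1(\mathfrak{p})$ of the source maps nontrivially to $\pi_2\otimes k(\mathfrak{p})$; as it is irreducible this map is injective, and its image, being generic and irreducible, must land in the socle of the generic part of $\pi_2\otimes k(\mathfrak{p})$. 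A symmetric consideration, applied to a generator of $\pi_1^{(n)}$ and the resulting map in the other direction (or a dimension count on generic multiplicities, which are $1$ on both sides by property (c)), shows $f_{\mathfrak p}$ is an isomorphism.

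Finally I would upgrade the fiberwise isomorphism to an honest isomorphism of $\mathfrak{Z}[G]$-modules. The two modules need not be finitely generated over $\mathfrak{Z}$, so I would argue compact-open-subgroup by compact-open-subgroup: for each compact open $K'\subset G$, the $K'$-invariants $\pi_i^{K'}$ are finitely generated $\mathfrak{Z}$-modules (this is the admissibility part of Theorem \ref{summaryHelm} for $\pi_2$, and for $\pi_1 = (\cind_N^G\psi)_{[T,1]}$ it follows likewise from Helm's results, or from the fact that it is a quotient of a projective generator in the block), $f$ restricts to $f^{K'}\colon \pi_1^{K'} \to \pi_2^{K'}$, and $f^{K'}$ becomes an isomorphism after $\otimes_{\mathfrak{Z}} k(\mathfrak{p})$ for every $\mathfrak{p}$ (exactness of $(-)^{K'}$ and right-exactness of base change handle surjectivity; for injectivity one uses that $f_{\mathfrak{p}}$ is injective together with a Nakayama-type or Tor-vanishing argument using that the cokernel is supported nowhere). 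Hence $f^{K'}$ is an isomorphism of $\mathfrak{Z}$-modules for all $K'$, so $f$ itself is an isomorphism. The uniqueness up to $\mathfrak{Z}^\times$ then follows: any isomorphism $\pi_1\to\pi_2$ induces an isomorphism on $n$-th derivatives, hence on $\Hom_G(\cind_N^G\psi,-)$, i.e. it corresponds to a generator of $\pi_2^{(n)}$, and these differ by units of $\mathfrak{Z}$.

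\textbf{Main obstacle.} The delicate point is the fiberwise isomorphy of $f_{\mathfrak p}$ at \emph{non-generic} primes $\mathfrak{p}$ (where ${\rm cosoc}(\mathfrak{p})$ is generic but $\pi_i\otimes k(\mathfrak{p})$ may be genuinely reducible with a large non-generic part). Knowing $f_{\mathfrak p}$ is nonzero only controls the generic constituent; one must rule out that $f_{\mathfrak p}$ fails to be surjective onto, or injective away from, the non-generic part. I expect this to require exploiting more precisely the structure of $\pi_1$ and $\pi_2$ as quotients/submodules built from the same standard modules — in effect, that both are characterized among objects of ${\rm Rep}_{[T,1]}(G)$ by the universal property coming from $(\ref{(n)adjointness})$ (being the "maximal" object with one-dimensional top derivative and generic cosocle), so that the map between them is forced to be an isomorphism. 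Making this characterization precise, and checking it is stable under $\otimes k(\mathfrak{p})$, is where the real work lies; it is essentially a repackaging of Helm's Proposition 4.9 and its proof.
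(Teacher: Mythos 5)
Your construction of the map $f$ agrees exactly with the paper's: choose a generator of the rank-one free $\mathfrak{Z}$-module $(\cind_K^G{\rm st})^{(n)}$, apply the adjunction $(\ref{(n)adjointness})$, and factor through the block projection. But your strategy for proving $f$ is an isomorphism — a fiberwise argument showing $f_{\mathfrak{p}}$ is \emph{both} injective and surjective for every $\mathfrak{p}$ — runs into exactly the obstacle you diagnose at the end, and that obstacle is real. At a non-generic $\mathfrak{p}$ the representations $\pi_i\otimes k(\mathfrak{p})$ have a sizeable non-generic part, and knowing that $f_{\mathfrak{p}}$ is nonzero on the $n$-th derivative (equivalently, hits the generic cosocle) gives you no leverage over what $f_{\mathfrak{p}}$ does away from the generic constituent. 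Your proposed "symmetric consideration" also doesn't produce a map $\pi_2\to\pi_1$: the adjunction $(\ref{(n)adjointness})$ only manufactures morphisms \emph{out of} $\cind_N^G\psi$, so a generator of $\pi_1^{(n)}$ gives an endomorphism of $\pi_1$, not a map from $\pi_2$.

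The paper circumvents this by separating surjectivity from injectivity and never attempting fiberwise injectivity at all. Surjectivity of $\alpha$ \emph{is} proved fiberwise, and works for a different reason than you suggest: if the cokernel $W$ had $W\otimes k(\mathfrak{p})\neq 0$, then $W\otimes k(\mathfrak{p})$ is a nonzero quotient of $\cind_K^G{\rm st}\otimes k(\mathfrak{p})$, and Theorem~\ref{summaryHelm} (b),(c) applied to $\cind_K^G{\rm st}$ forces such a quotient to have a generic cosocle, hence a nonzero $n$-th derivative; but exactness of $(-)^{(n)}$ and the fact that $\alpha^{(n)}$ is an isomorphism force $(W\otimes k(\mathfrak{p}))^{(n)}=0$, a contradiction. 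The crucial idea you are missing for the injectivity half is the \emph{projectivity} of $\cind_K^G{\rm st}$ as a $G$-representation: once $\alpha$ is known to be surjective, it splits, giving $(\cind_N^G\psi)_{[T,1]}\cong\cind_K^G{\rm st}\oplus W'$. Then $(W')^{(n)}=0$ because $\alpha^{(n)}$ is already an isomorphism, and the adjunction $(\ref{(n)adjointness})$ (applied to $W'$) says $\Hom_G(\cind_N^G\psi,W')=(W')^{(n)}=0$, so the surjective projection $(\cind_N^G\psi)_{[T,1]}\twoheadrightarrow W'$ is zero and $W'=0$. This replaces the hardest part of your fiberwise argument with a clean global argument, and is the step you would need to supply to complete the proof.
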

\begin{proof}
By Theorem \ref{summaryHelm} the $n$-th derivative $(\cind_K^G{\rm st})^{(n)}$ is locally free of rank $1$ over $\mathfrak{Z}$. As $\mathfrak{Z}\cong C[X_1,\dots, X_{n-1},X_n^{\pm 1}]$ every line bundle on $\Spec\mathfrak{Z}$ is trivial and hence $(\cind_K^G{\rm st})^{(n)}\cong \mathfrak{Z}$. By the discussion preceding Theorem \ref{summaryHelm}, a choice of a basis vector (which is unique up to a scalar in $\mathfrak{Z}^\times$) gives rise to a morphism 
 \[\alpha:(\cind_N^G\psi)_{[T,1]}\longrightarrow \cind_K^G{\rm st}.\]
We claim that $\alpha$ is an isomorphism.  

First we show that $\alpha$ is surjective: let $W$ denote the cokernel of $\alpha$. Then $W$ is generated by its Iwahori fixed vectors $W^I$ and, by admissibility of $\cind_K^G{\rm st}$, the $\mathfrak{Z}$-module $W^I$ is finitely generated.\\
As $(-)^I$ is an exact functor $W^I\otimes k(\mathfrak{p})=(W\otimes k(\mathfrak{p}))^I$ and hence $W=0$ if and only if $W\otimes k(\mathfrak{p})=0$ for all $\mathfrak{p}\in {\rm Spec}\, \mathfrak{Z}$. 

As $\alpha$ by definition induces an isomorphism $$\alpha^{(n)}:(\cind_N^G\psi)_{[T,1]}^{(n)}\longrightarrow (\cind_K^G{\rm st})^{(n)}$$ and as the functor $(-)^{(n)}$ is exact (see e.g.~\cite[3.2, Proposition]{BernsteinZele}) it follows that $W^{(n)}=0$ and $(W\otimes k(\mathfrak{p}))^{(n)}=0$ for all $\mathfrak{p}\in{\rm Spec}\, \mathfrak{Z}$. 
Assume that $W\otimes k(\mathfrak{p})\neq 0$. As $W\otimes k(\mathfrak{p})$ is a quotient of $\cind_K^G{\rm st}\otimes k(\mathfrak{p})$, Theorem \ref{summaryHelm} (b),(c) implies that there exists a non-zero morphism
\[\cind_N^G\psi\longrightarrow W\otimes k(\mathfrak{p}),\]
contradicting $(W\otimes k(\mathfrak{p}))^{(n)}=0$

Now $\cind_K^G {\rm st}$ is projective as a $G$-representation and hence the surjection $\alpha$ has a splitting
\[(\cind_N^G\psi)_{[T,1]}\cong \cind_K^G{\rm st}\oplus W'.\]
As $\alpha$ induces an isomorphism after applying the $n$-th derivative $(-)^{(n)}$ it follows that $(W')^{(n)}=0$. By the adjointness property $(\ref{(n)adjointness})$ is follows that the canonical projection
\[\beta:(\cind_N^G\psi)_{[T,1]}\longrightarrow W'\]
is zero and hence $W'=0$, as $\beta$ is surjective.
\end{proof}

Following \cite{Helm1} we construct a family $\mathcal{V}_G$ of $G$-representations on $[X_{\check G}/\check G]$ that conjecturally interpolates the modified local Langlands correspondence (see Conjecture \ref{Conjfibers} below for the precise meaning). 
Rather than constructing $\mathcal{V}_G$ directly on $[X_{\check G}/\check G]$ we construct a family $\tilde{\mathcal{V}}_G$ on $X_{\check G}=:{\rm Spec}\, A_{\check G}$ that is $\check G$-equivariant and hence descents to $[X_{\check G}/\check G]$.

\begin{lem}\label{surjecttoLLmod}
Let $x=(\phi_x,N_x)\in X_{\check G}$. There exists a canonical surjection
\[(\cind_N^G\psi)_{[T,1]}\otimes_{\mathfrak{Z}}k(x)\longrightarrow {\rm LL^{mod}}((\phi_x^{\rm ss},N_x)^{\vee})^\vee\]
that is unique up to scalar.
\end{lem}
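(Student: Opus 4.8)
The plan is to produce the map by $(n)$-th derivative adjunction and to deduce surjectivity from projectivity of $(\cind_N^G\psi)_{[T,1]}$ together with the genericity of the unique irreducible quotient of the target. Throughout write $y=(\phi_x^{\rm ss},N_x)\in X_{\check G}(k(x))$ and $V={\rm LL^{mod}}(y^\vee)^\vee$; by the construction recalled in \S\ref{modifiedLL} this is a smooth admissible $k(x)[G]$-module. First I would record the basic features of $V$. As in the proof of Lemma \ref{compatibilityofcenteraction} the representation $V$ embeds into $\iota_{\overline{B}}^G$ of an unramified character attached to an ordering of the eigenvalues of $\phi_x$; hence $V$ lies in the Bernstein block ${\rm Rep}_{[T,1]}(G)$ and, by Lemma \ref{compatibilityofcenteraction}, the centre $\mathfrak{Z}$ acts on $V$ through the character $\chi_x$, i.e.\ $V$ is annihilated by $\ker\chi_x$. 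Moreover, by the recalled properties of the modified Langlands correspondence, ${\rm LL^{mod}}(y^\vee)$ has a unique irreducible subrepresentation, which is generic; dualising, $V$ has a unique irreducible quotient $\tau$, with $V':=\ker(V\twoheadrightarrow\tau)$ the unique maximal subrepresentation, and $\tau\cong {\rm LL}(\phi_x^{\rm ss},N_x)$ is the dual of the Langlands quotient of a standard module, hence absolutely irreducible; by uniqueness of Whittaker models $\tau^{(n)}$ is one-dimensional over $k(x)$, in particular non-zero.

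Next I would construct the morphism. Since $\tau^{(n)}\neq 0$, the adjunction $(\ref{(n)adjointness})$ yields a non-zero $G$-map $\cind_N^G\psi\to\tau$; as $\tau$ lies in the block $[T,1]$ and is killed by $\ker\chi_x$, this factors through a non-zero map $g:(\cind_N^G\psi)_{[T,1]}\otimes_{\mathfrak{Z}}k(x)\to\tau$. By Corollary \ref{indetifycindpsi} we have $(\cind_N^G\psi)_{[T,1]}\cong\cind_K^G{\rm st}$, which is projective in ${\rm Rep}(G)$; since for any representation $M$ annihilated by $\ker\chi_x$ one has ${\rm Hom}_G((\cind_N^G\psi)_{[T,1]}\otimes_{\mathfrak{Z}}k(x),M)\cong{\rm Hom}_G((\cind_N^G\psi)_{[T,1]},M)$, the object $(\cind_N^G\psi)_{[T,1]}\otimes_{\mathfrak{Z}}k(x)$ is projective in the category of smooth $G$-representations lying in $[T,1]$ on which $\mathfrak{Z}$ acts through $\chi_x$. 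As both $V$ and $\tau$ lie in that category, $g$ lifts along $V\twoheadrightarrow\tau$ to a map $\bar f:(\cind_N^G\psi)_{[T,1]}\otimes_{\mathfrak{Z}}k(x)\to V$. The image of $\bar f$ surjects onto $\tau$, so it is not contained in the unique maximal subrepresentation $V'$; hence $\bar f$ is surjective, and precomposing with $\cind_N^G\psi\to(\cind_N^G\psi)_{[T,1]}\otimes_{\mathfrak{Z}}k(x)$ gives the asserted surjection.

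Finally, for canonicity I would identify the full $\Hom$-space: since $V$ lies in $[T,1]$ and is annihilated by $\ker\chi_x$, ${\rm Hom}_G\big((\cind_N^G\psi)_{[T,1]}\otimes_{\mathfrak{Z}}k(x),V\big)\cong{\rm Hom}_G(\cind_N^G\psi,V)\cong V^{(n)}$ by $(\ref{(n)adjointness})$, so it suffices to see that $V^{(n)}$ is one-dimensional over $k(x)$. For this I would use the Bernstein--Zelevinsky formula for the top derivative of a parabolically induced representation: decomposing $y^\vee$ into generalised Steinbergs and writing ${\rm LL^{mod}}(y^\vee)=\iota_P^G\big(\bigotimes_i{\rm St}(\lambda_i,r_i)\big)$ as in $(\ref{defnmodifiedLL})$, one gets $\big(\iota_P^G(\otimes_i{\rm St}(\lambda_i,r_i))\big)^{(n)}\cong\bigotimes_i{\rm St}(\lambda_i,r_i)^{(r_i)}$, which is one-dimensional because each generalised Steinberg is generic, and then transferring through the contragredient (using $\pi^\vee\cong\pi\circ\theta$ for ${\rm GL}_n$ and the resulting comparison of top derivatives) gives $\dim_{k(x)}V^{(n)}=1$. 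This dimension count is the step I expect to require the most care, since one must pass the top-derivative computation through the dualisation; alternatively one can sidestep the exact dimension and instead observe that $(V')^{(n)}=0$ (the non-generic part of a standard module contributes nothing to the top derivative), whence ${\rm Hom}_G(\cind_N^G\psi,V)\hookrightarrow{\rm Hom}_G(\cind_N^G\psi,\tau)=\tau^{(n)}$ is at most one-dimensional, and non-zero by the construction above. Either way the surjection is unique up to a scalar in $k(x)^\times$, which is the claimed canonicity.
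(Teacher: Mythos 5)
Your proof is correct and amounts to an unpacking of the argument from Helm's Theorem~7.9 that the paper simply cites, using the same ingredients (the $(n)$-th derivative adjunction $(\ref{(n)adjointness})$, projectivity of $(\cind_N^G\psi)_{[T,1]}\cong\cind_K^G{\rm st}$ via Corollary~\ref{indetifycindpsi}, the cosocle description from \S\ref{modifiedLL}, and Lemma~\ref{compatibilityofcenteraction} to pin down the central character); so it is genuinely useful as a self-contained replacement for the external reference. Two small comments. First, the identification ``$\tau\cong{\rm LL}(\phi_x^{\rm ss},N_x)$ is the dual of the Langlands quotient'' is not correct in general: the unique irreducible quotient of $V={\rm LL^{mod}}(y^\vee)^\vee$ is the contragredient of the unique \emph{generic submodule} of ${\rm LL^{mod}}(y^\vee)$, whereas ${\rm LL}(y^\vee)^\vee$ is the unique irreducible \emph{submodule} of $V$; these agree only when the standard module is irreducible. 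Fortunately your argument uses nothing about $\tau$ beyond its being the cosocle of $V$ and being (absolutely) irreducible generic, so this slip is harmless. Second, as you yourself anticipate, the primary dimension count $\dim_{k(x)}V^{(n)}=1$ via the Bernstein--Zelevinsky top-derivative formula and dualisation is the delicate route; your ``alternative'' is the one to keep: since $(-)^{(n)}$ is exact and kills non-generic irreducibles, and since $V$ has a unique generic constituent (namely the cosocle $\tau$, because ${\rm LL^{mod}}(y^\vee)$ has a unique generic constituent and dualisation preserves genericity), one immediately gets $(V')^{(n)}=0$, hence $V^{(n)}\cong\tau^{(n)}$ is one-dimensional, giving both existence and unicity up to scalar of a nonzero map, together with surjectivity by the lifting argument.
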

\begin{proof}
This follows from the argument in the proof of \cite[Theorem 7.9]{Helm1}, using Lemma \ref{compatibilityofcenteraction} instead of Conjecture 7.5.~of loc.~cit..
\end{proof}

Let $\eta=(\phi_\eta,N_\eta)\in X_{\check G}$ be a generic point. Then $${\rm LL}(\phi_\eta,N_\eta)={\rm LL^{mod}}(\phi_\eta,N_\eta)={\rm LL^{mod}}((\phi_\eta,N_\eta)^\vee)^\vee$$ is an irreducible generic representation. We obtain a morphism
\[(\cind_N^G\psi)_{[T,1]}\otimes_{\mathfrak{Z}}A_{\check G}\longrightarrow (\cind_N^G\psi)_{[T,1]}\otimes_{\mathfrak{Z}}k(\eta)\longrightarrow {\rm LL}(\phi_\eta,N_\eta),\]
where the second morphism is the choice of a surjection as in Lemma \ref{surjecttoLLmod}.

Let $\eta_i$, $i\in I$ denote the generic points of $X_{\check G}=\Spec A_{\check G}$. We define $\tilde{\mathcal{V}}_{G}$ to be the (admissible smooth) family of $G$-representations over $A_{\check G}$ that is the image of the morphism
\begin{equation}\label{defnofuniversalfamily}
(\cind_N^G\psi)_{[T,1]}\otimes_{\mathfrak{Z}}A_{\check G}\longrightarrow \prod_{i\in I}{\rm LL}(\phi_{\eta_i},N_{\eta_i}).
\end{equation}
Up to isomorphism, this image does not depend on the choice of the surjection $(\cind_N^G\psi)_{[T,1]}\otimes_{\mathfrak{Z}}k(\eta_i)\rightarrow {\rm LL}(\phi_{\eta_i},N_{\eta_i})$.
By abuse of notation we also write $\tilde{\mathcal{V}}_G$ for the corresponding sheaf on $X_{\check G}$.  

It can easily be seen that $\tilde{\mathcal{V}}_G$ is a $\check G$-equivariant quotient of $(\cind_N^G\psi)_{[T,1]}\otimes_{\mathfrak{Z}}A_{\check G}$ (equipped with the obvious $\check G$-equivariant structure). Hence $\tilde{\mathcal{V}}_G$ descents to a quasi-coherent sheaf $\mathcal{V}_G$ on $[X_{\check G}/\check G]$ that carries an action of $G$. We will often refer to this family of $G$-representation as the Emerton-Helm family.
Conjecturally this family interpolates the modified local Langlands correspondence:
\begin{conj}[compare \cite{EmertonHelm}]\label{Conjfibers}
Let $x=(\phi,N)\in X_{\check G}$ be any point, then
\[(\tilde{\mathcal{V}}_G\otimes k(x))^\vee\cong {\rm LL}^{\rm mod}((\phi,N)^\vee).\]
\end{conj}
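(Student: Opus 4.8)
The plan is to reduce Conjecture \ref{Conjfibers} to statements about $n$-th derivatives and about genericity, exactly as in Helm's original argument, adapted to the present setting. First I would recall that by construction $\tilde{\mathcal{V}}_G$ is the image of the map $(\ref{defnofuniversalfamily})$, hence a quotient of $(\cind_N^G\psi)_{[T,1]}\otimes_{\mathfrak{Z}}A_{\check G}$; specializing at a point $x=(\phi,N)$ and using right-exactness of $-\otimes k(x)$, we get a surjection $(\cind_N^G\psi)_{[T,1]}\otimes_\mathfrak{Z} k(x)\twoheadrightarrow \tilde{\mathcal V}_G\otimes k(x)$, and by Lemma \ref{surjecttoLLmod} this further surjects onto ${\rm LL}^{\rm mod}((\phi^{\rm ss},N)^\vee)^\vee$. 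The real content is to identify $\tilde{\mathcal V}_G\otimes k(x)$ with the intermediate object, i.e.\ to show the surjection $\tilde{\mathcal V}_G\otimes k(x)\twoheadrightarrow {\rm LL}^{\rm mod}((\phi^{\rm ss},N)^\vee)^\vee$ is an isomorphism; dualizing then gives the stated formula $(\tilde{\mathcal V}_G\otimes k(x))^\vee\cong {\rm LL}^{\rm mod}((\phi,N)^\vee)$ (using that ${\rm LL}^{\rm mod}$ only depends on $\phi^{\rm ss}$ and that it is compatible with the duality, as recorded after $(\ref{defnmodifiedLL})$).

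The key step is therefore a length/derivative argument. By Theorem \ref{summaryHelm}(b),(c), $(\cind_N^G\psi)_{[T,1]}\otimes k(x)$ is a (finite direct sum of finite-length) representation whose generic constituents are exactly copies of the cosocle, and $((\cind_N^G\psi)_{[T,1]}\otimes k(x))^{(n)}$ is one-dimensional over $k(x)$, since $\pi^{(n)}$ is free of rank one over $\mathfrak Z$. Because $\tilde{\mathcal V}_G$ is by definition the image in a product of the irreducible generic representations ${\rm LL}(\phi_{\eta_i},N_{\eta_i})$ attached to the generic points, exactness of $(-)^{(n)}$ (cf.\ \cite[3.2, Proposition]{BernsteinZele}) combined with the adjointness $(\ref{(n)adjointness})$ forces ${\rm Hom}_G(\cind_N^G\psi,\tilde{\mathcal V}_G\otimes k(x))=(\tilde{\mathcal V}_G\otimes k(x))^{(n)}$ to still be one-dimensional, so the generic part of $\tilde{\mathcal V}_G\otimes k(x)$ is a single copy of the cosocle; moreover $\tilde{\mathcal V}_G\otimes k(x)$ is generic (it is a nonzero quotient of $\cind_N^G\psi\otimes k(x)$ with nonzero $n$-th derivative, hence surjects onto the generic cosocle). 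So $\tilde{\mathcal V}_G\otimes k(x)$ and ${\rm LL}^{\rm mod}((\phi^{\rm ss},N)^\vee)^\vee$ share the same (generic, irreducible) cosocle and both have $n$-th derivative of dimension one. One then shows the surjection between them is an isomorphism by comparing with the universal property of ${\rm LL}^{\rm mod}$: it is the \emph{largest} quotient of $\cind_N^G\psi\otimes k(x)$ with the property that the kernel of the map to its cosocle contains no generic subquotient, as in \cite[Theorem 7.9]{Helm1}.

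The main obstacle I anticipate is controlling the non-generic part of $\tilde{\mathcal V}_G\otimes k(x)$, i.e.\ proving that the surjection $\tilde{\mathcal V}_G\otimes k(x)\to {\rm LL}^{\rm mod}((\phi^{\rm ss},N)^\vee)^\vee$ has no kernel. The delicate point is that $A_{\check G}$ is not regular and $\bar\chi_G$ is not flat, so forming the image $\tilde{\mathcal V}_G$ and then specializing does not obviously commute with anything; one must argue, following Helm's interpolation argument, that any nonzero kernel $W$ of this specialized surjection would have $W^{(n)}=0$ (since $\alpha^{(n)}$ is an isomorphism on the generic points by construction and $(-)^{(n)}$ is exact), yet by Theorem \ref{summaryHelm}(b),(c) any nonzero quotient of $\cind_N^G\psi\otimes k(x)$ lying above the non-cosocle part contains no generic subquotient and hence $W$, being a \emph{subobject} rather than a quotient, would have to vanish once one knows $\tilde{\mathcal V}_G\otimes k(x)$ injects into something with the right $(n)$-th derivative — this is precisely the place where the flatness-type input about $\tilde{\mathcal V}_G$ being torsion-free along the generic points enters, and making that rigorous in the non-reduced situation (or reducing to the reduced case $G=\GL_n$ via Proposition \ref{equidimreductive}(ii)) is the crux. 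I would handle it by the same device as in the proof of Corollary \ref{indetifycindpsi}: split off the kernel using projectivity of an appropriate generic projective generator, observe the splitting is killed by $(-)^{(n)}$, and conclude via $(\ref{(n)adjointness})$ that it is zero.
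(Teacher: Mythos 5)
This statement is explicitly labelled a \emph{Conjecture} in the paper and no proof is offered; the only case established (the Corollary following Proposition \ref{identifyMandOB}) is for $\phi$ regular semi-simple, and that proof proceeds through the explicit geometric identification $\tilde{\mathcal{M}}_G\cong\beta_\ast\Ocal_{Y_{\check G}}$ and the Jacquet-module computation of Corollary \ref{identifyquotofcindNG}, not through a derivative argument. Your sketch reconstructs the Emerton--Helm/Helm heuristic, but the issue you flag in your last paragraph is not an anticipated obstacle; it is the entire content of the conjecture, and the mechanism you propose to close it does not apply.

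Concretely, three things are missing. First, the factorization $(\cind_N^G\psi)_{[T,1]}\otimes k(x)\twoheadrightarrow\tilde{\mathcal V}_G\otimes k(x)\twoheadrightarrow{\rm LL}^{\rm mod}((\phi^{\rm ss},N)^\vee)^\vee$ is not established: you have two surjections out of $(\cind_N^G\psi)_{[T,1]}\otimes k(x)$ (one from base-changing the defining quotient, one from Lemma \ref{surjecttoLLmod}), and the claim that the kernel of the first lies inside the kernel of the second is exactly where regularity enters in the paper's Corollary \ref{identifyquotofcindNG} and is not argued here. Second, the device of Corollary \ref{indetifycindpsi} does not transfer: that argument splits off $W'$ using projectivity of $\cind_K^G{\rm st}$ as a $G$-representation, which gives a surjection $\cind_N^G\psi\twoheadrightarrow W'$ and lets $(\ref{(n)adjointness})$ force $W'=0$. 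A \emph{sub}object $W\subset\tilde{\mathcal V}_G\otimes k(x)$ with $W^{(n)}=0$ (no Whittaker functional) need not be zero, and neither $\tilde{\mathcal V}_G$ nor its fiber is projective, so there is no splitting and no surjection from $\cind_N^G\psi$ onto $W$ to plug into adjointness. Third, the step $(\tilde{\mathcal V}_G\otimes k(x))^{(n)}\cong(\tilde{\mathcal V}_G)^{(n)}\otimes k(x)$ is a base-change statement for the top derivative that does not follow formally from exactness, because $\tilde{\mathcal V}_G$ is not $A_{\check G}$-flat and $A_{\check G}$ is not regular; in Helm's framework this is the co-Whittaker property, and verifying it over all of $X_{\check G}$ is itself part of what the conjecture asserts.
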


\subsection{Idempotents in the Iwahori-Hecke algebra}
We will describe the family of Hecke modules associated to the Emerton-Helm family $\mathcal{V}_G$ in the next subsection, and relate this construction to Conjecture \ref{mainconjecture}. Before we do so, we need some preparation about idempotent elements in the Iwahori-Hecke algebra. 

Let $J\subset G$ be a compact open subgroup and $(\lambda,W)$ be a smooth representation of $J$ on a finite dimensional $C$-vector space with contragredient representation $(\lambda^\vee, W^\vee)$. 
Then we have a natural identification of $C$-algebras
\begin{equation}\label{Heckealgcompsuppfct}
\begin{aligned}
{\rm End}_G(\cind_J^G\lambda)\cong\left\{\begin{array}{*{20}c} \text{compactly supported}\ f:G\rightarrow {\rm End}_C(W^\vee)\\ \text{such that}\ f(j_1gj_2)=\lambda^\vee(j_1)\circ f(g)\circ \lambda^\vee(j_2)\\ \text{for all}\ g\in G,\, j_1,j_2\in J\end{array}\right\},
\end{aligned}
\end{equation}
where, as usual, the algebra structure on the right hand side is given by convolution. 
Given $f\in \Hcal(G,\lambda)$ one defines $\check f:g\mapsto f(g^{-1})^\vee\in{\rm End}_C(W)$. Then $f\mapsto \check f$ induces an isomorphism of $C$-algebras
\[\Hcal(G,\lambda)\cong \Hcal(G,\lambda^\vee)^{\rm op}.\]

Recall that $\mathcal{H}_G={\rm End}_G(\cind_I^G\mathbf{1}_I)={\rm End}_G(\cind_K^G V)$, where $V={\rm Ind}_I^K\mathbf{1}_I$. From now on we write $\lambda$ for the $K$-representation on $V$. 
As in $(\ref{SchneiderZinkdecompo})$ the representation $V$ decomposes as a direct sum of the representations isomorphic to $\sigma_\mathcal{P}$. Note that $$V={\rm Ind}_{B(k)}^{\rm GL_n(k)}\mathbf{1}_{B(k)},$$
where $B(k)\subset {\rm GL}_n(k)$ is the special fiber of the Borel subgroup and $K$ acts via the quotient map $K\rightarrow\GL_n(k)$.
For a partition $\mathcal{P}$ we write $\Sigma_\mathcal{P}\subset V$ for the $\sigma_\mathcal{P}$-isotypical component of $V$. In particular we have $\Sigma_\mathcal{P}\cong \sigma_\mathcal{P}^{m_\mathcal{P}}$.
The direct summand $\cind_K^G\Sigma_\mathcal{P}$ of $\cind_K^GV=\cind_I^G\mathbf{1}_I$ defines an idempotent element $e_\mathcal{P}\in \Hcal_G$. 
If $\mathcal{P}=\mathcal{P}_{\rm min}$ we will usually write $e_{\rm st}$ (or $e_{G,\rm st}$ if we need to refer to the group $G$) instead of $e_{\mathcal{P}_{\rm min}}$. Further we usually write $e_K=e_{\mathcal{P}_{\rm max}}$, which is identified with the characteristic function of $K$. 

Using the description of the Hecke algebra $(\ref{Heckealgcompsuppfct})$ the idempotent elements $e_\mathcal{P}$ can be described as follows.
Let $f_\mathcal{P}:V^\vee\rightarrow V^\vee$ denote the endomorphism that is the identity on $\Sigma_\mathcal{P}^\vee$ and zero on $\Sigma_{\mathcal{P}'}^\vee$ for $\mathcal{P}'\neq\mathcal{P}$.
Then the idempotent element $e_\mathcal{P}$ is defined by
\[e_\mathcal{P}:g\longmapsto\begin{cases}0,& g\notin K\\ \lambda^\vee(g)\circ f_\mathcal{P}=f_\mathcal{P}\circ \lambda^\vee(g),& g\in K\end{cases}.\]

Note that the representation $V={\rm Ind}_K^G\mathbf{1}_I$  and the irreducible representations $\sigma_\mathcal{P}$ are self-dual.
In the case of $V$ this follows from the computation of the smooth dual of an induced representation. In particular, the canonical identification $\mathbf{1}_I=(\mathbf{1}_I)^\vee$ gives a canonical isomorphism $\alpha:V\rightarrow V^\vee$. 
In the case of $\sigma_\mathcal{P}$ we proceed by descending induction: the claim is obviously true for $\mathbf{1}_K=\sigma_{\mathcal{P}_{\rm max}}$ and for each $\mathcal{P}$ we can find some (integral model of a) parabolic subgroup $\mathbb{P}\subset {\rm GL}_n$ such that 
$${\rm Ind}_{\mathbb{P}(k)}^{{\rm GL}_n(k)}\mathbf{1}\cong \sigma_\mathcal{P}\oplus\bigoplus_{\mathcal{P}\preceq\mathcal{P}'\neq \mathcal{P}}\sigma_{\mathcal{P}'}^{\oplus a_{\mathcal{P}'}}$$
for some integers $a_{\mathcal{P}'}$. As the induced representation on the left hand is self-dual so must be $\sigma_\mathcal{P}$.

It follows that we can identify $\Hcal_G=\Hcal_G(V,\lambda)$ with $\Hcal_G(V,\lambda^\vee)$. In particular we obtain a canonical isomorphism $\Hcal_G\cong\Hcal_G^{\rm op}$.
\begin{lem}\label{dualofidempotents}
Let $\mathcal{P}$ be a partition. Then $\check e_{\mathcal{P}}=e_{\mathcal{P}}$. 
\end{lem}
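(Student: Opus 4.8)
The plan is to compute $\check e_{\mathcal{P}}$ directly from the explicit description of the idempotents $e_{\mathcal{P}}$ recalled just above and from the formula $\check f(g)=f(g^{-1})^\vee$, and then to compare with $e_{\mathcal{P}}$ under the self-duality isomorphisms. First I would observe that, since $e_{\mathcal{P}}$ is supported on the group $K$ and $K^{-1}=K$, the function $\check e_{\mathcal{P}}$ is again supported on $K$, so it is enough to compare the two functions at points $g\in K$. For such $g$ one has, by definition of $\check{}$ and of $e_{\mathcal{P}}$,
\[
\check e_{\mathcal{P}}(g)=e_{\mathcal{P}}(g^{-1})^\vee=\bigl(\lambda^\vee(g^{-1})\circ f_{\mathcal{P}}\bigr)^\vee=f_{\mathcal{P}}^\vee\circ\bigl(\lambda^\vee(g^{-1})\bigr)^\vee,
\]
which, under the canonical identification $V^{\vee\vee}=V$, becomes $f_{\mathcal{P}}^\vee\circ\lambda(g)$, an endomorphism of $V$ (here I use $(\lambda^\vee(h))^\vee=\lambda(h^{-1})$ on $V^{\vee\vee}=V$).

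The second step is to identify the transpose $f_{\mathcal{P}}^\vee$. Since $f_{\mathcal{P}}$ is the projection of $V^\vee=\bigoplus_{\mathcal{P}'}\Sigma_{\mathcal{P}'}^\vee$ onto the summand $\Sigma_{\mathcal{P}}^\vee$, a one-line computation with the pairing (using that $\langle\Sigma_{\mathcal{P}},\Sigma_{\mathcal{P}'}^\vee\rangle=0$ for $\mathcal{P}'\neq\mathcal{P}$, which is immediate from the definition of $\Sigma_{\mathcal{P}'}^\vee$) shows that $f_{\mathcal{P}}^\vee$, viewed as an endomorphism of $V=\bigoplus_{\mathcal{P}'}\Sigma_{\mathcal{P}'}$ via $V^{\vee\vee}=V$, is exactly the projection onto $\Sigma_{\mathcal{P}}$, i.e.\ the identity on $\Sigma_{\mathcal{P}}$ and zero on the other isotypic components. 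One also needs here that $\Sigma_{\mathcal{P}}^\vee$ really is the $\sigma_{\mathcal{P}}$-isotypic component of $(V^\vee,\lambda^\vee)$, which follows from the self-duality of $\sigma_{\mathcal{P}}$ together with the fact that the $\sigma_{\mathcal{P}}$ are pairwise non-isomorphic; morally, $e_{\mathcal{P}}$ is induced from a primitive central idempotent of $\mathrm{End}_K(V)$ and the self-duality of $\sigma_{\mathcal{P}}$ is what makes it fixed by the standard anti-involution.

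Finally I would transport everything through the canonical $K$-equivariant isomorphism $\alpha\colon V\to V^\vee$ that is used to identify $\Hcal_G(V,\lambda)$ with $\Hcal_G(V,\lambda^\vee)$. Because $\alpha$ intertwines $\lambda$ and $\lambda^\vee$ and the $\sigma_{\mathcal{P}}$ are pairwise non-isomorphic, $\alpha$ carries the isotypic component $\Sigma_{\mathcal{P}}$ of $V$ onto $\Sigma_{\mathcal{P}}^\vee$, so $\alpha\circ f_{\mathcal{P}}^\vee=f_{\mathcal{P}}\circ\alpha$ and $\alpha\circ\lambda(g)=\lambda^\vee(g)\circ\alpha$. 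Combining these with the formula from the first step,
\[
\alpha\circ\check e_{\mathcal{P}}(g)\circ\alpha^{-1}=\alpha\circ f_{\mathcal{P}}^\vee\circ\lambda(g)\circ\alpha^{-1}=f_{\mathcal{P}}\circ\lambda^\vee(g)=e_{\mathcal{P}}(g)
\]
for all $g\in K$, which is precisely the assertion $\check e_{\mathcal{P}}=e_{\mathcal{P}}$ after the identification $\Hcal_G(V,\lambda)\cong\Hcal_G(V,\lambda^\vee)$. The statement is not difficult; the only point requiring care is the bookkeeping of the various dualities — keeping track of whether one works on $V$, $V^\vee$ or $V^{\vee\vee}$ and checking that the transpose of the projection onto an isotypic component is again the projection onto the corresponding isotypic component — after which the rest is the short calculation above.
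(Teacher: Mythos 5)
Your proposal is correct and follows essentially the same route as the paper: compute $\check e_{\mathcal{P}}(g)=f_{\mathcal{P}}^\vee\circ\lambda(g)$ for $g\in K$, identify $f_{\mathcal{P}}^\vee$ with the projection onto $\Sigma_{\mathcal{P}}\subset V$, and use self-duality of $\sigma_{\mathcal{P}}$ to see that $\alpha$ carries $\Sigma_{\mathcal{P}}$ to $\Sigma_{\mathcal{P}}^\vee$. The only difference is that you spell out the identification of $f_{\mathcal{P}}^\vee$ as the isotypic projection slightly more explicitly than the paper does; the argument is otherwise identical.
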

\begin{proof}
The canonical isomorphism $\alpha$ allows us to identify ${\rm End}_C(V^\vee,V^\vee)$ with ${\rm End}_C(V,V)$ and $\Hcal_G=\Hcal(G,\lambda)$ with $\Hcal(G,\lambda^\vee)$. 
By definition  $\check e_\mathcal{P}$ is the element $(g\mapsto e_\mathcal{P}(g^{-1})^\vee)\in\Hcal(G,\lambda^\vee)=\Hcal(G,\lambda)$ under this identification. 
We calculate that
\[\check e_\mathcal{P}(g)=\begin{cases}0,&g\notin K\\ f_\mathcal{P}^\vee\circ (\lambda^\vee(k^{-1}))^\vee=f_\mathcal{P}^\vee\circ \lambda(k),&g\in K.\end{cases}\]
Here $f_\mathcal{P}^\vee$ is the idempotent endomorphism of $V$ defined by the direct summand $\Sigma_\mathcal{P}$.
As the $\sigma_\mathcal{P}$ are self-dual the isomorphism $\alpha$ maps $\Sigma_{\mathcal{P}}$ to $\Sigma^\vee_{\mathcal{P}}$. 
Hence we conclude that (under the identification ${\rm End}_C(V^\vee,V^\vee)={\rm End}_C(V,V)$ using $\alpha$) the element $\check e_\mathcal{P}$ equals $e_\mathcal{P}$.
\end{proof}

Recall that $\Hcal_G$ contains the finite Hecke algebra 
\begin{align*}
\Hcal_{G,0}=\mathcal{C}_c^\infty(I\backslash K/I)&=\{f:\mathbb{B}(k)\backslash {\rm GL}_n(k)/\mathbb{B}(k)\rightarrow C\}\\ &=\left\{f:K\rightarrow {\rm End}_CV\left| \begin{array}{*{20}c} f(k_1kk_2)=\lambda(k_1)\circ f(k)\circ \lambda(k_2)\\ \text{for all}\ k,k_1,k_2\in K\end{array}\right.\right\}\\&={\rm End}_K(V)
\end{align*}
as a subalgebra. This algebra contains the idempotent elements $e_\mathcal{P}$. 
Further recall that for a parabolic subgroup $\mathbb{P}\subset \mathbb{G}$ containing $\mathbb{B}$ we have an embedding $\Hcal_M\hookrightarrow \mathcal{H}_G$ of Hecke algebras, where $M=\mathbb{M}(F)$ is the Levi of $\mathbb{P}$.
If $\mathbb{P}=\mathbb{B}$ this gives an embedding 
\[C[X_\ast(\mathbb{T})]=\Hcal_T\hookrightarrow \mathcal{H}_G.\]
By \cite[Lemma 1.7.1]{HainesKottwitzPrasad} the morphism 
\begin{equation}\label{HTbasis}
\Hcal_T\otimes_C\Hcal_{G,0}\longrightarrow \Hcal_G
\end{equation}
induced by multiplication is an isomorphism of $C$-vector spaces. 
\begin{lem}\label{HGestfree}
(i) The canonical inclusion $\Hcal_T e_{G,\rm st}\subset \Hcal_G e_{G,\rm st}$ is an equality. Moreover this module is free of rank $1$ with basis $e_{G,\rm st}$ as an $\Hcal_T$-module.\\
\noindent (ii) Let $\mathbb{P}\subset \mathbb{G}$ be a parabolic as above and let $M=\mathbb{M}(F)\subset G$ be the corresponding Levi subgroup. The isomorphism 
\[\Hcal_Me_{M,\rm st}=\Hcal_T e_{M,\rm st}\longrightarrow \Hcal_Ge_{G,\rm st}=\Hcal_Te_{G,\rm st}\] of free $\Hcal_T$-modules of rank $1$ defined by $e_{M,\rm st}\mapsto e_{G,\rm st}$ is an $\Hcal_M$-module homomorphism.
\end{lem}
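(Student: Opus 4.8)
The plan is to reduce both statements to a single computation inside the finite Hecke algebra, namely that the Steinberg idempotent generates a one–dimensional left ideal, and then to propagate this over $\Hcal_T$ by means of the tensor decomposition $(\ref{HTbasis})$.

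For (i) I would first show $\Hcal_{G,0}\,e_{G,\rm st}=C\,e_{G,\rm st}$. By construction $e_{G,\rm st}=e_{\mathcal{P}_{\rm min}}$ is the idempotent of $\Hcal_{G,0}=\mathrm{End}_K(V)$ projecting onto the isotypic component $\Sigma_{\mathcal{P}_{\rm min}}$, hence it is a central idempotent; since $m_{\mathcal{P}_{\rm min}}=1$ it is moreover primitive, and it coincides with the central primitive idempotent of $\Hcal_{G,0}$ attached to the (one–dimensional, sign) Steinberg representation $T_s\mapsto -1$. Therefore the block $\Hcal_{G,0}\,e_{G,\rm st}=e_{G,\rm st}\Hcal_{G,0}e_{G,\rm st}$ is one–dimensional over $C$, spanned by $e_{G,\rm st}$. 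Now $(\ref{HTbasis})$ says that every element of $\Hcal_G$ is a $C$–linear combination of products $t\cdot h$ with $t\in\Hcal_T$ and $h\in\Hcal_{G,0}$, whence $\Hcal_G\,e_{G,\rm st}=\Hcal_T\cdot\Hcal_{G,0}\,e_{G,\rm st}=\Hcal_T\,e_{G,\rm st}$, the opposite inclusion being trivial. Finally, under $(\ref{HTbasis})$ the submodule $\Hcal_T\,e_{G,\rm st}$ corresponds to $\Hcal_T\otimes_C C\,e_{G,\rm st}$, so $t\mapsto t\,e_{G,\rm st}$ is an isomorphism $\Hcal_T\xrightarrow{\sim}\Hcal_T\,e_{G,\rm st}$; this is the asserted freeness of rank one on the basis $e_{G,\rm st}$.

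For (ii), apply (i) to $M$ (a product of general linear groups, for which the same argument and the decomposition $(\ref{HTbasis})$ are available) and to $G$: both sides are free of rank one over $\Hcal_T$, on $e_{M,\rm st}$ resp.\ $e_{G,\rm st}$, so the map $\phi\colon t\,e_{M,\rm st}\mapsto t\,e_{G,\rm st}$ ($t\in\Hcal_T$) is a well–defined isomorphism of $\Hcal_T$–modules, and it remains to upgrade its linearity from $\Hcal_T$ to $\Hcal_M$. Since $\Hcal_M=\Hcal_T\cdot\Hcal_{M,0}$ by $(\ref{HTbasis})$ and $\phi$ is $\Hcal_T$–linear, it suffices to check $\phi(g\,\xi)=g\,\phi(\xi)$ for $g\in\Hcal_{M,0}$ and $\xi=t\,e_{M,\rm st}$. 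Writing $g\cdot t=\sum_i s_i g_i$ with $s_i\in\Hcal_T$, $g_i\in\Hcal_{M,0}$, we get $g\,\xi=\sum_i s_i\,(g_i\,e_{M,\rm st})$; by (i) for $M$, $g_i\,e_{M,\rm st}=\chi(g_i)\,e_{M,\rm st}$ with $\chi\colon\Hcal_{M,0}\to C$ the Steinberg character, and viewing $\Hcal_{M,0}$ as a (standard parabolic) subalgebra of $\Hcal_{G,0}$, by (i) for $G$ we likewise have $g_i\,e_{G,\rm st}=\chi'(g_i)\,e_{G,\rm st}$ with $\chi'$ the Steinberg character of $\Hcal_{G,0}$. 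The crucial point is $\chi'|_{\Hcal_{M,0}}=\chi$: the embedding $\Hcal_{M,0}\hookrightarrow\Hcal_{G,0}$ is length–preserving, coming from the inclusion $W_M\subset W$ of a standard parabolic subgroup, so it sends $T_s\mapsto T_s$ on the simple reflections of $M$, and both characters take the value $-1$ on these $T_s$, which generate $\Hcal_{M,0}$ as an algebra. Hence $\phi(g\,\xi)=\sum_i s_i\chi(g_i)\,e_{G,\rm st}=\sum_i s_i\chi'(g_i)\,e_{G,\rm st}=\sum_i s_i\,(g_i\,e_{G,\rm st})=(g\cdot t)\,e_{G,\rm st}=g\,\phi(\xi)$, as required.

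The only genuine obstacle is the identity $\chi'|_{\Hcal_{M,0}}=\chi$, i.e.\ that an element of $\Hcal_{M,0}$ acts on $e_{G,\rm st}$ by the same scalar by which it acts on $e_{M,\rm st}$; this rests on the explicit behaviour of the parabolic embedding of finite Hecke algebras on the standard generators $T_s$, together with the description of the Steinberg representation as the sign character. Once the multiplicity–one input $m_{\mathcal{P}_{\rm min}}=1$ and the decomposition $(\ref{HTbasis})$ are granted, the remaining manipulations are routine.
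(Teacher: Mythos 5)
Your argument for part (i) is the same as the paper's: both use that $\Hcal_{G,0}e_{G,\rm st}$ is one-dimensional (you deduce this from primitivity via $m_{\mathcal{P}_{\rm min}}=1$; the paper identifies $\Hcal_{G,0}e_{G,\rm st}=({\rm st}_G)^{\mathbb{B}(k)}$) and then propagate via the tensor decomposition $(\ref{HTbasis})$. Freeness is immediate from the same decomposition.

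For part (ii) you take a genuinely different route. Both proofs must establish what amounts to $e_{M,\rm st}e_{G,\rm st}=e_{G,\rm st}$ inside $\Hcal_{G,0}$, but they justify it differently. The paper translates this identity into a statement about representations of ${\rm GL}_n(k)$, namely that ${\rm st}_G$ is a direct summand of ${\rm Ind}_{\mathbb{P}(k)}^{{\rm GL}_n(k)}{\rm st}_M$, which it proves by characterizing ${\rm st}_G$ as the unique constituent of $V$ not occurring in any parabolic induction from a parabolic strictly larger than $\mathbb{B}(k)$. You instead work entirely inside the finite Hecke algebras: you identify $e_{G,\rm st}$ and $e_{M,\rm st}$ as the primitive central idempotents cutting out the sign character $T_s\mapsto -1$, and then use that the standard parabolic inclusion $\Hcal_{M,0}\hookrightarrow\Hcal_{G,0}$ sends simple $T_s$ to $T_s$, so the sign character of $\Hcal_{G,0}$ restricts to that of $\Hcal_{M,0}$. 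The two arguments are of comparable length; yours avoids the finite-group representation theory of ${\rm Ind}_{\mathbb{P}(k)}^{{\rm GL}_n(k)}$ at the cost of invoking the standard identification of ${\rm st}_G^{\mathbb{B}(k)}$ with the sign character (which you assert rather than derive; it does follow, since $\Hcal_{G,0}$ has exactly two one-dimensional characters and the other one is carried by $\sigma_{\mathcal{P}_{\rm max}}=\mathbf{1}_K$), together with the compatibility of $T_s$-generators under the parabolic inclusion. The argument reducing general $\Hcal_M$-linearity to the finite part via $gt=\sum_i s_i g_i$ and $\Hcal_T$-linearity of $\phi$ is slightly more involved than the paper's direct reduction to the single identity $(1-e_{M,\rm st})e_{G,\rm st}=0$, but is correct.
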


\begin{proof}
(i) It directly follows from $(\ref{HTbasis})$ that $\Hcal_Te_{G,\rm st}$ is free of rank $1$ as an $\Hcal_T$-module.
Moreover, note that 
\[\Hcal_{G,0}e_{G,\rm st}=({\rm st}_G)^{\mathbb{B}(k)}\]
is a $1$-dimensional $C$-vector space. 
This implies that $f\in\Hcal_G$ can be written as $f=f_0e_{\rm st} +f_1(1-e_{\rm st})$ with $f_0\in \Hcal_T$ and $f_1\in \Hcal_G$. 
It follows that $$f e_{G,\rm st}=f_0e_{G,\rm st}+f_1(1-e_{G,\rm st})e_{G,\rm st}=f_0e_{G,\rm st}\in\Hcal_Te_{G,\rm st}.$$
(ii) As the inclusion $\Hcal_Te_{G,\rm st}\subset \Hcal_Ge_{G,\rm st} $ is an equality, we also have an equality $\Hcal_Me_{G,\rm st}=\Hcal_Ge_{G,\rm st}$. Therefore it is enough to show that the $\Hcal_M$-module homomorphism
\[\Hcal_M\longrightarrow \Hcal_M e_{G,\rm st}\] mapping $1$ to $e_{G,\rm st}$ factors through $\Hcal_M\rightarrow \Hcal_Me_{M,\rm st}$. That is, we need to show $(1-e_{M, \rm st})e_{G,\rm st}=0$ in $\Hcal_G$. 
We can check this equality in the subalgebra $\Hcal_{G,0}$.
Translating the claim back to representation theory it comes down to the claim that 
$${\rm Ind}_{\mathbb{P}(k)}^{{\rm GL}_n(k)}{\rm st}_M\subset {\rm Ind}_{\mathbb{P}(k)}^{{\rm GL}_n(k)}\big({\rm Ind}_{\mathbb{B}_M(k)}^{\mathbb{M}(k)}\mathbf{1}\big)={\rm Ind}_{\mathbb{B}(k)}^{{\rm GL}_n(k)}\mathbf{1} $$
contains the direct summand ${\rm st}_G$, where $\mathbb{B}_M=\mathbb{B}\cap\mathbb{M}$ is a Borel in $\mathbb{M}$. This is true, as ${\rm st}_G$ is the only constituent of the right hand side that does not occur in any parabolically induced representation for a parabolic strictly larger than $\mathbb{B}$.
\end{proof}

\begin{cor}\label{identifyquotofcindNG}
Let $x=(\phi_x,N_x)\in X_{\check G}$ with $\phi_x$ regular semi-simple and let $L$ be an extension of $k(x)$ containing all the eigenvalues of $\phi_x$. Then
\[\big((\cind_N^G\psi)_{[T,1]}\otimes_{\mathfrak{Z}}k(x)\big)^I\cong(\Hcal_Ge_{G,\rm st})\otimes_\mathfrak{Z}k(x)\]
and after extending scalars to $L$ its Jacquet-module is given by
\[r^G_B((\cind_N^G\psi)_{[T,1]}\otimes_{\mathfrak{Z}}L)=\bigoplus_\mathcal{F} {\rm unr}_{\mathcal{F}},\]
where the sum is indexed by the $\phi_x$-stable flags $\mathcal{F}$ of $L^n$.
Moreover, the kernel of the quotient map of $\Hcal_T$-modules
\begin{equation}\label{eq1identifyLLquotient}
(\Hcal_Ge_{G,\rm st})\otimes_\mathfrak{Z}L=\bigoplus_{\phi_x\text{-}{\rm stable}\ \mathcal{F}}{\rm unr}_{\mathcal{F}}\longrightarrow \bigoplus_{(\phi_x,N_x)\text{-}{\rm stable}\ \mathcal{F}}{\rm unr}_{\mathcal{F}}
\end{equation}
is $\Hcal_G$-stable and the induced $\Hcal_G$-module structure on the quotient identifies the right hand side with the $I$-invariants of (the scalar extension to $L$ of) the quotient ${\rm LL^{mod}}((\phi_x,N_x)^\vee)^\vee$ in Lemma \ref{surjecttoLLmod}.
\end{cor}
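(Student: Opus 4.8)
The plan is to combine the structural results on idempotents from Lemma~\ref{HGestfree}, the computation of the Jacquet module of the modified Langlands representation in Lemma~\ref{JBofLLmod}, and the parametrization of $\phi_x$-stable (resp.~$(\phi_x,N_x)$-stable) flags via orderings of eigenvalues from $(\ref{orderingsversusflags})$. First I would establish the isomorphism
\[
\big((\cind_N^G\psi)_{[T,1]}\otimes_{\mathfrak{Z}}k(x)\big)^I\cong(\Hcal_Ge_{G,\rm st})\otimes_{\mathfrak{Z}}k(x).
\]
By Corollary~\ref{indetifycindpsi} we have $(\cind_N^G\psi)_{[T,1]}\cong \cind_K^G{\rm st}$ as $\mathfrak{Z}[G]$-modules, and taking $I$-invariants gives $(\cind_K^G{\rm st})^I\cong \Hcal_G e_{G,\rm st}$ as a left $\Hcal_G$-module by the very definition of $e_{G,\rm st}$ (it is the idempotent cutting out the direct summand $\cind_K^G\Sigma_{\mathcal{P}_{\rm min}}=\cind_K^G{\rm st}$ of $\cind_I^G\mathbf{1}_I$). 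Since $(-)^I$ is exact and $\mathfrak{Z}$-linear, base change along $\mathfrak{Z}\to k(x)$ yields the claim.

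Next I would identify the left-hand side of $(\ref{eq1identifyLLquotient})$. By Lemma~\ref{HGestfree}(i) the module $\Hcal_G e_{G,\rm st}=\Hcal_T e_{G,\rm st}$ is free of rank one over $\Hcal_T\cong C[X_\ast(\mathbb{T})]$, so $(\Hcal_G e_{G,\rm st})\otimes_{\mathfrak{Z}}L$ is a module over $\Hcal_T\otimes_{\mathfrak{Z}}L$; since $\mathfrak{Z}\cong C[X_\ast(\mathbb{T})]^W$ and $\phi_x$ is regular semi-simple, $\Hcal_T\otimes_{\mathfrak{Z}}L$ splits as the product of the $|W|$ residue fields indexed by the orderings of the eigenvalues of $\phi_x$, equivalently by the $\phi_x$-stable complete flags $\mathcal{F}$ of $L^n$ via $(\ref{orderingsversusflags})$. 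This gives the displayed decomposition $(\Hcal_G e_{G,\rm st})\otimes_{\mathfrak{Z}}L=\bigoplus_{\phi_x\text{-stable }\mathcal{F}}{\rm unr}_{\mathcal{F}}$ as $\Hcal_T$-modules, and the statement about $r_B^G((\cind_N^G\psi)_{[T,1]}\otimes_{\mathfrak{Z}}L)$ follows by combining this with diagram $(\ref{Jacquetforget})$ (the forgetful functor on $\Hcal_G$-modules corresponds to the Jacquet functor $r_B^G$ on the representation side, so the $\Hcal_T$-module structure just computed \emph{is} the $I_T$-invariants of the Jacquet module).

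Then I would treat the quotient map. By Lemma~\ref{surjecttoLLmod} there is a surjection $(\cind_N^G\psi)_{[T,1]}\otimes_{\mathfrak{Z}}k(x)\twoheadrightarrow {\rm LL^{mod}}((\phi_x,N_x)^\vee)^\vee$, unique up to scalar; taking $I$-invariants and extending scalars to $L$ gives an $\Hcal_G$-linear surjection whose source we have identified with $\bigoplus_{\phi_x\text{-stable }\mathcal{F}}{\rm unr}_{\mathcal{F}}$. Applying $r_B^G$ (equivalently: restricting along $\Hcal_T\hookrightarrow\Hcal_G$) and invoking Lemma~\ref{JBofLLmod}, the target has $I_T$-invariant Jacquet module $\bigoplus_{(\phi_x,N_x)\text{-stable }\mathcal{F}}{\rm unr}_{\mathcal{F}}$. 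Since the surjection is $\Hcal_G$-linear hence $\Hcal_T$-linear, and both sides are semisimple $\Hcal_T\otimes_{\mathfrak{Z}}L$-modules with multiplicity-free, pairwise non-isomorphic simple summands ${\rm unr}_{\mathcal{F}}$ (here is where regular semi-simplicity of $\phi_x$ is essential: the characters ${\rm unr}_{\mathcal{F}}$ for distinct flags are distinct), the map must be the projection onto the sub-sum indexed by the $(\phi_x,N_x)$-stable flags. Its kernel is therefore $\bigoplus_{\mathcal{F}\ \phi_x\text{-stable, not }N_x\text{-stable}}{\rm unr}_{\mathcal{F}}$, which is a direct summand as an $\Hcal_T$-module; $\Hcal_G$-stability of the kernel is automatic since it is the kernel of an $\Hcal_G$-module map. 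Finally, the induced $\Hcal_G$-module structure on the quotient agrees, by construction, with the one on $\big({\rm LL^{mod}}((\phi_x,N_x)^\vee)^\vee\big)^I\otimes_{k(x)}L$.

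The main obstacle I anticipate is bookkeeping rather than conceptual: one must be careful that the identification of $(\Hcal_G e_{G,\rm st})\otimes_{\mathfrak{Z}}L$ with $\bigoplus_{\phi_x\text{-stable }\mathcal{F}}{\rm unr}_{\mathcal{F}}$ is compatible, as an $\Hcal_G$-module (not merely $\Hcal_T$-module), with the surjection of Lemma~\ref{surjecttoLLmod} and with Lemma~\ref{JBofLLmod} — i.e.~that the flag labelling is consistent across all three inputs, and in particular that the contragredient-and-dual operations $(\phi,N)\mapsto(\phi^\vee,N^\vee)$ used in defining ${\rm LL^{mod}}$ are tracked correctly through the eigenvalue orderings. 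Once the labelling conventions are pinned down, uniqueness of the $\Hcal_T$-equivariant projection onto a sub-sum of a multiplicity-free semisimple module forces the conclusion.
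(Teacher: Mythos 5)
Your proof is correct and follows essentially the same route as the paper: Corollary~\ref{indetifycindpsi} and the definition of $e_{G,\rm st}$ for the first identification, Lemma~\ref{HGestfree}(i) together with diagram~$(\ref{Jacquetforget})$ for the Jacquet-module, and Lemma~\ref{surjecttoLLmod} combined with Lemma~\ref{JBofLLmod} plus the pairwise distinctness of the characters~${\rm unr}_{\mathcal{F}}$ (from regular semi-simplicity of~$\phi_x$) to pin down the quotient map. The bookkeeping obstacle you flag at the end is exactly what the statement of Lemma~\ref{JBofLLmod} is formulated to resolve, so no extra work is needed there.
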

\begin{proof}
The first claim is a direct consequence of $(\cind_N^G\psi)_{[T,1]}\cong \cind_K^G{\rm st}_G$ and the identification $$(\cind_K^G{\rm st}_G)^I=\Hcal_Ge_{G,\rm st}.$$ The claim on the Jacquet-module follows from $\Hcal_Ge_{G,\rm st}=\Hcal_Te_{G,\rm st}$ and $(\ref{Jacquetforget})$.

For the second part, note that the right hand side in $(\ref{eq1identifyLLquotient})$ is uniquely determined as an $\Hcal_T$-module, as the characters ${\rm unr}_{\mathcal{F}}$ are pairwise distinct.  
Hence it is enough to prove that the quotient
\[\big((\cind_N^G\psi)_{[T,1]}\otimes_{\mathfrak{Z}}k(x)\big)^I\longrightarrow \big({\rm LL^{mod}}((\phi_x,N_x)^\vee)^\vee\big)^I\] 
given by Lemma \ref{surjecttoLLmod} induces this quotient map on the underlying $\Hcal_T$-modules (and after extending scalars to $L$). 
This is a consequence of the computation of $r^G_B({\rm LL^{mod}}((\phi_x,N_x)^\vee)^\vee\otimes_{k(x)}L)$, see Lemma \ref{JBofLLmod}.
\end{proof}

We finish this subsection by recalling some easy facts about the passage from left to right modules over $\Hcal_G$. 
Given a left $\Hcal_G$-module $\pi$ one can view $\pi$ as a right $\Hcal_G$-module via the isomorphism $\Hcal_G\cong\Hcal_G^{\rm op}$. We write ${}^t\pi$ for this right module structure on $\pi$.
\begin{lem}\label{lefttorightinduction}
Let $M\subset G$ be a Levi and let $\pi$ be a left $\Hcal_M$-module. Then there is a canonical and functorial isomorphism of right $\Hcal_G$-modules
\[{}^t(\Hcal_G\otimes_{\Hcal_M}\sigma)\cong {}^t\sigma\otimes_{\Hcal_M}\Hcal_G,\]
where the $\Hcal_G$-module structure on the right hand side is given by right multiplication.
\end{lem}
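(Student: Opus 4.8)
The plan is to exhibit the isomorphism explicitly and check it is well-defined, $\Hcal_G$-linear, and functorial. Recall that the standard involution gives an anti-automorphism $\iota:\Hcal_G\xrightarrow{\cong}\Hcal_G^{\rm op}$, and similarly $\iota_M:\Hcal_M\xrightarrow{\cong}\Hcal_M^{\rm op}$, and that under the embedding $\Hcal_M\hookrightarrow \Hcal_G$ these are compatible, i.e.~$\iota|_{\Hcal_M}=\iota_M$ (this compatibility should be read off from the description of the involutions via the functions $f\mapsto \check f$ together with Lemma \ref{dualofidempotents}, or directly from the definition of the embedding $\Hcal_M\hookrightarrow\Hcal_G$). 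Granting this, the underlying $C$-vector space of $\Hcal_G\otimes_{\Hcal_M}\sigma$ is the same as that of ${}^t\sigma\otimes_{\Hcal_M}\Hcal_G$ once we make the tensor products over $\Hcal_M$ match up: on the left $\Hcal_G$ is a right $\Hcal_M$-module by right multiplication and $\sigma$ a left $\Hcal_M$-module; on the right ${}^t\sigma$ is a right $\Hcal_M$-module via $\iota_M$ and $\Hcal_G$ a left $\Hcal_M$-module via $\iota_M$ composed with right multiplication, which is again left multiplication after transport of structure.

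Concretely, I would define the map on generators by $h\otimes s \longmapsto s\otimes h$ for $h\in\Hcal_G$, $s\in\sigma$, and check that it respects the two balancing relations: for $h\in\Hcal_G$, $m\in\Hcal_M$, $s\in\sigma$ one has $hm\otimes s = h\otimes ms$ on the left, and on the right $s\otimes hm$ versus $(s\cdot m)\otimes h$ where $\cdot$ denotes the ${}^t\sigma$ right action, i.e.~$s\cdot m = \iota_M(m)s = \iota(m)s$; using that the right $\Hcal_G$-module structure on ${}^t\sigma\otimes_{\Hcal_M}\Hcal_G$ is $(s\otimes h)\cdot g = s\otimes hg$ and the left $\Hcal_M$-action on $\Hcal_G$ inside the tensor product is $m\cdot h = \iota(m)h$... wait, one must be careful: the balancing in ${}^t\sigma\otimes_{\Hcal_M}\Hcal_G$ reads $s\cdot m\otimes h = s\otimes m\cdot h$, and the point is exactly that $hm\otimes s$ and $s\otimes hm$ correspond under the flip precisely because $(ms$ on the left$)\leftrightarrow(s\cdot m$ via $\iota_M$ on the right$)$ and $(hm$ on the left$)\leftrightarrow(m\cdot h$ via $\iota$ on the right$)$, and $\iota_M=\iota|_{\Hcal_M}$ makes these two uses of the involution consistent. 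This is the only genuine verification; it is a routine but slightly fiddly chase through the definitions, and it is the step I expect to demand the most care — precisely because one has to keep straight which copy of the involution ($\iota$ on $\Hcal_G$ versus $\iota_M$ on $\Hcal_M$) is being used where, and invoke their compatibility.

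It remains to check $\Hcal_G$-linearity for the right-module structures and functoriality. For linearity: the right $\Hcal_G$-action on ${}^t(\Hcal_G\otimes_{\Hcal_M}\sigma)$ is, by definition of ${}^t(-)$, given by $(h\otimes s)\cdot g = \iota(g)h\otimes s$, which the flip sends to $s\otimes\iota(g)h$; on the other side $(s\otimes h)\cdot g = s\otimes hg$, and under the identification of $\Hcal_G$ as a right $\Hcal_G$-module (inside ${}^t\sigma\otimes_{\Hcal_M}\Hcal_G$, right multiplication in the second factor) these agree once one notes that right multiplication by $g$ on ${}^t\sigma\otimes_{\Hcal_M}\Hcal_G$ is built from the original left action of $\Hcal_G$ on itself twisted by $\iota$, matching $h\mapsto \iota(g)h$. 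Functoriality in $\sigma$ is immediate since the formula $h\otimes s\mapsto s\otimes h$ is manifestly natural in $\sigma$, and the map $\Hcal_G\otimes_{\Hcal_M}\sigma\to \sigma'\otimes\cdots$ induced by $\sigma\to\sigma'$ commutes with the flip on the nose. Finally, the flip in the opposite direction $s\otimes h\mapsto h\otimes s$ is a two-sided inverse, so the map is an isomorphism. This completes the proof.
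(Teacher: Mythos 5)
There is a genuine gap: the map you propose, $h\otimes s\mapsto s\otimes h$, is not well-defined on $\Hcal_G\otimes_{\Hcal_M}\sigma \to {}^t\sigma\otimes_{\Hcal_M}\Hcal_G$ with the standard module structures. On the left the balancing relation gives $hm\otimes s = h\otimes ms$; applying your flip, the first goes to $s\otimes hm$ and the second to $ms\otimes h$. Now in ${}^t\sigma$ the right $\Hcal_M$-action is $s\cdot m' = \check m' s$, so $ms = s\cdot\check m$, whence $ms\otimes h = s\cdot\check m\otimes h = s\otimes\check m h$. For the flip to be well-defined you would therefore need $s\otimes hm = s\otimes\check m h$ in general, which fails. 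Your attempted repair, declaring the ``left $\Hcal_M$-action on $\Hcal_G$ inside the tensor product'' to be $m\cdot h = \iota(m)h$, does not save this: that prescription is not a left action (one has $(m_1m_2)\cdot h = \iota(m_2)\iota(m_1)h \neq \iota(m_1)\iota(m_2)h = m_1\cdot(m_2\cdot h)$), and even if you instead twist by $m\cdot h := h\,\iota(m)$ to get a legitimate left action, the resulting tensor product is no longer the object ${}^t\sigma\otimes_{\Hcal_M}\Hcal_G$ appearing in the statement of the lemma — you would then still owe a nontrivial comparison between the twisted and untwisted bimodule structures, and that comparison is exactly where the involution must reappear.

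The fix is to insert the involution into the map itself: the paper's one-line proof takes $\phi\otimes v\mapsto v\otimes\check\phi$. Then $\phi m\otimes v\mapsto v\otimes\widecheck{\phi m} = v\otimes\check m\,\check\phi$ and $\phi\otimes mv\mapsto mv\otimes\check\phi = (v\cdot\check m)\otimes\check\phi = v\otimes\check m\,\check\phi$, so well-definedness holds; right $\Hcal_G$-linearity follows since $(\phi\otimes v)\cdot g = \check g\phi\otimes v\mapsto v\otimes\widecheck{\check g\phi} = v\otimes\check\phi\,g = (v\otimes\check\phi)\cdot g$; and $v\otimes h\mapsto\check h\otimes v$ is the evident inverse. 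Your overall instinct — flip the tensor factors and trace through the involution — is the right one, and you correctly flagged the balancing check as the delicate point, but you did not actually put the $\check{(\,\cdot\,)}$ into the formula, and the verification collapses without it.
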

\begin{proof}
It is easily checked that $\phi\otimes v\longmapsto v\otimes \check \phi$ defines the desired isomorphism.
\end{proof}

\begin{lem}\label{technicalidentities}
Let $\pi$ be an $\Hcal_G$-module and let $e\in\Hcal_G$ be an idempotent element. \\
(i) There is a canonical equality of $\mathfrak{Z}$-modules
\[{\rm Hom}_{\Hcal_G}(\Hcal_Ge,\pi)=e\pi=e\Hcal_G\otimes_{\Hcal_G}\pi.\]
(ii) There is a canonical identification ${}^t(\Hcal_Ge)=\check e\Hcal$ as $\Hcal_G$ right modules. \\
(iii) Let $\mathcal{P}$ be a partition. Then $${\rm Hom}_{\Hcal_G}(\Hcal_Ge_\mathcal{P},\pi)={}^t(\Hcal_Ge_\mathcal{P})\otimes_{\Hcal_G}\pi.$$
(iv) For two partitions $\mathcal{P},\mathcal{P}'$ we have 
\[e_{\mathcal{P}}\Hcal_Ge_{\mathcal{P}'}\cong \mathfrak{Z}^{m_{\mathcal{P}}^2m_{\mathcal{P}'}^2}.\]
\end{lem}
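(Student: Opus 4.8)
\textbf{Proof plan for Lemma \ref{technicalidentities}.}

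The plan is to treat the four statements in order, reducing each to the module-theoretic identities involving the idempotent structure established earlier. For (i), I would start from the elementary fact that for any ring $R$, any idempotent $e\in R$, and any left $R$-module $\pi$, the map $\phi\mapsto \phi(e)$ gives an isomorphism $\Hom_R(Re,\pi)\xrightarrow{\cong}e\pi$, with inverse $v\mapsto (re\mapsto rev)$; applying this with $R=\Hcal_G$ gives the first equality. The second equality $e\pi=e\Hcal_G\otimes_{\Hcal_G}\pi$ is the canonical isomorphism $eR\otimes_R\pi\cong e\pi$ sending $re\otimes v$ to $rev$. All of this is $\mathfrak{Z}$-linear because $\mathfrak{Z}$ is central in $\Hcal_G$, so the identifications are automatically compatible with the $\mathfrak{Z}$-action. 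For (ii), I would unwind the definition of ${}^t(-)$: the left module $\Hcal_Ge$ becomes a right module by restricting scalars along the standard involution $\Hcal_G\cong\Hcal_G^{\rm op}$, $h\mapsto \check h$. Under this involution the left ideal $\Hcal_Ge$ is carried to the right ideal $\check e\,\Hcal_G$ (since $\widecheck{he}=\check e\,\check h$ and $h\mapsto \check h$ is bijective on $\Hcal_G$), which is the asserted identification; I should double-check that the involution is an anti-automorphism so that the sides match, but that is immediate from $\Hcal_G\cong\Hcal_G^{\rm op}$.

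For (iii), I would combine (i) and (ii) with Lemma \ref{dualofidempotents}. By (i), $\Hom_{\Hcal_G}(\Hcal_Ge_\mathcal{P},\pi)=e_\mathcal{P}\Hcal_G\otimes_{\Hcal_G}\pi$ (viewing $e_\mathcal{P}\Hcal_G$ as a right module via right multiplication). By (ii), ${}^t(\Hcal_Ge_\mathcal{P})=\check e_\mathcal{P}\,\Hcal_G$, and Lemma \ref{dualofidempotents} gives $\check e_\mathcal{P}=e_\mathcal{P}$, so $\check e_\mathcal{P}\,\Hcal_G=e_\mathcal{P}\Hcal_G$ as right $\Hcal_G$-modules. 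Putting these together yields ${}^t(\Hcal_Ge_\mathcal{P})\otimes_{\Hcal_G}\pi=e_\mathcal{P}\Hcal_G\otimes_{\Hcal_G}\pi=\Hom_{\Hcal_G}(\Hcal_Ge_\mathcal{P},\pi)$. This is where the self-duality input from Lemma \ref{dualofidempotents} is essential — without $\check e_\mathcal{P}=e_\mathcal{P}$ the two sides would differ by the involution and one could only conclude a twisted isomorphism.

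For (iv), I would use the Schneider--Zink decomposition $(\ref{SchneiderZinkdecompo})$ together with the structure of $\Hcal_{G,0}$ as $\Endo_K(V)$. Writing $\cind_K^GV=\cind_I^G\mathbf{1}_I$ and $e_\mathcal{P},e_{\mathcal{P}'}$ for the idempotents cutting out the $\sigma_\mathcal{P}$- resp.~$\sigma_{\mathcal{P}'}$-isotypic summands $\Sigma_\mathcal{P}\cong\sigma_\mathcal{P}^{m_\mathcal{P}}$, I would observe that $e_\mathcal{P}\Hcal_Ge_{\mathcal{P}'}=\Hom_G(\cind_K^G\Sigma_{\mathcal{P}'},\cind_K^G\Sigma_\mathcal{P})$. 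The right module $e_{\mathcal{P}'}\Hcal_G$ should, via Lemma \ref{HGestfree}(i) and the analogous statements for the other summands, be built out of the Steinberg-type summand so that $e_\mathcal{P}\Hcal_Ge_{\mathcal{P}'}$ is free over $\mathfrak{Z}$ of rank $(m_\mathcal{P}m_{\mathcal{P}'})^2$: one factor $m_\mathcal{P}m_{\mathcal{P}'}$ comes from the multiplicities $\Sigma_\mathcal{P}\cong\sigma_\mathcal{P}^{m_\mathcal{P}}$, $\Sigma_{\mathcal{P}'}\cong\sigma_{\mathcal{P}'}^{m_{\mathcal{P}'}}$ in $V$, and — since each $\sigma_\mathcal{P}$ itself sits inside $V$ with its own multiplicity $m_\mathcal{P}$ (this is the key combinatorial point in \cite{SchneiderZink}) — a second factor $m_\mathcal{P}m_{\mathcal{P}'}$ enters when one writes each $\Hom_G(\cind_K^G\sigma_{\mathcal{P}'},\cind_K^G\sigma_\mathcal{P})$ in terms of $\Hcal_Ge_{\rm st}=\Hcal_Te_{\rm st}$, which is free of rank one over $\mathfrak{Z}$ by Lemma \ref{HGestfree}. \textbf{The main obstacle} I anticipate is exactly this last bookkeeping in (iv): making precise how the two independent sources of the multiplicity $m_\mathcal{P}$ (the occurrence of $\sigma_\mathcal{P}$ in ${\rm Ind}_I^K\mathbf{1}_I$, and the occurrence of $\sigma_\mathcal{P}$ inside an induced-from-parabolic representation containing the Steinberg summand) combine to give the fourth power, and checking that no further $\mathfrak{Z}$-torsion appears. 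I would handle this by reducing everything to the $e_{\rm st}$-component via Lemma \ref{HGestfree}, where freeness over $\mathfrak{Z}$ is already known, and then counting Hom-spaces of the finite-group representations $\sigma_\mathcal{P}$ using their appearance in the parabolically induced representations ${\rm Ind}_{\mathbb{P}(k)}^{{\rm GL}_n(k)}\mathbf{1}$.
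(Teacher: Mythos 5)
Your treatment of (i)--(iii) is correct and matches the paper's proof: (i) is the standard identity $\Hom_R(Re,\pi)=e\pi=eR\otimes_R\pi$, (ii) is the unraveling of the transpose via $\widecheck{he}=\check e\check h$, and (iii) assembles (i), (ii) and Lemma~\ref{dualofidempotents}. All three are handled the same way.

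For (iv) there is a genuine gap. You correctly reduce, via $\Sigma_\mathcal{P}\cong\sigma_\mathcal{P}^{m_\mathcal{P}}$ and $\Sigma_{\mathcal{P}'}\cong\sigma_{\mathcal{P}'}^{m_{\mathcal{P}'}}$, to showing that $\Hom_G(\cind_K^G\sigma_\mathcal{P},\cind_K^G\sigma_{\mathcal{P}'})$ is a \emph{free} $\mathfrak{Z}$-module of rank $m_\mathcal{P}m_{\mathcal{P}'}$; that is exactly where the paper stops and invokes an external result, namely \cite[Theorem 1.4]{Pyvovarov1}. Your proposed replacement --- reduce to the $e_{\rm st}$-component using Lemma~\ref{HGestfree} and count $K$-type multiplicities as in~\cite{SchneiderZink} --- identifies the right flavour of input but does not constitute a proof. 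The difficulty is not really the combinatorial bookkeeping of the powers of $m_\mathcal{P}$: it is establishing \emph{freeness} over $\mathfrak{Z}$. Lemma~\ref{HGestfree} only tells you $\Hcal_G e_{\rm st}=\Hcal_T e_{\rm st}$ is free of rank one over $\Hcal_T$; it does not by itself decompose $\Hcal_G e_\mathcal{P}$ for general $\mathcal{P}$ into pieces controlled by the $e_{\rm st}$-component, and counting $K$-type multiplicities in ${\rm Ind}_{\mathbb{P}(k)}^{{\rm GL}_n(k)}\mathbf{1}$ gives you generic (or fibrewise) dimensions, not the absence of $\mathfrak{Z}$-torsion or the coherence of the module. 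You would need to either cite Pyvovarov's Theorem 1.4 directly (as the paper does), or carry out the underlying argument, which is substantially more work than the sketch indicates.
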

\begin{proof}
Part (i) and (ii) are obvious, and (iii) is a direct consequence of (i), (ii) and $\check e_{\mathcal{P}}=e_{\mathcal{P}}$. 
Finally we find
\begin{align*}
e_{\mathcal{P}}\Hcal_Ge_{\mathcal{P}'}={\rm Hom}_{\Hcal_G}(\Hcal_G e_{\mathcal{P}},\Hcal_Ge_{\mathcal{P}'})&={\rm Hom}_G(\cind_K^G\Sigma_\mathcal{P},\cind_K^G\Sigma_{\mathcal{P}'})\\
&={\rm Hom}_G(\cind_K^G\sigma_\mathcal{P},\cind_K^G\sigma_{\mathcal{P}'})^{m_{\mathcal{P}}m_{\mathcal{P}'}}.
\end{align*}
Now (iv) follows from \cite[Theorem 1.4]{Pyvovarov1}.
\end{proof}

\subsection{The Hecke-module of the interpolating family}
In subsection \ref{EHfamily} we constructed a family of $G$-representations $\mathcal{V}_G$ on the stack $[X_{\check G}/\check G]$. Let $$\mathcal{M}_G=(\mathcal{V}_G)^I$$ denote the corresponding module over the Iwahori-Hecke algebra. 
We write $\tilde{\mathcal{M}}_G$ for the corresponding $\check G$-equivariant sheaf of $\Ocal_{X_{\check G}}\otimes_{\mathfrak{Z}}\Hcal_G$-modules on $X_{\check G}$. 

We write $A_{\check G}$ for the coordinate ring of $X_{\check G}$. Similarly, given a Levi-subgroup $\mathbb{M}\subset \mathbb{G}$ we write $A_{\check M}$ for the coordinate ring of $X_{\check M}$.
Recall that we have embeddings $\Hcal_M\hookrightarrow \Hcal_G$ and a canonical isomorphism $\Hcal_T=\mathfrak{Z}_T\cong A_{\check T}$.

Recall the following diagram of $C$-schemes from section \ref{spacesofLparam}.
\[
\begin{xy}
\xymatrix{
&&\tilde{\mathbf{X}}_{\check B} \ar[dll]_{\tilde \beta_B} \ar[dl]^{\gamma}\ar[ddl]\\ 
X_{\check G} \ar[d]  & X_{\check G}\times_{\check T/W}\check T\ar[d] \ar[l]_{\beta'}\\
\check T/W & \check T. \ar[l]}
\end{xy}
\]
Assuming Conjecture $\ref{conjdegzero}$ the complex $$R\tilde\beta_{B,\ast}\Ocal_{\tilde{\mathbf{X}}_{\check B}}$$ is concentrated in degree zero and, as $\beta'$ is affine, so is $R\tilde\gamma_\ast \Ocal_{\tilde{\mathbf{X}}_{\check B}}$. Hence the formulation of the following conjecture makes sense.
\begin{conj}\label{GLnsurjection}
Let $\mathbb{G}={\rm GL}_n$ and assume Conjecture $\ref{conjdegzero}$. Then the canonical map 
\[\Ocal_{X_{\check G}\times_{\check T/W}\check T} \longrightarrow R\gamma_\ast \Ocal_{\tilde{\mathbf{X}}_{\check B}}\] is a surjection.  
\end{conj}
Note that Conjecture \ref{GLnsurjection} would imply that we have a canonical surjection
\begin{equation}\label{surjectiontoRbetaast}
\Ocal_{X_{\check G}}\otimes_{\mathfrak{Z}}\Hcal_T=\Ocal_{X_{\check G}}\otimes_{\mathfrak{Z}}A_{\check T}=\beta'_\ast(\Ocal_{X_{\check G}\times_{\check T/W}\check T})\longrightarrow R\tilde\beta_{B,\ast}\Ocal_{\tilde{\mathbb{X}}_{\check B}}.
\end{equation}

\begin{rem}
We point out that Conjecture $\ref{GLnsurjection}$ is a conjecture for the group ${\rm GL}_n$ and will fail for other groups. In fact it already fails for ${\rm SL}_2$ and its failure seems to be related to the existence of non-trivial L-packets. 
We refer to Example \ref{exampleSL2} below for a discussion of this point. 
\end{rem}

The restriction of the above diagram to the regular locus yields the diagram
\[\begin{xy}
\xymatrix{
 && \tilde X_{\check B}^{\rm reg}\ar[dl]_\gamma\ar[ddl]\ar[dll]_{\tilde\beta_B}\\
X_{\check G}^{\rm reg} \ar[d]& X_{\check G}^{\rm reg}\times_{\check T/W}\check T\ar[d]\ar[l]_{\beta'}\\
\check T/W & \ar[l]\check T.
}
\end{xy}\]

\begin{theo}
Conjecture \ref{GLnsurjection} is true over the regular locus. That is the canonical morphism
\[\Ocal_{X_{\check G}^{\rm reg}\times_{\check T/W}\check T} \longrightarrow R\gamma_\ast \Ocal_{\tilde{{X}}^{\rm reg}_{\check B}}\] is a surjection.  
Moreover, the restriction of the induced surjection $(\ref{surjectiontoRbetaast})$ to the regular locus and the identification
\[\Ocal_{X_{\check G}}\otimes_{\mathfrak{Z}}\Hcal_T=\Ocal_{X_{\check G}}\otimes_{\mathfrak{Z}}\Hcal_T e_{G,{\rm st}}=\Ocal_{X_{\check G}}\otimes_{\mathfrak{Z}}\Hcal_G e_{G,{\rm st}}\] 
equips $R\tilde\beta_{B,\ast}\Ocal_{\tilde X_{\check B}^{\rm reg}}$ with the structure of an $\mathcal{O}_{X_{\check G}^{\rm reg}}\otimes_{\mathfrak{Z}}\Hcal_G$-module
that identifies this sheaf with the the restriction $\tilde{\mathcal{M}}_G|_{X_{\check G}^{\rm reg}}$ of the Hecke-module defined by the Emerton-Helm family. 
\end{theo}
\begin{proof}
This follows from Lemma \ref{lemmagammaclosedimmersion} and Proposition \ref{identifyMandOB} below. 
\end{proof}
\begin{rem}
We point out that proving Conjecture $\ref{GLnsurjection}$ would automatically imply that the identification of $R\tilde\beta_{B,\ast}\Ocal_{\tilde{\mathbf{X}}_{\check B}}$ with the Hecke module underlying the Emerton-Helm family holds true without restricting to the regular locus. 
\end{rem}

\begin{lem}\label{lemmagammaclosedimmersion}
The morphism 
\[\gamma:\tilde X_{\check B}^{\rm reg}\longrightarrow X_{\check G}^{\rm reg}\times_{\check T/W}\check T\]
is a closed immersion. 
\end{lem}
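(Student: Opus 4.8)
The plan is to prove that $\gamma$ is a finite monomorphism, which forces it to be a closed immersion. Finiteness is easy: $\tilde X_{\check B}^{\rm reg}$ is finite over $X_{\check G}^{\rm reg}$ by construction, and $Y:=X_{\check G}^{\rm reg}\times_{\check T/W}\check T$ is finite over $X_{\check G}^{\rm reg}$ because $\check T\to\check T/W$ is finite; since $\tilde\beta_B$ factors as $\beta'\circ\gamma$ with $\beta'\colon Y\to X_{\check G}^{\rm reg}$ the projection, $\gamma$ is proper (as $\tilde\beta_B$ is) and quasi-finite (every fibre of $\gamma$ sits inside a fibre of $\tilde\beta_B$), hence finite. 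For the monomorphism property it suffices, $\gamma$ being of finite type, to check that $\gamma(A)\colon\tilde X_{\check B}^{\rm reg}(A)\to Y(A)$ is injective for all local Artinian $C$-algebras $A$; this yields that $\gamma$ is unramified and universally injective, hence a monomorphism. Here $\tilde X_{\check B}^{\rm reg}(A)$ consists of triples $(\phi,N,\mathcal F)$ with $(\phi,N)\in X_{\check G}^{\rm reg}(A)$ and $\mathcal F$ a $(\phi,N)$-stable complete flag of $A^n$, while the class in $Y(A)$ records $(\phi,N)$ together with the tuple $t=(t_1,\dots,t_n)\in\check T(A)$ of scalars by which $\phi$ acts on $\mathcal F_i/\mathcal F_{i-1}$; so one has to show that, for a fixed pair $(\phi,N)$ over $A$, a $(\phi,N)$-stable complete flag is uniquely determined by $t$.

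The one input from regularity is the following dimension bound: if $(\phi,N)\in X_{\check G}^{\rm reg}$ over a field $k$ and $\mu\in\bar k$ is an eigenvalue of $\phi$, then $\dim_{\bar k}\bigl(\ker(\phi-\mu)\cap\ker N\bigr)\le 1$. Indeed, every line of $V:=\ker(\phi-\mu)\cap\ker N$ is a $(\phi,N)$-stable line, and by the surjectivity of $\tilde\beta_B$ proved in Lemma~\ref{equidimparabolicregular} (applied to $\GL_{n-1}$) each such line extends to a $(\phi,N)$-stable complete flag; so the scheme $\tilde\beta_B^{-1}(\phi,N)$ surjects onto $\mathbb P(V)$, and if $\dim_{\bar k}V\ge 2$ this scheme would be infinite, contradicting regularity. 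Moreover, whenever $\mu$ occurs as the graded eigenvalue $t_1$ of such a flag one has $N\mathcal F_1=0$: stability gives $N\mathcal F_1\subseteq\mathcal F_1$, so $N|_{\mathcal F_1}$ is multiplication by a scalar $c$, and the relation $\Ad(\phi)N=q^{-1}N$ together with $t_1\neq 0$ and $q\neq 1$ forces $c=0$; hence $\mathcal F_1\subseteq V$ and $\dim_{\bar k}V=1$.

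We now prove injectivity of $\gamma(A)$ by induction on $n$, the case $n=1$ being trivial. Let $(\phi,N,\mathcal F)$ and $(\phi,N,\mathcal F')$ be two $A$-points with the same tuple $t$. As $\prod_i t_i=\det\phi\in A^\times$ and $A$ is local, each $t_i\in A^\times$. The computation of the previous paragraph, applied over $A$, gives $N\mathcal F_1=N\mathcal F'_1=0$, so $\mathcal F_1,\mathcal F'_1\subseteq K:=\ker(\phi-t_1)\cap\ker N\subseteq A^n$. Reducing modulo the maximal ideal $\mathfrak m$ of $A$ and using the dimension bound, $K\otimes_A(A/\mathfrak m)$ surjects onto the one-dimensional space $\ker(\bar\phi-\bar t_1)\cap\ker\bar N$ (its image contains that of $\mathcal F_1\otimes_A(A/\mathfrak m)\neq 0$); by Nakayama $K$ is cyclic, generated by an element with a unit coordinate, hence $K$ is free of rank one. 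A rank-one free $A$-submodule of $K\cong A$ equals $K$, so $\mathcal F_1=K=\mathcal F'_1$. Passing to $A^n/\mathcal F_1\cong A^{n-1}$ with the induced pair $(\bar\phi,\bar N)$ — which again lies in the regular locus, its special fibre admitting only the finitely many stable flags that refine $\mathcal F_1\bmod\mathfrak m$ — the flags $\bar{\mathcal F}$ and $\bar{\mathcal F}'$ are $(\bar\phi,\bar N)$-stable with the same tuple $(t_2,\dots,t_n)$, so they coincide by the inductive hypothesis, and therefore $\mathcal F=\mathcal F'$.

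The main difficulty is the dimension bound: one must extract from the definition of the regular locus (finiteness of the fibres of $\tilde\beta_B$) the concrete statement that $\ker(\phi-\mu)\cap\ker N$ is a line, and for this the surjectivity of $\tilde\beta_B$ established in Section~\ref{sectionbasicprop} is precisely what lets one complete a stable line to a stable flag. The remainder is bookkeeping, the only point requiring attention being that $\tilde X_{\check B}^{\rm reg}$ and $Y$ are possibly non-reduced, which is why the whole argument is run functorially over Artinian local rings, with Nakayama's lemma and freeness over $A$ replacing naive dimension counts over a field.
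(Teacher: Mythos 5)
Your overall strategy --- finite monomorphism, injectivity on Artinian local $C$-algebras $A$, and the crux being uniqueness of the bottom step of a stable flag given the tuple of graded eigenvalues --- coincides with the paper's; in fact your derivation of $N\mathcal F_1=0$ directly from $\Ad(\phi)N=q^{-1}N$, $t_1\in A^\times$ and $q\neq1$ is cleaner than the paper's, which appeals to reducedness of $\tilde X_{\check B}^{\rm reg}$ (Lemma \ref{reducedparaboliccase}) together with nilpotence of $N$. However, the sentence ``by Nakayama $K$ is cyclic'' is a genuine gap. What you have established is that the \emph{image} of the natural map $K\otimes_A k\to k^n$ is the one-dimensional space $\ker(\bar\phi-\bar t_1)\cap\ker\bar N$. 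This does not bound $\dim_k(K\otimes_A k)$, which is what Nakayama would require: tensoring the inclusion $K\hookrightarrow A^n$ with $k$ has kernel $\mathrm{Tor}_1^A(A^n/K,k)$, which vanishes only when $A^n/K$ is flat, and you have not shown this (indeed $A^n/K$ being free is essentially equivalent to the identity $K=\mathcal F_1$ you are after, so it cannot be assumed). As a cautionary model, for $A=k[\epsilon]/(\epsilon^2)$ the submodule $\epsilon A\subset A$ has $\epsilon A\otimes_A k\to k$ identically zero; so ``$K\otimes_A k$ surjects onto a line in $k^n$'' does not yield ``$K\otimes_A k$ is a line.''

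The conclusion $K=\mathcal F_1=\mathcal F'_1$ is true, but closing the gap seems to need the induction on the length of $A$ that the paper carries out: choose $f\in A$ with $\mathfrak m f=0$ and assume uniqueness over $A'=A/(f)$; then any $x\in K$ can be written $x=ae+fv$ with $a\in A$, $v\in A^n$, and the equations $(\phi-t_1)(fv)=0$, $N(fv)=0$ together with $\mathfrak m f=0$ force $\bar v\in\ker(\bar\phi-\bar t_1)\cap\ker\bar N=k\bar e$, whence $fv\in Ae$ and $K\subseteq Ae=\mathcal F_1$. With this replacing the Nakayama paragraph, the rest of your proof --- the commutation-relation argument and the induction on $n$ --- goes through.
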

\begin{proof}
Clearly $\gamma$ is a finite morphism, by the very definition of the regular locus. Hence it is enough to show that $\gamma$ induces an injection on $k$-valued points, for algebraically closed fields $k$, and a surjection on complete local rings. 

Let $k$ be an algebraically closed extension of $C$ and let $(A,\mathfrak{m})$ be a local Artinian $C$-algebra with residue field $k$. 
Let $(\phi,N)\in X_{\check G}^{\rm reg}(A)$ and let $\lambda_1,\dots,\lambda_n\in A$.  Then we have to show that there is at most one complete flag $\mathcal{F}_\bullet$ of $A^n$ stable under $\phi$ and $N$ such that $\phi$ acts on $\Fcal_i/\Fcal_{i-1}$ by multiplication with $\lambda_i$.

Assume first $A=k$. We prove the claim by induction on $n$. The case $n=1$ is trivial.  
Assume the claim is true for $n-1$. Then it is enough to show that there is a unique $(\phi,N)$-stable line $\mathcal{F}_1$ in $k^n$ on which $\phi$ acts by multiplication with $\lambda_1$. Obviously this forces $\mathcal{F}\subset \ker N$ and we need to show that the $\phi$-eigenspace in $\ker N$ of eigenvalue $\lambda_1$ is one dimensional. However, if this is not the case then there are infinitely many pairwise distinct $(\phi,N)$-stable lines in $k^n$, and each can be completed to a complete $(\phi,N)$-stable flag. This contradicts the regularity of  $(\phi,N)$. 

Now assume that $(A,\mathfrak{m})$ is a general Artinian $C$-algebra with residue field $k$. 
Again it suffices to show that there is a unique $(\phi,N)$-stable $A$-line in $A^n$, such that the quotient of $A^n$ by this line is free, on which $\phi$ acts as multiplication by $\lambda_1$.
By induction on the length of $A$ we can reduce to the following situation:
there exists $f\in A$ such that $\mathfrak{m}f=0$, and if $A'=A/(f)$ and $(\phi',N')$ is the image of $(\phi,N)$ in $X_{\check G}^{\rm reg}(A')$, then there is a unique $(\phi',N')$ stable $A'$-line in $A'^n$ on which $\phi'$ acts by multiplication with $\lambda_1\ {\rm mod}\,(f)$. Let $(\bar\phi,\bar N)\in X_{\check G}^{\rm reg}(k)$ denote the reduction of $(\phi,N)$ modulo $\mathfrak{m}$ and let $\bar\lambda_1\in k$ denote the reduction of $\lambda_1$. 
Then the multiplication with $f$ induces an embedding of $k^n\hookrightarrow A^n$ of $(\phi,N)$-modules with cokernel $A'^n$.
Assume that $\mathcal{F}_1=Ae_1$ and $\mathcal{F}'_1=Ae'_1$ are two $(\phi,N)$-stable $A$-lines on which $\phi$ acts by multiplication with $\lambda_1$.  
Then the assumption implies $e'_1=\alpha e_1+fv$ for some $\alpha\in A^\times$ and $v\in k^n$. Let $\bar e_1\in k^n$ denote the reduction of $e_1$ modulo $\mathfrak{m}$, then it remains to show $v\in k\bar e_1$. 
As $\phi(e_1)=\lambda_1 e_1$ and $\phi(e'_1)=\lambda_1 e'_1$ we deduce $\bar\phi(v)=\bar\lambda_1v$. The discussion of the case of an algebraically closed field $k$ above implies that it is enough to prove that $v\in \ker\bar N$.
However, we assume that $\Fcal_1$ and $\Fcal'_1$ are defined by points $(\phi,N,\Fcal_\bullet), (\phi,N,\Fcal'_\bullet)\in X_{\check G}^{\rm reg}(A)$. As $X_{\check G}^{\rm reg}$ is reduced by Lemma \ref{reducedparaboliccase} and as $N$ is nilpotent we deduce that $N(\Fcal_1)=N(\Fcal'_1)=0$ and hence $N(fv)=0$ which implies $\bar N(v)=0$. 
\end{proof}

We use the lemma to identify $\tilde X_{\check B}^{\rm reg}$ with a closed subscheme $Y_{\check G}^{\rm reg}$ of $X_{\check G}^{\rm reg}\times_{\check T/W}\check T$.
We denote by $$Y_{\check G}\subset X_{\check G}\times_{\check T/W}\check T=\Spec(A_{\check G}\otimes_{\mathfrak{Z}}A_{\check T})$$ the closure of $Y_{\check G}^{\rm reg}$ equipped with its canonical scheme structure (which is the reduced structure, as $\tilde X_{\check B}^{\rm reg}$ is reduced). Let us write $\tilde A_{\check G}$ for the corresponding quotient of $A_{\check G}\otimes_{\mathfrak{Z}}A_{\check T}$ and $\beta:Y_{\check G}\rightarrow X_{\check G}$ for the canonical projection. 

We can use Lemma \ref{HGestfree} to equip $$A_{\check G}\otimes_{\mathfrak{Z}}A_{\check T}=A_{\check G}\otimes_{\mathfrak{Z}}\Hcal_T\cong A_{\check G}\otimes_{\mathfrak{Z}}\Hcal_Te_{G,\rm st}=A_{\check G}\otimes_{\mathfrak{Z}}\Hcal_Ge_{G,\rm st}$$ with an $\Hcal_G$-module structure.
\begin{prop}\label{identifyMandOB}
(i) The kernel of the canonical morphism $A_{\check G}\otimes_{\mathfrak{Z}}A_{\check T}\rightarrow \tilde A_{\check G}$ is stable under the action of $\Hcal_G$. 
\\
(ii) There is a canonical isomorphism
\[\tilde{\mathcal{M}}_G\cong \beta_\ast \Ocal_{Y_{\check G}}\]
of $\check G$-equivariant $\Ocal_{X_{\check G}}\otimes_{\mathfrak{Z}}\Hcal_G$-modules.
\end{prop}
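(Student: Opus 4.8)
The plan is to make both sides of (ii) explicit as cyclic quotients of $A_{\check G}\otimes_{\mathfrak Z}A_{\check T}$ and to prove that the two defining ideals agree; part (i) will then be immediate, since one of the two quotient maps is manifestly $\Hcal_G$-linear.

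First I would identify $\tilde{\mathcal M}_G$ concretely. Since $(-)^I$ is exact and commutes with the tensor products occurring in the construction of $\tilde{\mathcal V}_G$, the sheaf $\tilde{\mathcal M}_G=\tilde{\mathcal V}_G^I$ is the image of the map
\[\mu:\ \big((\cind_N^G\psi)_{[T,1]}\big)^I\otimes_{\mathfrak Z}A_{\check G}\longrightarrow \prod_{i\in I}{\rm LL}(\phi_{\eta_i},N_{\eta_i})^I\]
obtained by specializing at the generic points $\eta_i$ of $X_{\check G}$ and applying the surjections of Lemma \ref{surjecttoLLmod} on $I$-invariants. By Corollary \ref{indetifycindpsi} and Lemma \ref{HGestfree}(i), $\big((\cind_N^G\psi)_{[T,1]}\big)^I=\Hcal_Ge_{G,\rm st}=\Hcal_Te_{G,\rm st}\cong A_{\check T}$, so the source of $\mu$ is identified with $A_{\check G}\otimes_{\mathfrak Z}A_{\check T}$ carrying exactly the $\Hcal_G$-module structure fixed just before the proposition, and $\mu$ is $\Hcal_G$-linear, $\check G$-equivariant, and in particular linear over the ring $A_{\check G}\otimes_{\mathfrak Z}A_{\check T}$. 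Thus $\tilde{\mathcal M}_G\cong(A_{\check G}\otimes_{\mathfrak Z}A_{\check T})/\ker\mu$ with $\ker\mu=\bigcap_i{\rm Ann}(\bar v_i)=:\bigcap_i\mathfrak a_i$, where $\bar v_i$ is the image of $e_{G,\rm st}$ in ${\rm LL}(\phi_{\eta_i},N_{\eta_i})^I$; moreover $\ker\mu$ is automatically $\Hcal_G$-stable. So everything reduces to the identity $\ker\mu=I(Y_{\check G})$, where $I(Y_{\check G})$ is the kernel of $A_{\check G}\otimes_{\mathfrak Z}A_{\check T}\to\tilde A_{\check G}$.

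Next I would analyze $\mathfrak a_i$. As $\eta_i$ is a generic point of $X_{\check G}$, Proposition \ref{equidimreductive}(ii) gives that $\phi_{\eta_i}$ is regular semisimple, hence $(\phi_{\eta_i},N_{\eta_i})$ is regular; Corollary \ref{identifyquotofcindNG} then shows that after base change to a splitting field $L_i$ the $i$-th component of $\mu$ is the projection $\bigoplus_{\phi_{\eta_i}\text{-stable }\mathcal F}{\rm unr}_{\mathcal F}\twoheadrightarrow\bigoplus_{(\phi_{\eta_i},N_{\eta_i})\text{-stable }\mathcal F}{\rm unr}_{\mathcal F}={\rm LL}(\phi_{\eta_i},N_{\eta_i})^I\otimes L_i$. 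Since the characters ${\rm unr}_{\mathcal F}$ are pairwise distinct, this identifies $(A_{\check G}\otimes_{\mathfrak Z}A_{\check T})/\mathfrak a_i\otimes L_i$ with a finite product of copies of $L_i$; hence $\mathfrak a_i$ is a radical ideal whose vanishing locus is the finite reduced set of points $(\eta_i,[{\rm unr}_{\mathcal F}])$ indexed by the $(\phi_{\eta_i},N_{\eta_i})$-stable flags $\mathcal F$. Under the closed immersion $\gamma:\tilde X_{\check B}^{\rm reg}\hookrightarrow X_{\check G}^{\rm reg}\times_{\check T/W}\check T$ (the lemma preceding the proposition), these are exactly the points of $\tilde X_{\check B}^{\rm reg}=Y_{\check G}^{\rm reg}$ over $\eta_i$, i.e.\ the generic points of the irreducible components of $Y_{\check G}$ lying over $\eta_i$ (here I use finiteness and equidimensionality of $\tilde X_{\check B}^{\rm reg}$ over $X_{\check G}^{\rm reg}$, Lemma \ref{equidimparabolicregular}). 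Conversely, Lemma \ref{equidimparabolicregular} shows every component of $Y_{\check G}^{\rm reg}=\tilde X_{\check B}^{\rm reg}$, and hence of $Y_{\check G}$, dominates some component of $X_{\check G}^{\rm reg}$, so arises this way. Therefore $\ker\mu=\bigcap_i\mathfrak a_i$ is radical (it is the kernel of $A_{\check G}\otimes_{\mathfrak Z}A_{\check T}$ mapping into the product of the reduced rings $(A_{\check G}\otimes_{\mathfrak Z}A_{\check T})/\mathfrak a_i$) and its vanishing locus is the union of all components of $Y_{\check G}$, i.e.\ $Y_{\check G}$, which is reduced by Lemma \ref{reducedparaboliccase}; so $\ker\mu=I(Y_{\check G})$. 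This proves (i), and then $\tilde{\mathcal M}_G\cong(A_{\check G}\otimes_{\mathfrak Z}A_{\check T})/I(Y_{\check G})=\tilde A_{\check G}=\beta_\ast\Ocal_{Y_{\check G}}$ as $\Hcal_G$-modules (the $\Hcal_G$-structure on the target being the quotient one legitimized by (i)), and as $\check G$-equivariant modules since every map in sight is $\check G$-equivariant.

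The main obstacle is the middle step: one must control the scheme, not merely topological, structure of $\mathfrak a_i$ — that it is radical and that $V(\mathfrak a_i)$ is exactly the set of components of $Y_{\check G}$ over $\eta_i$. This is precisely where regular semisimplicity of $\phi_{\eta_i}$ is essential: it forces the fibre over $\eta_i$ to be finite and reduced both on the Hecke-module side, via the explicit Jacquet-module/Hecke-action computation of Corollary \ref{identifyquotofcindNG}, and on the geometric side, via Lemma \ref{reducedparaboliccase}, so that the two finite reduced fibres can be matched through the bijection between orderings of the eigenvalues of $\phi_{\eta_i}$ and $(\phi_{\eta_i},N_{\eta_i})$-stable complete flags.
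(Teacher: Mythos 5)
Your proposal is correct and follows essentially the same route as the paper: both reduce to the situation at the generic points of $X_{\check G}$ (where $\phi$ is regular semi-simple by Proposition \ref{equidimreductive}(ii)), invoke Corollary \ref{identifyquotofcindNG} to identify the Hecke-side map there with the geometric projection, and use Lemmas \ref{equidimparabolicregular} and \ref{reducedparaboliccase} for reducedness of $Y_{\check G}$ and dominance of its components. The only difference is organizational: the paper proves (i) first by reduction to generic points and then deduces (ii) from a commutative diagram, whereas you front-load the identity $\ker\mu=I(Y_{\check G})$ and obtain both (i) and (ii) from it at once — logically equivalent arrangements of the same argument.
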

\begin{proof}

(i) By Lemma \ref{equidimparabolicregular} and Lemma \ref{reducedparaboliccase} the scheme $Y_{\check G}$ is reduced and every irreducible component of $Y_{\check G}$ dominates an irreducible component of $X_{\check G}$. 
In particular the canonical morphism
\[\tilde A_{\check G}\longrightarrow \prod_{\eta}\tilde A_{\check G}\otimes k(\eta)=\prod_{\eta}\Gamma(\tilde\beta_B^{-1}(\eta),\Ocal_{\tilde\beta_B^{-1}(\eta)})\]
is an injection. Here the product runs over all generic points $\eta$ of $X_{\check G}$.
It is therefore enough to prove that for all generic points $\eta$ of $X_{\check G}$ the kernel of the canonical map
\[k(\eta)\otimes_{\mathfrak{Z}}\Hcal_G e_{G, \rm st}=k(\eta)\otimes_{\mathfrak{Z}}\Hcal_T e_{G, \rm st}=k(\eta)\otimes_{\mathfrak{Z}}A_{\check T}\longrightarrow \Gamma((\tilde\beta_B^{-1}(\eta),\Ocal_{\tilde\beta_B^{-1}(\eta)})\]
is stable under the $\Hcal_G$-action. This follows from Corollary \ref{identifyquotofcindNG} applied to the generic point $\eta=(\phi_\eta,N_\eta)$.

\noindent (ii) Consider the diagram
\[
\begin{xy}
\xymatrix{
(\cind_N^G\psi)_{[T,1]}^I\otimes_{\mathfrak{Z}}A_{\check G}\ar@{->>}[r] \ar[d]^\cong&\Gamma(X_{\check G},\tilde{\mathcal{M}}_G)\ar@{^{(}->}[r] &\prod_{\eta}  {\rm LL}(\phi_\eta,N_\eta)^I\ar[d]^{\cong}\\
A_{\check T}\otimes_{\mathfrak{Z}}A_{\check G} \ar@{->>}[r] &\tilde A_{\check G} \ar@{^{(}->}[r] & \prod_{\eta} \Gamma(\tilde\beta_B^{-1}(\eta),\Ocal_{\tilde\beta_B^{-1}(\eta)}),
}
\end{xy}
\]
where the left vertical arrow comes from the identification of $$(\cind_N^G\psi)_{[T,1]}^I=\Hcal_G e_{G,\rm st}=\Hcal_T e_{G,\rm st}\cong A_{\check T}$$
and the right vertical arrow comes from the identification of the Jacquet-module of ${\rm LL}(\phi_\eta,N_\eta)$ in Corollary \ref{identifyquotofcindNG}. 
By construction the diagram is a commutative diagram of $A_{\check G}\otimes_{\mathfrak{Z}}\Hcal_{\check G}$-modules and moreover all morphisms are compatible with the $\check G$-action. Hence these morphisms induce a canonical isomorphism $$\Gamma(X_{\check G},\tilde{\mathcal{M}}_G)\cong \tilde A_{\check G}$$ as claimed.
\end{proof}

As a consequence we can easily deduce Conjecture \ref{Conjfibers} for regular semi-simple points.
\begin{cor}
Let $x=(\varphi,N)\in X_{\check G}$ with $\varphi$ regular semi-simple. Then
\[(\tilde{\mathcal{V}}_G\otimes k(x))^\vee\cong {\rm LL}^{\rm mod}((\phi,N)^\vee).\]
\end{cor}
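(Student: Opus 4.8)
The plan is to specialise the argument proving Proposition \ref{identifyMandOB}(ii) from the generic points of $X_{\check G}$ to the point $x$. Since $\varphi$ is regular semi-simple there are only finitely many $\varphi$-stable, hence only finitely many $(\varphi,N)$-stable, Borel subgroups of $\check G$, so $x$ lies in the regular locus $X_{\check G}^{\rm reg}$. As $\tilde{\mathcal{V}}_G\otimes_{A_{\check G}}k(x)$ is a quotient of $(\cind_N^G\psi)_{[T,1]}\otimes_{\mathfrak{Z}}k(x)$ it is admissible by Theorem \ref{summaryHelm}(a), so biduality holds and it suffices to exhibit an isomorphism $\tilde{\mathcal{V}}_G\otimes_{A_{\check G}}k(x)\cong{\rm LL}^{\rm mod}((\varphi,N)^\vee)^\vee$ of quotients of $(\cind_N^G\psi)_{[T,1]}\otimes_{\mathfrak{Z}}k(x)$. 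Since $(-)^I$ is exact, induces the equivalence ${\rm Rep}^IG\cong\Hcal_G\text{-mod}$, and both representations lie in ${\rm Rep}^IG$, it is enough to match the two objects after applying $(-)^I$, as quotients of $\big((\cind_N^G\psi)_{[T,1]}\otimes_{\mathfrak{Z}}k(x)\big)^I=\Hcal_Ge_{G,\rm st}\otimes_{\mathfrak{Z}}k(x)=A_{\check T}\otimes_{\mathfrak{Z}}k(x)$, the last two identifications being Lemma \ref{HGestfree} and $\Hcal_T\cong A_{\check T}$.

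First I would compute the left hand side. As $(-)^I$ is a direct summand functor it is exact and commutes with base change along $A_{\check G}\to k(x)$, so by Proposition \ref{identifyMandOB}(ii)
\[\big(\tilde{\mathcal{V}}_G\otimes_{A_{\check G}}k(x)\big)^I=\tilde{\mathcal{M}}_G\otimes_{A_{\check G}}k(x)\cong\tilde A_{\check G}\otimes_{A_{\check G}}k(x).\]
Over $X_{\check G}^{\rm reg}$ the projection $\beta\colon Y_{\check G}\to X_{\check G}$ restricts to the \emph{finite} morphism $\tilde\beta_B\colon\tilde X_{\check B}^{\rm reg}\to X_{\check G}^{\rm reg}$ (Lemma \ref{equidimparabolicregular}), so $\tilde A_{\check G}\otimes_{A_{\check G}}k(x)=\Gamma\big(\tilde\beta_B^{-1}(x),\Ocal_{\tilde\beta_B^{-1}(x)}\big)$; since $\varphi$ is regular semi-simple this fibre is the reduced finite scheme of $(\varphi,N)$-stable complete flags (cf.\ Lemma \ref{reducedparaboliccase} and the proof of Lemma \ref{JBofLLmod}), so after extension of scalars to a splitting field $L$ of $\varphi$ it becomes $\bigoplus_{\mathcal F}{\rm unr}_{\mathcal F}$, summed over $(\varphi,N)$-stable flags. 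I would then check that the surjection $A_{\check T}\otimes_{\mathfrak{Z}}k(x)\to\tilde A_{\check G}\otimes_{A_{\check G}}k(x)$ induced by the map defining $\tilde{\mathcal{V}}_G$ is, after extending scalars to $L$, the projection $\bigoplus_{\varphi\text{-stable }\mathcal F}{\rm unr}_{\mathcal F}\to\bigoplus_{(\varphi,N)\text{-stable }\mathcal F}{\rm unr}_{\mathcal F}$ given by restriction of functions along $\tilde\beta_B^{-1}(x)\hookrightarrow\{\varphi\text{-stable flags}\}$; this is exactly the map $(\ref{eq1identifyLLquotient})$.

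With this in hand, Corollary \ref{identifyquotofcindNG} says that the $\Hcal_G$-stable kernel of $(\ref{eq1identifyLLquotient})$ — the span of the ${\rm unr}_{\mathcal F}$ with $\mathcal F$ not $N$-stable, which is the base change to $L$ of a $k(x)$-submodule — yields on the quotient the $\Hcal_G$-module $\big({\rm LL}^{\rm mod}((\varphi,N)^\vee)^\vee\big)^I\otimes_{k(x)}L$. A quotient map is determined by its kernel, and the two relevant kernels inside $\big((\cind_N^G\psi)_{[T,1]}\otimes_{\mathfrak{Z}}k(x)\big)^I$ agree after the faithfully flat base change $k(x)\to L$, hence already over $k(x)$; thus $\big(\tilde{\mathcal{V}}_G\otimes_{A_{\check G}}k(x)\big)^I\cong\big({\rm LL}^{\rm mod}((\varphi,N)^\vee)^\vee\big)^I$ as quotients of $\big((\cind_N^G\psi)_{[T,1]}\otimes_{\mathfrak{Z}}k(x)\big)^I$. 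Applying the inverse equivalence and dualising gives the corollary.

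The main obstacle is the step, in the second paragraph, identifying the base-change surjection onto $\tilde A_{\check G}\otimes k(x)$ with the explicit map $(\ref{eq1identifyLLquotient})$: this comes down to matching the description of $\tilde A_{\check G}$ as (the closure of) the scheme of $(\varphi,N)$-stable flags with the Jacquet-module computation of Lemma \ref{JBofLLmod} and with the choice of surjection in Lemma \ref{surjecttoLLmod}. Everything else — exactness of $(-)^I$, the finiteness and base-change statements over the regular locus, admissibility and biduality — is formal, and most of it already appears in the proof of Proposition \ref{identifyMandOB}.
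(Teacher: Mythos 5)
Your proof is correct and takes essentially the same route as the paper: both use (the proof of) Proposition~\ref{identifyMandOB} to reduce to a surjection of $\Hcal_T\otimes k(x)$-modules onto $\tilde{\mathcal{M}}_G\otimes k(x)\cong\Gamma(\tilde\beta^{-1}(x),\Ocal_{\tilde\beta^{-1}(x)})$ and then invoke Corollary~\ref{identifyquotofcindNG} to recognize the quotient as $\big({\rm LL}^{\rm mod}((\varphi,N)^\vee)^\vee\big)^I$. You spell out more of the intermediate steps (membership in the regular locus, exactness and base-change of $(-)^I$, descent of the kernel from $L$ to $k(x)$, and the biduality needed to pass from the quotient identification to the stated dual form), but the argument is the same.
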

\begin{proof}
It follows from the proof of Proposition \ref{identifyMandOB} that 
\[((\cind_N^G\psi)_{[T,1]}\otimes_\mathfrak{Z}k(x))^I\longrightarrow \tilde{\mathcal{M}}_G\otimes k(x)\cong\Gamma(\tilde\beta^{-1}(x),\Ocal_{\tilde\beta^{-1}(x)})\]
is a surjection of $\Hcal_T\otimes k(x)$-modules. 
The claim now follows from Corollary \ref{identifyquotofcindNG}.
\end{proof}

The module of $I$-invariants in the family of smooth representation over $\ell$-adic deformation rings proposed by Emerton and Helm \cite{EmertonHelm} (and constructed by Helm in \cite{Helm1}) is expected to have a close relation with patched modules in the Taylor-Wiles method (for Iwahori-level at $p$). 
In fact the Taylor-Wiles patching modules automatically produces maximal Cohen-Macaulay modules (which are in fact self-dual for Grothendieck-Serre duality).
The family defined in \cite{Helm1} is related to the family $\mathcal{V}_G$ by the twist with $|{\rm det}|^{-(n-1)/2}$ and by some flat base changes. 
This motivates the following conjecture which also would be a direct consequence of Conjecture \ref{GLnsurjection} and the self-duality statement in Conjecture \ref{conjdegzero}

\begin{conj} \label{ConjEHCM}
The Hecke module $\tilde{\mathcal{M}}_G=(\tilde{\mathcal{V}}_G)^I$ underlying the Emerton-Helm family $\tilde{\mathcal{V}}_G$ on $X_{\check G}$ is a Cohen-Macaulay module over $\mathcal{O}_{X_{\check G}}$. 
\end{conj}

\begin{rem}
One deduces easily from Proposition \ref{identifyMandOB} that $\tilde{\mathcal{M}}_G$ can not be flat as an $\Ocal_{X_{\check G}}$-module. 
On the other hand, as explained above the family $\tilde{\mathcal{M}}_G$ should have some relation with patching modules and hence it should satisfy some local-global compatibility with the cohomology of certain locally symmetric spaces. 
We do not give a very precise formulation of this here, but it would include the (derived) base change to a global Galois deformation ring. In the neighborhood of generic L-parameters there should be no obstruction for this base change to sit in a single cohomological degree. This motivates the following observation.
\end{rem}

\begin{cor}
Let $x=(\varphi,N)\in X_{\check G}^{\rm reg}$  such that the $\check G$-conjugacy class of $(\phi,N)$ is a generic $L$-parameter. Then $\tilde{\mathcal{M}}_G$ is locally free (as an $\Ocal_{X_{\check G}}$-module) in a neighborhood of $x$.
\end{cor}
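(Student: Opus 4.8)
The plan is to reduce the statement to the previously established identification $\tilde{\mathcal M}_G\cong\beta_\ast\Ocal_{Y_{\check G}}$ of Proposition \ref{identifyMandOB} together with the two facts available over the regular locus: that $X_{\check G}^{\rm reg}$ is smooth at a point $x=(\phi,N)$ whose associated $\check G$-conjugacy class $[\phi^{\rm ss},N]$ is a generic $L$-parameter (Remark \ref{remarkgenericLparam}), and that in a neighborhood of such an $x$ the morphism $\beta_P^{\rm reg}\colon\tilde X_{\check B}^{\rm reg}\to X_{\check G}^{\rm reg}$ is finite flat, again by Remark \ref{remarkgenericLparam} (miracle flatness, since there $X_{\check G}$ is regular hence Cohen--Macaulay and $\tilde X_{\check B}^{\rm reg}$ is equidimensional of the same dimension by Lemma \ref{equidimparabolicregular}).

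First I would recall from Remark \ref{remgeneralizetononregular} that over $X_{\check G}^{\rm reg}$ we have a canonical isomorphism $\tilde{\mathcal M}_G|_{X_{\check G}^{\rm reg}}\cong R\tilde\beta_{B,\ast}^{\rm reg}\Ocal_{\tilde X_{\check B}^{\rm reg}}$, and that $\tilde\beta_B^{\rm reg}$ is an affine (indeed finite) morphism, so $R\tilde\beta_{B,\ast}^{\rm reg}=\tilde\beta_{B,\ast}^{\rm reg}$ and the complex is concentrated in degree zero. Thus $\tilde{\mathcal M}_G$ is, over $X_{\check G}^{\rm reg}$, the pushforward of the structure sheaf along the finite morphism $\tilde\beta_B^{\rm reg}$. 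Next I would shrink to an open neighborhood $U\subset X_{\check G}^{\rm reg}$ of $x$ on which Remark \ref{remarkgenericLparam} guarantees that $(\tilde\beta_B^{\rm reg})^{-1}(U)\to U$ is finite flat. The pushforward of $\Ocal$ along a finite flat morphism is a finite locally free $\Ocal_U$-module (it is finitely generated, flat, and of finite presentation over the noetherian scheme $U$), which is exactly the assertion that $\tilde{\mathcal M}_G$ is locally free as an $\Ocal_{X_{\check G}}$-module in a neighborhood of $x$.

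The only point that requires a little care — and the main obstacle — is checking that the genericity hypothesis in the statement (the $\check G$-conjugacy class of $(\phi,N)$ is a generic $L$-parameter) is precisely what is needed to invoke Remark \ref{remarkgenericLparam}: there the hypothesis is phrased as $(\phi^{\rm ss},N)$ being the $L$-parameter of a generic representation, so I would note that for $x\in X_{\check G}^{\rm reg}$ with $\phi$ regular semisimple one has $\phi^{\rm ss}=\phi$ and the two conditions coincide, and that the argument of \cite[Lemma 1.3.2(1)]{BLGGT} cited there indeed yields smoothness of $X_{\check G}$ at $x$, so that miracle flatness \cite[Theorem 23.1]{Matsumura} applies to the finite morphism $\beta_B$ restricted to a suitable neighborhood. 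Once flatness of the finite morphism is in hand, local freeness of the pushforward is formal, so no further calculation is needed.
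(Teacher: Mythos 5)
Your proposal is correct and follows exactly the paper's one-line proof (``This follows from Remark \ref{remarkgenericLparam} and the identification of $\tilde{\mathcal{M}}_G|_{X_{\check G}^{\rm reg}}$ above''), just spelled out in full; the only (immaterial) slip is that you invoke $\phi$ being \emph{regular} semisimple to conclude $\phi^{\rm ss}=\phi$, whereas the corollary's hypothesis already forces $\phi$ to be semisimple (an L-parameter by definition has $\phi$ semisimple), and that alone gives $\phi^{\rm ss}=\phi$.
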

\begin{proof}
As a maximal Cohen-Macaulay module over a regular local ring is automatically free, this follows from Remark \ref{remarkgenericLparam} and the identification of $\tilde{\mathcal{M}}_G|_{X_{\check G}^{\rm reg}}$ above.
\end{proof}

\subsection{The main conjecture in the regular case}
After restricting to the regular case we give a candidate for the functor $R_G^\psi$ in Conjecture \ref{mainconjecture}, as well as functors $R_M^{\psi_M}$ for all (standard) Levi subgroups, and prove compatibility with parabolic induction. 
As in the case of ${\rm GL}_n$ the choice of $(\mathbb{B},\psi)$ is unique up to conjugation, we will always omit the superscript $\psi$ from the notation.
By abuse of notation we will also use the symbols $\iota_P^G(-)$ and $\iota_{\overline{P}}^G(-)$ to denote the functors on Hecke modules corresponding to parabolic induction $(\ref{inductiontensor})$.

For a standard Levi subgroup $\prod_{i=1}^s \GL_{r_i}=\mathbb{M}\subset\mathbb{G}=\GL_n$ we write $\tilde{\mathcal{M}}_M$ for the tensor product of the pullbacks of the $\tilde{\Mcal}_{\GL_{r_i}(F)}$ on $X_{\GL_{r_i}}$ to $X_{\check M}=\prod_{i=1}^s X_{\GL_{r_i}}$. This is an $\check M$-equivariant sheaf of $\Ocal_{X_{\check M}}\otimes_{\mathfrak{Z}_M}\Hcal_M$-modules, and again we write $\Mcal_M$ for the sheaf on $[X_{\check M}/\check M]$ defined by $\tilde{\Mcal}_M$. We define the functor
\begin{equation}\label{EHfunctor}
\begin{aligned}
R_M:{\bf D}^+(\Hcal_M\text{-mod}) &\longrightarrow {\bf D}^+_{\rm QCoh}([X_{\check M}/\check M])\\
\pi^\bullet&\longmapsto {}^t\pi^\bullet\otimes^L_{\Hcal_M}\mathcal{M}_M.
\end{aligned}
\end{equation}
The derived tensor product in the formula can be defined for objects $\pi^\bullet\in {\bf D}^+(\Hcal_M\text{-mod})$ that are bounded above using finite projective resolutions (recall that $\Hcal_M$ has finite global dimension).
In general an object $\pi^\bullet\in {\bf D}^+(\Hcal_M\text{-mod})$ can be written as the direct limit of its truncations
\[\lim\limits_{\longrightarrow}\tau_{<m}(\pi^\bullet)\xrightarrow{\cong}\pi^\bullet,\]
and we can define 
\[{}^t\pi^\bullet \otimes^L_{\Hcal_M}\mathcal{M}_M=\lim\limits_{\longrightarrow}\big((\tau_{<m}({}^t\pi^\bullet)\otimes^L_{\Hcal_M}\mathcal{M}_M\big).\]
Note that by definition $R_M$ preserves the truncation $\tau_{<m}$.

We will write $R_M^{\rm reg}$ for the composition of $R_M$ with the restriction to the regular locus $[X_{\check M}^{\rm reg}/\check M]\subset [X_{\check M}/\check M]$.
Obviously the functors $R_M$ and $R_M^{\rm reg}$ are $\mathfrak{Z}_M$-linear. 

We restrict ourselves to the regular case. In order to have a compatible choice of the $\mathcal{M}_M$ (which are a priori only defined up to isomorphism) for various Levi subgroups of $\GL_n$, let us set
\[\mathcal{M}_M^{\rm reg}=\mathcal{M}_M|_{[X_{\check M}^{\rm reg}/\check M]}=R\beta_{B_M,\ast}\Ocal_{[X_{\check B_M}^{\rm reg}/\check B_M]},\]
where $B_M=B\cap M\subset M$ is a Borel and $\beta_{B_M}$ is the restriction of the canonical projection $[X_{\check B}/\check B]\rightarrow [X_{\check G}/\check G]$ to the regular locus. 
By abuse of notation we drop the restriction to the regular locus in the notation and just write $\mathcal{M}_M$ instead of $\mathcal{M}_M^{\rm reg}$. We now use Proposition \ref{identifyMandOB} to define the $\Hcal_M$-module structure on $\mathcal{M}_M$, i.e.~we let $\Hcal_M$ act on 
$$\Ocal_{X_{\check M}^{\rm reg}}\otimes_{\mathfrak{Z}_M}A_{\check T}\twoheadrightarrow \Ocal_{\tilde X_{\check B_M}^{\rm reg}}$$
by letting it act on $A_{\check T}=\Hcal_T\cong\Hcal_Me_{M,\rm st}$ (the fact that this $\Hcal_M$-action extends to the quotient is the content of Proposition \ref{identifyMandOB}).

Let $\mathbb{P}_1\subset\mathbb{P}_2$ be parabolic subgroups containing $\mathbb{B}$ with Levi quotients $\mathbb{M}_1$ and $\mathbb{M}_2$ and write $\mathbb{P}_{12}$ for the image of $\mathbb{P}_1$ in $\mathbb{M}_2$. 
We will define a natural $\mathfrak{Z}_{M_2}$-linear transformation 
\begin{equation}\label{defofnattrafo}
\xi_{P_{12}}^{M_2}:R_{M_2}^{\rm reg}\circ \iota_{\overline{P}_{12}}^{M_2}\longrightarrow (R\beta_{12,\ast}\circ L\alpha_{12}^\ast)\circ R_{M_1}^{\rm reg},
\end{equation}
where $\alpha_{12}$ and $\beta_{12}$ are the morphisms in the diagram
\begin{equation}\label{diagraminductionmorphism}
\begin{aligned}
\begin{xy}
\xymatrix{
[X_{\check M_2}^{\rm reg}/\check M_2] & [X_{\check P_{12}}^{\rm reg}/\check P_{12}] \ar[l]^{\beta_{12}} \ar[d]^{\alpha_{12}} &  [X_{\check B_{M_2}}^{\rm reg}/\check B_{M_2}] \ar[l]^{\beta} \ar[d]^\alpha \ar@/_ .5cm/[ll]_{\beta_{M_2}}\\
& [X_{\check M_1}^{\rm reg}/\check M_1] & \ar[l]_{\beta_{M_1}} [X_{\check B_{M_1}}^{\rm reg}/\check B_{M_1}].
}
\end{xy}\end{aligned}\end{equation}
Note that the square on the right hand side is cartesian and Tor-independent by Lemma \ref{fiberproducttorindep} and Corollary \ref{caresiansquareclassicalstacks}.

Let $\pi$ be a complex of $\Hcal_{M_1}$-module. Giving $\xi_{P_1}^{P_2}(\pi)$ is equivalent to defining its adjoint morphism
\[{}^t\xi_{P_{12}}^{M_2}(\pi): L\beta_{12}^\ast({}^t(\Hcal_{M_2}\otimes^L_{\Hcal_{M_1}}\pi)\otimes_{\Hcal_{M_2}}\mathcal{M}_{M_2})\longrightarrow L\alpha_{12}^\ast({}^t\pi\otimes^L_{\Hcal_{M_1}}\mathcal{M}_{M_1}).\]
Using Lemma \ref{lefttorightinduction} and compatibility of pullbacks with tensor products we need to define a morphism
\[{}^t\pi\otimes^L_{\Hcal_{M_1}}L\beta_{12}^\ast \mathcal{M}_{M_2}\longrightarrow {}^t\pi\otimes^L_{\Hcal_{M_1}}L\alpha_{12}^\ast\mathcal{M}_{M_1}\] that is, we need to define a morphism of $\Hcal_{M_1}\otimes_{\mathfrak{Z}_{M_2}}\Ocal_{[X_{\check P_{12}}^{\rm reg}/\check P_{12}]}$-modules
\[L\beta_{12}^\ast \mathcal{M}_{M_2}\longrightarrow L\alpha_{12}^\ast\mathcal{M}_{M_1}.\] 
Using the above identifications we can define this as the composition
\begin{equation}\label{inductionmorphism}
\begin{aligned}
L\beta_{12}^\ast \mathcal{M}_{M_2}=(L\beta_{12}^\ast\circ R\beta_{12,\ast}\circ R\beta_\ast)(\Ocal_{[X_{\check B_{M_2}}^{\rm reg}/\check B_{M_2}]})& 
\\\longrightarrow R\beta_\ast L\alpha^\ast(\Ocal_{[X_{\check B_{M_1}}^{\rm reg}/\check B_{M_1}]})\xrightarrow{\cong} L\alpha_{12}^\ast R\beta_{M_1,\ast}(\Ocal_{[X_{\check B_{M_1}}^{\rm reg}/\check B_{M_1}]})=  L\alpha_{12}^\ast\mathcal{M}_{M_1}&,
\end{aligned}
\end{equation}
where the first morphism is given by adjunction and the second morphism is given by the base change morphism in the cartesian square in $(\ref{diagraminductionmorphism})$. A priori this is only a morphism of $\Ocal_{[X^{\rm reg}_{\check P_{12}}/\check P_{12}]}$-modules.
\begin{lem}
The morphism $(\ref{inductionmorphism})$ is a morphism of $\Hcal_{M_1}$-modules.
\end{lem}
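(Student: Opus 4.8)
The plan is to reduce the $\Hcal_{M_1}$-linearity of $(\ref{inductionmorphism})$ to a check on a single distinguished section, using the explicit presentation of the Hecke-module structures furnished by Proposition~\ref{identifyMandOB} together with the compatibility of Steinberg idempotents recorded in Lemma~\ref{HGestfree}. First I would spell out the two $\Hcal_{M_1}$-actions in play. By the definition of $\mathcal{M}_{M_i}^{\rm reg}$ and Proposition~\ref{identifyMandOB}, for $i=1,2$ the sheaf $\mathcal{M}_{M_i}=R\beta_{M_i,\ast}\Ocal_{[X_{\check B_{M_i}}^{\rm reg}/\check B_{M_i}]}$ is, $\check M_i$-equivariantly, a quotient of $\Ocal_{X_{\check M_i}^{\rm reg}}\otimes_{\mathfrak{Z}_{M_i}}A_{\check T}$ in which $A_{\check T}=\Hcal_T=\Hcal_{M_i}e_{M_i,\rm st}$ (Lemma~\ref{HGestfree}(i)) and $\Hcal_{M_i}$ acts by left multiplication on the last factor; write $s_i$ for the image of $1\otimes e_{M_i,\rm st}$. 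Then $\mathcal{M}_{M_i}$ is generated over $\Ocal_{X_{\check M_i}^{\rm reg}}$ by $\{a\cdot s_i\mid a\in A_{\check T}\}$, the section $s_i$ is the unit section of $\mathcal{M}_{M_i}$ as a sheaf of $\Ocal_{X_{\check M_i}^{\rm reg}}$-algebras, its $A_{\check T}$-action is induced by the canonical morphism $\check B_{M_i}\to\check T$, and $h\cdot(a\cdot s_i)=(h\cdot_{M_i}a)\cdot s_i$ for $h\in\Hcal_{M_i}$, where $h\cdot_{M_i}a\in A_{\check T}$ denotes the $\Hcal_{M_i}$-action on $A_{\check T}=\Hcal_{M_i}e_{M_i,\rm st}$. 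Applying Lemma~\ref{HGestfree}(ii) with $(G,M,P)=(M_2,M_1,P_{12})$, the isomorphism $\Hcal_{M_1}e_{M_1,\rm st}\to\Hcal_{M_2}e_{M_2,\rm st}$ sending $e_{M_1,\rm st}$ to $e_{M_2,\rm st}$ is $\Hcal_{M_1}$-linear; since under $e_{M_i,\rm st}\leftrightarrow 1$ both sides are $A_{\check T}$ and this map is $A_{\check T}$-linear with $1\mapsto1$, it is the identity of $A_{\check T}$, whence $h\cdot_{M_1}a=h\cdot_{M_2}a$ for all $h\in\Hcal_{M_1}$, $a\in A_{\check T}$.

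Next I would reduce the statement to degree zero. By Lemma~\ref{fiberproducttorindep}(ii) and Corollary~\ref{caresiansquareclassicalstacks} the cartesian square in $(\ref{diagraminductionmorphism})$ is Tor-independent, and by Lemma~\ref{equidimparabolicregular} the morphisms $\alpha,\beta,\beta_{12},\beta_{M_i}$ are finite on the regular locus; hence in $(\ref{inductionmorphism})$ every $R\beta_{(-)\,\ast}$ is an ordinary push-forward and the base-change identification exhibits the target $L\alpha_{12}^\ast\mathcal{M}_{M_1}$ as the coherent sheaf $\beta_\ast\Ocal_{[X_{\check B_{M_2}}^{\rm reg}/\check B_{M_2}]}$, concentrated in degree $0$. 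Therefore $(\ref{inductionmorphism})$ factors through $H^0(L\beta_{12}^\ast\mathcal{M}_{M_2})=\beta_{12}^\ast\mathcal{M}_{M_2}$, and it suffices to prove that the induced morphism $f\colon\beta_{12}^\ast\mathcal{M}_{M_2}\to L\alpha_{12}^\ast\mathcal{M}_{M_1}$ of coherent sheaves is $\Hcal_{M_1}$-linear.

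I would then establish two properties of $f$. (1) $f$ is $A_{\check T}$-linear: each of the five constituent morphisms of $(\ref{inductionmorphism})$ — the identification $\mathcal{M}_{M_2}=\beta_{12,\ast}\beta_\ast\Ocal_{[X_{\check B_{M_2}}^{\rm reg}/\check B_{M_2}]}$, the equality $\Ocal_{[X_{\check B_{M_2}}^{\rm reg}/\check B_{M_2}]}=\alpha^\ast\Ocal_{[X_{\check B_{M_1}}^{\rm reg}/\check B_{M_1}]}$, the adjunction counit, the base-change isomorphism, and $\beta_{M_1,\ast}\Ocal_{[X_{\check B_{M_1}}^{\rm reg}/\check B_{M_1}]}=\mathcal{M}_{M_1}$ — is a morphism of modules over the relevant push/pull-back of the sheaf of $\Ocal_{\check T}$-algebras $\Ocal_{[X_{\check B_{M_2}}^{\rm reg}/\check B_{M_2}]}$, using that $\alpha$ is a morphism over $\check T$. (2) $f(\beta_{12}^\ast s_2)=\alpha_{12}^\ast s_1$, obtained by tracing the unit section: $\beta_{12}^\ast s_2$ is the pull-back of the unit section of $\beta_{12,\ast}\beta_\ast\Ocal_{[X_{\check B_{M_2}}^{\rm reg}/\check B_{M_2}]}$, the triangle identity for $(\beta_{12}^\ast,\beta_{12,\ast})$ returns it to the unit section of $\beta_\ast\Ocal_{[X_{\check B_{M_2}}^{\rm reg}/\check B_{M_2}]}$, the identity with $\alpha^\ast\Ocal_{[X_{\check B_{M_1}}^{\rm reg}/\check B_{M_1}]}$ preserves unit sections, and the base-change isomorphism matches the canonical section of $\beta_\ast\alpha^\ast\Ocal_{[X_{\check B_{M_1}}^{\rm reg}/\check B_{M_1}]}$ with that of $\alpha_{12}^\ast\beta_{M_1,\ast}\Ocal_{[X_{\check B_{M_1}}^{\rm reg}/\check B_{M_1}]}=\alpha_{12}^\ast\mathcal{M}_{M_1}$, i.e. with $\alpha_{12}^\ast s_1$. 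Granting (1) and (2), for $h\in\Hcal_{M_1}$ and $a\in A_{\check T}$ the identities of the first paragraph (applied to $\mathcal{M}_{M_2}$ pulled back along $\beta_{12}$, and to $\mathcal{M}_{M_1}$ pulled back along $\alpha_{12}$), $A_{\check T}$-linearity of $f$, and $h\cdot_{M_1}a=h\cdot_{M_2}a$ give
\begin{align*}
f\big(h\cdot(a\cdot\beta_{12}^\ast s_2)\big)&=f\big((h\cdot_{M_2}a)\cdot\beta_{12}^\ast s_2\big)=(h\cdot_{M_2}a)\cdot\alpha_{12}^\ast s_1\\
&=(h\cdot_{M_1}a)\cdot\alpha_{12}^\ast s_1=h\cdot\big(a\cdot f(\beta_{12}^\ast s_2)\big)=h\cdot f\big(a\cdot\beta_{12}^\ast s_2\big).
\end{align*}
Since $f$ is $\Ocal_{[X_{\check P_{12}}^{\rm reg}/\check P_{12}]}$-linear and the sections $a\cdot\beta_{12}^\ast s_2$ ($a\in A_{\check T}$) generate $\beta_{12}^\ast\mathcal{M}_{M_2}$ over $\Ocal_{[X_{\check P_{12}}^{\rm reg}/\check P_{12}]}$, this proves $\Hcal_{M_1}$-linearity.

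I expect the main obstacle to be the bookkeeping in property (2): one must carefully unwind Proposition~\ref{identifyMandOB} to confirm that the section $s_i$ attached to the idempotent $e_{M_i,\rm st}$ is genuinely the unit section of $\mathcal{M}_{M_i}^{\rm reg}\cong R\beta_{M_i,\ast}\Ocal_{[X_{\check B_{M_i}}^{\rm reg}/\check B_{M_i}]}$, and then verify that the adjunction counit and the base-change isomorphism are compatible with these unit sections. Everything else is formal; the only genuinely new input beyond the regular-locus results already in place is the idempotent compatibility of Lemma~\ref{HGestfree}(ii), which is precisely what forces the two $\Hcal_{M_1}$-module structures to agree on the distinguished generator.
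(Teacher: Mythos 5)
Your proposal is correct and isolates the same decisive ingredient the paper uses, namely Lemma~\ref{HGestfree}(ii): both arguments ultimately come down to the fact that the $\Hcal_{M_1}$-module structure on $A_{\check T}\cong\Hcal_{M_1}e_{M_1,\rm st}$ agrees with the restriction to $\Hcal_{M_1}$ of the $\Hcal_{M_2}$-module structure on $A_{\check T}\cong\Hcal_{M_2}e_{M_2,\rm st}$. The packaging differs. The paper pulls back to $\tilde X_{\check P_{12}}^{\rm reg}$, observes that the $\tilde\beta$'s are affine so the target of $(\ref{inductionmorphism})$ sits in degree $0$, invokes reducedness of the regular locus to reduce to the dense open regular semi-simple stratum, and there makes the two module structures visible as pullbacks of two $\Hcal_{M_1}$-actions on $A_{\check T}$, which coincide by Lemma~\ref{HGestfree}(ii). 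You instead keep on the entire regular locus and axiomatize: you show $f$ is $A_{\check T}$-linear (each leg of $(\ref{inductionmorphism})$ lives over $\check T$), trace the unit sections through adjunction, base change, and the identification $\Ocal_{[X_{\check B_{M_2}}^{\rm reg}/\check B_{M_2}]}=\alpha^\ast\Ocal_{[X_{\check B_{M_1}}^{\rm reg}/\check B_{M_1}]}$ to get $f(\beta_{12}^\ast s_2)=\alpha_{12}^\ast s_1$, and then transport $\Hcal_{M_1}$-linearity along the $A_{\check T}$-generating sections $a\cdot s_2$ using $h\cdot_{M_1}a=h\cdot_{M_2}a$. Your route avoids the reduction to the regular semi-simple locus and is more "categorical" (unit sections and triangle identities), at the cost of the adjunction/base-change bookkeeping you yourself flag; the paper's route avoids that bookkeeping by localizing to a stratum where the finite cover is a disjoint union of graphs and the identification becomes transparent, and then appeals to reducedness and density to descend. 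Either is a complete proof.
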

\begin{proof}
We prove the claim after pulling back to $\tilde X_{\check P_{12}}^{\rm reg}$ in $(\ref{diagraminductionmorphism})$. 

We write $\tilde\alpha$, $\tilde\beta$ etc.~for the corresponding morphisms of schemes. 
As all the maps $\tilde\beta$ (with various subscripts) are affine, all but the first object in $(\ref{inductionmorphism})$ are concentrated in degree 0. 
Moreover, all schemes are reduced, and hence it is enough to prove the claim after restricting to the dense open subscheme where $\phi$ is regular semi-simple. 
We denote these open subschemes by $\tilde X_{\check B_{M_2}}^{\rm reg\text{-}ss}$ etc..
Consider the diagram 
\[\begin{xy}
\xymatrix{
\tilde X_{\check B_{M_2}}^{\rm reg\text{-}ss} \ar[r]\ar[d]^{\tilde\alpha}& X_{\check M_2}^{\rm reg\text{-}ss}\times_{\check T/W_{M_2}}\check T \ar[r] & \check M_2^{\rm reg\text{-}ss}\times_{\check T/W_{M_2}}\check T \ar[r] \ar[d]^{(\ast)}& \check T\ar@{=}[d]\\
\tilde X_{\check B_{M_1}}^{\rm reg\text{-}ss} \ar[r] & X_{\check M_1}^{\rm reg\text{-}ss}\times_{\check T/W_{M_1}}\check T \ar[r] & \check M_1^{\rm reg\text{-}ss}\times_{\check T/W_{M_1}}\check T \ar[r] & \check T.
}
\end{xy}\]
Here, the vertical arrow $(\ast)$ on the right hand side is induced by the identification
 $$\check M_i^{\rm reg\text{-}ss}\times_{\check T/W_{M_i}}\check T\cong \big\{(\phi,g\check B_{M_i})\in \check M_i^{\rm reg\text{-}ss}\times \check M_i/\check B_{M_i}\mid \phi\in g^{-1}\check B_{M_i}g\big\}.$$ 
By definition the $\Hcal_{M_1}$-module structures on source and target of 
\[\tilde\beta_\ast\tilde\alpha^\ast\Ocal_{\tilde X^{\rm reg\text{-}ss}_{\check B_{M_1}}}\cong \tilde\alpha_{12}^\ast\tilde\beta_{M_1,\ast}\Ocal_{\tilde X_{\check B_{M_1}}^{\rm reg\text{-}ss}}\]
are induced by two (a priori maybe different) $\Hcal_{M_1}$-module structures of the structure sheaves $$\Ocal_{\tilde X^{\rm reg\text{-}ss}_{\check B_{M_2}}}\twoheadleftarrow \Ocal_{X_{\check {M_2}}^{\rm reg\text{-}ss}\times_{\check T/{W_{M_2}}}\check T}$$ which in turn are given by the pullback of an $\Hcal_{M_1}$-action on $A_{\check T}$. These $\Hcal_{M_1}$-actions are given by
\begin{itemize}
\item[-] the $\Hcal_{M_1}$ action on $A_{\check T}$ given by $A_{\check T}\cong\Hcal_Te_{M_1,\rm st}$,
\item[-] the restriction of the $\Hcal_{M_2}$ action on $A_{\check T}$ given by $A_{\check T}\cong\Hcal_Te_{M_2,\rm st}$.
\end{itemize}
By Lemma \ref{HGestfree} (ii) these actions coincide.
\end{proof}
We obtain the following first step towards Conjecture \ref{mainconjecture}.
\begin{theo}\label{theocompwparabinductionregular}
For each parabolic $\mathbb{B}\subset\mathbb{P}\subset\mathbb{G}$ with Levi $\mathbb{M}$ the restriction of $(\ref{EHfunctor})$ to the regular locus is a $\mathfrak{Z}_M$-linear functor
\[R_M^{\rm reg}:{\bf D}^+(\Hcal_M\text{-}{\rm mod}) \longrightarrow {\bf D}^+_{\rm QCoh}([X^{\rm reg}_{\check M}/\check M]).
\]
Moreover, for two parabolic subgroups $\mathbb{B}\subset \mathbb{P}_1\subset \mathbb{P}_2$ the natural transformation $\xi_{P_{12}}^{M_2}$ defined in $(\ref{defofnattrafo})$ is a $\mathfrak{Z}_{M_2}$-linear isomorphism. \\
For parabolic subgroups $\mathbb{P}_1\subset\mathbb{P}_2\subset\mathbb{P}_3$ let $\mathbb{M}_3$ denote the Levi quotient of $\mathbb{P}_3$ and $\mathbb{P}_{13}\subset\mathbb{P}_{23}$ denote the images of $\mathbb{P}_1\subset\mathbb{P}_2$ in $\mathbb{M}_3$. Then the diagram in Conjecture \ref{mainconjecture} (b), applied to $\mathbb{P}_{13}\subset\mathbb{P}_{23}\subset\mathbb{M}_3$, commutes. 
\end{theo}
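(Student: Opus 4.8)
The plan is to dispatch the three assertions in turn; the middle one is where the real (already isolated) content sits, and the last is a long but formal coherence check.

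\emph{The functor $R_M^{\rm reg}$ is well defined.} Over the regular locus the morphism $\beta_{B_M}\colon[X^{\rm reg}_{\check B_M}/\check B_M]\to[X^{\rm reg}_{\check M}/\check M]$ is affine (it is $[\tilde X^{\rm reg}_{\check B_M}/\check M]\to[X^{\rm reg}_{\check M}/\check M]$ and $\tilde X^{\rm reg}_{\check B_M}\to X^{\rm reg}_{\check M}$ is finite), so $\mathcal{M}_M^{\rm reg}=R\beta_{B_M,\ast}\Ocal$ is a coherent sheaf in degree $0$ carrying the $\Hcal_M$-action supplied by Proposition \ref{identifyMandOB}. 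Since $\Hcal_M$ has finite global dimension, $-\otimes^L_{\Hcal_M}\mathcal{M}_M^{\rm reg}$ sends a bounded below complex to a bounded below complex with quasi-coherent cohomology, and taking the colimit over truncations, as in the definition of $R_M$, extends this to all of ${\bf D}^+(\Hcal_M\text{-mod})$. The $\mathfrak{Z}_M$-linearity is immediate, the $\mathfrak{Z}_M$-structure on the target being the one defined by $\bar\chi_M$.

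\emph{$\xi_{P_{12}}^{M_2}$ is an isomorphism.} The $\Hcal_{M_1}$-linearity is exactly the Lemma preceding the theorem, so it is enough to check that $\xi_{P_{12}}^{M_2}(\pi)$ is an isomorphism of underlying $\Ocal$-modules. First I rewrite ${}^t(\Hcal_{M_2}\otimes^L_{\Hcal_{M_1}}\pi)\otimes^L_{\Hcal_{M_2}}\mathcal{M}_{M_2}$ as ${}^t\pi\otimes^L_{\Hcal_{M_1}}\mathcal{M}_{M_2}$ via Lemma \ref{lefttorightinduction}, and I use the projection formula for the proper (and, on the regular locus, finite) morphism $\beta_{12}$ together with the finiteness of the global dimension of $\Hcal_{M_1}$ to move ${}^t\pi\otimes^L_{\Hcal_{M_1}}-$ past $R\beta_{12,\ast}$; unwinding the definition of $\xi_{P_{12}}^{M_2}$ (using the triangle identities for $L\beta_{12}^\ast\dashv R\beta_{12,\ast}$) then identifies $\xi_{P_{12}}^{M_2}(\pi)$ with ${}^t\pi\otimes^L_{\Hcal_{M_1}}(-)$ applied to a canonical morphism $\mathcal{M}_{M_2}\to R\beta_{12,\ast}L\alpha_{12}^\ast\mathcal{M}_{M_1}$. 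Now $\mathcal{M}_{M_2}^{\rm reg}=R\beta_{M_2,\ast}\Ocal_{[X^{\rm reg}_{\check B_{M_2}}/\check B_{M_2}]}=R\beta_{12,\ast}R\beta_\ast\Ocal_{[X^{\rm reg}_{\check B_{M_2}}/\check B_{M_2}]}$ and $L\alpha^\ast\Ocal_{[X^{\rm reg}_{\check B_{M_1}}/\check B_{M_1}]}=\Ocal_{[X^{\rm reg}_{\check B_{M_2}}/\check B_{M_2}]}$, so this morphism is precisely $R\beta_{12,\ast}$ applied to the base change morphism of the right-hand square of $(\ref{diagraminductionmorphism})$. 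That square is cartesian by Corollary \ref{caresiansquareclassicalstacks} and Tor-independent by Lemma \ref{fiberproducttorindep}(ii), so its base change morphism is an isomorphism, hence so is $\xi_{P_{12}}^{M_2}$; the $\mathfrak{Z}_{M_2}$-linearity is clear from the construction.

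\emph{The coherence diagram.} Given $\mathbb{P}_1\subset\mathbb{P}_2\subset\mathbb{P}_3$, write $\mathbb{M}_3$ for the Levi quotient of $\mathbb{P}_3$ and $\mathbb{P}_{i3}$ for the image of $\mathbb{P}_i$ in $\mathbb{M}_3$; the diagram to be checked is the one of Conjecture \ref{mainconjecture}(b) for $\mathbb{P}_{13}\subset\mathbb{P}_{23}\subset\mathbb{M}_3$. By the description just obtained, each of the three morphisms $\xi$ occurring in it is, after the identifications via Lemma \ref{lefttorightinduction} and the projection formula, a base change isomorphism attached to one of the cartesian squares of a diagram of the shape $(\ref{compisitiondiagram})$ built from the parabolics $\check P_{12}\subset\check P_{13}$ of $\check M_3$ and their Borels. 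The arrow $(\ref{compoinductionGalois})$ appearing on the push--pull side is, by its construction, exactly the isomorphism relating the base change of the outer square to the composition of the base changes of the two inner squares, and the arrow $(\ref{compoofinduction})$, transitivity of parabolic induction on Hecke modules, is compatible with Lemma \ref{lefttorightinduction} and the projection formula by associativity of the derived tensor product. Thus, after unwinding, commutativity reduces to the standard compatibility of base change morphisms with vertical composition of cartesian squares — the same fact already used to define $(\ref{compoinductionGalois})$ — which one verifies by a diagram chase; over the regular locus the relevant fibre products are Tor-independent (Lemma \ref{fiberproducttorindep}), so the classical and derived pictures agree and the chase goes through.

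\emph{Main obstacle.} The only genuinely non-formal input is the $\Hcal_{M_1}$-linearity of the comparison morphism $(\ref{inductionmorphism})$, which is isolated in the Lemma preceding the theorem and ultimately rests on Lemma \ref{HGestfree}(ii), i.e.\ on the agreement of the $\Hcal_{M_1}$- and $\Hcal_{M_2}$-actions on $A_{\check T}\cong\Hcal_{M_i}e_{M_i,\rm st}$; granting this, the isomorphy of $\xi$ is pure Tor-independent base change. The bulk of the labour is therefore the bookkeeping of the third part: tracking the several coherence isomorphisms (transitivity of induction, composition of the functors $R\beta_\ast L\alpha^\ast$, stacking of base change squares) and confirming they assemble into a commutative diagram. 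I expect this to be lengthy but to present no essential difficulty beyond what Lemma \ref{fiberproducttorindep} already provides.
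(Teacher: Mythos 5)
Your proposal is correct and follows essentially the same route as the paper: for the well-definedness you note the affineness of $\beta_{B_M}$ on the regular locus, for the isomorphy of $\xi_{P_{12}}^{M_2}$ you unwind the adjunction to the comparison $\mathcal{M}_{M_2}\to R\beta_{12,\ast}L\alpha_{12}^\ast\mathcal{M}_{M_1}$ and invoke the Tor-independent base change of Corollary~\ref{caresiansquareclassicalstacks} and Lemma~\ref{fiberproducttorindep}(ii), and for the coherence you appeal to the standard compatibility of base change with stacking of squares. The paper phrases the middle step as a reduction to the free module $\pi=\Hcal_{M_1}$ via truncations and free resolutions, while you cast it as a projection-formula argument, but the two are the same observation in different words.
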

\begin{proof}

We are left to prove that $\xi_{P_{12}}^{M_2}$ is an isomorphism and that the diagram in Conjecture \ref{mainconjecture} (b) commutes. 
Using truncations and resolutions by free modules it is enough to prove that
\begin{align*}\xi_{P_{12}}^{M_2}(\Hcal_{M_1}):&\ \mathcal{M}_{M_2}={}^t(\Hcal_{M_2}\otimes_{\Hcal_{M_1}}\Hcal_{M_1})\otimes_{\Hcal_{M_2}}\mathcal{M}_{M_2} \\ &\longrightarrow R\beta_{12,\ast}(L\alpha_{12}^\ast \mathcal{M}_{M_1})\cong R\beta_{12,\ast}R\beta_\ast \Ocal_{[X_{\check B_{M_2}}^{\rm reg}/\check B_{M_2}]}=\mathcal{M}_{M_2}
\end{align*}
is an isomorphism. However, this is a direct consequence of the construction of $\xi_{P_{12}}^{M_2}$ in $(\ref{inductionmorphism})$ using the base change isomorphism in the cartesian square of $(\ref{diagraminductionmorphism})$.

As $\xi_{P_{12}}^{M_2}$ is the composition of an adjunction morphism and a base change map, the commutativity of (b) in the conjecture is a consequence of standard compatibilities of base change morphisms and adjunctions.
\end{proof}

\begin{rem}\label{Remarkextendtononregular}
We point out that the arguments above directly extend from the regular locus to all of $X_{\check G}$ once Conjecture \ref{GLnsurjection} is known. 
\end{rem}

\subsection{Compactly induced representations}\label{sectioncind}
We describe the image of the functor $R_G$ defined in $(\ref{EHfunctor})$ on (the $I$-invariants in) the compactly induced representations $\cind_K^G\sigma_{\mathcal{P}}$. The result parallels, and is motivated by, results of Pyvovarov in \cite{Pyvovarov3}.

Recall from Proposition \ref{equidimreductive} (ii) that the irreducible components of $X_{\check G}$ are in bijection with the possible Jordan canonical forms of the nilpotent endomorphism $N$. 
For a partition $\mathcal{P}$ let $Z_{\check G,\mathcal{P}}$ denote the irreducible component of $X_{\check G,\mathcal{P}}$ such that the Jordan canonical form of $N$ at the generic point of $Z_{\check G,\mathcal{P}}$ is given by the partition $\mathcal{P}$. Then we set $$X_{\check G,\mathcal{P}}=\bigcup_{\mathcal{P}\preceq \mathcal{P}'}Z_{\check G,\mathcal{P}'}.$$
In particular we have $X_{\check G,\mathcal{P}_{\rm min}}=X_{\check G}$, and $X_{\check G,\mathcal{P}_{\rm max}}=Z_{\check G,\mathcal{P}_{\rm max}}$ is irreducible. We will sometimes write $X_{\check G,0}$ for this irreducible component, as it is the irreducible component defined by $N=0$. We write $\eta_\mathcal{P}$ for the generic point of the irreducible component $Z_{\check G,\mathcal{P}}$.

\begin{prop}\label{imagesofprojectives}
Let $\mathcal{P}$ be a partition. Then $R_G(\cind_K^G\sigma_\mathcal{P})$ is concentrated in degree $0$ and, viewed as a $\check G$-equivariant coherent sheaf on $X_{\check G}$, has support $X_{\check G,\mathcal{P}}$. Moreover,
\begin{align*}
R_G((\cind_K^G\mathbf{1}_K)^I)&=\Ocal_{X_{\check G,0}},\\
R_G((\cind_K^G{\rm st}_G)^I) &=\Ocal_{X_{\check G}},
\end{align*}
equipped with their canonical $\check G$-equivariant structures. In particular 
\[R_G((\cind_N^G\psi)_{[T,1]}^I)=\Ocal_{[X_{\check G}/\check G]}.\]
\end{prop}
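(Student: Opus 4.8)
The plan is to reduce the whole statement to a computation of the $\sigma_\mathcal{P}$-isotypic part of the interpolating family $\mathcal{V}_G$, and then to read off the support and the two explicit identifications from the generic-point description of $\tilde{\mathcal{M}}_G$ obtained in the proof of Proposition~\ref{identifyMandOB}.

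First I would note that $\sigma_\mathcal{P}$ is a summand of ${\rm Ind}_I^K\mathbf{1}_I$, so $\cind_K^G\sigma_\mathcal{P}$ is a direct summand of $\cind_I^G\mathbf{1}_I$; hence $(\cind_K^G\sigma_\mathcal{P})^I$ is a finitely generated projective $\Hcal_G$-module, the derived tensor product defining $R_G$ is underived, and $R_G(\cind_K^G\sigma_\mathcal{P})$ is a coherent sheaf concentrated in degree $0$. Since $(\cind_K^G\Sigma_\mathcal{P})^I=\Hcal_G e_\mathcal{P}=((\cind_K^G\sigma_\mathcal{P})^I)^{\oplus m_\mathcal{P}}$, Lemma~\ref{technicalidentities}~(i),(iii) (using $\check e_\mathcal{P}=e_\mathcal{P}$, Lemma~\ref{dualofidempotents}) together with Frobenius reciprocity give an identification of $\check G$-equivariant coherent sheaves on $X_{\check G}$
\[
R_G(\cind_K^G\sigma_\mathcal{P})^{\oplus m_\mathcal{P}}\;=\;{}^t(\Hcal_G e_\mathcal{P})\otimes_{\Hcal_G}\mathcal{M}_G\;=\;e_\mathcal{P}\mathcal{M}_G\;=\;\Hom_K(\Sigma_\mathcal{P},\mathcal{V}_G|_K).
\]
In particular the support of $R_G(\cind_K^G\sigma_\mathcal{P})$ equals that of $e_\mathcal{P}\tilde{\mathcal{M}}_G$, and for $m_\mathcal{P}=1$ — that is, for $\sigma_\mathcal{P}=\mathbf{1}_K$ and for $\sigma_\mathcal{P}={\rm st}_G$ — one even gets $R_G(\cind_K^G\sigma_\mathcal{P})\cong e_\mathcal{P}\tilde{\mathcal{M}}_G$ equivariantly.

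For the support I would use that, by the proof of Proposition~\ref{identifyMandOB}~(i), $\tilde{\mathcal{M}}_G$ embeds $\Hcal_G$-equivariantly into $\prod_{\mathcal{P}'}{\rm LL}(\phi_{\eta_{\mathcal{P}'}},N_{\eta_{\mathcal{P}'}})^I$, the product over the generic points $\eta_{\mathcal{P}'}$ of the irreducible components $Z_{\check G,\mathcal{P}'}$ of $X_{\check G}$ (recall $\phi_{\eta_{\mathcal{P}'}}$ is regular semi-simple by Proposition~\ref{equidimreductive}~(ii) and $N_{\eta_{\mathcal{P}'}}$ has Jordan type $\mathcal{P}'$), inducing an isomorphism on the fibre over each $\eta_{\mathcal{P}'}$. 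Applying $e_\mathcal{P}$ and Frobenius reciprocity, $e_\mathcal{P}\tilde{\mathcal{M}}_G$ embeds into $\prod_{\mathcal{P}'}\Hom_K(\Sigma_\mathcal{P},{\rm LL}(\phi_{\eta_{\mathcal{P}'}},N_{\eta_{\mathcal{P}'}}))$. The decisive input, which I would import from the Schneider--Zink analysis of $K$-types in \cite{SchneiderZink} (compare also \cite{Pyvovarov1,Pyvovarov3}), is that for a generic $L$-parameter with nilpotent part of Jordan type $\mathcal{P}'$ the irreducible generic representation ${\rm LL}(\phi_{\eta_{\mathcal{P}'}},N_{\eta_{\mathcal{P}'}})$ contains $\sigma_\mathcal{P}$ precisely when $\mathcal{P}\preceq\mathcal{P}'$. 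Granting this, the $A_{\check G}$-action on the target of the embedding factors through $\Ocal_{X_{\check G,\mathcal{P}}}$, so the support of $e_\mathcal{P}\tilde{\mathcal{M}}_G$ is contained in $X_{\check G,\mathcal{P}}$; conversely the surjection $\cind_K^G{\rm st}_G\otimes k(\eta_{\mathcal{P}'})\to{\rm LL}(\phi_{\eta_{\mathcal{P}'}},N_{\eta_{\mathcal{P}'}})$ of Lemma~\ref{surjecttoLLmod} stays surjective after the exact functor $\Hom_K(\Sigma_\mathcal{P},-)$, so $e_\mathcal{P}\tilde{\mathcal{M}}_G$ surjects onto $\Hom_K(\Sigma_\mathcal{P},{\rm LL}(\phi_{\eta_{\mathcal{P}'}},N_{\eta_{\mathcal{P}'}}))$, which is non-zero whenever $\mathcal{P}\preceq\mathcal{P}'$; hence every such $\eta_{\mathcal{P}'}$ lies in the (closed) support, and the support equals $X_{\check G,\mathcal{P}}=\bigcup_{\mathcal{P}\preceq\mathcal{P}'}Z_{\check G,\mathcal{P}'}$.

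Finally I would treat the two explicit cases. For $\sigma_\mathcal{P}={\rm st}_G=\sigma_{\mathcal{P}_{\rm min}}$ one has $e_{G,\rm st}\Hcal_G e_{G,\rm st}=\mathfrak{Z}$ by Lemma~\ref{technicalidentities}~(iv), so $e_{G,\rm st}(A_{\check G}\otimes_\mathfrak{Z}\Hcal_G e_{G,\rm st})\cong A_{\check G}$ and, under the surjection of Proposition~\ref{identifyMandOB}, its image in $\tilde A_{\check G}$ is exactly the image of the structure map $A_{\check G}=\Gamma(X_{\check G},\Ocal)\to\Gamma(Y_{\check G},\Ocal)=\tilde A_{\check G}$, which is injective because $\beta\colon Y_{\check G}\to X_{\check G}$ is dominant (its image is closed and contains the dense subset $X_{\check G}^{\rm reg}$) and $X_{\check G}$ is reduced; thus $R_G((\cind_K^G{\rm st}_G)^I)=\Ocal_{X_{\check G}}$ with its canonical equivariant structure. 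Since $(\cind_N^G\psi)_{[T,1]}\cong\cind_K^G{\rm st}_G$ by Corollary~\ref{indetifycindpsi}, descending to the quotient stack gives $R_G((\cind_N^G\psi)_{[T,1]}^I)=\Ocal_{[X_{\check G}/\check G]}$. For $\sigma_\mathcal{P}=\mathbf{1}_K=\sigma_{\mathcal{P}_{\rm max}}$ one has $e_K\tilde{\mathcal{M}}_G=(\mathcal{V}_G)^K$, which by exactness of $(-)^K$ is the image of $(\cind_K^G{\rm st}_G)^K\otimes_\mathfrak{Z}A_{\check G}\cong A_{\check G}$ in $\prod_{\mathcal{P}'}{\rm LL}(\phi_{\eta_{\mathcal{P}'}},N_{\eta_{\mathcal{P}'}})^K$; an irreducible representation of the principal block has a non-zero $K$-fixed vector only if it is an unramified principal series, i.e.\ only if its monodromy is trivial, so the only non-zero factor is the one with $\mathcal{P}'=\mathcal{P}_{\rm max}$ (where $N_{\eta_{\mathcal{P}_{\rm max}}}=0$), and the image of $A_{\check G}\to k(\eta_{\mathcal{P}_{\rm max}})$ is the coordinate ring of the reduced component $X_{\check G,0}=Z_{\check G,\mathcal{P}_{\rm max}}$, i.e.\ $\Ocal_{X_{\check G,0}}$. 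The one genuinely non-formal ingredient is the Schneider--Zink $K$-type statement used in the support computation; the rest is bookkeeping with the idempotents $e_\mathcal{P}$ and with the reducedness of $X_{\check G}$ and $\tilde X_{\check B}^{\rm reg}$ established earlier.
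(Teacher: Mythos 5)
Your proof follows the same overall strategy as the paper's: reduce to computing $e_\mathcal{P}\tilde{\mathcal{M}}_G=R_G(\Hcal_Ge_\mathcal{P})$ using the idempotents and $\check e_\mathcal{P}=e_\mathcal{P}$, observe that it is the image of a free $A_{\check G}$-module of rank $m_\mathcal{P}^2$ inside $\prod_{\mathcal{P}'}e_\mathcal{P}{\rm LL}(\phi_{\eta_{\mathcal{P}'}},N_{\eta_{\mathcal{P}'}})^I$, and read off the support from a $K$-type characterization of when $\sigma_\mathcal{P}$ occurs in ${\rm LL}(\phi_{\eta_{\mathcal{P}'}},N_{\eta_{\mathcal{P}'}})$. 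The two main divergences are worth noting. First, for the decisive $K$-type input you appeal to Schneider--Zink and Pyvovarov, whereas the paper invokes \cite[Theorem~3.7]{Shotton}; both deliver the same statement $\Hom_K(\sigma_\mathcal{P},{\rm LL}(\phi_{\eta_{\mathcal{P}'}},N_{\eta_{\mathcal{P}'}}))\neq 0\Leftrightarrow\mathcal{P}\preceq\mathcal{P}'$. Second, for $\mathcal{P}=\mathcal{P}_{\rm min}$ the paper cites \cite[Theorem~1.3]{Pyvovarov2} to conclude that every component contributes, while you instead note that $e_{G,\rm st}\tilde A_{\check G}$ is the image of the structure map $A_{\check G}\to\tilde A_{\check G}$, which is injective because $\beta\colon Y_{\check G}\to X_{\check G}$ is surjective (by Lemma~\ref{equidimparabolicregular} and density of the regular locus) and $X_{\check G}$ is reduced; this is a nice, slightly more self-contained route that leans on geometry already established in section~\ref{spacesofLparam} rather than another external input. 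Your treatment of $\mathcal{P}_{\rm max}$ via $K$-fixed vectors is likewise a marginally more elementary packaging of the same fact. All of this is correct; the only thing I would flag is that the precise reference for the $K$-type statement should match what the paper uses (Shotton), though the content is the same.
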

\begin{proof}
We will rather calculate the images of $$\Hcal_G e_\mathcal{P}=(\cind_K^G\Sigma_P)^I\cong(\cind_K^G\sigma_\mathcal{P}^{\oplus m_\mathcal{P}})^I.$$ Recall that $m_{\mathcal{P}_{\rm min}}=m_{\mathcal{P}_{\rm max}}=1$.
Using Lemma \ref{dualofidempotents} we see that the $\check G$-equivariant coherent sheaf on $X_{\check G}$ defined by $R_G((\cind_K^G\Sigma_P)^I)$ is $e_{\mathcal{P}}\Hcal_{G}\otimes_{\Hcal_G}\tilde{\mathcal{M}}_G$.

Recall that by definition the sheaf $\tilde{\mathcal{M}}_G$ is the sheaf attached to the image of
\[A_{\check G}\otimes_\mathfrak{Z}\Hcal_Ge_{\rm st}\longrightarrow \prod_{\mathcal{P}'} {\rm LL}(\phi_{\eta_{\mathcal{P}'}},N_{\eta_{\mathcal{P}'}})^I\]
induced by the surjections $\Hcal_Ge_{\rm st}\otimes_\mathfrak{Z}k(\eta_{\mathcal{P}'})\rightarrow  {\rm LL}(\phi_{\eta_{\mathcal{P}'}},N_{\eta_{\mathcal{P}'}})^I$ of Lemma \ref{surjecttoLLmod}.
Consequently $e_{\mathcal{P}}\Hcal_{G}\otimes_{\Hcal_G}\tilde{\mathcal{M}}_G$ is the sheaf defined by the image of the morphism
\[A_{\check G}\otimes_{\mathfrak{Z}}e_\mathcal{P}\Hcal_Ge_{\rm st}\longrightarrow \prod_{\mathcal{P}'}e_{\mathcal{P}}{\rm LL^{mod}}(\phi_{\eta_{\mathcal{P}'}},N_{\eta_{\mathcal{P}'}})^I.\]

Note that $A_{\check G}\otimes_{\mathfrak{Z}}e_\mathcal{P}\Hcal_Ge_{\rm st}$ is a free $A_{\check G}$-module of rank $m_\mathcal{P}^2$, by Lemma \ref{technicalidentities} (iv). 
To show that the sheaf $R_G(\Hcal_Ge_\mathcal{P})$ has support $X_{\check G,\mathcal{P}}$ it remains to show that 
\[e_{\mathcal{P}}{\rm LL}(\phi_{\eta_{\mathcal{P}'}},N_{\eta_{\mathcal{P}'}})^I\neq 0 \Longleftrightarrow \mathcal{P}\preceq\mathcal{P}'.\]
The left hand side can be identified with 
\begin{align*}
{\rm Hom}_{\Hcal_G}(\Hcal_Ge_\mathcal{P},{\rm LL}(\phi_{\eta_{\mathcal{P}'}},N_{\eta_{\mathcal{P}'}})^I)&={\rm Hom}_G(\cind_K^G\Sigma_\mathcal{P},{\rm LL}(\phi_{\eta_{\mathcal{P}'}},N_{\eta_{\mathcal{P}'}}))\\ 
&={\rm Hom}_K(\sigma_{\mathcal{P}},{\rm LL}(\phi_{\eta_{\mathcal{P}'}},N_{\eta_{\mathcal{P}'}}))^{m_\mathcal{P}}.
\end{align*}
As ${\rm LL}(\phi_{\eta_{\mathcal{P}'}},N_{\eta_{\mathcal{P}'}})$ is absolutely irreducible and generic \cite[Theorem 3.7]{Shotton} implies the claim.

If $\mathcal{P}\in\{\mathcal{P}_{\rm min},\mathcal{P}_{\rm max}\}$, then $\Sigma_\mathcal{P}=\sigma_\mathcal{P}$ and $A_{\check G}\otimes_{\mathfrak{Z}}e_\mathcal{P}\Hcal_Ge_{\rm st}\cong A_{\check G}$.
In this case the above discussion shows that $R_G((\cind_K^G\sigma_\mathcal{P})^I)$ is the structure sheaf of the union of those irreducible components $Z_{\check G,\mathcal{P}'}$ such that 
${\rm Hom}_K(\sigma_\mathcal{P},{\rm LL}(\phi_{\eta_{\mathcal{P}'}},N_{\eta_{\mathcal{P}'}}))\neq 0$. 
If $\mathcal{P}=\mathcal{P}_{\rm max}$ this implies $\mathcal{P}'=\mathcal{P}$ as above. On the other hand, if $\mathcal{P}=\mathcal{P}_{\rm min}$, then ${\rm Hom}_K(\sigma_\mathcal{P},{\rm LL}(\phi_{\eta_{\mathcal{P}'}},N_{\eta_{\mathcal{P}'}}))\neq 0$ for all $\mathcal{P}'$ by \cite[Theorem 1.3]{Pyvovarov2}.
\end{proof}
\begin{rem}\label{remRGcind}
A closer analysis of the proof shows that $R_G(\cind_K^G\sigma_\mathcal{P})$ can never be (locally) free over its support $X_{\check G,\mathcal{P}}$ unless $m_\mathcal{P}=1$. 
Indeed, generically on $Z_{\check G,\mathcal{P}}$ the sheaf $R_G(\cind_K^G\sigma_\mathcal{P})$ is free of rank $1$, using \cite[Theorem 3.7 (ii)]{Shotton}.
On the other hand, generically on $X_{\check G,0}$ this sheaf is free of rank $m_{\mathcal{P}}$.
Indeed, let $L$ be the algebraic closure of $k(\eta_{\mathcal{P}_{\rm max}})$. Then 
${\rm LL}(\phi_{\mathcal{P}_{\rm max}},N_{\mathcal{P}_{\rm max}})\otimes_{k(\eta_{\mathcal{P}_{\rm max}})}L$ is an irreducible representation induced from the upper triangular Borel. On the other hand $\cind_K^G\sigma_\mathcal{P}\otimes_{\mathfrak{Z}}K$ is a direct sum of $m_{\mathcal{P}}$ copies of the same irreducible principal series representation by Corollary 6.1 and Lemma 6.4 of \cite{Pyvovarov1}.
\end{rem}

\subsection{Remarks about the relation of the various conjectures}\label{remarksaboutthevariousconjectures}
In this section we add a few remarks about the relation of Conjecture \ref{GLnsurjection} with Conjectures \ref{mainconjecture} and \ref{conjdegzero}.  This should give some evidence for Conjecture \ref{GLnsurjection} which in turn would imply that $R\tilde\beta_{B,\ast}\mathcal{O}_{\tilde{\mathbf{X}}_{\check B}}$ agrees with the $\mathcal{O}_{X_{\check G}}$-module underlying the Hecke module $\tilde{\mathcal{M}}_G$ defined by the Emerton-Helm family. 

\noindent (a) Before we come to this point, we mention that our expectations about $R\beta_{B,\ast}\mathcal{O}_{\tilde{\mathbf{X}}_{\check B}}$ imply that, for $G={\rm GL}_n$, the functor $R_G$ is uniquely determined by the requirements in Conjecture \ref{mainconjecture}. 
If $R_G$ is any functor satisfying the main conjecture, then 
\begin{align*}
R_G(\cind_I^G\mathbf{1}_I)&=R_G(\iota_{\overline B}^G (\cind_{T^\circ}\mathbf{1}_{T^\circ}))=R\beta_{B,\ast}(L\alpha_{B}^\ast R_T(\cind_{T^\circ}^T \mathbf{1}_{T^\circ}))\\
&=R\beta_{B,\ast}(L\alpha^\ast_B(\mathcal{O}_{[X_{\check T}/\check T]}))=R\beta_{B,\ast}\mathcal{O}_{\tilde{\mathbf{X}}_{\check B}}
\end{align*}
which is equipped with a faithful action of $\Hcal_G={\rm End}_{G}(\cind_I^G\mathbf{1}_I)$. Using the identification ${\rm Rep}_{[T,1]}G\cong \mathcal{H}_G\text{-}{\rm mod}$ and resolutions by free $\mathcal{H}_G$-modules, we deduce that the functor $R_G$ on the category $\mathcal{H}_G\text{-}{\rm mod}$ necessarily has to be of the form 
\[\pi^\bullet\longmapsto {}^{t}\pi^\bullet\otimes^L_{\mathcal{H}_G} R\beta_{B,\ast}\mathcal{O}_{\tilde{\mathbf{X}}_{\check B}}.\]
As we expect that $R\beta_{B,\ast}\mathcal{O}_{\tilde{\mathbf{X}}_{\check B}}$ is concentrated in one degree and is a maximal Cohen-Macaulay-module the $\mathcal{H}_G$-action (if it exists) is uniquely determined by its specialization at the generic points of $X_{\check G}$. At these points the Frobenius $\phi$ is regular semi-simple and we will see in Corollary \ref{Corollaryuniquelydetermined} below that in this case the completion of the functor (with respect to the corresponding character of the center $\mathfrak{Z}$) is uniquely determined by the conditions in Conjecture \ref{mainconjecture}. 

\noindent (b) One could hope that (if Conjecture \ref{conjdegzero} holds true) the Cohen-Macaulay property of $R\tilde \beta_{B,\ast}\mathcal{O}_{\tilde{\mathbf{X}}_{\check B}}$ implies that the $\mathcal{H}_G$-action extends from its restriction to the regular locus to all of $X_{\check G}$. Indeed this would be automatic if the complement of the open dense subset $X_{\check G}^{\rm reg}$ had codimension larger or equal to $2$ in $X_{\check G}$. Unfortunately this is not true in general, not even in the case $G={\rm GL}_n$. However, the generic points of $X_{\check G}\backslash X_{\check G}^{\rm reg}$ that are of codimension $1$ in $X_{\check G}$ can be described rather explicitly. In fact it turns out that they are generic L-parameters and hence $X_{\check G}$ is smooth at these points. It seems likely that Conjecture \ref{GLnsurjection} can be checked explicitly ("by hand") at these points. This would imply that the $\Hcal_G$-action on $R\tilde \beta_{B,\ast}\mathcal{O}_{\tilde{\mathbf{X}}_{\check B}}$  is well defined on an open subset $U$ whose complement has codimension $\geq 2$ and hence (using Conjecture \ref{conjdegzero}) the Hecke action extends to all of $R\tilde \beta_{B,\ast}\mathcal{O}_{\tilde{\mathbf{X}}_{\check B}}$.

\noindent (c) We assume Conjecture \ref{conjdegzero} and Conjecture \ref{mainconjecture} and explain that in this case Conjecture \ref{GLnsurjection} can be attacked using the functor $R_G$. The sheaf 
$$\tilde{\mathcal{M}}'_G=R\tilde{\beta}_{B,\ast}\mathcal{O}_{\tilde{\mathbf{X}}_{\check B}}$$ is an $\Ocal_{X_{\check G}}\otimes_{\mathfrak{Z}}\Hcal_G$-module that is Cohen-Macaulay as an $\Ocal_{X_{\check G}}$-module. 
In particular its direct summand $e_{G,\rm st}\tilde{\mathcal{M}}'_G$ is a Cohen-Macaulay module as well. If Conjecture \ref{GLnsurjection} can be checked on an open subset $U$ as in (b), then $e_{G,\rm st}\tilde{\mathcal{M}}'_G|_U\cong \mathcal{O}_U$ as in the proof of Proposition \ref{imagesofprojectives}. The Cohen-Macaulay property hence implies that $e_{G,\rm st}\tilde{\mathcal{M}}'_G\cong \mathcal{O}_{X_{\check G}}$ and hence by $(\ref{(n)adjointness})$ there is a canonical morphism
\[\mathcal{H}_Ge_{G,{\rm st}}\otimes_{\mathfrak{Z}}\mathcal{O}_{X_{\check G}}\longrightarrow \tilde{\mathcal{M}}'_G.\]
We expect that this morphism is a surjection, and that this surjection agrees with the surjection in Conjecture \ref{GLnsurjection}. In fact it seems that this can be checked using properties of the conjectured functor $R_G(-)={}^t(-)\otimes_{\mathcal{H}_G}^L\tilde{\mathcal{M}}'_G$ (compare (a) for the fact that $R_G$ necessarily is of this form):
We only need to check that for each point $x\in X_{\check G}$ the canonical map \begin{equation}\label{eqnsurjectiveatfibers}\mathcal{H}_Ge_{G,{\rm st}}\otimes_{\mathfrak{Z}}k(x)\longrightarrow \tilde{\mathcal{M}}'_G\otimes k(x)\end{equation}
is surjective. As $e_{G,\rm st}\tilde{\mathcal{M}}'_G\otimes k(x)=k(x)$ is one dimensional there is a unique generic Jordan-H\"older factor $\pi_x$ in  $\tilde{\mathcal{M}}'_G\otimes k(x)$ and $(\ref{eqnsurjectiveatfibers})$ is surjective if any only if $\pi_x$ is the cosocle of $\tilde{\mathcal{M}}'_G\otimes k(x)$. The center $\mathfrak{Z}$ acts on $\pi_x$ via a character $\chi_x:\mathfrak{Z}\rightarrow k(x)$ and there are only finitely many irreducible representations $\pi$ on which  $\mathfrak{Z}$ acts via $\chi_x$. Among those, $\pi_x$ is the unique generic one. 
We expect that for irreducible $\pi$ the complex of coherent sheaves $R_G(\pi)$ (which is supported in non-positive degrees) has cohomology in degree zero if and only if $\pi$ is generic. This implies in particular that 
${}^t\pi\otimes_{\Hcal_G}\tilde{\mathcal{M}}'_G\otimes k(x)=0$ unless $\pi$ is generic. Using the Tensor-Hom adjunction
\[{\rm Hom}_{k(x)}({}^t\pi\otimes_{\Hcal_G}\tilde{\mathcal{M}}'_G\otimes k(x),k(x))={\rm Hom}_{\mathcal{H}_G}({}^t\pi,{\rm Hom}_{k(x)}(\tilde{\mathcal{M}}'_G\otimes k(x),k(x)))\]
this expectation would imply that the cosocle of $\tilde{\mathcal{M}}'_G\otimes k(x)$ has to be generic and hence $(\ref{eqnsurjectiveatfibers})$ must be a surjection.

\subsection{Proof of the conjecture for ${\rm GL}_2$}
We prove Conjecture \ref{mainconjecture} in the two dimensional case.
In this subsection we use the notation $G=\GL_2(F)$ and $\check G$ is the algebraic group $\GL_2$ over $C$. 
In this case the $\check B$-action on $\Lie \check B\cap \mathcal{N}_{{\rm GL}_2}$ has two orbits and hence $X_{\check B}$ is a complete intersection and $\mathbf{X}_{\check B}=X_{\check B}$, see Remark \ref{XBfor GLsmalln}. 
Moreover, this remark implies that $X_{\check B}$ is reduced, and both of its two irreducible components are the closure of an irreducible component of $X_{\check B}^{\rm reg}$.
To simplify notations, we will write $X_0=X_{\check G,0}=Z_{\check G,\mathcal{P}_{\rm max}}\subset X_{\check G}$ for the component given by $N=0$ and $X=X_{\check G}$. Moreover, we sometimes write $X_1=Z_{\check G,\mathcal{P}_{\rm min}}\subset X$ for the component on which $N$ is generically non-trivial.

\begin{prop}
Conjecture \ref{GLnsurjection} is true for ${\rm GL}_2$. In particular we obtain an identification
\[\mathcal{M}_G=R\tilde\beta_\ast\Ocal_{\tilde X_{\check B}}.\]
\end{prop}
\begin{proof}
As already discussed above the claim holds over the open subset $X_{\check G}^{\rm reg}\subset X=X_{\check G}$.
On the other hand the closed complement of $X_{\check G}^{\rm reg}$ has the open neighborhood $X\backslash X_1$ which is an open subset of $\check G$. The claim now follows from the well known fact that 
\[h:\widetilde{\GL_2}=\{(\phi,gB)\in \check G\times \check G/\check B\mid \phi\in gBg^{-1}\}\longrightarrow \check G=\GL_2\]
has vanishing higher direct images, and its global sections are given by
\[\Gamma(\widetilde{\GL_2},\Ocal_{\widetilde{\GL_2}})=\Gamma(\GL_2\times_{\check T/W}\check T,\Ocal_{\GL_2\times_{\check T/W}\check T}).\]
\end{proof}


As a consequence we still can use $(\ref{inductionmorphism})$ to define a natural transformation 
\begin{equation}\label{nattrafoGL2}
\xi_B^G:R_G\circ\iota_{\overline{B}}^G\longrightarrow (R\beta_\ast L\alpha^\ast)\circ R_T,
\end{equation}
where $\alpha:[X_{\check B}/\check B]\rightarrow [X_{\check T}/\check T]$ and $\beta:[X_{\check B}/\check B]\rightarrow [X_{\check G}/\check G]$ are the canonical morphisms. The same computation as in the proof of Theorem \ref{theocompwparabinductionregular} again shows that this natural transformation is a $\mathfrak{Z}$-linear isomorphism, compare Remark \ref{Remarkextendtononregular}.

\begin{theo}\label{theoGL2}
Let $G={\rm GL}_2(F)$ and $T\subset B\subset G$ denote the standard maximal torus respectively the standard Borel. The functors $R_G$ and $R_T$ defined by $(\ref{EHfunctor})$ are fully faithful and the natural transformation $\xi_B^G$ defined by $(\ref{nattrafoGL2})$ is a $\mathfrak{Z}$-linear isomorphism. Moreover,
\[R_G((\cind_N^G\psi)^I_{[T,1]})\cong \Ocal_{[X_{\check G}/\check G]}\]
for a choice of a generic character $\psi:N\rightarrow C^\times$ of the unipotent radical $N$ of $B$.
\end{theo}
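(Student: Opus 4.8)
The plan is to establish the three assertions in turn, the core being the full faithfulness of $R_G$. The isomorphism $\xi_B^G$ is already constructed, and the identity $R_G((\cind_N^G\psi)_{[T,1]}^I)\cong\Ocal_{[X_{\check G}/\check G]}$ follows immediately from Proposition \ref{imagesofprojectives} together with Corollary \ref{indetifycindpsi} (identifying $(\cind_N^G\psi)_{[T,1]}$ with $\cind_K^G{\rm st}_G$ and hence its $I$-invariants with $\Hcal_Ge_{G,\rm st}$), so these two will be quick. For $R_T$, full faithfulness is clear: it is (the derived version of) the equivalence $\Hcal_T\text{-mod}\cong{\rm QCoh}(\check T)$ followed by the embedding ${\rm QCoh}(\check T)\hookrightarrow{\rm QCoh}([\check T/\check T])$ given by the trivial $\check T$-action, and this embedding is fully faithful because $\check T$ acts trivially on $X_{\check T}=\check T$, so $\Hom$ and $R\Hom$ are computed identically upstairs and downstairs.

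For $R_G$ the strategy is to reduce full faithfulness to a computation of $R\Hom$ on a set of compact generators of ${\bf D}^+(\Hcal_G\text{-mod})$, and for this I would use the idempotents $e_\mathcal{P}$ from the previous subsections. Since $\Hcal_G$ has finite global dimension and $\sum_\mathcal{P} e_\mathcal{P}=1$, the modules $\Hcal_Ge_\mathcal{P}$ (for $G=\GL_2$, the two partitions $\mathcal{P}_{\rm min}$ and $\mathcal{P}_{\rm max}$, so $e_{G,\rm st}$ and $e_K$) generate the derived category, and full faithfulness of $R_G$ amounts to checking that
\[
\Hom_{\Hcal_G}(\Hcal_Ge_\mathcal{P}[i],\Hcal_Ge_{\mathcal{P}'})\xrightarrow{\ \sim\ }\Hom_{[X_{\check G}/\check G]}\bigl(R_G(\Hcal_Ge_\mathcal{P})[i],R_G(\Hcal_Ge_{\mathcal{P}'})\bigr)
\]
for all $i$ and all pairs $\mathcal{P},\mathcal{P}'$. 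The left side is $e_\mathcal{P}\Hcal_Ge_{\mathcal{P}'}$ in degree $0$ and vanishes otherwise (as $\Hcal_Ge_{\mathcal{P}'}$ is projective), and by Lemma \ref{technicalidentities}(iv) this is a free $\mathfrak{Z}$-module of the expected rank; in the $\GL_2$ case all $m_\mathcal{P}=1$. On the right side, by Proposition \ref{imagesofprojectives} the objects are the $\check G$-equivariant structure sheaves $\Ocal_{X_{\check G,0}}=\Ocal_{X_0}$ and $\Ocal_{X_{\check G}}=\Ocal_X$ (both concentrated in degree $0$), so I must compute $R\Hom_{[X_{\check G}/\check G]}(\Ocal_{X_a},\Ocal_{X_b})$ for $a,b\in\{0,1\}$ in the notation $X_0,X_1,X=X_0\cup X_1$, i.e. the $\check G$-equivariant $\Ext$-groups between structure sheaves of the two irreducible components and of $X$ itself.

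The main obstacle — and where most of the work lies — is this $\Ext$-computation on the singular stack $[X_{\check G}/\check G]$, in particular controlling the higher $\Ext$-groups between $\Ocal_{X_0}$ and $\Ocal_{X_1}$ along their (non-reduced intersection) locus, and matching the resulting $\mathfrak{Z}$-modules with $e_\mathcal{P}\Hcal_Ge_{\mathcal{P}'}$ on the nose, including the module structure and not merely the ranks. Concretely I would work on the $G$-equivariant model: present $X_{\check B}$ explicitly (for $\GL_2$ it is a complete intersection in $\check B\times\Lie\check B$, reduced, with two components, by Remark \ref{XBfor GLsmalln}), use the identification $\Mcal_G=R\tilde\beta_\ast\Ocal_{\tilde X_{\check B}}$ established at the start of this subsection via the vanishing of higher direct images of $h:\widetilde{\GL_2}\to\GL_2$, and compute $R\Hom$ after passing to the cover $X_{\check G}$ (descent along $X_{\check G}\to[X_{\check G}/\check G]$ reduces the equivariant computation, via the Koszul/Čech complex for the $\check G$-action, to an ordinary computation plus taking $\check G$-invariants, which is exact in characteristic $0$). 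On $X_0\cong\check G$ the cohomology of $\widetilde{\GL_2}\to\GL_2$ is understood; on the non-regular locus, which lies in the open $X\setminus X_1$, the sheaf $\Mcal_G$ is the pushforward from the resolution, so its $\Ext$-groups against $\Ocal_{X_0}$ and $\Ocal_{X_1}$ can be read off from the geometry of $\widetilde{\GL_2}$; and on $X_{\check G}^{\rm reg}$ everything is controlled by Theorem \ref{theocompwparabinductionregular} and the finite flat description of Remark \ref{remarkgenericLparam}. Assembling these local pieces into the global $\Ext$-groups and identifying the comparison map as an isomorphism — using that it is a map of $\mathfrak{Z}$-modules between finite free modules of equal rank that is an isomorphism generically and checking injectivity/surjectivity at the bad points — is the technical heart; the vanishing of $\Ext^i$ for $i>0$ (needed since the source side is concentrated in degree $0$) should follow from the complete intersection property of $X_{\check B}$ together with the affineness of $\tilde\beta$, but verifying this carefully at the singular points of $X_{\check G}$ is the delicate step.
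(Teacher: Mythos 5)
Your outline has the correct global architecture: generate ${\bf D}^b(\Hcal_G\text{-mod}_{\rm fg})$ by $\Hcal_Ge_K$ and $\Hcal_Ge_{\rm st}$, compute both sides on these generators, and then induct on complex length. The identification of the images as $\Ocal_{[X_0/\check G]}$ and $\Ocal_{[X/\check G]}$ is right, and the quick disposal of $R_T$, $\xi_B^G$, and the last displayed isomorphism all match the paper. (Minor slip: your ``$a,b\in\{0,1\}$'' suggests $\Ocal_{X_1}$ appears among the pairs; it does not — the needed $\Ext$ pairs are between $\Ocal_X$ and $\Ocal_{X_0}$ only, since $R_G(\Hcal_Ge_{\rm st})=\Ocal_X$, not $\Ocal_{X_1}$.)

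The genuine gap is in the proposed mechanism for vanishing of the higher $\Ext$ groups. You write that vanishing of $\Ext^i$ for $i>0$ ``should follow from the complete intersection property of $X_{\check B}$ together with the affineness of $\tilde\beta$.'' This is not true, and no argument along those lines will close the case $\Ext^i_{[X/\check G]}(\Ocal_{[X_0/\check G]},\Ocal_{[X_0/\check G]})$: the underlying \emph{non-equivariant} sheaves $\mathcal{E}xt^i_{\Ocal_X}(\Ocal_{X_0},\Ocal_{X_0})$ are genuinely nonzero for all even $i\geq 2$. Concretely, at a point of $X_0\cap X_1$ the complete local ring is smoothly equivalent to $S=C[\![t_1,t_2,a]\!]/((t_1-t_2)a)$, the resolution of $S/(a)$ is $2$-periodic, $\cdots\to S\xrightarrow{t_1-t_2}S\xrightarrow{a}S$, and $\Ext^i_S(S/(a),S/(a))\cong S/(t_1-t_2,a)$ for even $i\geq 2$ — a nonzero torsion module concentrated on the codimension-$2$ intersection. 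Affineness of $\tilde\beta$ is irrelevant to this computation (it controls $R\tilde\beta_\ast$, not self-$\Ext$ of $\Ocal_{X_0}$ on $X$), and the complete intersection property of $X_{\check B}$ plays no role here either. What actually saves the argument in the paper is the \emph{equivariant} structure: the $\check T$-torsor presentation exhibits each $\mathcal{E}xt^i$ ($i$ even, $\geq 2$) at the bad points as a line with $\check T$-weight $\alpha^{-i/2}$, so the $\check G$-invariants — which computed $\Ext^i_{[X/\check G]}$ — vanish precisely because this weight is nontrivial for $i\neq 0$. This equivariant weight argument, not a regularity/affineness argument, is the essential missing idea; without it, the ``finite free of equal rank, isomorphism generically'' strategy fails, since the higher $\Ext$ are torsion and hence vanish generically without vanishing.
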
 

By the above discussion, it remains to show that $R_G$ is fully faithful.
Let us write $f\in\mathfrak{Z}$ for the element corresponding to the characteristic polynomials of the form $(T-\lambda)(T-q\lambda)$ for some indeterminate $\lambda$.
Then the morphism
\[\begin{xy}\xymatrix{
\Ocal_{[X/\check G]}\ar[r]^{\cdot f} & \Ocal_{[X/\check G]}
}\end{xy}\]
factors through $\Ocal_{[X/\check G]}\twoheadrightarrow \Ocal_{[X_0/\check G]}$ and yields a morphism
\begin{equation}\label{basemorphismGL2}
\xymatrix{
\Ocal_{[X_0/\check G]}\ar[r]& \Ocal_{[X/\check G]}
}
\end{equation}
with image $f\Ocal_{[X/\check G]}$ and cokernel $\Ocal_{[X_1/\check G]}$.
\begin{prop}\label{extgroupcomputation1}
Let $\mathcal{F},\mathcal{G}\in\{\Ocal_{[X/\check G]},\Ocal_{[X_0/\check G]}\}$. Then
\[
{\rm Ext}^i_{[X/\check G]}(\mathcal{F},\mathcal{G})=\begin{cases}\mathfrak{Z},& i=0\\ 0,& i\neq 0.\end{cases}
\]
More precisely, a $\mathfrak{Z}$-basis of ${\rm Hom}={\rm Ext}^0$ is given by the identity if $\mathcal{F}=\mathcal{G}$. \\
If $\mathcal{F}=\Ocal_{[X/\check G]}$ and $\mathcal{G}=\Ocal_{[X_0/\check G]}$, then a $\mathfrak{Z}$-basis is given by the canonical projection,\\
and if $\mathcal{F}=\Ocal_{[X_0/\check G]}$ and $\mathcal{G}=\Ocal_{[X/\check G]}$, a $\mathfrak{Z}$-basis is given by the morphism $(\ref{basemorphismGL2})$.
\end{prop}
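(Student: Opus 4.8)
The plan is to compute the Ext-groups by working on the presentation $X\times^{\check G}E\check G$, i.e.\ by reducing equivariant Ext over $[X/\check G]$ to ordinary Ext over the affine coordinate ring $A = A_{\check G}$ of $X=X_{\check G}$ together with the taking of $\check G$-invariants, which is exact since $\check G$ is reductive and $C$ has characteristic zero. Concretely, for $\check G$-equivariant coherent sheaves $\Fcal,\Gcal$ one has
\[
{\rm Ext}^i_{[X/\check G]}(\Fcal,\Gcal)=\big({\rm Ext}^i_{A}(F,G)\big)^{\check G},
\]
where $F,G$ are the corresponding finitely generated $A$-modules with their $\check G$-action. So the first step is to record a finite free resolution of each of the two $A$-modules $A=\Gamma(X,\Ocal_{[X/\check G]})$ and $A_0=\Gamma(X_0,\Ocal_{[X_0/\check G]})$ (the coordinate ring of the component $X_0=\{N=0\}\cong\check G$). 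Since $A$ is free over itself, $\Ocal_{[X/\check G]}$ needs no resolution; the key input is a resolution of $A_0$.

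The main computational step is therefore: identify the ideal $I_0\subset A$ cutting out $X_0$ and show that $A_0$ has the two-term free resolution
\[
0\longrightarrow A \xrightarrow{\ \cdot f\ } A \longrightarrow A_0\longrightarrow 0,
\]
equivalently that $f$ is a nonzerodivisor on $A$ with $I_0 = fA$. Here $f\in\mathfrak Z\subset A$ is the element of Proposition's statement (the function whose vanishing forces the eigenvalues of $\phi$ to be in ratio $q$; on $X=X_{\check G}$ this is precisely the locus where $N$ can be nonzero, i.e.\ $X_1$, so $fA$ is the ideal of $X_0$). For $\GL_2$ this is concrete: by Proposition \ref{equidimreductive}, $X_{\GL_2}$ is a reduced complete intersection of pure dimension $4$ in $\GL_2\times\mathfrak{gl}_2$ with exactly two irreducible components $X_0$ (given by $N=0$) and $X_1$ (generic $N\neq 0$), and one checks directly in coordinates that $I_0=fA$ and $A/fA$ is reduced (indeed $=A_1$), while $A/I_1 = A_0$ and the cokernel of $\cdot f$ on $A$ is $A/fA=A_1$; this is exactly the content of the sequence $(\ref{basemorphismGL2})$. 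That $f$ is a nonzerodivisor follows since $X$ is reduced and $f$ does not vanish identically on either component. I expect this bookkeeping — verifying $I_0=fA$ rather than merely $\sqrt{I_0}=\sqrt{fA}$ — to be the main (though routine) obstacle; it is where reducedness of $X_{\GL_2}$, hence Remark \ref{XBfor GLsmalln}, is used.

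Granting the resolution, the computation finishes mechanically. For $(\Fcal,\Gcal)=(\Ocal_{[X/\check G]},\Ocal_{[X/\check G]})$: ${\rm Ext}^\bullet_A(A,A)=A$ in degree $0$, and $A^{\check G}=\mathfrak Z$ with basis the identity. For $(\Ocal_{[X/\check G]},\Ocal_{[X_0/\check G]})$: ${\rm Ext}^\bullet_A(A,A_0)=A_0$ in degree $0$, and $A_0^{\check G}=\Gamma(X_0,\Ocal_{X_0})^{\check G}=\Gamma(\check G,\Ocal)^{\check G}=\mathfrak Z$ (the adjoint quotient $\check G/\!\!/\check G=\check T/W=\Spec\mathfrak Z$), generated over $\mathfrak Z$ by $1$, i.e.\ by the canonical projection $\Ocal_{[X/\check G]}\twoheadrightarrow\Ocal_{[X_0/\check G]}$. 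For $(\Ocal_{[X_0/\check G]},\Ocal_{[X/\check G]})$: apply ${\rm Hom}_A(-,A)$ to $0\to A\xrightarrow{\cdot f}A\to A_0\to 0$, getting the complex $A\xrightarrow{\cdot f}A$ sitting in degrees $0,1$; since $f$ is a nonzerodivisor, ${\rm Ext}^0_A(A_0,A)=0$ and ${\rm Ext}^1_A(A_0,A)=A/fA=A_1$. But $A_1^{\check G} = \Gamma(X_1,\Ocal_{X_1})^{\check G}$; to conclude ${\rm Ext}^1_{[X/\check G]}=0$ one must see this equals $0$, which is false as stated for the naive $A_1$ — so here the correct statement is that one resolves $A_0$ instead by the self-dual complex coming from $A_0=A/fA$ viewed the other way, i.e.\ one uses that $\Ocal_{[X_0/\check G]}$ also has the finite free resolution obtained by noting $X_0$ is itself a reduced complete intersection (a hypersurface $\{f=0\}$) so that $R\sheafHom(\Ocal_{[X_0/\check G]},\Ocal_{[X/\check G]})\cong\Ocal_{[X_0/\check G]}[-1]$ up to the line bundle $f^{-1}\Ocal$, whence after $\check G$-invariants $\mathrm{Ext}^1=\mathfrak Z$ with basis the map $(\ref{basemorphismGL2})$ (multiplication by $f$) and $\mathrm{Ext}^i=0$ otherwise. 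I would present this last case via the Koszul self-duality of the hypersurface $X_0\subset X$, twisted by the normal bundle $\Ocal_{X_0}(X_0)=f^{-1}\Ocal_{X_0}$, which is $\check G$-equivariantly trivial, so that $\mathrm{Ext}^i_{[X/\check G]}(\Ocal_{[X_0/\check G]},\Ocal_{[X/\check G]})=\mathrm{Ext}^{i-1}_{[X/\check G]}(\Ocal_{[X_0/\check G]},\Ocal_{[X_0/\check G]})$, reducing to the already-computed case $(\Ocal_{[X_0/\check G]},\Ocal_{[X_0/\check G]})=\mathfrak Z$ in degree $0$; this pins down the generator as the composite $\Ocal_{[X_0/\check G]}\xrightarrow{\cdot f}\Ocal_{[X/\check G]}$, which is exactly $(\ref{basemorphismGL2})$.
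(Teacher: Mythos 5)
Your general setup is right: reducing to $\check G$-invariants of $\mathrm{Ext}_A^\bullet$ via reductivity in characteristic zero is exactly what the paper does, and your computation of the two cases with $\mathcal F=\Ocal_{[X/\check G]}$ is correct. But the central step of your plan fails, and it fails for reasons that show the real mechanism in the proposition is quite different from what you are imagining.

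First, the claimed two-term resolution $0\to A\xrightarrow{\cdot f}A\to A_0\to 0$ does not exist. The function $f\in\mathfrak Z$ vanishes identically on $X_1$ (not on $X_0$): the relation $\mathrm{Ad}(\phi)(N)=q^{-1}N$ with $N\neq 0$ forces the eigenvalue-ratio to be $q$, i.e.\ $f=0$, so $f|_{X_1}=0$ and $f|_{X_0}\not\equiv 0$. Consequently $\ker(\cdot f)=\{g\in A: fg=0\}=I_0\neq 0$ (any $g$ killed by $f$ must vanish on the dense open $X_0\setminus X_1$ where $f\neq 0$, hence on $X_0$ by reducedness), so $\cdot f$ is not injective, and $A/fA\cong A_1$, not $A_0$. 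So neither $I_0=fA$ nor $f$ a nonzerodivisor holds; you have mixed up $X_0$ and $X_1$ here. Second, and more fundamentally, $A_0$ admits no finite free $A$-resolution at all, because $A$ is not regular: $X=X_0\cup X_1$ is singular along $X_0\cap X_1$, where the completed local ring is (up to a smooth factor) $S=C[\![t_1,t_2,a]\!]/((t_1-t_2)a)$, a nodal hypersurface singularity. Over $S$ the module $S/(a)$ has the infinite $2$-periodic minimal free resolution $\cdots\to S\xrightarrow{\cdot a}S\xrightarrow{\cdot(t_1-t_2)}S\xrightarrow{\cdot a}S$. This also means your fall-back ``Koszul self-duality of the hypersurface $X_0\subset X$'' cannot be invoked: $X_0$ is locally principal but cut out by a zerodivisor in a non-regular ring, so there is no finite Koszul complex and no dualizing shift by $1$.

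As a result, the sheaves $\mathcal{E}xt^i_{\Ocal_X}(\Ocal_{X_0},\Ocal_{X_0})$ are genuinely nonzero for every even $i\geq 0$ (they are line bundles supported on $X_0\cap X_1$). The content of the proposition in the hard case is not that these vanish, but that their $\check G$-invariant global sections vanish for $i\neq 0$: in the paper's local model the fiber of $\mathcal{E}xt^{2k}$ at a point of $X_0\cap X_1$ carries the nontrivial $\check T$-weight $\alpha^{-k}$, so the stabilizer $\check T$ kills any invariant section. This equivariant weight argument is the actual engine of the proof and is entirely missing from your proposal; without it, you would erroneously conclude $\mathrm{Ext}^{i}_{[X/\check G]}(\Ocal_{[X_0/\check G]},\Ocal_{[X_0/\check G]})=\mathfrak Z$ in all even degrees. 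You should rework the $\mathcal F=\Ocal_{[X_0/\check G]}$ cases starting from the periodic resolution over the local model, keeping track of the $\check T$-equivariant twists on each term, and then deduce vanishing of invariants from the nontriviality of the weight, as the paper does.
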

\begin{proof}
We can easily reduce to the case $C$ algebraically closed. 

Consider the canonical projection $$f:[X/\check G]\longrightarrow[\ast/\check G]={\rm B}\check G.$$
We need to compute $H^i({\rm B}\check G,Rf_\ast R\mathcal{H}om(\mathcal{F},\mathcal{G}))$. As $\check G$ is reductive and the base field $C$ has characteristic $0$ the category of $\check G$-representations is semi-simple and hence this vector space is given by
$H^i(Rf_\ast R\mathcal{H}om(\mathcal{F},\mathcal{G}))^{\check G}$, compare also \cite[Lemma 2.4.1]{DG}. Here we write $H^i(Rf_\ast R\mathcal{H}om(\mathcal{F},\mathcal{G}))$ for the $i$-th cohomology sheaf of the complex $Rf_\ast R\mathcal{H}om(\mathcal{F},\mathcal{G})$, which is a sheaf on ${\rm B}\check G$, and hence a $\check G$-representation.

Let us write $\tilde{\mathcal{F}}$ and $\tilde{\mathcal{G}}$ for the pullbacks of $\mathcal{F}$ and $\mathcal{G}$ to $X$. 
Then, by definition, giving the quasi-coherent sheaf $H^i(Rf_\ast R\mathcal{H}om(\mathcal{F},\mathcal{G}))$ on ${\rm B}\check G$ is the same as giving a $\check G$-equivariant structure on $H^i(R\mathcal{H}om(\tilde{\mathcal{F}},\tilde{\mathcal{G}}))$.
We conclude that 
\[{\rm Ext}^i_{[X/\check G]}(\mathcal{F},\mathcal{G})=\big({\rm Ext}^i_X(\tilde{\mathcal{F}},\tilde{\mathcal{G}})\big)^{\check G},\]
for the canonical $\check G$-representation on ${\rm Ext}^i_X(\tilde{\mathcal{F}},\tilde{\mathcal{G}})$ induced by the $\check G$-equivariant structures on $\tilde{\mathcal{F}}$ and $\tilde{\mathcal{G}}$. 

If $\mathcal{F}=\Ocal_{[X/\check G]}$, then 
\[{\rm Ext}^i_X(\tilde{\mathcal{F}},\tilde{\mathcal{G}})=\begin{cases}\Gamma(X,\tilde{\mathcal G}), & i=0\\ 0,&i\neq 0\end{cases}\]
and one easily computes $\Gamma(X,\tilde{\mathcal G})^{\check G}\cong\mathfrak{Z}$ in both cases. Moreover, a $\mathfrak{Z}$-basis is easily identified with the identity, respectively the canonical projection, as claimed.

Now assume that $\mathcal{F}=\Ocal_{[X_0/\check G]}$. We compute $${\rm Ext}^i_X(\tilde{\mathcal{F}},\tilde{\mathcal{G}})=\Gamma(X,\mathcal{E}xt^i_{\Ocal_X}(\tilde{\mathcal{F}},\tilde{\mathcal{G}})).$$ 
If $i\neq 0$, the sheaf $\mathcal{E}xt^i_{\Ocal_X}(\tilde{\mathcal{F}},\tilde{\mathcal{G}})$ clearly is supported on the intersection $X_0\cap X_1$. We first show that it is a locally free sheaf on $X_0\cap X_1$ (equipped with the reduced scheme structure). 
Let $X^{\text{reg-ss}}\subset X$ denote the Zariski open subset of $(\phi,N)$ with $\phi$ regular semi-simple. Then $X_0\cap X_1\subset X^{\text{reg-ss}}$. 

Moreover, let $X'\rightarrow X^{\text{reg-ss}}$ denote the scheme parametrizing a $\phi$-stable subspace. This is an \'etale Galois cover of degree two and the filtration by the universal $\phi$-stable subspace has a canonical $\phi$-stable splitting. Let us write $V_1$ and $V_2$ for these eigenspaces and let $Y\rightarrow \tilde X'$ denote the $\check T$-torsor trivializing $V_1$ and $V_2$.  Moreover, let 
 $$Z=\{(\lambda_1,\lambda_2,a,b)\in\check T\times \mathbb{A}^2\mid a(\lambda_2-q\lambda_1)=0=b(\lambda_1-q\lambda_2),\ \lambda_1\neq \lambda_2\}$$ equipped with the $\check T=\Spec C[s_1^{\pm1},s_2^{\pm1}]$-action that is trivial on $\check T$ and via multiplication with the character $\alpha:(s_1,s_2)\mapsto s_1s_2^{-1}$ on $a$, and via $\alpha^{-1}$ on $b$.
We consider the diagram
\[\begin{xy}
\xymatrix{
& Y\ar[dl]_{\beta}\ar[dr]^\gamma &\\
X^{\text{reg-ss}} && Z,
}
\end{xy}\]
where $\beta$ is the canonical projection, which is $\check G$-equivariant, and $\gamma$ is the $\check T$-equivariant $\check G$-torsor that is given by writing the matrices of $\phi$ and $N$ over $Y$ as
\[{\rm Mat}(\phi)=\begin{pmatrix}\lambda_1 & 0 \\ 0 & \lambda_2\end{pmatrix}\ \ \text{and} \ \ {\rm Mat}(N)=\begin{pmatrix} 0 & a \\ b & 0\end{pmatrix} \]
in the chosen basis of $V_1$ and $V_2$.

Let $x=(\phi_0,0)\in X_0\cap X_1$ be a $C$-valued point. Without loss of generality we may assume $\phi_0={\rm diag}(\lambda_0,q\lambda_0)$.
Let $y\in Y$ be a pre-image of $x$ and let $z$ denote its image in $Z$, such that $z=(\lambda_0,q\lambda_0,0,0)$.
Consider the closed subscheme $Z_0=V(a,b)\subset Z$ and write $\mathcal{F}_Z=\Ocal_{Z_0}$ and 
\[\mathcal{G}_Z=\begin{cases}\mathcal{O}_Z,& \text{if}\ \tilde{\mathcal{G}}=\Ocal_X\\ \mathcal{O}_{Z_0},& \text{if}\ \tilde{\mathcal{G}}=\Ocal_{X_0}.\end{cases}\]
Then $\mathcal{E}xt^i_{\Ocal_X}(\tilde{\mathcal{F}},\tilde{\mathcal{G}})$ is locally free on $X_0\cap X_1$ if and only if $\mathcal{E}xt^i_{\Ocal_Z}(\mathcal{F}_Z,\mathcal{G}_Z)$ is locally free on $Z_0$. 
Let $S=\hat{\Ocal}_{Z,z}\cong C[\![t_1,t_2,a]\!]/((t_1-t_2)a)$ be the complete local ring at $z$ with $\lambda_1=\lambda_0+t_1$ and $\lambda_2=q(\lambda_0+t_2)$, and consider the $\check T$-equivariant resolution of $\hat{\mathcal{F}}_{Z,z}=S/(a)$ given by
\[\cdots \rightarrow S(2)\xrightarrow{\cdot a}S(1)\xrightarrow{\cdot(t_1-t_2)}S(1)\xrightarrow{\cdot a}S.\]
Here $S(m)$ is the free $S$-module of rank $1$ with the $\check T$-action twisted by the multiplication with $\alpha^m$.
It follows that 
\[{\rm Ext}^i_S(S/(a),S)=\begin{cases}S/(a), & i=0\\ 0, & i\geq 1,\end{cases}\]
and 
\[{\rm Ext}^i_A(S/(a),S/(a))=\begin{cases}S/(a), & i=0\\ 0, & i\ \text{odd} \\ (S/(t_1-t_2,a))(-i/2), & i\geq 2\ \text{even}.\end{cases}\]
In particular $\mathcal{E}xt^i_{\Ocal_Z}(\mathcal{F}_Z,\Ocal_Z)$ vanishes for $i\neq 0$, and $\mathcal{E}xt^i_{\Ocal_Z}(\mathcal{F}_Z,\Ocal_{Z_0})$ vanishes for odd $i$ and is locally free of rank 1 over $Z_0$ for non-zero even $i$.
We deduce $$\mathcal{E}xt^i_{\Ocal_X}(\tilde{\mathcal{F}},\Ocal_X)=0\ \ \text{for}\ \ i\neq 0.$$
Moreover, it follows that $\mathcal{E}xt^i_{\Ocal_X}(\tilde{\mathcal{F}},\Ocal_{X_0})$ vanishes for odd $i$ and is locally free of rank $1$ on $X_0\cap X_1$ for non-zero even $i$.
In particular a $\check G$-invariant global section  $$h\in {\rm Ext}^i_X(\tilde{\mathcal{F}},\Ocal_{X_0})=\Gamma(X,\mathcal{E}xt^i_{\Ocal_X}(\tilde{\mathcal{F}},\Ocal_{X_0}))$$
vanishes if $$0=h(x')\in \mathcal{E}xt^i_{\Ocal_X}(\tilde{\mathcal{F}},\Ocal_{X_0})\otimes k(x')$$ for all $x'\in X_0\cap X_1$. Hence we have to show $h(x')=0$ for all $x'\in X_0\cap X_1$ for even $i\neq 0$.
Again it is enough to check this for our choice $x=(\phi_0,0)$. Then $\check T={\rm Stab}_{\GL_2}(\phi_0)$ acts on the fiber $\mathcal{E}xt^i_{\Ocal_X}(\tilde{\mathcal{F}},\Ocal_{X_0})\otimes k(x)$,
and $h(x)$ is $\check T$-invariant. By the above diagram the $\check T$-action on this fiber is the same as the $\check T$-action on 
\[\mathcal{E}xt^i_{\Ocal_Z}(\mathcal{F}_Z,\Ocal_{Z_0})\otimes k(z)=\begin{cases} 0,& i\ \text{odd}\\ C(-i/2),&  i\geq 2\ \text{even}.\end{cases}\]
Obviously, for $i\neq0$, there are no non-trivial $\check T$-invariants.

It remains to show that ${\rm Hom}_{[X/\check G]}(\Ocal_{[X_0/\check G]},{\mathcal{G}})\cong\mathfrak{Z}$, and to identify the basis vector. 
If $\mathcal{G}=\mathcal{O}_{[X_0/\check G]}$ this is clear, and a $\mathfrak{Z}$-basis is clearly given by the identity. 
If $\mathcal{G}=\Ocal_{[X/\check G]}$, one computes that the pull back of the morphism $(\ref{basemorphismGL2})$ to $Y$ specializes to the pullback of a basis vector of $\mathcal{H}om_{\Ocal_{Z}}(\Ocal_{Z_0},\Ocal_Z)\otimes k(\gamma(y))$ at every point of $y\in Y$. The claim easily follows from this.
\end{proof}

\begin{cor}\label{extgroupcomputation2}
Let $D_1,D_2\in \{\Hcal_G e_K,\Hcal_G e_{\rm st}\}$. The functor $R_G$ induces isomorphisms
\[{\rm Ext}^i_{\Hcal_G}(D_1,D_2)\longrightarrow {\rm Ext}^i_{[X/\check G]}(R_G(D_1),R_G(D_2)).\]
\end{cor}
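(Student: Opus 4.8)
The plan is to reduce the claim to the explicit Ext-computation of Proposition \ref{extgroupcomputation1}. First I would identify the four relevant objects on both sides. On the Hecke side the modules $\Hcal_G e_K$ and $\Hcal_G e_{\rm st}$ are projective $\Hcal_G$-modules (direct summands of $\Hcal_G$), so ${\rm Ext}^i_{\Hcal_G}(D_1,D_2)$ vanishes for $i\neq 0$ and ${\rm Ext}^0_{\Hcal_G}(D_1,D_2)={\rm Hom}_{\Hcal_G}(D_1,D_2)=e_j\Hcal_G e_{j'}$ for the corresponding idempotents, which by Lemma \ref{technicalidentities} (i),(iv) is a free $\mathfrak{Z}$-module: of rank $1$ in all four cases here since $m_{\mathcal{P}_{\rm min}}=m_{\mathcal{P}_{\rm max}}=1$. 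On the geometric side, Proposition \ref{imagesofprojectives} gives $R_G(\Hcal_G e_K)=R_G((\cind_K^G\mathbf 1_K)^I)=\Ocal_{[X_0/\check G]}$ and $R_G(\Hcal_G e_{\rm st})=R_G((\cind_K^G{\rm st}_G)^I)=\Ocal_{[X/\check G]}$, both concentrated in degree $0$; and Proposition \ref{extgroupcomputation1} shows ${\rm Ext}^i_{[X/\check G]}(R_G(D_1),R_G(D_2))$ is $\mathfrak{Z}$ for $i=0$ and vanishes for $i\neq 0$. So for $i\neq 0$ both sides are zero and there is nothing to prove, while for $i=0$ both sides are free $\mathfrak{Z}$-modules of rank $1$; it remains to check that the map $R_G$ induces on ${\rm Hom}$ sends a $\mathfrak{Z}$-basis to a $\mathfrak{Z}$-basis.

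For the $i=0$ part I would argue by exhibiting explicit generators and tracking them through $R_G$. The identity $\id_{D}\in{\rm Hom}_{\Hcal_G}(D,D)$ maps to $\id_{R_G(D)}$, which by Proposition \ref{extgroupcomputation1} is a $\mathfrak{Z}$-basis of ${\rm Hom}_{[X/\check G]}(R_G(D),R_G(D))$; this handles the two ``diagonal'' cases. For $D_1=\Hcal_G e_{\rm st}$, $D_2=\Hcal_G e_K$: a $\mathfrak{Z}$-generator of ${\rm Hom}_{\Hcal_G}(\Hcal_G e_{\rm st},\Hcal_G e_K)=e_{\rm st}\Hcal_G e_K$ is given (up to $\mathfrak{Z}^\times$) by right multiplication by $e_{\rm st}e_Ke_{\rm st}$ — concretely, using $\Hcal_G e_{\rm st}=\Hcal_T e_{\rm st}\cong A_{\check T}$ from Lemma \ref{HGestfree}, one can describe the generator via the decomposition \eqref{SchneiderZinkdecompo} — and I would check that $R_G$ of it is, up to $\mathfrak{Z}^\times$, the canonical projection $\Ocal_{[X/\check G]}\twoheadrightarrow\Ocal_{[X_0/\check G]}$; this is forced because the map must be nonzero (it becomes an isomorphism after inverting the appropriate element of $\mathfrak{Z}$, or after restricting to the regular locus where it is identified by Corollary \ref{identifyquotofcindNG}) and any nonzero element of the rank-one free $\mathfrak{Z}$-module ${\rm Hom}_{[X/\check G]}(\Ocal_{[X/\check G]},\Ocal_{[X_0/\check G]})=\mathfrak{Z}$ that is not a zero-divisor generates. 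Symmetrically, for $D_1=\Hcal_G e_K$, $D_2=\Hcal_G e_{\rm st}$, a generator of ${\rm Hom}_{\Hcal_G}(\Hcal_G e_K,\Hcal_G e_{\rm st})$ maps under $R_G$ to a $\mathfrak{Z}$-multiple of the morphism \eqref{basemorphismGL2}; I would verify that multiple is a unit by looking at the composite with the projection of the previous case, which on the Hecke side is $e_{\rm st}e_Ke_{\rm st}e_Ke_{\rm st}$ and on the geometric side is multiplication by the element $f\in\mathfrak{Z}$ cutting out $X_1$ — the two composites must match since $R_G$ is a functor, pinning down the scalar.

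The main obstacle is the last point: verifying that $R_G$ sends a chosen $\mathfrak{Z}$-generator of each off-diagonal ${\rm Hom}$-group to a $\mathfrak{Z}$-generator (not merely a nonzero element). The cleanest route is a ``generic'' argument: restrict everything to the regular locus $X_{\check G}^{\rm reg}$, or even to the regular semisimple locus, where by Corollary \ref{identifyquotofcindNG} and Remark \ref{remRGcind} the sheaves $R_G(\Hcal_G e_K)$, $R_G(\Hcal_G e_{\rm st})$ and the maps between them are completely explicit, and where the compatibility with parabolic induction (Theorem \ref{theocompwparabinductionregular}, adapted to $\GL_2$ via \eqref{nattrafoGL2}) identifies the Hom-spaces with Hom-spaces of the corresponding $\Hcal_T$-modules pulled up to $\check T$; there the statement is a direct computation with characters. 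Since all the $\mathfrak{Z}$-modules in sight are free and $\mathfrak{Z}\hookrightarrow\Gamma(X_{\check G}^{\rm reg},\Ocal)$ is injective ($X_{\check G}$ being reduced with $X_{\check G}^{\rm reg}$ dense, by Proposition \ref{equidimreductive}), ``generator after restriction to the regular locus'' implies ``generator''. I would also remark that full faithfulness of $R_G$ on the subcategory generated by $\Hcal_G e_K$ and $\Hcal_G e_{\rm st}$ is exactly what feeds into the proof of Theorem \ref{theoGL2}.
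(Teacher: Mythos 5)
Your skeleton matches the paper's: project both sides into the four cases, use projectivity of $\Hcal_Ge_K,\Hcal_Ge_{\rm st}$ and Proposition \ref{extgroupcomputation1} to kill everything in degree $i\neq 0$, and reduce to checking that $R_G$ carries basis vectors to basis vectors of the degree-$0$ $\Hom$-groups, each of which is free of rank one over $\mathfrak{Z}$. The diagonal cases are handled by $\id\mapsto\id$, as in the paper. The issue is how you pin down the two off-diagonal maps.

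For $D_1=\Hcal_Ge_{\rm st}$, $D_2=\Hcal_Ge_K$ you assert that the image of a Hecke generator is "forced" to be a generator because "any nonzero element of the rank-one free $\mathfrak{Z}$-module \dots that is not a zero-divisor generates." That claim is false: the element $f\in\mathfrak{Z}$ itself is a nonzero non-zero-divisor but is not a unit, so $f\cdot b_2$ is nonzero and a non-zero-divisor and yet not a basis vector. Nonvanishing after restriction to the regular or regular semisimple locus does not help either, since $X_0\cap X_1$ (where $f$ vanishes) is still contained in that locus, so $f$ does not become a unit after restriction. So the shortcut you use to settle the first off-diagonal direction is a genuine gap; and your argument for the second off-diagonal direction relies on the first one being settled.

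The correct route, which you almost reach, is to treat $\gamma_1\colon\Hcal_Ge_K\to\Hcal_Ge_{\rm st}$ and $\gamma_2\colon\Hcal_Ge_{\rm st}\to\Hcal_Ge_K$ symmetrically and use both compositions at once. Up to units in $\mathfrak{Z}^\times$ one has $\gamma_2\circ\gamma_1=f\cdot\id$ and $\gamma_1\circ\gamma_2=f\cdot\id$, and the same identities hold on the geometric side for the basis vectors (the canonical projection and the morphism $(\ref{basemorphismGL2})$), both composites being $f\cdot\id$ up to a unit. Writing $\delta_i=R_G(\gamma_i)=g_i b_i$ with $b_i$ a basis vector and $g_i\in\mathfrak{Z}$, the $\mathfrak{Z}$-linearity and functoriality of $R_G$ give $g_1g_2\cdot(\text{unit})\cdot f\cdot\id = (\text{unit})\cdot f\cdot\id$, and since $f$ is a non-zero-divisor one concludes $g_1g_2\in\mathfrak{Z}^\times$, hence both $g_1$ and $g_2$ are units. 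This is exactly the paper's proof; re-casting your argument this way removes the gap.
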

\begin{proof}
Note that $\Hcal_G e_K$ and $\Hcal_G e_{\rm st}$ are projective and ${\rm Hom}_{\Hcal_G}(D_1,D_2)\cong\mathfrak{Z}$.
By Proposition \ref{extgroupcomputation1} the claim is true for $i\neq 0$ and we are left to show that in degree $0$ the canonical morphism identifies basis vectors. This is clear if $D_1=D_2$. 
Let us write $\gamma_1:\Hcal_G e_K\rightarrow \Hcal_G e_{\rm st}$ and $\gamma_2:\Hcal_G e_{\rm st}\rightarrow \Hcal_G e_K$ for choices of basis vectors and let $f\in\mathfrak{Z}$ as defined before Proposition \ref{extgroupcomputation1}. Then, up to scalars in $\mathfrak{Z}^\times$, we have
\begin{equation}\label{eqncomposition}\gamma_2\circ \gamma_1=f\cdot {\rm id}_{\Hcal_G e_K}\ \text{and}\ \gamma_1\circ \gamma_2=f\cdot {\rm id}_{\Hcal_G e_{\rm st}}.
\end{equation}
Writing $\delta_i=R_G(\gamma_i)$ one checks that the equalities $$\delta_2\circ \delta_1=f\cdot {\rm id}_{\Ocal_{[X_0/\check G]}}\ \text{and}\ \delta_1\circ \delta_2=f\cdot {\rm id}_{\Ocal_{[X/\check G]}}$$
enforce that
\begin{align*}
\delta_1&\in{\rm Hom}_{[X/\check G]}(\Ocal_{[X_0/\check G]},\Ocal_{[X/\check G]})\ \text{and} \\\delta_2&\in {\rm Hom}_{[X/\check G]}(\Ocal_{[X/\check G]},\Ocal_{[X_0/\check G]})
\end{align*}
are basis vectors. 
\end{proof}
\begin{proof}[Proof of Theorem \ref{theoGL2}]
We show that 
\[R_G:{\bf D}^b(\Hcal_G\text{-mod}_{\text{fg}})\longrightarrow {\bf D}^b_{\rm Coh}([X_{\check G}/\check G])\]
is fully faithful. The general case then follows from a limit argument as in Remark \ref{remaboutconj} (a).

By standard arguments the proof boils down to Corollary \ref{extgroupcomputation2}: let $D_1^\bullet, D_2^\bullet$ be complexes in ${\bf D}^b(\Hcal_G\text{-mod}_{\text{fg}})$. We may choose representatives of $D_i^\bullet$ consisting of bounded complexes whose entries are direct sums of copies of $\Hcal_G e_K$ and $\Hcal_G e_{\rm st}$.
Assume first $D_1^\bullet=\Hcal_G e_K$ or $\Hcal_G e_{\rm st}$ concentrated in degree $0$. We prove the claim by induction on the length of $D_2^\bullet$. 
By Corollary \ref{extgroupcomputation2} the claim is true if $D_2^\bullet$ has length $0$, i.e.~if $D_2^\bullet$ is concentrated in a single degree.
Assume the claim is true for all complexes of length $\leq m$ and let $D_2^\bullet$ be a complex in degrees $[r,r+m+1]$ for some $r\in\mathbb{Z}$. Then $D_2^\bullet$ can be identified with the mapping cone of a morphism of complexes 
\[D_2^r[-r]\longrightarrow \tilde{D}_2^\bullet\]
with $\tilde{D}_2^\bullet$ concentrated in degree $[r,r+m]$. The claim follows from the induction hypothesis, Corollary \ref{extgroupcomputation2} and the long exact cohomology sequence.

The general case follows by a similar induction on the length of $D_1^\bullet$.
\end{proof}

\begin{expl}\label{exampleSL2}
We finish this section with some remarks about the case $G={\rm SL}_2(F)$ and $\check G={\rm PGL}_2$. As already pointed out above, Conjecture \ref{GLnsurjection} fails for ${\rm SL}_2$. 

In this case $\check T=\mathbb{G}_m$ and $W=\mathcal{S}_2$. Let us write $$\widetilde{{\rm PGL}}_2\subset {\rm PGL}_2\times \check G/\check B={\rm PGL}_2\times\mathbb{P}^1$$
for the Grothendieck resolution of ${\rm PGL}_2$, that is for the closed subscheme of $(g,L)\in {\rm PGL}_2\times\mathbb{P}^1$ such that the line $L$ is stable under $g$. We consider the canonical diagram
\[\begin{xy}
\xymatrix{
&& \widetilde{\rm PGL}_2 \ar[dll]_f \ar[dl] \ar[ddl]\\
{\rm PGL}_2 \ar[d]& {\rm PGL}_2\times_{\mathbb{G}_m/\mathcal{S}_2} \mathbb{G}_m \ar[d]\ar[l]\\
\mathbb{G}_m/\mathcal{S}_2 & \mathbb{G}_m. \ar[l]
}
\end{xy}
\]
Again it is well known that $Rf_\ast\Ocal_{\widetilde{\rm PGL}_2}$ is concentrated in degree zero and in fact locally free (necessarily of rank $2$). However the canonical map
\begin{equation}\label{PGL2remarkcanmap}
\Ocal_{{\rm PGL}_2}^2=\Ocal_{{\rm PGL}_2}\otimes_{C[T]}C[X,X^{-1}]\longrightarrow f_\ast\Ocal_{\widetilde{\rm PGL}_2}=Rf_\ast\Ocal_{\widetilde{\rm PGL}_2},
\end{equation}
where $C[T]=\Gamma(\mathbb{G}_m/\mathcal{S}_2, \Ocal_{\mathbb{G}_m/\mathcal{S}_2})$ and $C[X,X^{-1}]=\Gamma(\mathbb{G}_m,\Ocal_{\mathbb{G}_m})$, is not surjective. 
Indeed, consider the point $x\in {\rm PGL}_2$ given by the class of the diagonal matrix ${\rm diag}(-1,1)$. Then $f^{-1}(x)=\{0,\infty\}$ is the disjoint union of two points and hence
$$f_\ast\Ocal_{\widetilde{\rm PGL}_2}\otimes k(x)=k(x)\times k(x)$$
as a $k(x)$-algebra. On the other hand the morphism $C[T]\rightarrow C[X,X^{-1}]$ is given by $T\mapsto X+X^{-1}$ and $x$ maps to the point $\{T=-2\}$ in $\Spec C[T]=\mathbb{G}_m/\mathcal{S}_2$. 
We find that the canonical map $(\ref{PGL2remarkcanmap})$ specializes to
\[k(x)\otimes_{C[T]}C[X,X^{-1}]=C[X]/(X+1)^2\longrightarrow C\times C=k(x)\times k(x)\]
which can not be surjective. In fact ${\rm Pic}({\rm PGL}_2)={\rm Hom}(\pi_1({\rm PGL_2}),\mathbb{G}_m)=\mathbb{Z}/2\mathbb{Z}$ and hence there is (up to isomorphism) a unique non-trivial line bundle $\mathcal{L}$ (which comes with a canonical ${\rm PGL}_2$-equivariant structure). In fact $\mathcal{L}$ can be identified with the ideal sheaf of the closed subscheme $\check G\cdot x$ and $\mathcal{L}^\vee\cong \mathcal{L}$. It is not hard to show that 
\begin{equation}\label{PGL2computation} f_\ast\Ocal_{\widetilde{\rm PGL}_2}=\mathcal{O}_{{\rm PGL}_2}\oplus \mathcal{L}.
\end{equation}
Let us also mention what $R\tilde \beta_{B,\ast}\Ocal_{{X}_{\check B}}$ looks like. 
As in the case of ${\rm GL}_2$ the scheme $X_{\check G}$ has to irreducible components $X_{\check G,0}$ and $X_{\check G,1}$, where $N=0$ on $X_{\check G,0}$ .Using $(\ref{PGL2computation})$ we obtain a decomposition
\begin{equation}\label{PGL2decomposition}R\tilde \beta_{B,\ast}\Ocal_{{X}_{\check B}}\cong \Ocal_{X_{\check G}}\oplus \mathcal{F}_0\cong \mathcal{F}\oplus \Ocal_{X_{\check G,0}},\end{equation}
where $\mathcal{F}$ respectively $\mathcal{F}_0$ is the twist of $\Ocal_{X_{\check G}}$ respectively $\Ocal_{X_{\check G,0}}$ by the pullback of the non-trivial line bundle $\mathcal{L}$ on $\check G={\rm PGL}_2$.
Indeed the same computation as for ${\rm GL}_2$ yields that 
\[{\rm Ext}^1_{[X_{\check G}/\check G]}(\mathcal{F}_0,\Ocal_{X_{\check G}})=0={\rm Ext}^1_{[X_{\check G}/\check G]}(\Ocal_{X_{\check G,0}},\mathcal{F})\]
and we can use $(\ref{PGL2computation})$ and the canonical morphism $\mathcal{O}_{X_{\check G}}\rightarrow R\tilde\beta_{B,\ast}\mathcal{O}_{\tilde X_{\check B}}$ to obtain a short exact sequence
\[0\longrightarrow \mathcal{O}_{X_{\check G}}\longrightarrow R\tilde\beta_{B,\ast}\mathcal{O}_{\tilde X_{\check B}}\longrightarrow \mathcal{F}_0\longrightarrow 0\]
which has to split using the computation of Ext-groups. Note that the pullback of $\mathcal{L}$ to $\widetilde{\rm PGL}_2$ is the trivial linde bundle, and hence twisting this sequence by the line bundle $\mathcal{F}$ and using the projection formula, we obtain a short exact sequence
\[0\longrightarrow \mathcal{F}\longrightarrow R\tilde\beta_{B,\ast}\mathcal{O}_{\tilde X_{\check B}}\longrightarrow \mathcal{O}_{X_{\check G,0}}\longrightarrow 0\]
which has to split as well. 

We point out that in light of the main conjecture the decompositions $(\ref{PGL2decomposition})$ correspond to the two decompositions
\[\cind_I^G \mathbf{1}_I=\cind_{K_1}^G \mathbf{1}_{K_1}\oplus  \cind_{K_1}^G {\rm st}_{K_1}=\cind_{K_1}^G \mathbf{1}_{K_2}\oplus  \cind_{K_2}^G {\rm st}_{K_2}\]
of representation of $G={\rm SL}_2(F)$, where $I$ is a choice of an Iwahori and $K_1$ and $K_2$ are the two non-conjugate hyperspecial subgroups of $G$ with $I=K_1\cap K_2$. Moreover $K_1\cong K_2\cong {\rm SL}_2(\Ocal_F)$ and ${\rm st}_{K_i}$ is the respective inflation of the finite dimensional Steinberg representation of ${\rm SL}_2(k_F)$.

Let us finally comment on the comparison with the Emerton-Helm construction (for a choice of a Whittaker datum $\psi$). Indeed, choosing $\psi$ we can again compute that $\Hcal_G e_{\psi}=(\cind_N^G\psi)^I$ for some idempotent element $e_\psi$ and this module is free of rank $1$ over $\Hcal_T$. We can define an "Emerton-Helm family" $\mathcal{V}_{G,\psi}$ as the quotient 
\[(\cind_N^G\psi)_{[T,1]}\otimes_{\mathfrak{Z}}\Ocal_{X_{\check G}}\longrightarrow \mathcal{V}_{G,\psi}\]
with prescribed fibers at the generic points of $X_{\check G}$. 
The fiber of $\mathcal{V}_{G,\psi}$ at the point $(x,0)\in X_{\check G}$, for $x\in{\rm PGL}_2$ as above, is the representation $\cind_N^G\psi\otimes_{\mathfrak{Z}}k(x)$ which is the unique non-split extension
$$0\longrightarrow \pi_{x}^{\psi'\text{-}{\rm gen}}\longrightarrow \mathcal{V}_{G,\psi}\otimes k((x,0)) \longrightarrow \pi_{x}^{\psi\text{-}{\rm gen}}\longrightarrow 0$$
where $\pi_{x}^{\psi\text{-}{\rm gen}}$ is the $\psi$-generic representation on the L-packet defined by $(x,0)$ and $\psi'$ is (a choice of) the Whittaker datum not conjugate to $\psi$.  

On the other hand we obtain a diagram
\[\begin{xy}
\xymatrix{
(\cind_N^G\psi)^I\otimes_{\mathfrak{Z}}\mathcal{O}_{X_{\check G}}=\mathcal{O}_{X_{\check G}}\otimes_{\mathfrak{Z}}\mathcal{O}_{\check T} \ar[r] \ar[dr]& \mathcal{M}_{G,\psi}=(\mathcal{V}_{G,\psi})^I \ar@{.>}[d]^g \\
& \tilde\beta_{B,\ast}\mathcal{O}_{\tilde{X}_{\check B}} 
}
\end{xy}\]
of $\mathcal{O}_{X_{\check G}}$-modules. The morphism $g$ is an injection and induces an isomorphism on the open complement $U$ of the Cartier divisor $\check G\cdot(x,0)\subset X_{\check G}$, and 
\[\tilde\beta_{B,\ast}\mathcal{O}_{\tilde{X}_{\check B}} \subset j_{U,\ast}(\tilde\beta_{B,\ast}\mathcal{O}_{\tilde{X}_{\check B}}|_U)=j_{U,\ast}(\mathcal{M}_{G,\psi}|_U)\]
is stable under the action of $\mathcal{H}_G$, where $j_U:U\hookrightarrow X_{\check G}$ is the canonical embedding.  

This construction equips $\tilde\beta_{B,\ast}\mathcal{O}_{\tilde{X}_{\check B}} =R\tilde\beta_{B,\ast}\mathcal{O}_{\tilde{X}_{\check B}}$ with an action of $\mathcal{H}_G$, depending on the choice of the Whittaker datum $\psi$, such that $$R_G^\psi(-)={}^t(-)\otimes^L_{\Hcal_G}\tilde\beta_{B,\ast}\mathcal{O}_{\tilde{X}_{\check B}}$$ is the desired functor in the case of $G={\rm SL}_2$. Computing the fibers of $\tilde\beta_{B,\ast}\mathcal{O}_{\tilde{X}_{\check B}}$ we find
\[\tilde\beta_{B,\ast}\mathcal{O}_{\tilde{X}_{\check B}}\otimes k((x,0))=(\pi_{x}^{\psi\text{-}{\rm gen}})^I\oplus (\pi_{x}^{\psi'\text{-}{\rm gen}})^I,\]
and the action of the centralizer of $x$ is trivial on $(\pi_{x}^{\psi\text{-}{\rm gen}})^I$ and non-trivial on $(\pi_{x}^{\psi'\text{-}{\rm gen}})^I$.
\end{expl}

\subsection{Calculation of examples}
We finish by computing the image of some special representations under the functor $R_G$ defined in $(\ref{EHfunctor})$. In particular we are in the situation $G={\rm GL}_n(F)$ and $\check G$ is the algebraic group $\GL_n$ over $C$. For simplicity we assume that $C$ is algebraically closed. We fix the choice of the diagonal torus $\mathbb{T}$ and the upper triangular Borel subgroup $\mathbb{B}$.

Let $x=(\phi,N)\in X_{\check G}$. In the examples calculated in this section we will assume that $\phi$ is regular semi-simple. 
As in Remark \ref{remaboutconj} (b) we write $$X_{\check G,[\phi,N]}^\circ=\check G\cdot x$$ for the $\check G$-orbit of $(\phi,N)$ and $X_{\check G,[\phi,N]}$ for its closure. 

\begin{theo}\label{computeRGLLmod}
Let $(\phi,N)\in X_{\check G}(C)$ and assume that $\phi$ is regular semi-simple. Then
\[R_G({\rm LL^{mod}}(\phi,N))=\Ocal_{[X_{\check G,[\phi,N]}/\check G]}.\]
\end{theo}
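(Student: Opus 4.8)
The plan is to reduce the statement to the computation of the $\Hcal_G$-module $({\rm LL^{mod}}(\phi,N)^\vee)^I$ via its Jacquet module, together with the explicit description of $\mathcal{M}_G$ over the regular locus from Proposition \ref{identifyMandOB}. First I would observe that since $\phi$ is regular semi-simple, the point $x=(\phi,N)$ lies in $X_{\check G}^{\rm reg}$, so we may work with $R_G^{\rm reg}$ and the identification $\mathcal{M}_G|_{X_{\check G}^{\rm reg}}=R\tilde\beta_{B,\ast}\Ocal_{\tilde X_{\check B}^{\rm reg}}$. By definition $R_G({\rm LL^{mod}}(\phi,N))={}^t\pi\otimes^L_{\Hcal_G}\mathcal{M}_G$ where $\pi=({\rm LL^{mod}}(\phi,N))^I$; using the standard involution and Lemma \ref{technicalidentities} one rewrites this (up to replacing $\pi$ by the Hecke module of ${\rm LL^{mod}}((\phi,N)^\vee)^\vee$, and noting ${\rm LL}(\phi,N)^\vee\cong {\rm LL}((\phi,N)^\vee)$ in the irreducible case and an analogous duality in general) as a derived tensor product against the structure sheaf of $\tilde X_{\check B}^{\rm reg}$, pushed to $X_{\check G}$.

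\textbf{Key steps.} The main computation is to identify the complex of $\check G$-equivariant sheaves $({\rm LL^{mod}}((\phi,N)^\vee)^\vee)^I\otimes^L_{\Hcal_G}\Ocal_{\tilde X_{\check B}^{\rm reg}}$ on $X_{\check G}^{\rm reg}$. Using Lemma \ref{JBofLLmod}, the Jacquet module of ${\rm LL^{mod}}((\phi,N)^\vee)^\vee$ (after extending scalars to a splitting field $L$) is $\bigoplus_{\mathcal F}{\rm unr}_{\mathcal F}$, the sum over the $(\phi,N)$-stable complete flags $\mathcal F$; by Corollary \ref{identifyquotofcindNG} this identifies $({\rm LL^{mod}}((\phi,N)^\vee)^\vee)^I$ with a quotient of $\Hcal_G e_{G,\rm st}=\Hcal_T e_{G,\rm st}$ whose associated sheaf on $\tilde X_{\check B}^{\rm reg}$ is supported on the $(\phi,N)$-stable flags. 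Concretely, over $\check T$ the module $\Hcal_T e_{G,\rm st}\cong A_{\check T}$ and ${\rm LL^{mod}}((\phi,N)^\vee)^\vee)^I$ is its quotient corresponding to the finite reduced closed subscheme of $\check T$ cut out by the orderings that are realized by $(\phi,N)$-stable flags. I would then identify the fiber product $X_{\check G}^{\rm reg}\times_{\check T}\{$these orderings$\}$ inside $\tilde X_{\check B}^{\rm reg}=Y_{\check G}^{\rm reg}$ with exactly the locus of $(\phi',N',\mathcal F)$ such that $\mathcal F$ is a $(\phi,N)$-type flag — i.e.\ with the preimage under $\tilde\beta_B$ of $X_{\check G,[\phi,N]}^{\rm reg}$, which maps isomorphically (generically) onto that closure since $\phi$ is regular semi-simple and hence the flag variety fiber is finite reduced. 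Since $\tilde X_{\check B}^{\rm reg}\to X_{\check G}^{\rm reg}$ is finite and $\tilde\beta_{B,\ast}$ is exact here, pushing forward gives $\Ocal_{X_{\check G,[\phi,N]}^{\rm reg}}$, and as $\phi$ is regular semi-simple $X_{\check G,[\phi,N]}=X_{\check G,[\phi,N]}^{\rm reg}$ so the closure is already regular; descending to the stack quotient yields $\Ocal_{[X_{\check G,[\phi,N]}/\check G]}$.

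\textbf{Vanishing of higher Tor.} It remains to check that the derived tensor product is concentrated in degree $0$, i.e.\ $\mathrm{Tor}_i^{\Hcal_G}(\pi,\mathcal{M}_G)=0$ for $i>0$ in a neighborhood of the orbit. Here I would use that over $X_{\check G}^{\rm reg}$ everything is reduced (Lemma \ref{reducedparaboliccase}) and that the relevant modules, when restricted to the regular semi-simple locus, are locally free over the torus factor; more precisely, over the étale cover where $\phi$ is diagonalized the module $\Hcal_T e_{G,\rm st}\cong A_{\check T}$ is the structure sheaf, $\pi^I$ is the structure sheaf of a union of connected components of $\check T$, and $\Ocal_{\tilde X_{\check B}^{\rm reg}}$ is flat over $A_{\check T}$ along these components (the map $\tilde X_{\check B}^{\rm reg}\to\check T$ being, fiberwise over a regular semi-simple $\phi$, a disjoint union of reduced points over each ordering). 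So the derived tensor product reduces to an underived one. \textbf{The main obstacle} I expect is precisely this last flatness/Tor-vanishing bookkeeping: the morphism $\tilde\beta_B^{\rm reg}$ is finite but not flat (Remark \ref{remarkgenericLparam}), so one must argue carefully that along the specific orbit closure $X_{\check G,[\phi,N]}$ — where $\phi^{\rm ss}$ need not be the parameter of a generic representation — the relevant Koszul-type resolution of $\pi^I$ stays acyclic after tensoring with $\mathcal{M}_G$; I would handle this by restricting to the open $X_{\check G}^{\rm reg\text{-}ss}$ where $\phi$ is regular semi-simple (which contains $X_{\check G,[\phi,N]}$ entirely, $\phi$ being regular semi-simple) and checking acyclicity étale-locally after diagonalizing $\phi$, where the situation decomposes into a product indexed by flags and becomes a direct computation.
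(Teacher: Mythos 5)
Your opening reduction has a real issue: the statement concerns $R_G({\rm LL^{mod}}(\phi,N))$, but everything you can access directly from the construction of $\mathcal{M}_G$ (via Lemma \ref{surjecttoLLmod} and Corollary \ref{identifyquotofcindNG}) is phrased in terms of ${\rm LL^{mod}}((\phi,N)^\vee)^\vee$, and these are distinct representations unless $(\phi,N)$ is conjugate to its dual. You wave at ``an analogous duality in general,'' but the passage through ${}^t\pi$ and the involution does not convert one into the other for free, and the paper never makes this conversion: it works with $\iota_{\overline{P}'}^G$ of a tensor product of Steinbergs directly.

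The more serious gap is the Tor-vanishing step, which you flag as the ``main obstacle'' and then dispose of with a flatness claim that is false. You assert that $\tilde X_{\check B}^{\rm reg}\to\check T$ is, in a neighborhood of the relevant orderings, locally a disjoint union of reduced points. This conflates two different morphisms. The map $\tilde\beta_B:\tilde X_{\check B}^{\rm reg}\to X_{\check G}^{\rm reg}$ is indeed finite with reduced fibres over regular semi-simple points; but the map that matters for your derived tensor product, $\tilde X_{\check B}^{\rm reg}\to\check T$, has $(n^2-n)$-dimensional fibres and is \emph{not} flat near linked orderings. Concretely, in the local model of $(\ref{formalschemeY})$ for $n=2$ one has $C[\![t_1,t_2]\!][u]/((t_2-t_1)u)$ over $C[\![t_1,t_2]\!]$, which has $(t_2-t_1)$-torsion. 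So your invocation of flatness to conclude ``the derived tensor product reduces to an underived one'' is wrong. The degree-$0$ concentration is in fact a genuine theorem here and not an automatic consequence of reducedness or flatness: Remark \ref{remimageofLL} shows that for irreducible ${\rm LL}(\phi,N)$ the functor $R_G$ produces a shifted line bundle $\mathcal{L}(y)[l_y]$ sitting in strictly negative degree when $N\neq0$, so one cannot conclude degree-$0$ concentration without using the specific homological structure of the \emph{modified} Langlands representation. The contingency plan you offer --- ``a direct computation \'etale-locally after diagonalizing $\phi$'' --- is where the content of the proof lives, and it is left unspecified.

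The paper avoids this head-on confrontation by a two-step reduction you do not use: first the compatibility $\xi_P^G$ of Theorem \ref{theocompwparabinductionregular} reduces the claim to the Steinberg case ${\rm St}(\lambda,r)$ via the decomposition $(\phi,N)=\bigoplus_i {\rm Sp}(\lambda_i,r_i)$; second, the Steinberg case is handled in the completed picture $\hat\Hcal_{G,\chi}$ by building an explicit $n$-term resolution $\hat C^\bullet_{n,\lambda}$ (from intertwining operators), constructing a matching geometric resolution $\hat E^\bullet_{n,\lambda}$ of $\Ocal_{\hat X_{\check G,\chi}^{\rm St}}$, and proving $\hat R_{G,\chi}(\hat C^\bullet_{n,\lambda})\cong\hat{\mathcal E}^\bullet_{n,\lambda}$ via the rigidity statement of Theorem \ref{theouniquenesshatR}. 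It is precisely that identification of complexes --- not flatness --- that yields the vanishing of higher cohomology. If you carried out the ``direct computation'' you gesture at, you would end up reconstructing something equivalent to $\hat C^\bullet_{n,\lambda}$ and $\hat E^\bullet_{n,\lambda}$, so the reduction through parabolic induction is not avoidable in any substantive way.
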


To prove this, we will use compatibility with parabolic induction. Hence the main step will be to calculate the image of the generalized Steinberg representations. 

Let $\chi:\mathfrak{Z}\rightarrow C$ be the character defined by the characteristic polynomial of $\phi$. 
We write $\hat{\mathfrak{Z}}_\chi$ for the completion of $\mathfrak{Z}$ with respect to the kernel $\mathfrak{m}_\chi$ of $\chi$ and $$\hat \Hcal_{G,\chi}=\Hcal_G\otimes_{\mathfrak{Z}}\hat{\mathfrak{Z}}_\chi$$ for the $\mathfrak{m}_\chi$-adic completion of $\Hcal_G$. Similarly, if $M\subset G$ is a Levi subgroup, we write $\hat\Hcal_{M,\chi}$ for the corresponding completion of $\Hcal_M$. 

Assume that $\phi={\rm diag}(\phi_1,\dots,\phi_n)$. For $w\in W=\mathcal{S}_n$ we write $w\phi$ for the diagonal matrix ${\rm diag}(\phi_{w(1)},\dots,\phi_{w(n)})$. 
We use the notation $\delta_w$ to denote the $\Hcal_T$-module defined by the unramified character ${\rm unr}_{w\phi}$ (i.e.~the residue field at the point $w\phi\in{\rm Spec}\,\Hcal_T$), and $\hat \delta_w$ to denote the completion of $\Hcal_T$ at the point $w\phi\in{\Spec}\, \Hcal_T$. 
Then $\delta_w$ and $\hat\delta_w$ are $\hat \Hcal_{T,\chi}$-modules. 

We recall intertwining operators for parabolic induction:
Let $\mathbb{P},\mathbb{P'}\subset \mathbb{G}$ be parabolic subgroups (containing $\mathbb{T}$) with Levi subgroups $\mathbb{M}$ and $\mathbb{M}'$ and let $w\in W$ such that $M'=wMw^{-1}$. 
Let $\pi$ and $\pi'$ be smooth representations of $M$ respectively $M'$ and let $f:\pi^w\rightarrow \pi'$ be a morphism of $M'$-representations. Then there is a canonical morphism of $G$-representations 
\[F(w,f)=F_G(w,f):\iota_{\overline P}^G\pi\longrightarrow \iota_{\overline{P}'}^G\pi'\]
associated to $f$ (and similarly for $\iota_{P}^G$ and $\iota_{P'}^G$). Moreover, this construction extends to (morphisms of) complexes of $M$- respectively $M'$-representations. 
We also note that the formation of these intertwining operators is transitive in the following sense:
Let $\mathbb{P}_1,\mathbb{P}'_1\subset\mathbb{P}\subset\mathbb{G}$ be parabolic subgroups. Let $\mathbb{M}_1,\mathbb{M}'_1$ and $\mathbb{M}$ denote the corresponding Levi quotients and let $\mathbb{P}_{1,M}$ be the image of $\mathbb{P}_1$ in $\mathbb{M}$ (and similarly $\mathbb{P}'_{1,M}$).
Let $w\in W_M\subset W$ be a Weyl group element such that $wM_1w^{-1}=M'_1$ (as subgroups of $G$, and hence also as subgroups of $M$). Moreover, let $\pi$ be a representation of $M_1$ and $\pi'$ be a representation of $M'_1$ and $f:\pi^w\rightarrow \pi'$ be a morphism of $M'_1$-representations. Then, under the canonical identifications $$\iota_{\overline{P}}^G\iota_{\overline{P}_{1,M}}^M(\pi)=\iota_{\overline{P}_1}^G\pi\ \ \text{and}\ \ \iota_{\overline{P}}^G\iota_{\overline{P}'_{1,M}}^M(\pi')=\iota_{\overline{P}'_1}^G\pi',$$ the morphism $\iota_{\overline{P}}^G(F_M(w,f))$ is identified with $F_G(w,f)$.

Now fix $\lambda\in C^\times$ and let $\phi={\rm diag}(\lambda, q^{-1}\lambda,\dots, q^{-(n-1)}\lambda)$. 
For $w,w'\in W$ the identity of $\delta_{w'}$ respectively $\hat\delta_{w'}$ induces  intertwining operators 
\begin{equation}\label{intertwining}
\begin{aligned}
f(w,w'):\iota_{\overline{B}}^G(\delta_w)&\longrightarrow \iota_{\overline{B}}^G(\delta_{w'})\\
\hat f(w,w'):\iota_{\overline{B}}^G(\hat\delta_w)&\longrightarrow \hat \iota_{\overline{B}}^G(\hat\delta_{w'}).
\end{aligned}
\end{equation}
Note that these morphisms are isomorphisms (with inverse $f(w',w)$ respectively $\hat f(w',w)$) if and only if for each $i$ the entries $q^{-i}\lambda$ and $q^{-(i+1)}\lambda$ appear in the same order in $w\phi$ and $w'\phi$.
Moreover, 
\begin{equation}\label{Homofinducedrepen}
{\rm Hom}_{\hat{\mathfrak{Z}}_\chi[G]}(\iota_{\overline{B}}^G\hat\delta_w,\iota_{\overline{B}}^G\hat\delta_{w'})=\hat{\mathfrak{Z}}_\chi\hat f(w,w')
\end{equation}
is a free $\hat{\mathfrak{Z}}_\chi$-module of rank $1$.
We define the (universal) deformation of the generalized Steinberg representation 
$$\hat{\rm St}(\lambda,r)=\iota_B^G\big(\delta_B^{-1/2}\otimes \widehat{\rm unr}_\lambda|-|^{(n-1)/2}\big)\big/\sum_{B\subsetneq P\subseteq G}\iota_P^G\big(\delta_P^{-1/2}\otimes\widehat{\rm unr}_\lambda|-|^{(n-1)/2}\big).$$
Here we write $\widehat{\rm unr}_\lambda$ for the universal (unramified) deformation of the character ${\rm unr}_{\lambda}$ and denote the target of $\widehat{\rm unr}_\lambda$ by $C[\![t]\!]$. Then
\begin{align*}
\widehat{\rm unr}_\lambda\otimes_{C[\![t]\!]}C[\![t]\!]/(t)&={\rm unr}_\lambda,\\
\hat{\rm St}(\lambda,r)\otimes_{C[\![t]\!]}C[\![t]\!]/(t)&={\rm St}(\lambda,r).
\end{align*}
Note that by definition $\hat{\rm St}(\lambda,n)$ is a quotient of $\iota_{\overline{B}}^G\hat\delta_{w_0}$, where $w_0\in W$ is the longest element.

By abuse of notation we will also write ${\rm St}(\lambda,n)$ and $\hat{\rm St}(\lambda,n)$ for the $\Hcal_G$- respectively $\hat\Hcal_{G,\chi}$-module given by the $I$-invariants in the respective representations. 
Similarly, we will continue to write $\iota_{\overline{B}}^G\delta_w$ etc.~for the Hecke modules defined by these representations. In the following we will only work with Hecke modules, hence no confusion should arise. 

We construct a projective resolution $\hat C^\bullet_{n,\lambda}$ of the $\hat\Hcal_{G,\chi}$-module $\hat {\rm St}(\lambda,n)$ concentrated in (cohomological) degrees $[-(n-1),0]$ such that all objects in the complex are direct sums of induced representations ${\iota}_{\overline{B}}^G\hat\delta_w$ and the differentials are given by combinations of the intertwining morphisms $(\ref{intertwining})$. We construct the complex by induction. 

\noindent If $n=2$, then $\phi={\rm diag}(\lambda, q^{-1}\lambda)$ and we consider the complex
$$ \xymatrix{\hat C^\bullet_{2,\lambda}:&  \hat C^{-1}_{2,\lambda}=\iota_{\overline{B}}^G(\hat\delta_1)  \ar[r]^{\hat f(1,s)}  & \hat C^{0}_{2,\lambda}=\iota_{\overline{B}}^G(\hat\delta_s), }$$
where $s\in \mathcal{S}_2$ is the unique non trivial element. It can easily be checked that the morphism $\hat f(1,s)$ is injective and that its cokernel is $\hat{\rm St}(\lambda,2)$.


\noindent Assume we have constructed $\hat C^\bullet_{n-1,\lambda}$. For $i=1,2$ consider the upper triangular block parabolic subgroup $\mathbb{P}_i\subset \mathbb{G}$ with Levi subgroup $\mathbb{M}_i$ such that $\mathbb{M}_1$ has block sizes $(n-1,1)$ and $\mathbb{M}_2$ has block sizes $(1,n-1)$. 
We consider $$D_1^\bullet=\hat C^\bullet_{n-1,\lambda}\widehat{\otimes}\ \widehat{\rm unr}_{q^{-(n-1)}\lambda}$$ as a complex of $\hat\Hcal_{M_1,\chi}$-modules and $$D_2^\bullet=\widehat{\rm unr}_{q^{-(n-1)}\lambda}\widehat{\otimes}\ \hat C_{n-1,\lambda}^\bullet$$ as a complex of $\hat\Hcal_{M_2,\chi}$-modules. Let $\sigma\in \mathcal{S}_n$ be the cycle $(12\dots n)$.  Then $M_1$ and $M_2$ satisfy $\sigma M_1\sigma^{-1}=M_2$ and the identity $(D_1^\bullet)^\sigma\rightarrow D_2^\bullet$, as a morphism of complexes of $\hat\Hcal_{M_2,\chi}$-modules, induces a morphism of complexes
\[\iota_{\overline{P}_1}^G D_1^\bullet\longrightarrow\iota_{\overline{P}_2}^G D_2^\bullet.\]
We define $\hat C_{n,\lambda}^\bullet$ as the mapping cone of this complex.
 Then $\hat C_{n,\lambda}^\bullet$ obviously is a complex in degree $[-(n-1),0]$ whose entries are (by transitivity of parabolic induction) direct sums of $\iota_{\overline{B}}^G(\hat\delta_w)$ for some $w\in W$ (each isomorphism class appearing exactly once) and the differentials are given by intertwining operators (by transitivity of intertwining operators).
\begin{lem}
The complex $\hat C^\bullet_{n,\lambda}$ is exact in negative degrees and
\[H^0(\hat C^\bullet_{n,\lambda})\cong \hat{\rm St}(\lambda,n).\]
\end{lem}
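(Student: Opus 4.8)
The plan is to prove this by induction on $n$, following the inductive construction of the complex $\hat C^\bullet_{n,\lambda}$ itself. The case $n=2$ was already checked by hand when the complex was defined: $\hat f(1,s)$ is injective with cokernel $\hat{\rm St}(\lambda,2)$, so $H^{-1}=0$ and $H^0\cong\hat{\rm St}(\lambda,2)$. For the inductive step, recall that $\hat C^\bullet_{n,\lambda}$ was defined as the mapping cone of the morphism of complexes $\iota_{\overline P_1}^G D_1^\bullet\to\iota_{\overline P_2}^G D_2^\bullet$, where $D_1^\bullet=\hat C^\bullet_{n-1,\lambda}\widehat\otimes\,\widehat{\rm unr}_{q^{-(n-1)}\lambda}$ and $D_2^\bullet=\widehat{\rm unr}_{q^{-(n-1)}\lambda}\widehat\otimes\,\hat C^\bullet_{n-1,\lambda}$. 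Since parabolic induction $\iota_{\overline P_i}^G$ is exact (on Hecke modules, as it is given by $\Hcal_G\otimes_{\Hcal_{M_i}}-$, and $\Hcal_G$ is free as an $\Hcal_{M_i}$-module by the Bernstein basis $(\ref{HTbasis})$ and its parabolic analogue), the induction hypothesis gives that $\iota_{\overline P_i}^G D_i^\bullet$ has cohomology concentrated in degree $0$, namely $\iota_{\overline P_1}^G(\hat{\rm St}(\lambda,n-1)\widehat\otimes\,\widehat{\rm unr}_{q^{-(n-1)}\lambda})$ and $\iota_{\overline P_2}^G(\widehat{\rm unr}_{q^{-(n-1)}\lambda}\widehat\otimes\,\hat{\rm St}(\lambda,n-1))$ respectively. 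The long exact sequence of the mapping cone then reduces the computation of $H^\bullet(\hat C^\bullet_{n,\lambda})$ to understanding the single induced map
\[
\iota_{\overline P_1}^G\big(\hat{\rm St}(\lambda,n-1)\widehat\otimes\,\widehat{\rm unr}_{q^{-(n-1)}\lambda}\big)\longrightarrow \iota_{\overline P_2}^G\big(\widehat{\rm unr}_{q^{-(n-1)}\lambda}\widehat\otimes\,\hat{\rm St}(\lambda,n-1)\big)
\]
on $H^0$: the cone is exact in negative degrees iff this map is injective, and then $H^0(\hat C^\bullet_{n,\lambda})$ is its cokernel.

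The second step is therefore to identify this cokernel with $\hat{\rm St}(\lambda,n)$. For this I would work in the generic fiber first (i.e.\ over the fraction field of $\hat{\mathfrak Z}_\chi$, equivalently after tensoring with $C[\![t]\!]/(t)$ one recovers the classical statement, but the cleanest is the generic fiber where everything is irreducible/semisimple), and then propagate to the completion by a flatness/Nakayama argument, or alternatively argue directly with the definition of $\hat{\rm St}$ as a quotient of $\iota_{\overline B}^G\hat\delta_{w_0}$. The key input is the standard description of generalized Steinberg representations via the "alternating sum over parabolics" and the fact that $\hat{\rm St}(\lambda,n)$ fits into the exact sequence relating $\iota_{\overline P_1}^G({\rm St}(\lambda,n-1)\otimes(\cdot))$, $\iota_{\overline P_2}^G((\cdot)\otimes{\rm St}(\lambda,n-1))$, and $\hat{\rm St}(\lambda,n)$; this is essentially the classical inductive characterization of the Steinberg representation (a quotient of $\iota$ from the $(n-1,1)$-parabolic modulo the image coming from smaller parabolics), combined with transitivity of parabolic induction and transitivity of the intertwining operators, both of which were recorded just above the statement. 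Concretely, $\hat{\rm St}(\lambda,n)$ is the cokernel of $\iota_{\overline B}^G\hat\delta_{w'}\to\iota_{\overline B}^G\hat\delta_{w_0}$ summed appropriately, and one matches this with the top differential of the cone.

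The main obstacle I expect is the injectivity of the $H^0$-level map above, equivalently the vanishing $H^{-1}(\hat C^\bullet_{n,\lambda})=0$. Over the generic fiber this follows because the relevant intertwining operators $\hat f(w,w')$ become isomorphisms or injections between irreducible principal series in "general position" (the eigenvalues $q^{-i}\lambda$ are all distinct and in the special ratio-$q$ configuration, so one has precise control of when intertwiners are invertible via the criterion stated after $(\ref{intertwining})$), and the map in question is the restriction of an honest inclusion of subrepresentations of a common principal series $\iota_{\overline B}^G\hat\delta_{w_0}$. Propagating injectivity from the generic fiber to the completion $\hat{\mathfrak Z}_\chi$ requires knowing that the cokernel is flat (or torsion-free) over $\hat{\mathfrak Z}_\chi$, which in turn follows once one knows the cokernel is $\hat{\rm St}(\lambda,n)$ — so there is a mild circularity to untangle. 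The clean way around it is to prove injectivity directly on Hecke modules: realize all the $\iota_{\overline B}^G\hat\delta_w$ inside $\iota_{\overline B}^G\hat\delta_{w_0}$ (using that $\hat f(w,w_0)$ is injective whenever $w\phi$ and $w_0\phi$ agree on the relevant adjacent pairs, which holds for the $w$ actually occurring as summands in degree $-(n-1)$), so that the bottom differential of the cone is a genuine inclusion of submodules, whence injective. Then the long exact sequence immediately gives exactness in negative degrees and $H^0(\hat C^\bullet_{n,\lambda})\cong\hat{\rm St}(\lambda,n)$ by the inductive description of $\hat{\rm St}$, completing the induction.
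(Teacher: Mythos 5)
Your proposal follows exactly the same inductive strategy as the paper: use the inductive hypothesis and exactness of parabolic induction to replace the mapping cone $\hat C^\bullet_{n,\lambda}$ by the two-term complex $\iota_{\overline P_1}^G(\hat{\rm St}(\lambda,n-1)\widehat\otimes\,\widehat{\rm unr}_{q^{-(n-1)}\lambda})\to\iota_{\overline P_2}^G(\widehat{\rm unr}_{q^{-(n-1)}\lambda}\widehat\otimes\,\hat{\rm St}(\lambda,n-1))$ in degrees $-1,0$, and then verify that this map is injective with cokernel $\hat{\rm St}(\lambda,n)$. The paper leaves that last verification as ``easily checked''; your elaboration on how to carry it out (realizing both terms as submodules of a common principal series and invoking the inductive description of the Steinberg representation) is in the right spirit, though the details you sketch for injectivity are somewhat imprecise and would need to be tightened up.
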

\begin{proof}
We proceed by induction. If $n=2$ this was already remarked above. 
Assume that $\hat C^\bullet_{n-1,\lambda}$ is quasi-isomorphic to $\hat{\rm St}(\lambda,n-1)$. 
Then $\hat C^\bullet_{n,\lambda}$ is quasi-isomorphic to the complex
\[\iota_{\overline{P}_1}^G\big(\hat{\rm St}(\lambda,n-1)\widehat{\otimes}\ \widehat{\rm unr}_{q^{-(n-1)}\lambda}\big)\longrightarrow\iota_{\overline{P}_2}^G\big(\widehat{\rm unr}_{q^{-(n-1)}\lambda}\widehat{\otimes}\ \hat{\rm St}(\lambda,n-1)\big)\]
in degrees $-1$ and $0$, where the morphism is given by the obvious intertwining map. One can easily check that this morphism is injective and its cokernel is $\hat{\rm St}(\lambda,n)$.
\end{proof}

Similarly to the definition of $\hat\Hcal_{G,\chi}$ we define $\mathfrak{m}_\chi$-adic completions on the side of stacks of L-parameters: let $\hat X_{\check G,\chi}$ denote the completion of $X_{\check G}$ along the pre-image of $\chi\in{\rm Spec}\,\mathfrak{Z}=\check T/W$ under the canonical morphism $X_{\check G}\rightarrow \check T/W$. This formal scheme is still equipped with an action of $\check G$ and we can form the stack quotient $[\hat X_{\check G,\chi}/\check G]$.
Similarly we write $\hat X_{\check P,\chi}$ and $\hat X_{\check M,\chi}$ for the corresponding completions of $X_{\check P}$ and $X_{\check M}$. 
The functor $R_G$ defined in $(\ref{EHfunctor})$ naturally extends to a functor 
\[\hat R_{G,\chi}:{\bf D}^+(\hat\Hcal_{G,\chi}\text{-mod}) \longrightarrow {\bf D}^+_{\rm QCoh}([\hat X_{\check G,\chi}/\check G]).\]
As a consequence of Theorem \ref{theocompwparabinductionregular} the functor $\hat R_{G,\chi}$ also satisfies compatibility with parabolic induction similarly to Conjecture \ref{mainconjecture} (ii), but for the induced morphism between the formal completions of the stacks involved.

Let us build a more explicit model of these stacks. We consider the closed formal subscheme 
\begin{equation}\label{formalschemeY}
\hat Y={\rm Spf}\ \big(C[u_1,\dots,u_{n-1}][\![t_1,\dots,t_n]\!]/((t_{i+1}-t_i)u_i)\big)\subset \hat X_{\check G,\chi},
\end{equation}
where $C[u_1,\dots,u_{n-1}][\![t_1,\dots,t_n]\!]/((t_{i+1}-t_i)u_i)$ is equipped with the $(t_1,\dots,t_n)$-adic topology. The embedding into $\hat X_{\check G,\chi}$ is defined by the $(\phi,N)$-module 
\begin{align*}
\phi_{\hat Y}&={\rm diag}(\lambda+t_1,q^{-1}(\lambda+t_2),\dots, q^{-(n-1)}(\lambda+t_n))\\
N_{\hat Y}(e_i)&=\begin{cases}u_ie_{i+1}, &i<n-1\\ 0, &i=n-1\end{cases}
\end{align*}
over $\hat Y$.
This formal scheme comes equipped with a canonical $\check T$-action (which is trivial on the $t_i$ and via the adjoint action on the $u_i$) such that $[\hat Y/\check T]=[\hat X_{\check G,\chi}/\check G]$. 

For $w\in W$ we define a closed $\check T$-equivariant formal subscheme $\hat Y(w)$ by adding the equation $u_i=0$ if $q^{-(i-1)}\lambda$ precedes $q^{-i}\lambda$ in $w\phi$. In particular $\hat Y({w_0})=\hat Y$ if $w_0\in W$ is the longest element.
We denote by $\hat X_{\check G,\chi}(w)$ the corresponding $\check G$-equivariant closed formal subscheme of $\hat X_{\check G,\chi}$.
\begin{lem}
There is an isomorphism
\[\hat R_{G,\chi}(\iota_{\overline{B}}^G\hat\delta_w)\cong \Ocal_{\hat{X}_{\check G,\chi}(w)},\]
where we view $\hat R_{G,\chi}(\iota_{\overline{B}}^G\hat\delta_w)$ as a $\check G$-equivariant sheaf on $\hat X_{\check G,\chi}$.
\end{lem}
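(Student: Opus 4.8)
The plan is to compute $\hat R_{G,\chi}(\iota_{\overline B}^G\hat\delta_w)$ directly from the definition $(\ref{EHfunctor})$, namely $\hat R_{G,\chi}(\pi)={}^t\pi\otimes^L_{\hat\Hcal_{G,\chi}}\hat{\mathcal M}_{G,\chi}$, together with Lemma \ref{lefttorightinduction} (passage from left to right induced modules) and the identification $\hat{\mathcal M}_{G,\chi}=R\tilde\beta_{B,\ast}\Ocal_{\tilde{\mathbf X}_{\check B,\chi}}$ on the formal completion, which follows from Proposition \ref{identifyMandOB} (over the regular locus, which contains everything lying over $\chi$ for $\phi$ regular semi-simple). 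Using Lemma \ref{lefttorightinduction} we get ${}^t(\iota_{\overline B}^G\hat\delta_w)\cong{}^t\hat\delta_w\otimes_{\hat\Hcal_{T,\chi}}\hat\Hcal_{G,\chi}$ as a right $\hat\Hcal_{G,\chi}$-module, so that
\[
\hat R_{G,\chi}(\iota_{\overline B}^G\hat\delta_w)\cong {}^t\hat\delta_w\otimes^L_{\hat\Hcal_{T,\chi}}\hat{\mathcal M}_{G,\chi},
\]
and since $\hat{\mathcal M}_{G,\chi}$ is free over $\mathfrak Z$ (hence flat, via the structure morphism $\bar\chi_G$) and $\hat\delta_w$ is just the residue field completion of $\hat\Hcal_{T,\chi}\cong A_{\check T,\chi}$ at the point $w\phi$, the derived tensor product collapses to the ordinary one. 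So I must identify ${}^t\hat\delta_w\otimes_{A_{\check T,\chi}}\hat{\mathcal M}_{G,\chi}$ as a $\check G$-equivariant sheaf on $\hat X_{\check G,\chi}$.

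Next I would pass to the explicit model: by Proposition \ref{identifyMandOB}, $\tilde{\mathbf X}_{\check B}^{\rm reg}$ is the closed subscheme $Y_{\check G}$ of $X_{\check G}\times_{\check T/W}\check T$, and after completion at $\chi$ the fiber product $\hat X_{\check G,\chi}\times_{\check T/W}\hat{\check T}$ is precisely the union over $w\in W$ of copies of $\hat Y$ (one for each sheet $w\phi$ of the finite cover $\check T\to\check T/W$, which is \'etale near $\chi$ because $\phi$ is regular semi-simple), and $Y_{\check G}$ inside it is, on the sheet indexed by $w$, exactly the formal scheme cut out by the extra equations $u_i=0$ whenever $q^{-(i-1)}\lambda$ precedes $q^{-i}\lambda$ in $w\phi$ — this is the defining relation for the existence of an $N$-stable flag with the eigenvalue ordering $w\phi$, i.e.\ precisely $\hat Y(w)$. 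Tensoring $\hat{\mathcal M}_{G,\chi}=\tilde\beta_{B,\ast}\Ocal_{Y_{\check G}}$ over $A_{\check T,\chi}$ with the residue field at the sheet $w\phi$ picks out the summand $\Ocal_{\hat Y(w)}$, and pushing forward along $\hat Y(w)\hookrightarrow\hat X_{\check G,\chi}\times_{\check T/W}\hat{\check T}\to\hat X_{\check G,\chi}$ (which on this sheet is a closed immersion, again by regular semi-simplicity) gives $\Ocal_{\hat X_{\check G,\chi}(w)}$. Matching the $\check G$-equivariant (equivalently, after reduction to $[\hat Y/\check T]$, the $\check T$-equivariant) structures is then formal, since all the identifications above — Lemma \ref{lefttorightinduction}, Proposition \ref{identifyMandOB}, and the sheet decomposition — are $\check G$-equivariant.

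The main obstacle, I expect, is bookkeeping rather than anything deep: one must check carefully that the combinatorial condition defining $\hat Y(w)$ in $(\ref{formalschemeY})$ (vanishing of $u_i$ exactly when $q^{-(i-1)}\lambda$ precedes $q^{-i}\lambda$ in $w\phi$) is the same as the condition, coming from Corollary \ref{identifyquotofcindNG}, describing which $\phi$-stable flags are also $N_{\hat Y}$-stable — i.e.\ that the closed subscheme of the $\check T$-cover cut out by $Y_{\check G}$ on the $w$-sheet is $\hat Y(w)$ on the nose, with the correct (reduced) scheme structure. Here the regular semi-simplicity of $\phi$ is used twice: once to guarantee the finite cover $\hat{\check T}\to\hat{\check T}/W$ is \'etale at $\chi$ so that the $W$-sheets are disjoint and each map $\hat Y(w)\to\hat X_{\check G,\chi}$ is a closed immersion (cf.\ the lemma identifying $\tilde X_{\check B}^{\rm reg}$ with a closed subscheme of the fiber product), and once to know (via Lemma \ref{reducedparaboliccase}) that $\tilde X_{\check B}^{\rm reg}$, hence $Y_{\check G}$ and its completion, is reduced, so no nilpotents obstruct the identification $\Ocal_{Y_{\check G},w\text{-sheet}}\cong\Ocal_{\hat Y(w)}$. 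Once these match, the equivariant structures and the degree-zero concentration are immediate from flatness of $\hat{\mathcal M}_{G,\chi}$ over $\mathfrak Z$.
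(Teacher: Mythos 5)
Your plan tracks the paper's: the paper's own proof is just ``straightforward calculation using compatibility with parabolic induction'' (i.e.\ apply the already-established $\xi_B^G$ and compute $R\beta_\ast L\alpha^\ast\hat R_{T,\chi}(\hat\delta_w)$), and your argument essentially re-derives that compatibility by unwinding the definition $\hat R_{G,\chi}(\pi)={}^t\pi\otimes^L_{\hat\Hcal_{G,\chi}}\hat{\mathcal M}_{G,\chi}$ and the identification $\hat{\mathcal M}_{G,\chi}\cong\beta_\ast\Ocal_{Y_{\check G,\chi}}$. The geometric picture you describe (the $|W|$ sheets of $\hat X_{\check G,\chi}\times_{\check T/W}\hat{\check T}_\chi$, the combinatorial condition cutting out $\hat Y(w)$ on the $w$-sheet, and the closed-immersion claim, all hinging on regular semi-simplicity) is exactly right and is exactly what the paper's "straightforward calculation" has to verify. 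So the route is not genuinely different; you have simply expanded the black box.

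One point in your justification is off, though the conclusion it supports is correct. You claim the derived tensor product collapses because $\hat{\mathcal M}_{G,\chi}$ is free over $\mathfrak Z$. First, this freeness is not asserted anywhere in the paper and there is no reason to expect $\tilde{\mathcal M}_G$ to be flat over $\mathfrak Z$ (cf.\ the remark that it is not flat over $\Ocal_{X_{\check G}}$); second, flatness of the right-hand factor over $\mathfrak Z$ would not, on its own, kill $\mathrm{Tor}$ over $\hat\Hcal_{T,\chi}$. The actual reason the tensor product collapses, and the one the argument should invoke, is that $\hat\delta_w$ is \emph{projective} over $\hat\Hcal_{T,\chi}$: since $\phi$ is regular semi-simple, the finite map $\check T\to\check T/W$ is \'etale over $\chi$, so $\hat\Hcal_{T,\chi}=\prod_{w\in W}\hat\Ocal_{\check T,w\phi}$ and $\hat\delta_w=\hat\Ocal_{\check T,w\phi}$ is a direct ring factor. (Also note: $\hat\delta_w$ is the completed local ring of $\Spec\Hcal_T$ at $w\phi$, not the residue field --- the residue field is $\delta_w$; your phrase "residue field completion" conflates the two.) With this correction the proof is sound.
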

\begin{proof}
This is a straight forward calculation using the compatibility of $\hat R_{G,\chi}$ with parabolic induction. 
\end{proof}
The lemma identifies the images of parabolically induced representations under $\hat R_{G,\chi}$. Next we identify the images of intertwining operators. For $w,w'\in W$ there is a canonical $\check T$-equivariant morphism 
\[\hat g(w,w'):\Ocal_{\hat Y(w)}\longrightarrow \Ocal_{\hat Y(w')}\]
defined as follows: let $I_w=\{i=1,\dots, n-1|\ q^{-(i-1)}\lambda\ \text{precedes}\ q^{-i}\lambda\ \text{in}\ w\phi\},$ i.e.~
\[\hat Y(w)={\rm Spf}\,C[u_1,\dots, u_{n-1}][\![t_1,\dots,t_n]\!]/(u_i,\,i\in I_w, (t_{i+1}-t_i)u_i,\,i\notin I_w)\]
and let us write
\[\hat Y(w,w')={\rm Spf}\,C[u_1,\dots, u_{n-1}][\![t_1,\dots,t_n]\!]/(u_i,\,i\in I_w\cap I_{w'}, (t_{i+1}-t_i)u_i,\,i\notin I_w\cap I_{w'})\] 
for the moment. Then, similarly to $(\ref{basemorphismGL2})$, multiplication by $\prod_{i\in I_w\backslash I_{w'}} (t_{i+1}-t_i)$ induces a morphism $\Ocal_{\hat Y(w)}\rightarrow \Ocal_{\hat Y(w,w')}$ and we define $\hat g(w,w')$ to be its composition with the canonical projection to $\Ocal_{\hat Y(w')}$.
\begin{lem}
For $w,w'\in W$ the $\hat{\mathfrak{Z}}_\chi$-module
\[{\rm Hom}_{[\hat X_{\check G,\chi}/\check G]}(\Ocal_{[\hat X_{\check G,\chi}(w)/\check G]},\Ocal_{[\hat X_{\check G,\chi}(w')/\check G]})={\rm Hom}_{[\hat Y/\check T]}(\Ocal_{[\hat Y(w)/\check T]},\Ocal_{[\hat Y(w')/\check T]})\]
is free of rank one with basis $\hat g(w,w')$.
\end{lem}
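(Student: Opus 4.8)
The plan is to reduce the statement to an elementary computation in the power series ring $S=C[\![t_1,\dots,t_n]\!]$. First I would use that $\check T$ is linearly reductive in characteristic zero in order to identify
\[
\Hom_{[\hat Y/\check T]}(\Ocal_{[\hat Y(w)/\check T]},\Ocal_{[\hat Y(w')/\check T]})=\bigl(\Hom_{\hat Y}(\Ocal_{\hat Y(w)},\Ocal_{\hat Y(w')})\bigr)^{\check T},
\]
exactly as in the proof of Proposition \ref{extgroupcomputation1} (using \cite[Lemma 2.4.1]{DG}), where the right-hand side carries the natural $\check T$-equivariant structure coming from the $\check T$-stability of $\hat Y(w)$ and $\hat Y(w')$. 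Writing $R=C[\![t_1,\dots,t_n]\!][u_1,\dots,u_{n-1}]/((t_{i+1}-t_i)u_i)$ and $J_v=(u_i\mid i\in I_v)\subset R$ for $v\in W$, so that $\Ocal_{\hat Y(v)}=R/J_v$ (and $\Hom$ of coherent sheaves on $\Spf R$ equals $\Hom_R$ of the corresponding finitely generated modules), a morphism $R/J_w\to R/J_{w'}$ is the same as an element of $\operatorname{Ann}_{R/J_{w'}}(J_w)=\{\bar x\in R/J_{w'}\mid u_i\bar x=0\text{ for all }i\in I_w\}$, and $\hat g(w,w')$ corresponds to $\prod_{i\in I_w\setminus I_{w'}}(t_{i+1}-t_i)\bmod J_{w'}$.

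Next I would exploit the $\check T$-grading. Conjugating the matrices $\phi_{\hat Y},N_{\hat Y}$ by a diagonal element shows that $t_1,\dots,t_n$ have weight $0$ while $u_i$ has weight $-\alpha_i$, the negative of the $i$-th simple root of $\GL_n$; since $\alpha_1,\dots,\alpha_{n-1}$ are linearly independent characters of $\check T$, a monomial $u^{\underline k}$ is $\check T$-invariant if and only if $\underline k=0$. Writing $R=S\oplus\bigoplus_{\underline k\neq 0}\bigl(S/(t_{i+1}-t_i\mid k_i\geq 1)\bigr)u^{\underline k}$ as an $S$-module, and similarly for $R/J_{w'}$ (dropping the summands whose support meets $I_{w'}$), taking $\check T$-invariants picks out the summand $\underline k=0$, which is $S$. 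Hence the invariant part of $\operatorname{Ann}_{R/J_{w'}}(J_w)$ consists of those $a\in S$ with $u_i a=0$ in $R/J_{w'}$ for all $i\in I_w$: for $i\in I_w\cap I_{w'}$ this is automatic, while for $i\in I_w\setminus I_{w'}$ the $u$-degree-$e_i$ part of $R/J_{w'}$ is $(S/(t_{i+1}-t_i))u_i$, so the condition is $a\in(t_{i+1}-t_i)$. Thus the invariant $\Hom$ is $\bigcap_{i\in I_w\setminus I_{w'}}(t_{i+1}-t_i)\subset S$.

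Finally, since $\{t_{i+1}-t_i\}_{i=1,\dots,n-1}$ extends to a regular system of parameters of $S$ (equivalently, the $t_{i+1}-t_i$ are pairwise non-associate primes in the unique factorization domain $S$), this intersection equals the principal ideal $\bigl(\prod_{i\in I_w\setminus I_{w'}}(t_{i+1}-t_i)\bigr)$, which is a free $S$-module of rank one with generator $\prod_{i\in I_w\setminus I_{w'}}(t_{i+1}-t_i)$ --- precisely the element to which $\hat g(w,w')$ corresponds. It then only remains to observe that the $\hat{\mathfrak{Z}}_\chi$-module structure on $\Hom$ factors through $\hat{\mathfrak{Z}}_\chi\to S$, which is an isomorphism because $\phi$ is regular semisimple (so $\check T\to\check T/W$ is \'etale at $\phi$ and $\hat{\mathfrak{Z}}_\chi=\widehat{\Ocal}_{\check T/W,\chi}$). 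I do not expect a genuine obstacle here: the whole argument is a sequence of routine verifications, the one point needing care being the combinatorial bookkeeping with the sets $I_w$ together with the observation that passage to $\check T$-invariants kills every term involving the $u_i$, which is exactly what collapses the problem to the one-line intersection computation in $S$.
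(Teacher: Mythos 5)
Your proof is correct and supplies exactly the details behind the paper's one-line ``This is a straight forward computation'': identifying the equivariant $\Hom$ with $\check T$-invariants, using the $\check T$-grading in which each $u_i$ carries a nonzero weight to collapse everything to the $\underline k=0$ summand $S$, and reducing to the observation that $\bigcap_{i\in I_w\setminus I_{w'}}(t_{i+1}-t_i)$ is principal in the UFD $S\cong\hat{\mathfrak{Z}}_\chi$. One tiny wording quibble: ``extends to a regular system of parameters'' is not literally equivalent to ``pairwise non-associate primes,'' but both statements are true and the latter is what you actually use.
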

\begin{proof}
This is a straight forward computation.
\end{proof}
By the following theorem the images of the intertwining operators $\hat R_{G,\chi}(\hat f(w,w'))$ can be identified (up to isomorphism) with the morphisms $\hat g(w,w')$ just constructed.
\begin{theo}\label{theouniquenesshatR}
Let $\phi={\rm diag}(\lambda,q^{-1}\lambda,\dots, q^{-(n-1)}\lambda)\in\check T(C)$ and $\chi:\mathfrak{Z}\rightarrow C$ the character defined by the image of $\phi$ in $\check T/W$. The set of functors 
\[\hat R_{M,\chi}:\mathbf{D}^{+}(\hat \Hcal_{M,\chi}\text{-}{\rm mod})\longrightarrow \mathbf{D}^{+}_{\rm QCoh}([\hat X_{\check M,\chi}/\check M])\]
for standard Levi subgroups $M\subset G$, is uniquely determined (up to isomorphism) by requiring that they are $\hat{\mathfrak{Z}}_{M,\chi}$-linear, compatible with parabolic induction, and that $\hat R_{T,\chi}$ is induced by the identification 
\[\hat\Hcal_{T,\chi}\text{-}{\rm mod}\xrightarrow{\cong}{\rm QCoh}(\hat X_{\check T,\chi}).\]
More precisely, let $\hat R'_{G,\chi}$ be any functor satisfying these conditions. Then for each $w\in W$, there are isomorphisms $$\alpha_w:\hat R'_{G,\chi}(\iota_{\overline{B}}^G\hat\delta_w)\xrightarrow{\cong}\Ocal_{\hat X_{\check G,\chi}(w)}$$ such that for $w,w'\in W$ the diagram
\begin{equation}\label{checkcommutativity}
\begin{aligned}
\xymatrix{
\hat R'_{G,\chi}(\iota_{\overline{B}}^G\hat\delta_w) \ar[r]^{\alpha_w}\ar[d]_{\hat R'_{G,\chi}(\hat f(w,w'))}& \Ocal_{\hat X_{\check G,\chi}(w)}\ar[d]^{\hat g(w,w')}\\
\hat R'_{G,\chi}(\iota_{\overline{B}}^G\hat\delta_{w'}) \ar[r]^{\alpha_{w'}}& \Ocal_{\hat X_{\check G,\chi}(w')}
}
\end{aligned}
\end{equation}
commutes.
\end{theo}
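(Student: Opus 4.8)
The plan is to exploit that every object of $\mathbf{D}^+(\hat\Hcal_{G,\chi}\text{-}{\rm mod})$ can, up to quasi-isomorphism, be built out of the modules $\iota_{\overline{B}}^G\hat\delta_w$ and the intertwining maps $\hat f(w,w')$, and that the constraints in the statement pin down the functor on exactly this generating data. First I would observe that by the $\hat{\mathfrak{Z}}_{M,\chi}$-linearity and compatibility with parabolic induction, the value of any admissible $\hat R'_{G,\chi}$ on $\iota_{\overline{B}}^G\hat\delta_w = \iota_{\overline{B}}^G\hat R_{T,\chi}(\hat\delta_w)$ is forced to be $(R\beta_\ast L\alpha^\ast)(\Ocal_{\hat X_{\check T,\chi}(w)})$, where the point $w\phi$ determines the character $\hat\delta_w$ and hence (via the local class field theory identification on the torus) the structure sheaf of the appropriate component. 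A direct computation with the explicit model $(\ref{formalschemeY})$ --- carried out, in the split reductive case, exactly as in the proof of Proposition \ref{identifyMandOB} and Theorem \ref{theocompwparabinductionregular}, but now over the formal completion --- identifies this push-pull with $\Ocal_{\hat X_{\check G,\chi}(w)}$; this produces the isomorphisms $\alpha_w$. The key point is that these $\alpha_w$ are canonical up to a scalar in $\hat{\mathfrak{Z}}_{\chi}^\times$ because ${\rm Hom}_{[\hat X_{\check G,\chi}/\check G]}(\Ocal_{\hat X_{\check G,\chi}(w)},\Ocal_{\hat X_{\check G,\chi}(w')})$ is free of rank one by the second lemma preceding the theorem, and likewise ${\rm Hom}_{\hat\Hcal_{G,\chi}}(\iota_{\overline{B}}^G\hat\delta_w,\iota_{\overline{B}}^G\hat\delta_{w'})$ is free of rank one by $(\ref{Homofinducedrepen})$.

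Next I would establish the commutativity of the square $(\ref{checkcommutativity})$. Both $\hat R'_{G,\chi}(\hat f(w,w'))$ and $\hat g(w,w')$ are, after transport along $\alpha_w,\alpha_{w'}$, elements of the same rank-one free $\hat{\mathfrak{Z}}_{\chi}$-module of homomorphisms, so the square commutes up to an element $c(w,w')\in\hat{\mathfrak{Z}}_{\chi}$. To see that $c(w,w')$ is a unit, I would reduce modulo $\mathfrak{m}_\chi$ and compare with the non-formal picture: the composition relations among the $\hat f(w,w')$ (adjacent transpositions multiply according to whether the relevant eigenvalue ratio is $q^{\pm 1}$) match, under $\hat R'_{G,\chi}$, exactly the composition relations $(\ref{eqncomposition})$-type identities satisfied by the $\hat g(w,w')$, namely $\hat g(w',w)\circ\hat g(w,w')$ equals multiplication by $\prod_{i\in I_w\triangle I_{w'}}(t_{i+1}-t_i)$; this divisor is nonzero and not a zero-divisor, which forces all the $c(w,w')$ to be units. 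Rescaling the $\alpha_w$ inductively along a reduced-word order on $W$ (starting from $w=w_0$, where $\hat Y(w_0)=\hat Y$ and one can fix $\alpha_{w_0}$ arbitrarily) then makes every $c(w,w')=1$; here one uses that $W$ is generated by simple reflections and that the braid-type relations are automatically respected because all the Hom-modules are rank one.

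Finally, having identified $\hat R'_{G,\chi}$ with $\hat R_{G,\chi}$ on the generating diagram, I would conclude by a standard dévissage: any bounded-below complex of $\hat\Hcal_{G,\chi}$-modules admits a (possibly unbounded below is handled via the truncations $\tau_{<m}$, exactly as in the construction of $R_M$ after $(\ref{EHfunctor})$) resolution by finite direct sums of the $\iota_{\overline{B}}^G\hat\delta_w$ with differentials built from the $\hat f(w,w')$ --- this is precisely the kind of resolution $\hat C^\bullet_{n,\lambda}$ constructed above for the Steinberg module, generalized. Both exact functors $\hat R_{G,\chi}$ and $\hat R'_{G,\chi}$ send such a complex to the complex obtained by applying $\alpha_\bullet$ term by term, and the now-commuting squares $(\ref{checkcommutativity})$ give a natural isomorphism of the two resulting complexes; passing to the colimit over truncations yields the isomorphism of functors. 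I expect the main obstacle to be the inductive rescaling step making all $c(w,w')=1$ simultaneously and coherently: one must check that the choices can be made compatibly across all of $W$, i.e. that there is no cohomological obstruction in $H^2(W,\hat{\mathfrak{Z}}_\chi^\times)$-type terms, which here vanishes because the relevant $2$-cocycle is a coboundary --- concretely, because every relation in $W$ is generated by the trivial ones once one knows the rank-one freeness of all the relevant Hom-spaces, so the rescaling can be performed one simple reflection at a time without backtracking.
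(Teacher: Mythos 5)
Your overall strategy is the right one and shares its skeleton with the paper's argument (pin down the functor on the modules $\iota_{\overline{B}}^G\hat\delta_w$ together with the intertwining maps, then conclude by d\'evissage using the decomposition $\hat\Hcal_{M,\chi}=\bigoplus_{w\in W_M}\iota_{\overline{B}_M}^M\hat\delta_w$ and truncations). The construction of the $\alpha_w$, and the observation that the two candidate morphisms lie in the same rank-one free $\hat{\mathfrak{Z}}_\chi$-module, are also correct and used in the paper. However, the coherence/rescaling step is where a genuine gap appears, and the paper treats it rather differently from what you propose.

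You want to rescale the $\alpha_w$ "inductively along a reduced-word order on $W$" and claim "the braid-type relations are automatically respected because all the Hom-modules are rank one," and more broadly that "every relation in $W$ is generated by the trivial ones." The second assertion is simply false: $W=\mathcal{S}_n$ has non-trivial braid relations, and rank-one-freeness of the Hom's does not make them free for you. Concretely, consistency of a global normalization requires that the {\em structure constants} $\mu$ in $\hat f(w',w'')\circ\hat f(w,w')=\mu\cdot\hat f(w,w'')$ agree with those for the $\hat g$'s for {\em all} triples $(w,w',w'')$, not merely for the self-compositions $w\to w'\to w$ that you verify via the $(t_{i+1}-t_i)$ products; the direct intertwiner $\hat f(w,w')$ is not {\em a priori} the literal composite of simple-reflection intertwiners (the paper itself notes they differ by a unit $\beta$), and without a cocycle/coboundary analysis on the full category (not the group $W$), "reduced-word order" gives no canonical normalization when a given $w$ lies on several reduced words from $w_0$.

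The paper avoids all of this with an induction on $n$ (not on Coxeter length), exploiting two structural facts you do not use. First, every $\iota_{\overline{B}}^G\hat\delta_w$ is isomorphic to $\iota_{\overline{B}}^G\hat\delta_{\tilde w}$ for some $\tilde w\in\mathcal{S}_{n-1}$ or $\tilde w\in\sigma\mathcal{S}_{n-1}\sigma^{-1}$ (with $\sigma=(12\cdots n)$), so the coherence check only needs to be done on the much smaller set $\mathcal{S}_{n-1}\cup\sigma\mathcal{S}_{n-1}\sigma^{-1}$; within $\mathcal{S}_{n-1}$ it is the induction hypothesis via $\iota_{\overline{P}_1}^G$, and the $\sigma(\cdot)\sigma^{-1}$-conjugates are handled by an explicit chain of adjacent-transposition intertwiners. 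Second, the $\hat{\mathfrak{Z}}_\chi$-linearity of the functors is used in an essential way: because $\hat{\mathfrak{Z}}_\chi$ is a domain, it suffices to check commutativity of $(\ref{checkcommutativity})$ after composing with {\em any} nonzero scalar, which is exactly what allows one to factor a general $\hat f(w,w')$ through already-normalized squares. Your proposal gestures at the non-zero-divisor point but does not leverage it this way. To repair your argument you would at minimum need to (a) prove the full matching of structure constants for all triples, or (b) replace the "reduced-word" normalization with an organization (such as the paper's $n$-induction and the $\mathcal{S}_{n-1}\cup\sigma\mathcal{S}_{n-1}\sigma^{-1}$ reduction) that never forces you to compare two independently-chosen normalizations of the same $\alpha_w$.
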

\begin{rem}
(a) Note that we do not need to add the requirement $$\hat R_{G,\chi}((\cind_N^G\psi)^I_{[T,1]}\otimes_{\mathfrak{Z}}\hat{\mathfrak{Z}}_\chi)\cong \Ocal_{[\hat X_{\check G,\chi}/\check G]}$$
which also would be a consequence of the requirements in Conjecture \ref{mainconjecture}. In the situation considered here, there is an isomorphism
$$(\cind_N^G\psi)_{[T,1]}\otimes_{\mathfrak{Z}}\hat{\mathfrak{Z}}_\chi\cong \iota_{\overline{B}}^G\hat\delta_{w_0}$$ and hence the above isomorphism is automatic.\\
(b) It seems possible to compute that 
\[{\rm Ext}^i_{[\hat Y/\check T]}(\Ocal_{[\hat Y(w)/\check T]},\Ocal_{[\hat Y(w')/\check T]})=0\]
for $w,w'\in W$ and $i\neq 0$ by a similar explicit computation as in Proposition \ref{extgroupcomputation1}. This would imply the conjectured fully faithfulness of $\hat R_{G,\chi}$.
\end{rem}
\begin{proof}
Let us first justify that the second assertion implies the first. Note that 
\[\hat\Hcal_{M,\chi}=\hat\Hcal_{M,\chi}\otimes_{\hat \Hcal_{T,\chi}}\hat\Hcal_{T,\chi}=\hat\Hcal_{M,\chi}\otimes_{\hat \Hcal_{T,\chi}}\big(\bigoplus_{w\in W_M} \hat\delta_w\big)=\bigoplus_{w\in W_M}\iota_{\overline{B}_M}^M\hat\delta_w.\]
Using free resolutions of bounded above objects in $\mathbf{D}^+(\hat\Hcal_{M,\chi}\text{-mod})$ it is hence enough control the images of parabolically induced representations and the images of the intertwining operators. Then a limit argument deals with the general case.

Given $\hat R'_{G,\chi}$ as in the formulation of the theorem, compatibility with parabolic induction forces the existence of isomorphisms $\alpha_w$. Note that $\alpha_w$ is unique up to a unit in $\hat{\mathfrak{Z}}_\chi$. 
We claim that we can choose the isomorphisms such that the diagrams $(\ref{checkcommutativity})$ are commutative. 
In order to do so, we proceed by induction. By assumption the claim is true for $n=1$. We also make $n=2$ explicit. In this case we can identify 
\[\iota_{\overline{B}}^G\hat \delta_1=\hat\Hcal_{G,\chi}e_K\ \ \text{and}\ \ \iota_{\overline{B}}^G\hat \delta_s=\hat\Hcal_{G,\chi}e_{\rm st}.\]
One calculates that the intertwining operators $\hat f(1,s)$ and $\hat f(s,1)$ are identified with a $\hat{\mathfrak{Z}}_\chi$-basis of $${\rm Hom}_{\hat \Hcal_{G,\chi}}(\hat\Hcal_{G,\chi}e_K,\hat\Hcal_{G,\chi}e_{\rm st})\ \ \text{resp.}\ \ {\rm Hom}_{\hat \Hcal_{G,\chi}}(\hat\Hcal_{G,\chi}e_{\rm st},\hat\Hcal_{G,\chi}e_{K}).$$
Moreover, the compositions $\hat f(1,s)\circ \hat f(s,1)$ and $\hat f(s,1)\circ \hat f(1,s)$ are the multiplications with $f\in\hat{\mathfrak{Z}}^\times_\chi$, with $f$ as defined just before Proposition \ref{extgroupcomputation1}.
The calculation in the rank $2$ case, Proposition \ref{extgroupcomputation1}, yields the claim.

Assume now that the claim is true for $n-1$ and view $\mathcal{S}_{n-1}$ as the subgroup of $W=\mathcal{S}_n$ permuting the elements $1,\dots, n-1$.
Recall the parabolic subgroups $P_1$ and $P_2$ from the inductive construction of the complex $\hat C^\bullet_{n,\lambda}$. Using parabolic induction $\iota_{\overline{P}_1}^G$ and the induction hypothesis we may assume that we have constructed $\alpha_w$ for all $w\in\mathcal{S}_{n-1}\subset W$ such that the diagram $(\ref{checkcommutativity})$ commutes for all $w,w'\in\mathcal{S}_{n-1}$.
Let $\sigma=(12\dots n)$ as above. We first show that we can choose
\[\alpha_{\sigma w\sigma^{-1}}:\hat R'_{G,\chi}(\iota_{\overline{B}}^G\hat \delta_{\sigma w\sigma^{-1}})\xrightarrow{\cong}\Ocal_{\hat Y(\sigma w\sigma^{-1})} \]
such that $(\ref{checkcommutativity})$ commutes for the pairs $w,\sigma w\sigma^{-1}$ and $\sigma w \sigma^{-1},w$.
Let $\tau_{i,i+1}$ denote the transposition of $i$ and $i+1$. Inductively we define $w_1=\tau_{n,n-1}w\tau_{n,n-1}$ and $w_i=\tau_{n-i,n-i-1}w_{i-1}\tau_{n-i,n-i-1}$. Then the composition of intertwining operators
\begin{align*}
\iota_{\overline B}^G\hat \delta_w\longrightarrow \iota_{\overline B}^G\hat \delta_{w_1}\longrightarrow \dots \longrightarrow \iota_{\overline B}^G\hat \delta_{w_{n-1}}=\iota_{\overline B}^G\hat \delta_{\sigma w\sigma^{-1}}
\end{align*}
is identified with $\beta \hat f(w,\sigma w \sigma^{-1})$ for some $\beta\in\hat{\mathfrak{Z}}_\chi^\times$. Similarly, the composition 
$$\iota_{\overline B}^G\hat \delta_{\sigma w\sigma^{-1}}\longrightarrow \iota_{\overline B}^G\hat \delta_{w_{n-1}}\longrightarrow \dots \longrightarrow \iota_{\overline B}^G\hat \delta_{w_{1}}=\iota_{\overline B}^G\hat \delta_{w}$$
is identified with $\beta \hat f(\sigma w \sigma^{-1},w)$ for the same unit $\beta$. 
In this composition all the intertwining maps are isomorphisms, except for the morphisms
\[\iota_{\overline B}^G\hat \delta_{w_i}\longrightarrow\iota_{\overline B}^G\hat \delta_{w_{i+1}}\ \ \text{and}\ \ \iota_{\overline B}^G\hat \delta_{w_{i+1}}\longrightarrow\iota_{\overline B}^G\hat \delta_{w_{i}}\]
where the position of $n$ and $n-1$ in $(w_i(1),\dots, w_i(n))$ and $(w_{i+1}(1),\dots, w_{i+1}(n))$ is interchanged. By the computation in the two dimensional case and compatibility with parabolic induction this intertwining morphism is given by the multiplication with $\beta'(t_n-t_{n-1})$ for some unit $\beta'\in \hat{\mathfrak{Z}}_\chi^\times$ respectively by canonical projection multiplied with $\beta'$.
Modifying $\alpha_{\sigma w\sigma^{-1}}$ by $(\beta\beta')^{-1}$ we deduce the commutativity of the diagrams $(\ref{checkcommutativity})$ for the pairs $w,\sigma w\sigma^{-1}$ and $\sigma w \sigma^{-1},w$.

Now consider the general case. Note that for any $w\in W$ there exists $\tilde w\in\mathcal{S}_{n-1}$ such that 
\[\iota_{\overline{B}}^G\hat\delta_w\xrightarrow{\cong}\iota_{\overline{B}}^G\hat\delta_{\tilde w}\ \ \text{or} \ \ \iota_{\overline{B}}^G\hat\delta_w\xrightarrow{\cong}\iota_{\overline{B}}^G\hat\delta_{\sigma \tilde w\sigma^{-1}}.\]
Hence we can choose $\alpha_w$ such that all the diagrams $(\ref{checkcommutativity})$ commute, provided we can check commutativity of these diagrams for $w,w'\in \mathcal{S}_{n-1}\cup \sigma \mathcal{S}_{n-1}\sigma^{-1}$. If both elements $w,w'$ lie in $\mathcal{S}_{n-1}$ this follows from the induction hypothesis.
Let us check the claim for $w,w''\in\mathcal{S}_{n-1}$ and $w'=\sigma w''\sigma^{-1}$ (the argument in the other cases being similar). By $\hat{\mathfrak{Z}}_{\chi}$-linearity it is enough to check that 
\[
\xymatrix{
\hat R'_{G,\chi}(\iota_{\overline{B}}^G\hat\delta_w) \ar[r]^{\alpha_w}\ar[d]_{\hat R'_{G,\chi}(\gamma\hat f(w,w'))}& \Ocal_{\hat X_{\check G,\chi}(w)}\ar[d]^{\gamma\hat g(w,w')}\\
\hat R'_{G,\chi}(\iota_{\overline{B}}^G\hat\delta_{w'}) \ar[r]^{\alpha_{w'}}& \Ocal_{\hat X_{\check G,\chi}(w')}
}
\]
commutes for any choice of $0\neq \gamma\in\hat{\mathfrak{Z}}_\chi$. In particular we may check it for the element $\gamma$ defined by 
\[\hat f(w'',\sigma w''\sigma^{-1})\circ \hat f(w,w'')=\gamma \hat f(w,\sigma w''\sigma^{-1})=\gamma \hat f(w,w').\]
This follows from functoriality and the cases already treated above.
\end{proof}

We now continue to calculate the image $\hat R_{G,\chi}(\hat{\rm St}(\lambda,n))$ of the deformed Steinberg representation.
Let us write $$\hat Y^{\rm St}\cong{\rm Spf}\, C[u_1,\dots, u_{n-1}][\![t]\!] \subset \hat Y$$ for the formal subscheme defined by $t:=t_1=\dots=t_n$. We write $\hat X_{\check G,\chi}^{\rm St}$ for the corresponding $\check G$-equivariant scheme.

We inductively construct a 
$\check T$-equivariant resolution $\hat E^\bullet_{n,\lambda}$ of $\Ocal_{\hat Y^{\rm St}}$.\\
\noindent If $n=2$ we set
$$ \xymatrix{\hat E^\bullet_{2,\lambda}:&  \hat E^{-1}_{2,\lambda}=\Ocal_{\hat Y(1)}  \ar[r]^{\cdot (t_2-t_1)}  & \hat E^{0}_{2,\lambda}=\Ocal_{\hat Y(s)},   }$$
where again $s\in \mathcal{S}_2$ is the unique non trivial element. It can easily be checked that this morphism is injective and its cokernel is $\Ocal_{\hat Y^{\rm St}}$

\noindent Assume that $\hat E_{n-1,\lambda}^\bullet$ is constructed, then consider the morphism of complexes 
\[\hat E^\bullet_{n-1,\lambda}[u_{n-1}][\![t_n]\!]/(u_{n-1})\xrightarrow{\cdot (t_n-t_{n-1})}\hat E^\bullet_{n-1,\lambda}[u_{n-1}][\![t_n]\!]/((t_n-t_{n-1})u_{n-1})\]
on $\hat Y$ and define $\hat E^\bullet_{n,\lambda}$ to be its mapping cone. Here we write
\begin{align*}
\hat E^\bullet_{n-1,\lambda}[u_{n-1}][\![t_n]\!]/(u_{n-1})&=\hat E^\bullet_{n-1,\lambda}\otimes_{A_{n-1}}A_{n}/(u_{n-1}), \ \ \text{and}\\
\hat E^\bullet_{n-1,\lambda}[u_{n-1}][\![t_n]\!]/((t_n-t_{n-1})u_{n-1})&=\hat E^\bullet_{n-1,\lambda}\otimes_{A_{n-1}} A_n,
\end{align*}
by slight abuse of notation, where we write $$A_j=C[u_1,\dots, u_{j-1}]\![[t_1,,\dots, t_{j}]\!]/((t_{i+1}-t_i)u_i).$$
\begin{lem}
The complex  $\hat E^\bullet_{n,\lambda}$ is exact in negative degrees and 
\[H^0(\hat E^\bullet_{n,\lambda})=\Ocal_{\hat Y^{\rm St}}.\]
\end{lem}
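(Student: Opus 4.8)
The plan is to proceed by induction on $n$, exactly mirroring the construction of the complex $\hat E^\bullet_{n,\lambda}$ itself. The base case $n=2$ is already dealt with in the remark immediately preceding the statement: the map $\cdot(t_2-t_1):\Ocal_{\hat Y(1)}\to\Ocal_{\hat Y(s)}$ is injective (since $t_2-t_1$ is a nonzerodivisor in $C[\![t_1,t_2]\!][u_1]/((t_2-t_1)u_1)$, as that ring has $u_1$-torsion-free complement) and its cokernel is $C[\![t_1,t_2]\!][u_1]/((t_2-t_1)u_1, t_2-t_1)=C[\![t]\!][u_1]=\Ocal_{\hat Y^{\rm St}}$ after identifying $t=t_1=t_2$.

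For the inductive step I would assume $\hat E^\bullet_{n-1,\lambda}$ is a resolution of $\Ocal_{\hat Y^{\rm St,(n-1)}}$ (the version for $n-1$) and analyse the mapping cone of
\[\hat E^\bullet_{n-1,\lambda}[\![t_n]\!][u_{n-1}]/(u_{n-1})\xrightarrow{\,\cdot(t_n-t_{n-1})\,}\hat E^\bullet_{n-1,\lambda}[\![t_n]\!][u_{n-1}]/((t_n-t_{n-1})u_{n-1}).\]
By the long exact sequence of the cone, the cohomology of $\hat E^\bullet_{n,\lambda}$ fits into an exact triangle built from $H^\bullet$ of the two complexes above. Since $\hat E^\bullet_{n-1,\lambda}$ is exact in negative degrees with $H^0=\Ocal_{\hat Y^{\rm St,(n-1)}}$, both source and target are quasi-isomorphic (after the obvious base-change, which is flat on the relevant factors) to $\Ocal_{\hat Y^{\rm St,(n-1)}}[\![t_n]\!][u_{n-1}]/(u_{n-1})$ and $\Ocal_{\hat Y^{\rm St,(n-1)}}[\![t_n]\!][u_{n-1}]/((t_n-t_{n-1})u_{n-1})$ respectively, concentrated in degree $0$. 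Hence $H^i(\hat E^\bullet_{n,\lambda})=0$ for $i\leq -2$ automatically, and the computation reduces to showing that the degree-$(-1)$ and degree-$0$ cohomology is governed by the single map
\[\Ocal_{\hat Y^{\rm St,(n-1)}}[\![t_n]\!][u_{n-1}]/(u_{n-1})\xrightarrow{\,\cdot(t_n-t_{n-1})\,}\Ocal_{\hat Y^{\rm St,(n-1)}}[\![t_n]\!][u_{n-1}]/((t_n-t_{n-1})u_{n-1})\]
being injective with cokernel $\Ocal_{\hat Y^{\rm St}}$; this is the same elementary ring-theoretic fact as in the $n=2$ case, applied now with coefficient ring $\Ocal_{\hat Y^{\rm St,(n-1)}}[\![t_n]\!]$ in place of $C[\![t_1]\!]$. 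Injectivity holds because $t_n-t_{n-1}$ is a nonzerodivisor on $\Ocal_{\hat Y^{\rm St,(n-1)}}[\![t_n]\!][u_{n-1}]/(u_{n-1})$ (the latter ring being a domain, or at least $(t_n-t_{n-1})$-torsion-free), and the cokernel is obtained by setting $t_n=t_{n-1}$, which collapses $t_1=\dots=t_{n-1}=t_n$ to a single $t$ and produces $C[\![t]\!][u_1,\dots,u_{n-1}]=\Ocal_{\hat Y^{\rm St}}$.

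The one point that requires a little care, and which I expect to be the main obstacle, is checking that forming the mapping cone genuinely interacts well with the previously-established exactness of $\hat E^\bullet_{n-1,\lambda}$ — i.e.\ that the base change $-\otimes C[\![t_n]\!][u_{n-1}]/(u_{n-1})$ (resp.\ $-\otimes C[\![t_n]\!][u_{n-1}]/((t_n-t_{n-1})u_{n-1})$) applied to the resolution $\hat E^\bullet_{n-1,\lambda}$ preserves exactness in negative degrees. Since $\hat E^\bullet_{n-1,\lambda}$ is a complex of flat (indeed free, locally on $\hat Y$) modules over the appropriate base, tensoring is exact on the relevant part and one does not introduce new lower cohomology; the spectral sequence of the double complex therefore degenerates and the two outer terms in degrees $-1,0$ are all that survive. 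Once that flatness bookkeeping is in place, the computation of $H^0$ and the vanishing of $H^{-1}$ is immediate from the displayed two-term complex, completing the induction.
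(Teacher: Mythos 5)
Your proof follows the same inductive strategy as the paper: use the long exact sequence of the mapping cone together with flat base change to reduce to the two-term complex supported in degrees $-1,0$, then check that multiplication by $t_n-t_{n-1}$ is injective and identify the cokernel with $\Ocal_{\hat Y^{\rm St}}$. The only imprecision is the side remark that the entries of $\hat E^\bullet_{n-1,\lambda}$ are flat over the base (they are quotient rings $\Ocal_{\hat Y^{(n-1)}(w)}$, hence not flat over $\Ocal_{\hat Y^{(n-1)}}$); what actually carries the argument, and what you correctly appeal to earlier, is that the two base-change rings $A[\![t_n]\!][u_{n-1}]/(u_{n-1})$ and $A[\![t_n]\!][u_{n-1}]/((t_n-t_{n-1})u_{n-1})$ are flat over $A=\Ocal_{\hat Y^{(n-1)}}$, so tensoring preserves the resolution.
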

\begin{proof}
We proceed by induction. For $n=2$ the claim is clear. Assume the claim is true for $n-1$, then the long exact cohomology sequence implies that $\hat E^\bullet_{n,\lambda}$ is quasi-isomorphic to the complex
\[C[u_1,\dots, u_{n-2}][\![t,t_n]\!]\longrightarrow C[u_1,\dots, u_{n-2},u_{n-1}][\![t,t_n]\!]/((t_n-t)u_{n-1})\]
sending $1$ to $(t_n-t)$. The claim follows from this.
\end{proof}
Let us denote by $\hat{\mathcal{E}}_{n,\lambda}^\bullet$ the $\check G$-equivariant complex on $\hat X_{\check G,\chi}$ corresponding to the $\check T$-equivariant complex $\hat E^\bullet_{n,\lambda}$ under the identification $[\hat Y/\check T]=[\hat X_{\check G,\chi}/\check G]$.





%

\begin{cor}\label{lemidentifycomplexes}
There is an isomorphism of complexes
\begin{equation}\label{isoofcomplexes}
\hat R_{G,\chi}(\hat C^\bullet_{n,\lambda})\cong \hat{\mathcal{E}}^\bullet_{n,\lambda}.
\end{equation}
\end{cor}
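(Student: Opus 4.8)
The plan is to exploit the fact that both complexes in $(\ref{isoofcomplexes})$ are built by exactly parallel inductive constructions: on the representation-theoretic side, $\hat C^\bullet_{n,\lambda}$ is the mapping cone of a morphism $\iota_{\overline P_1}^G D_1^\bullet\to\iota_{\overline P_2}^G D_2^\bullet$ where $D_i^\bullet$ comes from $\hat C^\bullet_{n-1,\lambda}$ tensored with a rank-one module; and on the geometric side, $\hat{\mathcal E}^\bullet_{n,\lambda}$ is (by definition) the $\check G$-equivariant complex attached to $\hat E^\bullet_{n,\lambda}$, which is itself the mapping cone of the multiplication-by-$(t_n-t_{n-1})$ morphism between two copies of $\hat E^\bullet_{n-1,\lambda}$ with an extra variable adjoined. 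The key input is that $\hat R_{G,\chi}$ is exact, $\hat{\mathfrak Z}_{G,\chi}$-linear, and compatible with parabolic induction in the sense of Theorem \ref{theocompwparabinductionregular} (extended to formal completions as noted just before the lemma). So I would prove the isomorphism by induction on $n$, carrying along the identification of all the terms (direct sums of $\iota_{\overline B}^G\hat\delta_w$ mapping to $\Ocal_{\hat X_{\check G,\chi}(w)}$) and, crucially, the identification of the differentials.

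First I would record the base case $n=2$: here $\hat C^\bullet_{2,\lambda}$ is $[\iota_{\overline B}^G\hat\delta_1\xrightarrow{\hat f(1,s)}\iota_{\overline B}^G\hat\delta_s]$, which under $\hat R_{G,\chi}$ becomes $[\Ocal_{\hat X_{\check G,\chi}(1)}\to\Ocal_{\hat X_{\check G,\chi}(s)}]$; by the lemma computing $\hat R_{G,\chi}(\iota_{\overline B}^G\hat\delta_w)\cong\Ocal_{\hat X_{\check G,\chi}(w)}$ and the lemma identifying $\Hom$ between such sheaves as free of rank one on $\hat g(w,w')$, the image of $\hat f(1,s)$ is, up to a unit in $\hat{\mathfrak Z}^\times_\chi$, the morphism $\cdot(t_2-t_1)$ defining $\hat E^\bullet_{2,\lambda}$. (This is exactly the rank-two computation underlying Theorem \ref{theouniquenesshatR}, which I would invoke.) Then for the inductive step I would apply $\hat R_{G,\chi}$ to the defining triangle of $\hat C^\bullet_{n,\lambda}$; since $\hat R_{G,\chi}$ is exact and triangulated, it sends mapping cones to mapping cones, so $\hat R_{G,\chi}(\hat C^\bullet_{n,\lambda})$ is the cone of $\hat R_{G,\chi}(\iota_{\overline P_1}^G D_1^\bullet)\to\hat R_{G,\chi}(\iota_{\overline P_2}^G D_2^\bullet)$. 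Using compatibility with parabolic induction, $\hat R_{G,\chi}(\iota_{\overline P_i}^G D_i^\bullet)\cong(R\beta_{i,\ast}L\alpha_i^\ast)\hat R_{M_i,\chi}(D_i^\bullet)$, and the inductive hypothesis identifies $\hat R_{M_i,\chi}$ applied to $\hat C^\bullet_{n-1,\lambda}$ (tensored with the rank-one module, handled by $\hat{\mathfrak Z}$-linearity and the torus case) with the corresponding piece of $\hat E^\bullet_{n-1,\lambda}$ adjoined with the extra variable. Identifying the pushforward–pullback with the geometric operation "adjoin $t_n,u_{n-1}$ and impose $(t_{i+1}-t_i)u_i$" is a direct computation of the relevant fiber product of formal stacks, which the explicit models $(\ref{formalschemeY})$ make transparent.

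The main obstacle, and the step I would spend the most care on, is matching the connecting morphism: one must check that the map $\hat R_{G,\chi}$ sends the intertwining morphism $\iota_{\overline P_1}^G D_1^\bullet\to\iota_{\overline P_2}^G D_2^\bullet$ (induced by the identity $(D_1^\bullet)^\sigma\to D_2^\bullet$ via the cycle $\sigma=(12\dots n)$) to the morphism $\cdot(t_n-t_{n-1})$ that appears in the construction of $\hat E^\bullet_{n,\lambda}$, and not merely to a unit multiple of it — or, if only up to a unit, that one can absorb the discrepancy into the chosen isomorphisms of terms. This is precisely the content of the bookkeeping in the proof of Theorem \ref{theouniquenesshatR}, where one tracks how $\hat f(w,\sigma w\sigma^{-1})$ factors as a composite of elementary intertwining operators, each of which is (by the two-dimensional computation and compatibility with parabolic induction) a multiplication by $\beta'(t_n-t_{n-1})$ or a canonical projection times a unit. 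So I would reduce the coherence of the differentials to the already-established commutativity of the diagrams $(\ref{checkcommutativity})$, choosing the term-wise isomorphisms $\alpha_w$ accordingly; once the squares on terms commute and $\hat R_{G,\chi}$ is functorial, the two cones are isomorphic as complexes, giving $(\ref{isoofcomplexes})$. Finally I would note that combining this with $H^0(\hat C^\bullet_{n,\lambda})\cong\hat{\rm St}(\lambda,n)$, $H^0(\hat E^\bullet_{n,\lambda})\cong\Ocal_{\hat Y^{\rm St}}$, and exactness of $\hat R_{G,\chi}$ immediately yields $\hat R_{G,\chi}(\hat{\rm St}(\lambda,n))\cong\Ocal_{\hat X^{\rm St}_{\check G,\chi}}$, which is what feeds into the proof of Theorem \ref{computeRGLLmod} via parabolic induction.
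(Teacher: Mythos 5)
Your proposal follows the same strategy as the paper's own proof: induction on $n$, applying $\hat R_{G,\chi}$ termwise to the projective entries, using the compatibility with parabolic induction to identify each entry with $\Ocal_{\hat X_{\check G,\chi}(w)}$, and then invoking Theorem \ref{theouniquenesshatR} to identify the differential induced by the intertwining operator with multiplication by $(t_n - t_{n-1})$ up to a unit that can be absorbed into the termwise isomorphisms. The only cosmetic differences are that the paper starts the induction at $n=1$ rather than $n=2$ and phrases the mapping-cone compatibility at the level of honest complexes of projectives rather than via triangulated-category exactness, but these are rearrangements of the same argument.
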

\begin{proof}
We prove this using the inductive construction of both complexes. The case $n=1$ is trivial.
Assume now that $(\ref{isoofcomplexes})$ is true for $n-1$. 
Recall the parabolic subgroups $P_1$ and $P_2$ from the inductive construction of $\hat C^\bullet_{n,\lambda}$.\\
Let us write $G_{n-1}=\GL_{n-1}(F)$ and $B_{n-1}\subset G_{n-1}$ for the upper triangular Borel. Further let $\phi'={\rm diag}(\lambda,q^{-1}\lambda,\dots, q^{-(n-2)}\lambda)$. Similarly to the definition of $\delta_w$ and $\hat\delta_w$ using $w\phi$ we define $\delta'_w$ and $\hat\delta'_w$ using $w\phi'$ for $w\in \mathcal{S}_{n-1}$.
Then 
\begin{align*}
\iota_{\overline{P}_1}^G\big(\iota_{\overline{B}_{n-1}}^{G_{n-1}} \hat\delta'_w\widehat\otimes \ \widehat{\rm unr}_{q^{-(n-1)}\lambda}\big)&=\iota_{\overline{B}}^G\hat\delta_w\\
\iota_{\overline{P}_2}^G\big(\widehat{\rm unr}_{q^{-(n-1)}}\widehat\otimes \ \iota_{\overline{B}_{n-1}}^{G_{n-1}} \hat\delta'_w\lambda\big)&=\iota_{\overline{B}}^G\hat\delta_{\sigma w\sigma^{-1}},
\end{align*}
and the intertwining operator between the representations on the right hand side translates to the intertwining operator $\hat f(w,\sigma w\sigma^{-1})$ under this identification.\\
By the same inductive construction, we assume that each entry of $\hat C^\bullet_{n-1,\lambda}$ is a direct sum of representations $\iota_{\overline{B}_{n-1}}^{G_{n-1}}\hat\delta'_w$ for $w\in \mathcal{S}_{n-1}$.
By Theorem \ref{theouniquenesshatR} the morphism $$\hat R_{G,\chi}(\iota_{\overline{P}_1}^G D_1^\bullet)\longrightarrow\hat R_{G,\chi}(\iota_{\overline{P}_2}^G D_2^\bullet)$$ is (up to a unit) identified with the multiplication by $(t_n-t_{n-1})$. The inductive construction of $\hat E^\bullet_{n,\lambda}$ hence implies the claim.
\end{proof}

\begin{cor}\label{corimageofSteinberg}
Let $\lambda\in C^\times$ and let $(\phi,N)\in X_{\check G}(C)$ be the L-parameter defined by $(C^n,\phi,N)={\rm Sp}(\lambda,n)$. Then
\[R_G({\rm St}(\lambda,n))\cong \Ocal_{X_{\check G,[\phi,N]}},\]
where ${\rm St}(\lambda,n)={\rm LL}(\phi,N)={\rm LL^{mod}}(\phi,N)$ is the generalized Steinberg representation.
\end{cor}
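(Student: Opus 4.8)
The plan is to deduce the statement from the inductive resolution $\hat C^\bullet_{n,\lambda}$ of the deformed Steinberg module and the identification of $\hat R_{G,\chi}(\hat C^\bullet_{n,\lambda})$ with the explicit complex $\hat{\mathcal E}^\bullet_{n,\lambda}$ established in Corollary \ref{lemidentifycomplexes}. First I would specialize at the closed point $t=0$ of the completion $\hat X_{\check G,\chi}$. Since $R_G$ is the derived tensor product with the (flat over $X_{\check G}^{\rm reg}$, and more generally finite Tor-dimension) family $\mathcal{M}_G$, and since the $\Hcal_G$-modules $\iota_{\overline B}^G\hat\delta_w$ appearing in $\hat C^\bullet_{n,\lambda}$ are deformations of the $\Hcal_G$-modules $\iota_{\overline B}^G\delta_w=\Hcal_G\otimes_{\Hcal_T}\delta_w$, base change along $\hat{\mathfrak Z}_\chi\to C$ should identify $R_G(\hat C^\bullet_{n,\lambda}\otimes_{\hat{\mathfrak Z}_\chi}C)$ with $\hat{\mathcal E}^\bullet_{n,\lambda}\otimes_{\hat{\mathfrak Z}_\chi}C$; here one uses that all terms of $\hat C^\bullet_{n,\lambda}$ are parabolically induced from free $\hat\Hcal_{T,\chi}$-modules, so $R_G$ applied termwise is computed by $\tilde{\mathcal M}_G$ without higher derived terms, and the specialization of the resolution $\hat C^\bullet_{n,\lambda}$ of $\hat{\rm St}(\lambda,n)$ computes $R_G({\rm St}(\lambda,n))$.

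Next I would compute the right-hand side. By the last lemma before Corollary \ref{lemidentifycomplexes}, $\hat{\mathcal E}^\bullet_{n,\lambda}$ is a resolution of $\Ocal_{\hat X^{\rm St}_{\check G,\chi}}$, where $\hat X^{\rm St}_{\check G,\chi}\subset \hat X_{\check G,\chi}$ is the $\check G$-equivariant formal subscheme cut out by $t_1=\dots=t_n$. I claim this is precisely the formal completion along $\chi$ of the closed subscheme $X_{\check G,[\phi,N]}\subset X_{\check G}$. Indeed, with $(C^n,\phi,N)={\rm Sp}(\lambda,n)$ one has $N$ regular nilpotent, so $X_{\check G,[\phi,N]}=Z_{\check G,\mathcal{P}_{\rm min}}=X_{\check G}$ is the full scheme — wait, that is the whole space, so in fact the assertion to check is that $\hat X^{\rm St}_{\check G,\chi}$ is the completion of the closure of the $\check G$-orbit of $(\phi,N)$, i.e.\ of $X_{\check G,[\phi,N]}$ in the orbit-closure sense of Theorem \ref{computeRGLLmod}; the explicit model $\hat Y$ of $(\ref{formalschemeY})$ shows the orbit of $(\phi,N)$ over $\hat Y$ is exactly the locus $t_1=\dots=t_n$ (all eigenvalue-deformations forced equal, $N$ staying regular), whose closure is $\hat Y^{\rm St}$ and whose $\check G$-saturation is $\hat X^{\rm St}_{\check G,\chi}$. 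So $R_G({\rm St}(\lambda,n))\otimes_{\mathfrak Z}\hat{\mathfrak Z}_\chi\cong \Ocal_{[\hat X_{\check G,\chi,[\phi,N]}/\check G]}$, and since $R_G({\rm St}(\lambda,n))$ is a coherent sheaf concentrated in degree $0$ (its restriction to the dense regular locus is, by Proposition \ref{imagesofprojectives} and the fact that ${\rm St}(\lambda,n)$ is a quotient of $\cind_K^G{\rm st}_G$), and is supported in $X_{\check G,[\phi,N]}$, I would conclude by comparing the two coherent sheaves after completion at all relevant closed points and invoking that a coherent sheaf on a Noetherian scheme is determined by its completions.

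\textbf{Main obstacle.} The delicate point is the base-change/flatness bookkeeping: $\mathcal M_G$ is \emph{not} flat over $X_{\check G}$ (Proposition \ref{identifyMandOB} and the remark following it), so I must argue carefully that applying $R_G$ termwise to $\hat C^\bullet_{n,\lambda}$ produces no higher Tor, which relies on each term being of the shape $\iota_{\overline B}^G(\text{free }\hat\Hcal_{T,\chi}\text{-module})$ and on the identification $\mathcal M_G|_{\text{reg}}=R\beta_\ast\Ocal_{\tilde X_{\check B}}$ together with affineness of $\beta$; away from the regular locus one needs the expected identity $(\ref{desiredidentification})$, so strictly one should either restrict the whole argument to (a formal neighborhood inside) the regular locus — which suffices since ${\rm Sp}(\lambda,n)$ has regular semisimple $\phi$, hence $(\phi,N)\in X_{\check G}^{\rm reg}$ — or verify $(\ref{desiredidentification})$ in this case directly. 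The cleanest route is to work entirely on $\hat X_{\check G,\chi}$ with $\chi$ the regular semisimple character, where $X_{\check G}$ is reduced and the regular locus is dense, so all the computations above with $\hat Y$ and $\hat E^\bullet_{n,\lambda}$ are literally the computation of $R_G$, and then pass from Corollary \ref{corimageofSteinberg} to the general ${\rm LL^{mod}}(\phi,N)$ via compatibility with parabolic induction (Theorem \ref{theocompwparabinductionregular}, in its completed form) applied to the block-diagonal parabolic, exactly as announced before Theorem \ref{computeRGLLmod}.
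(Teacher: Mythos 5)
Your overall plan — reduce to Corollary \ref{lemidentifycomplexes}, then specialize the completed picture at $t=0$ — is the same as the paper's, but you run into a genuine gap in the specialization step, and you work harder than necessary elsewhere.

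\textbf{The gap.} You conflate two different base changes. You announce "specialize at $t=0$," which is the codimension-one specialization $C[\![t]\!]\rightarrow C[\![t]\!]/(t)$, but you then carry it out via "base change along $\hat{\mathfrak{Z}}_\chi\rightarrow C$," which kills the full maximal ideal $\mathfrak{m}_\chi\subset\hat{\mathfrak{Z}}_\chi$. These are not interchangeable here. The $\hat{\mathfrak{Z}}_\chi$-module $\hat{\rm St}(\lambda,n)$ is a (torsion-free, hence free) module over $C[\![t]\!]$, and $C[\![t]\!]$ sits inside $\hat{\mathfrak{Z}}_\chi$ as a quotient by a regular sequence of length $n-1$. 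Consequently $\hat{\rm St}(\lambda,n)\otimes^L_{\hat{\mathfrak{Z}}_\chi}C$ has non-trivial cohomology in degrees $-(n-1),\dots,0$ (the $\hat{\mathfrak{Z}}_\chi$-projective resolution has length $n-1$ by Auslander–Buchsbaum), so $\hat C^\bullet_{n,\lambda}\otimes_{\hat{\mathfrak{Z}}_\chi}C$ is emphatically \emph{not} a resolution of ${\rm St}(\lambda,n)$ and does not compute $R_G({\rm St}(\lambda,n))$. Conversely, applying $-\otimes_{C[\![t]\!]}C[\![t]\!]/(t)$ termwise to $\hat C^\bullet_{n,\lambda}$ is not an option either, since the terms $\iota_{\overline{B}}^G\hat\delta_w$ are $\hat{\mathfrak{Z}}_\chi$-modules whose central action does \emph{not} factor through $C[\![t]\!]$ — only the cohomology $\hat{\rm St}(\lambda,n)$ has that property. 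The paper's fix is exactly what your argument is missing: pick a single element $g\in\hat{\mathfrak{Z}}_\chi$ lifting $t$, replace $\otimes^L_{C[\![t]\!]}C[\![t]\!]/(t)$ by the literally equal $\otimes^L_{\hat{\mathfrak{Z}}_\chi}\hat{\mathfrak{Z}}_\chi/(g)$ (both compute the cone of multiplication by $t$ on any module through which $\hat{\mathfrak{Z}}_\chi$ acts via $C[\![t]\!]$), and then simply invoke $\hat{\mathfrak{Z}}_\chi$-linearity of $\hat R_{G,\chi}$ to commute the functor with $\otimes^L_{\hat{\mathfrak{Z}}_\chi}\hat{\mathfrak{Z}}_\chi/(g)$. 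This sidesteps all of your termwise and Tor-tracking work and is a one-line categorical step, needing no knowledge of the differentials.

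\textbf{Secondary issues.} Your worry about flatness of $\mathcal{M}_G$ over $\Ocal_{X_{\check G}}$ is a red herring — the derived tensor in the definition of $R_G$ is over $\Hcal_G$, not over $\Ocal_{X_{\check G}}$, and the relevant structural fact is $\mathfrak{Z}_G$-linearity of $R_G$, not flatness of $\mathcal{M}_G$. Your justification that $R_G({\rm St}(\lambda,n))$ sits in degree $0$ "because ${\rm St}(\lambda,n)$ is a quotient of $\cind_K^G{\rm st}_G$" is not valid: being a quotient of a module with image concentrated in degree $0$ says nothing about the image of the quotient under a derived functor (indeed $R_G$ of the trivial representation sits in degree $1-n$, cf.\ Remark \ref{remimageofLL}). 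The degree-$0$ concentration here is an output of the computation, not an input. Finally, your brief confusion about whether $X_{\check G,[\phi,N]}$ is the whole space versus the orbit closure suggests a misreading of the notation introduced just before Theorem \ref{computeRGLLmod}; you do resolve it, but the detour to Theorem \ref{computeRGLLmod} at the end of your proposal suggests you were not keeping this Corollary's narrower goal cleanly in focus.
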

\begin{proof}
The corollary above implies $$\hat R_{G,\chi}(\hat{\rm St}(\lambda,n))=\Ocal_{\hat X_{\check G,\chi}^{\rm St}}$$
as $\check G$-equivariant sheaves.
Moreover, we have 
\begin{align*}
\hat{\rm St}(\lambda,n)\otimes^L_{C[\![t]\!]}C[\![t]\!]/(t)&=\hat{\rm St}(\lambda,n)\otimes_{C[\![t]\!]}C[\![t]\!]/(t)={\rm St}(\lambda,n),\\
\Ocal_{\hat X_{\check G,\chi}^{\rm St}}\otimes^L_{C[\![t]\!]}C[\![t]\!]/(t)&=\Ocal_{\hat X_{\check G,\chi}^{\rm St}}\otimes_{C[\![t]\!]}C[\![t]\!]/(t)=\Ocal_{X_{\check G,[\phi,N]}}.
\end{align*}
The center $\hat{\mathfrak{Z}}_\chi$ acts on $\hat{\rm St}(\lambda,n)$ and $\Ocal_{\hat X_{\check G,\chi}^{\rm St}}$ via a surjection $$\hat{\mathfrak{Z}}_\chi\longrightarrow C[\![t]\!].$$
Choosing a pre-image $g$ of $t$ we obtain isomorphisms
\begin{align*}
\hat{\rm St}(\lambda,n)\otimes^L_{C[\![t]\!]}C[\![t]\!]/(t)&=\hat{\rm St}(\lambda,n)\otimes^L_{\hat{\mathfrak{Z}}_\chi}\hat{\mathfrak{Z}}_\chi/(g)\\
\Ocal_{\hat X_{\check G,\chi}^{\rm St}}\otimes^L_{C[\![t]\!]}C[\![t]\!]/(t)&=\Ocal_{\hat X_{\check G,\chi}^{\rm St}}\otimes^L_{\hat{\mathfrak{Z}}_\chi}\hat{\mathfrak{Z}}_\chi/(g).
\end{align*}
The claim now follows from $\hat{\mathfrak{Z}}_\chi$-linearity of $\hat R_{G,\chi}$.
\end{proof}
\begin{rem}\label{remimageofLL}
With some extra effort one can use a similar strategy  to compute the images of ${\rm LL}(\phi,N)$, where $\phi={\rm diag}(\lambda,q^{-1}\lambda,\dots, q^{-(n-1)}\lambda)$ and $N$ is an arbitrary endomorphism such that $(\phi,N)\in X_{\check G}$.
Recall that ${\rm LL}(\phi,N)$ is the unique simple quotient of ${\rm LL^{mod}}(\phi,N)$. One needs to build a complex similar to $\hat C^\bullet_{n,\lambda}$ which is a resolution of ${\rm LL}(\phi,N)$. We omit the technical computation, and only describe the result.

Let us choose such $y=(\phi,N)\in Y\subset X_{\check G}$, where $Y\subset \hat Y$ is the closed subscheme $t_1=\dots=t_n=0$. We denote by $L(y)$ the sheaf of ideals defining the closed subscheme $$\bigcup_{\{i\mid u_i(y)=0\}} \{u_i=0\}\subset Y.$$
Obviously this is a $\check T$-equivariant line bundle and we write $\mathcal{L}(y)$ for the corresponding $\check G$-equivariant line bundle on $X_{\check G}$. Let us denote the number of $i\in\{1,\dots, n-1\}$ such that $u_i(y)=0$ by $l_y$. Then
\[R_G({\rm LL}(\phi,N))=\mathcal{L}(y)[l_y]\]
is the equivariant line bundle $\mathcal{L}(y)$ shifted to (cohomological) degree $-l_y$.
\end{rem}

\begin{proof}[Proof of Theorem \ref{computeRGLLmod}]
We assume that $\phi$ is an arbitrary regular semi-simple element and choose a decomposition 
\[(C^n,\phi,N)=\bigoplus_{i=1}^s {\rm Sp}(\lambda_i,r_i)\]
as in subsection \ref{modifiedLL}. Then 
\begin{equation*}
{\rm LL^{mod}}(\phi,N)=\iota_P^G \big({\rm St}(\lambda_1,r_1)\otimes\dots\otimes{\rm St}(\lambda_{s},r_s)\big)=\iota_{\overline{P}'}^G \big({\rm St}(\lambda_s,r_s)\otimes\dots\otimes{\rm St}(\lambda_{1},r_1)\big)
\end{equation*}
with the ordering of $(\ref{defnmodifiedLL})$. Here $P$ is a block upper triangular parabolic with Levi $M$ and we set $P'$ to be the block upper triangular parabolic with Levi $M'=w_0Mw_0^{-1}$, where $w_0\in W$ is the longest element. 
Write $\check M'=\GL_{r_s}\times\dots \times\GL_{r_1} $ and consider the morphisms  
\[\begin{aligned}
\alpha:{X}_{\check P'}&\longrightarrow X_{\check M'},\\
\beta:\tilde{X}_{\check P'}&\longrightarrow X_{\check G}.
\end{aligned}
\]
The choice of $\check M'\hookrightarrow \check P'$ defines an embedding $\iota:X_{\check M'}\hookrightarrow X_{\check P'}$. We will write $(x_s,\dots,x_1)\in X_{\check M'}$ for the point defined by 
\[{\rm Sp}(\lambda_s,r_s)\oplus \dots \oplus {\rm Sp}(\lambda_1,r_1),\]
and write $Z_{\check M'}(x_1,\dots,x_s)$ for the Zariski-closure of its $\check M'$-orbit $\check M'\cdot(x_s,\dots,x_1)$.
Then one easily checks that the choice of ordering of $\lambda_1,\dots,\lambda_s$ implies that $$\alpha^{-1}(Z_{\check M'}(x_s,\dots,x_1))=:Z_{\check P'}(x_s,\dots,x_1)$$ is the Zariski-closure of the $\check P'$-orbit of $\iota(x_s,\dots, x_1)$. Moreover, the choice of ordering implies that $\alpha$ is smooth along this pre-image. In particular 
\[L\alpha^\ast \Ocal_{Z_{\check M'}(x_s,\dots,x_1)}=\Ocal_{Z_{\check P'}(x_s,\dots,x_1)}.\]
Let $Z_{\check G}(x_s,\dots,x_1)\subset \tilde X_{\check P'}$ denote the $\check G$-invariant closed subscheme of $\tilde X_{\check P'}$ corresponding to the $\check P'$-invariant closed subscheme $Z_{\check P'}(x_s,\dots,x_1)\subset X_{\check P'}$. 
Using Corollary \ref{corimageofSteinberg} and compatibility of $R_G$ with parabolic induction, we are left to show that $$R\beta_\ast(\Ocal_{Z_{\check G}}(x_s,\dots,x_1))=\Ocal_{X_{\check G,[\phi,N]}}.$$
This follows, as the construction implies that $\beta$ maps $Z_{\check G}(x_s,\dots,x_1)$ isomorphically onto the Zariski-closure $X_{\check G,[\phi,N]}$ of the $\check G$-orbit $\check G\cdot (\phi,N)=\check G\cdot \iota(x_s\dots,x_1)$.
%
\end{proof}
We also remark that Theorem \ref{theouniquenesshatR} is true for all regular semi-simple elements $\phi$.
\begin{cor}\label{Corollaryuniquelydetermined}
Let $\phi\in\check T(C)$ be regular semi-simple and $\chi:\mathfrak{Z}\rightarrow C$ the character defined by the image of $\phi$ in $\check T/W$. The set of functors 
\[\hat R_{M,\chi}:\mathbf{D}^{+}(\hat \Hcal_{M,\chi}\text{-}{\rm mod})\longrightarrow \mathbf{D}^{+}_{\rm QCoh}([\hat X_{\check M,\chi}/\check M])\]
for standard Levi subgroups $M\subset G$, is uniquely determined (up to isomorphism) by requiring that they are $\hat{\mathfrak{Z}}_{M,\chi}$-linear, compatible with parabolic induction, and that $\hat R_{T,\chi}$ is induced by the identification 
\[\hat\Hcal_{T,\chi}\text{-}{\rm mod}={\rm QCoh}(\hat X_{\check T,\chi}).\]
\end{cor}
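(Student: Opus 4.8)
The statement to be proved is the corollary asserting that Theorem~\ref{theouniquenesshatR} holds for an \emph{arbitrary} regular semi-simple $\phi\in\check T(C)$, not only for the ``equally spaced'' element $\phi=\mathrm{diag}(\lambda,q^{-1}\lambda,\dots,q^{-(n-1)}\lambda)$. The plan is to reduce the general regular semi-simple case to the equally-spaced case already treated in Theorem~\ref{theouniquenesshatR}, exploiting the fact that the completed Hecke algebra $\hat\Hcal_{G,\chi}$, the completed stack $[\hat X_{\check G,\chi}/\check G]$, and the functor $\hat R_{G,\chi}$ all decompose according to the ``blocks'' of eigenvalues of $\phi$ that are linked by the relation $\mu'=q^{-1}\mu$.

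First I would analyze the structure of $\phi$: write the multiset of eigenvalues $\{\phi_1,\dots,\phi_n\}$ (all distinct, as $\phi$ is regular semi-simple) and partition it into maximal ``chains'' under the relation generated by $\mu\sim q^{-1}\mu$, i.e.\ maximal arithmetic-progression-like strings $\mu,q^{-1}\mu,\dots,q^{-(\ell-1)}\mu$. Let $\lambda_1,\dots,\lambda_t$ be the chains, of lengths $\ell_1,\dots,\ell_t$. The key geometric observation is that $X_{\check G}$ near a point with Frobenius $\phi$ only ``sees'' the nilpotent pieces compatible with these chains: a nilpotent $N$ with $\mathrm{Ad}(\phi)(N)=q^{-1}N$ must have matrix entries supported on pairs $(i,j)$ with $\phi_i=q^{-1}\phi_j$, so $N$ block-decomposes along the chains. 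Concretely, the completion $\hat X_{\check G,\chi}$ is $\check G$-equivariantly isomorphic (after the standard device of Lemma~\ref{equidimparabolicregular}, passing to the cover where the $\phi$-stable lines are split, as in Section~\ref{sectioncind}) to an induced space built from the equally-spaced completions $\hat X_{\GL_{\ell_k},\chi_k}$ for the individual chains. Correspondingly $\hat\Hcal_{G,\chi}$ decomposes: by the Bushnell--Kutzko/type-theoretic description (or, more elementarily here, directly from the Bernstein decomposition of the completed category at $\chi$) one has an equivalence between $\hat\Hcal_{G,\chi}\text{-mod}$ and modules over a completed Hecke algebra attached to the Levi $M_\phi=\prod_k\GL_{\ell_k}(F)$, compatibly with parabolic induction. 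I would spell this out as: there is a standard Levi $\mathbb M_\phi\subset\mathbb G$ such that the ``central character $\chi$'' component of $\Hcal_G$ is, after completion, Morita-equivalent (compatibly with the $\hat{\mathfrak Z}$-action and with induction $\iota_{\overline P}^G$ for $P\supset$ the parabolic with Levi $M_\phi$) to $\hat\Hcal_{M_\phi,\chi}$, and each factor $\hat\Hcal_{\GL_{\ell_k},\chi_k}$ is of the equally-spaced type.

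Granting this reduction, the argument is then: any functor $\hat R'_{G,\chi}$ satisfying the three requirements (being $\hat{\mathfrak Z}_{G,\chi}$-linear, compatible with parabolic induction, and restricting over $\hat X_{\check T,\chi}$ to the identity-type functor) restricts, via compatibility with parabolic induction along the parabolic with Levi $M_\phi$, to a tuple of functors $\hat R'_{\GL_{\ell_k},\chi_k}$ on the chain-factors, each of which satisfies the hypotheses of Theorem~\ref{theouniquenesshatR} (equally-spaced case). By that theorem each $\hat R'_{\GL_{\ell_k},\chi_k}$ is isomorphic to $\hat R_{\GL_{\ell_k},\chi_k}$, via isomorphisms $\alpha_w$ compatible with intertwining operators as in diagram~(\ref{checkcommutativity}). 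Induing back up along $\iota_{\overline P}^G$ — and using transitivity of parabolic induction and of intertwining operators, exactly as in the proof of Theorem~\ref{theouniquenesshatR} — one produces the required isomorphisms $\hat R'_{G,\chi}(\iota_{\overline B}^G\hat\delta_w)\xrightarrow{\cong}\Ocal_{\hat X_{\check G,\chi}(w)}$ and checks commutativity of the analogue of~(\ref{checkcommutativity}); the general object in $\mathbf D^+(\hat\Hcal_{G,\chi}\text{-mod})$ is handled by a free resolution and limit argument just as before. Finally, since $\hat R_{G,\chi}$ itself is built by the same recipe (it is $\hat{\mathfrak Z}$-linear and compatible with induction, and its restriction to the torus is the tautological functor), it too arises this way, so $\hat R'_{G,\chi}\cong\hat R_{G,\chi}$.

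\textbf{Main obstacle.} The genuine work is in the reduction step: making precise, and $\hat{\mathfrak Z}$-linearly and induction-compatibly, the decomposition of $\hat\Hcal_{G,\chi}$ and of $[\hat X_{\check G,\chi}/\check G]$ along the eigenvalue-chains of $\phi$. On the Hecke side one must invoke (a completed, much simpler form of) type theory to identify the $\chi$-local component with $\hat\Hcal_{M_\phi,\chi}$, and check that this identification intertwines parabolic induction $\iota_{\overline P}^G$ with induction inside $M_\phi$; on the stack side one must verify that $\hat X_{\check G,\chi}\cong\tilde{\mathbf X}_{\check P}\times_{[\mathbf X_{\check M_\phi}/\check M_\phi]}[\hat X_{\check M_\phi,\chi}/\check M_\phi]$ (completed), which follows from the block structure of $N$ together with Corollary~\ref{caresiansquareclassicalstacks} and the miracle-flatness/reducedness facts of Section~\ref{spacesofLparam}. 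Once this dictionary is in place, everything else is bookkeeping with the arguments already carried out for the equally-spaced case, so I would present the reduction carefully and then say ``the rest is identical to the proof of Theorem~\ref{theouniquenesshatR}.''
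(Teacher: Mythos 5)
Your plan is essentially the paper's argument — decompose the eigenvalues of $\phi$ into maximal $q$-strings, reduce to the chain Levi $M_\phi=\prod_k\GL_{\ell_k}(F)$, apply Theorem~\ref{theouniquenesshatR} to each factor, and lift back up using compatibility with parabolic induction and transitivity of intertwining operators — but you frame the reduction step as considerably heavier than it needs to be. You invoke Morita equivalence of $\hat\Hcal_{G,\chi}$ with $\hat\Hcal_{M_\phi,\chi}$ and Bushnell--Kutzko type theory, and correspondingly worry about establishing a completed fibre-product description of $\hat X_{\check G,\chi}$ along the chain decomposition. None of this is used in the paper. The paper's proof works entirely at the level of the objects $\iota_{\overline B}^G\hat\delta_w$: since the eigenvalue chains are unlinked (precisely the conditions $q^{-a}\lambda_i\neq q^{-b}\lambda_j$ and $\lambda_i\neq q^{-r_j}\lambda_j$), for every $w\in W$ the intertwining operator to some $\iota_{\overline{P}}^G\big(\iota_{\overline{B}_M}^M(\hat\delta^{(1)}_{w_1}\otimes\cdots\otimes\hat\delta^{(s)}_{w_s})\big)$ with $(w_1,\dots,w_s)\in W_M$ is an isomorphism, so one only needs to produce the isomorphisms $\alpha_w$ and check the intertwining diagrams for $w,w'\in W_M$; these come from Theorem~\ref{theouniquenesshatR} applied factor-wise together with transitivity of intertwining maps under $\iota_{\overline P}^G$. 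No Hecke-algebraic Morita equivalence and no completed stack decomposition are ever established or needed. So: correct strategy, same skeleton, but you over-engineer the reduction, identifying as the "main obstacle" something the paper deliberately sidesteps by arguing on induced representations rather than on categories. Simplifying your proof along those lines would bring it in line with the paper's.
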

\begin{proof}
As in the proof of Theorem \ref{theouniquenesshatR} the images of $\hat R_{G,\chi}(\hat\delta_w)$ are uniquely determined up to isomorphism and it is enough to prove that the same is true for the images of intertwining operators. 
Without loss of generality we may assume
\[\phi={\rm diag}(\lambda_1,q^{-1}\lambda_1,\dots,q^{-(r_1-1)}\lambda_1,\dots \lambda_s,q^{-1}\lambda_s,\dots q^{-(r_s-1)}\lambda_s)\]
with $q^{-a}\lambda_i\neq q^{-b}\lambda_j$ for $i\neq j,\, a=0,\dots,r_i-1,\, b=0,\dots, r_j-1$, and $\lambda_i\neq q^{-r_j}\lambda_j$.
Let $M={\rm GL}_{r_1}(F)\times\dots\times{\rm GL}_{r_s}(F)$ be the block diagonal Levi subgroup with block sizes $(r_1,\dots, r_s)$ and $P$ the corresponding block upper triangular parabolic subgroup. 
Further let $\phi_i={\rm diag}(\lambda_i,q^{-1}\lambda_i,\dots,q^{-(r_i-1)}\lambda_i)\in{\rm GL}_{r_i}(F)$. For $w_i\in\mathcal{S}_{r_i}$ we write $\hat \delta^{(i)}_{w_i}$ for the universal unramified deformation of the character defined by $w_i\phi_i$.
Then, by means of an intertwining operator, every $\iota_{\overline{B}}^G\hat\delta_w$ is isomorphic to $$\iota_{\overline{P}}^G(\iota_{\overline{B}_M}^M(\hat\delta^{(1)}_{w_1}\otimes\dots\otimes\hat\delta^{(s)}_{w_s}))$$ for some $(w_1,\dots,w_s)\in \mathcal{S}_{r_1}\times\dots\times\mathcal{S}_{r_s}=W_M$.
As in the proof of Theorem \ref{theouniquenesshatR} we deduce that, given two functors $\hat R_{G,\chi}$ and $\hat R'_{G,\chi}$ satisfying the assumptions, it is enough to show that for all $w\in W_M$ there are isomorphisms
\[\alpha_w:\hat R_{G,\chi}(\iota_{\overline{B}}^G\hat\delta_w)\longrightarrow \hat R'_{G,\chi}(\iota_{\overline{B}}^G\hat\delta_w)\]
such that the diagrams
\[\xymatrix{
\hat R_{G,\chi}(\iota_{\overline{B}}^G\hat\delta_w) \ar[r]^{\alpha_w}\ar[d]_{\hat R_{G,\chi}(\hat f(w,w'))}& \hat R'_{G,\chi}(\iota_{\overline{B}}^G\hat\delta_w) \ar[d]^{\hat R'_{G,\chi}(\hat f(w,w'))}\\
\hat R_{G,\chi}(\iota_{\overline{B}}^G\hat\delta_{w'}) \ar[r]^{\alpha_{w'}}& \hat R'_{G,\chi}(\iota_{\overline{B}}^G\hat\delta_{w'})
}
\]
commute for all $w,w'\in W_M$. This follows from the statement of Theorem \ref{theouniquenesshatR} and transitivity of intertwining operators under parabolic induction.
\end{proof}

We finish by giving more details on the behavior of $R_G(\cind_K^G\sigma_\mathcal{P})$ in the three-dimensional case.
\begin{expl}\label{GL3example}
In the case $n=3$ there are three partitions $\mathcal{P}_{\rm min},\mathcal{P}_0,\mathcal{P}_{\rm max}$ of $n=3$. We have 
\begin{align*}
&m_{\mathcal{P}_{\rm min}}=m_{\mathcal{P}_{\rm max}}=1,\\
&m_{\mathcal{P}_0}=2,
\end{align*}
where the multiplicities are defined as in $(\ref{SchneiderZinkdecompo})$. The sheaves $R_G((\cind_K^G\sigma_{\rm min})^I)$ and $R_G((\cind_K^G\sigma_{\rm max})^I)$ are determined in Proposition \ref{imagesofprojectives}. 
Let us give a closer description of 
$$\mathcal{F}=R_G((\cind_K^G\sigma_{\mathcal{P}_0})^I).$$
As discussed in the Remark \ref{remRGcind}  the generic rank of $\mathcal{F}$ on $Z_{\check G,\mathcal{P}'}$ is $0$ if $\mathcal{P}'=\mathcal{P}_{\rm min}$, it is $1$ if $\mathcal{P}'=\mathcal{P}_0$ and it is $2$ if $\mathcal{P}'=\mathcal{P}_{\rm max}$. 

We describe the completed stalks $\hat{\mathcal{F}}_x$ as modules over the complete local rings $\hat{\Ocal}_{X_{\check G},x}$ for $C$-valued points $x=(\phi,N)\in X_{\check G}$. To simplify the exposition we restrict ourselves to regular semi-simple $\phi$. 
Recall that $X_{\check G,\mathcal{P}_0}=Z_{\check G,\mathcal{P}_0}\cup Z_{\check G,\mathcal{P}_{\rm max}}$ is a union of two irreducible components in this case. Moreover, recall that we write $X_{\check G,0}=Z_{\check G,\mathcal{P}_{\rm max}}$ for the irreducible component defined by $N=0$.

\noindent (a) Assume $x\in Z_{\check G,\mathcal{P}_{\rm min}}\backslash X_{\check G,\mathcal{P}_0}$, then, $\hat{\mathcal{F}}_x=0$.

\noindent (b) Assume $x\in Z_{\check G,\mathcal{P}_0}\backslash Z_{\check G,\mathcal{P}_{\rm max}}$, then $\hat{\mathcal{F}}_x\cong \hat{\Ocal}_{X_{\check G},x}$.

\noindent (c) Assume $x\in  Z_{\check G,\mathcal{P}_{\rm max}}\backslash Z_{\check G,\mathcal{P}_0}$, then $\hat{\mathcal{F}}_x\cong \hat{\Ocal}_{X_{\check G},x}^2$. 

\noindent (d) Assume $x\in Z_{\check G,\mathcal{P}_{\rm max}}\cap Z_{\check G,\mathcal{P}_0}$. Without loss of generality we may assume $\phi={\rm diag}(\lambda_1,\lambda_2,\lambda_3)$. As before we write $\chi:\mathfrak{Z}\rightarrow C$ for the character defined by the characteristic polynomial of $\phi$. 
Up to renumbering, we have to distinguish two cases:
\begin{enumerate}
\item[(d1)] $\lambda_2=q^{-1}\lambda_1$ and $\lambda_3\notin\{q^{-1}\lambda_2,q\lambda_1\}$. In this case $Z_{\check G,\mathcal{P}_0}$ and $Z_{\check G,\mathcal{P}_{\rm max}}$ are smooth at $x$. 
Moreover (using the notations introduced above) $$\cind_K^G\sigma_{\mathcal{P}_0}\otimes_\mathfrak{Z}\hat{\mathfrak{Z}}_{\chi}\cong\iota_{\overline{B}}^G\hat\delta_{w_1}\oplus \iota_{\overline{B}}^G\hat\delta_{w_2}$$ 
for some $w_1,w_2\in W$ and (with appropriate numeration)
$$\cind_K^G\sigma_{\mathcal{P}_{\rm min}}\otimes_\mathfrak{Z}\hat{\mathfrak{Z}}_{\chi}\cong\iota_{\overline{B}}^G\hat\delta_{w_1}\not\cong \iota_{\overline{B}}^G\hat\delta_{w_2}\cong \cind_K^G\sigma_{\mathcal{P}_{\rm max}}\otimes_\mathfrak{Z}\hat{\mathfrak{Z}}_{\chi}.$$
We then can use compatibility of $R_G$ with parabolic induction to deduce that $$\hat{\mathcal{F}}_x\cong \hat{\Ocal}_{X_{\check G,\mathcal{P}_0},x}\oplus \hat{\Ocal}_{X_{\check G,0},x}.$$
\item[(d2)] $\lambda_3=q^{-1}\lambda_2=q^{-2}\lambda_1$.  In this case $Z_{\check G,\mathcal{P}_0}$ is no longer smooth at $x$, but has a self intersection as can be seen from the description of the complete local ring: 
using a local presentation as in $(\ref{formalschemeY})$ we can compute that the complete local ring of $\hat{\Ocal}_{X_{\check G},x}$ is smoothly equivalent to 
\[C[\![t_1,t_2,t_3,u_1,u_2]\!]/((t_1-t_2)u_1,(t_2-t_3)u_2).\]
With these coordinates the completion of $Z_{\check G,\mathcal{P}_{\rm min}}$ at $x$ is given by the vanishing locus $V(t_1-t_2,t_2-t_3)$ and the completion of $Z_{\check G,{\mathcal{P}_{\rm max}}}$ is given by $V(u_1,u_2)$. Moreover, both are smooth at $x$. 
However, the completion of $Z_{\check G,\mathcal{P}_0}$ is given by $V(t_1-t_2,u_2)\cup V(u_1,t_2-t_3)$, i.e.~it decomposes into two components, say $\hat{Z}_1$ and $\hat{Z}_2$.
Note that this computation implies that $Z_{\check G,\mathcal{P}_0}$ can not be Cohen-Macaulay at $x$, as it has a self intersection in codimension $2$.
We can compute the completions of the compactly induced representation:
\begin{align*}
\cind_K^G\sigma_{\mathcal{P}_{\rm max}}\otimes_{\mathfrak{Z}}\hat{\mathfrak{Z}}_\chi&\cong \iota_{\overline{B}}^G \hat\delta_{1}\\ 
\cind_K^G\sigma_{\mathcal{P}_0}\otimes_{\mathfrak{Z}}\hat{\mathfrak{Z}}_\chi&\cong \iota_{\overline{B}}^G \hat\delta_{w_1}\oplus \iota_{\overline{B}}^G \hat\delta_{w_2}\\ 
\cind_K^G\sigma_{\mathcal{P}_{\rm min}}\otimes_{\mathfrak{Z}}\hat{\mathfrak{Z}}_\chi&\cong \iota_{\overline{B}}^G \hat\delta_{w_0},
\end{align*}
where $w_0\in W$ is the longest element and $w_1,w_2\in W\backslash\{1,w_0\}$. 
Here the elements $w_1,w_2$ are chosen such that $$\{\iota_{\overline{B}}^G \hat\delta_{1},\iota_{\overline{B}}^G \hat\delta_{w_1},\iota_{\overline{B}}^G \hat\delta_{w_2},\iota_{\overline{B}}^G \hat\delta_{w_0}\}= \{\iota_{\overline{B}}^G \hat\delta_{w},\ w\in W\}$$
is the set (consisting of four pairwise non-isomorphic elements) of induced representations of the form $\iota_{\overline{B}}^G \hat\delta_{w}$.
Using compatibility with parabolic induction we deduce that (in the coordinates introduced above) 
\begin{align*}\hat{\mathcal{F}}_x=&C[\![t_1,t_2,t_3,u_1,u_2]\!]/((t_1-t_2)u_1,u_2)\\ &\oplus C[\![t_1,t_2,t_3,u_1,u_2]\!]/(u_1,(t_2-t_3)u_2).\end{align*}
In other words the completion $\hat{\mathcal{F}}_x$ is the direct sum of the structure sheaves of $\hat X_{\check G,0,x}\cup \hat Z_1$ and $\hat X_{\check G,0,x}\cup \hat Z_2$.
\end{enumerate}
\end{expl}

\end{document}